\documentclass[11pt]{amsart}

\usepackage{graphicx}
\usepackage{hyperref}
\usepackage{xypic}
\usepackage{color}
\usepackage{latexsym}
\usepackage{amssymb}
\usepackage{lscape}
\usepackage[utf8x]{inputenc}
\usepackage{epsfig,url}
\usepackage[all,cmtip]{xy}
\usepackage[all]{xy}
\usepackage{amsmath}
\usepackage{stmaryrd}
\setlength{\oddsidemargin}{0cm} \setlength{\evensidemargin}{0cm}\setlength{\textwidth}{16cm}\setlength{\topmargin}{1cm}\setlength{\textheight}{20cm}\setlength{\headheight}{.1in}\setlength{\headsep}{.3in}\setlength{\parskip}{.5mm}\setlength{\arraycolsep}{2pt} %favourite length settings
\theoremstyle{plain} % text will be italics
\newtheorem{thmx}{Theorem}

\newtheorem{theorem}{Theorem}[section]
\newtheorem{proposition}[theorem]{Proposition}
\newtheorem{corollary}[theorem]{Corollary}
\newtheorem{lemma}[theorem]{Lemma}
\newtheorem{conjecture}[theorem]{Conjecture}
\newtheorem{assumption}[theorem]{Assumption}

\theoremstyle{definition} % text will be roman

\newtheorem{example}[theorem]{Example}

\newtheorem{remark}[theorem]{Remark}
\newtheorem{definition}[theorem]{Definition}

\DeclareMathOperator{\Kunn}{\mathtt{Ku}}

\DeclareMathOperator{\VV}{\mathbb{V}}
\DeclareMathOperator{\Ts}{T}
\DeclareMathOperator{\Sph}{S}
\DeclareMathOperator{\RS}{\mathbf{M}}
\DeclareMathOperator{\SR}{\mathbb{S}}

\DeclareMathOperator{\Span}{Span}
\DeclareMathOperator{\mon}{mon}
\DeclareMathOperator{\DData}{\mathtt{dec}}
\DeclareMathOperator{\forg}{\mathtt{for}}
\DeclareMathOperator{\Sum}{\mathbf{sum}}
\DeclareMathOperator{\triv}{\mathfrak{o}}

\DeclareMathOperator{\Ob}{Ob}

\DeclareMathOperator{\st}{st} 
\DeclareMathOperator{\pp}{\mathtt{pp}} 
\DeclareMathOperator{\Dec}{\mathtt{Dec}} 
\DeclareMathOperator{\Wt}{wt}
\DeclareMathOperator{\Jac}{Jac}

\DeclareMathOperator{\Co}{\mathtt{co}}
\DeclareMathOperator{\SC}{\mathtt{co}}

\DeclareMathOperator{\hm}{\mathtt{hm}}
\DeclareMathOperator{\No}{\mathbf{N}}
\DeclareMathOperator{\fib}{\mathbf{fib}}

\DeclareMathOperator{\sw}{\mathtt{sw}}
\DeclareMathOperator{\Vect}{\mathbf{Vect}}

\DeclareMathOperator{\euv}{\mathfrak{V}}
\DeclareMathOperator{\eue}{\mathfrak{E}}
\DeclareMathOperator{\SG}{\mathtt{Sym}}

\DeclareMathOperator{\eu}{\mathfrak{eu}}

\DeclareMathOperator{\Jor}{\mathtt{Jor}}

\DeclareMathOperator{\id}{id}
\DeclareMathOperator{\TS}{\mathtt{TS}}

\DeclareMathOperator{\Fr}{Fr}
\newcommand{\QQ}{\mathbb{Q}}
\DeclareMathOperator{\tw}{\mathtt{tw}}

\DeclareMathOperator{\prim}{prim}

\DeclareMathOperator{\Rep}{Rep}

\DeclareMathOperator{\Sp}{\mathfrak{P}}
\DeclareMathOperator{\fr}{fr}

\DeclareMathOperator{\SHom}{\mathcal{H}\textit{om}}

\DeclareMathOperator{\crit}{crit}
\DeclareMathOperator{\KK}{\mathrm{K_0}}
\DeclareMathOperator{\res}{\mathtt{res}}
\DeclareMathOperator{\tr}{\mathop{tr}}
\DeclareMathOperator{\Gl}{G}

\DeclareMathOperator{\Der}{D}
\DeclareMathOperator{\Aff}{\mathbb{A}}
\newcommand{\Ho}{\mathrm{H}}
\newcommand{\Do}{\mathrm{D}}

\newcommand{\state}[1]{\textbf{\{}#1\textbf{\}}}

\newcommand{\supp}{\mathop{\rm Supp}\nolimits}

\newcommand{\ext}{\mathop{\rm ext}\nolimits}
\newcommand{\Hom}{\mathop{\rm Hom}\nolimits}

\newcommand{\Db}[1]{\mathrm{D}^{\mathrm{b}}(#1)}
\newcommand{\Dlb}[1]{\mathrm{D}^{\mathrm{lb}}(#1)}
\newcommand{\Dblf}[1]{\mathrm{D}^\mathrm{b}_{\mathrm{lc}}(#1,\mathbb{Q})}
\newcommand{\Dbc}[1]{\mathrm{D}^{\mathrm{b}}_{\mathrm{c}}(#1,\mathbb{Q})}
\newcommand{\Dcc}[1]{\mathrm{D}_{\mathrm{c}}(#1,\mathbb{Q})}

\newcommand{\GL}{{\rm GL}}

\newcommand{\PGL}{{\rm PGL}}

\newcommand{\MF}{\mathop{\rm MF}\nolimits}
\newcommand{\MHM}{\mathop{\mathbf{MHM}}\nolimits}
\newcommand{\MMHS}{\mathop{\mathbf{MMHS}}\nolimits}
\newcommand{\EMHS}{\mathop{\mathbf{EMHS}}\nolimits}

\newcommand{\MHS}{\mathop{\mathbf{MHS}}\nolimits}

\newcommand{\Perv}{\mathop{\rm Perv}\nolimits}

\DeclareMathOperator{\real}{Re}
\newcommand{\Gr}{\mathop{\rm Gr}\nolimits}

\newcommand{\Sym}{{\rm Sym}}

\newcommand{\rat}{\mathop{\mathbf{rat}}\nolimits}

\newcommand{\pt}{\mathop{\rm pt}}

\renewcommand{\tilde}{\widetilde}
\newcommand{\Cp}{\mathbb{C}}

\newcommand{\tate}[1]{\mathbb{Q}\textbf{\{}#1\textbf{\}}}
\newcommand{\mtate}[2]{\mathbb{Q}_{#1}\textbf{\{}#2\textbf{\}}}

\newcommand{\D}[1]{\mathrm{D}(#1)}

\renewcommand{\AA}{\mathbb A}

\newcommand{\wt}{{\rm wt}}

\begin{document}
\author[B. Davison ]{Ben Davison}
\address{B. Davison: IST Austria}
\email{ben.davison@ist.ac.at}

\title[The critical CoHA of a quiver with potential]{The critical CoHA of a quiver with potential}
\begin{abstract} 
Pursuing the similarity between the Kontsevich--Soibelman construction of the cohomological Hall algebra of BPS states and Lusztig's construction of canonical bases for quantum enveloping algebras, and the similarity between the inetgrality conjecture for motivic Donaldson--Thomas invariants and the PBW theorem for quantum enveloping algebras, we build a coproduct on the cohomological Hall algebra associated to a quiver with potential.  We also prove a cohomological dimensional reduction theorem, further linking a special class of cohomological Hall algebras with Yangians, and explaining how to connect the study of character varieties with the study of cohomological Hall algebras.
\end{abstract}

\maketitle
\tableofcontents
\thispagestyle{empty}

\section{Introduction}

\subsection{Background and motivation} Let $Q$ be a quiver, i.e. a pair of sets $Q_1$ and $Q_0$ and a pair of maps $s,t\colon Q_1\rightarrow Q_0$.  We will always assume that these sets are finite.  Let $W\in\mathbb{C} Q/[\mathbb{C} Q,\mathbb{C} Q]$ be an element of the vector space quotient.  Such an element is called a \textit{potential}.  In the paper \cite{COHA} Kontsevich and Soibelman introduced the \textit{critical cohomological Hall algebra} (abbreviated to CoHA in this paper) $\mathcal{H}_{Q,W}$ associated to this data.  The main result of the present paper is that this algebra carries a kind of localised coproduct, in a sense that we explain in Section \ref{mresults}.  Firstly, we explain why it is natural to look for and expect such a coproduct to exist.

The algebra $\mathcal{H}_{Q,W}$ is graded by the monoid $\mathbb{N}^{Q_0}$.  For $\gamma\in\mathbb{N}^{Q_0}$, the degree $\gamma$ summand is defined to be the dual compactly supported cohomology
\begin{equation}
\label{CHlive}
\mathcal{H}_{Q,W,\gamma}:=\Ho_{c,\Gl_{\gamma}}(\RS_{Q,\gamma},\phi_{\tr(W)_{\gamma}}\mathbb{Q}[-1])^{\vee}[\chi(\gamma,\gamma)],
\end{equation}
where 
\begin{equation}
\label{RSdef}
\RS_{Q,\gamma}:=\bigoplus_{a\in Q_1}\Hom(\Cp^{\gamma(s(a))},\Cp^{\gamma(t(a))}).
\end{equation}
is an affine space of $\gamma$-dimensional representations of $Q$, acted on by the group
\begin{equation}
\label{Gdef}
\Gl_{\gamma}:=\prod_{i\textrm{ a vertex of }Q}\GL_{\Cp}(\gamma(i))
\end{equation}
by change of basis.  The sheaf complex $\phi_{\tr(W)_{\gamma}}\mathbb{Q}[-1]$ is the vanishing cycles complex associated to the function $\tr(W)_{\gamma}$ on the space of $\gamma$-dimensional $Q$-representaions $\RS_{Q,\gamma}$.  The definition of the vanishing cycles complex is recalled in Section \ref{sheaves_section}, while notions related to the geometry of the moduli space of $Q$-representations are recalled in Section \ref{coha_section}.  The shift here is defined by the expressions
\begin{align}
\label{ldefs}
\chi(\gamma_1,\gamma_2):=&l_0(\gamma_1,\gamma_2)-l_1(\gamma_1,\gamma_2)\\ \nonumber
l_0(\gamma_1,\gamma_2):=&\sum_{i\in Q_0}\gamma_1(i)\gamma_2(i)\\ \nonumber
l_1(\gamma_1,\gamma_2):=&\sum_{a\in Q_1}\gamma_1(s(a))\gamma_2(t(a)).
\end{align}
We will attempt to be agnostic, in this introduction, regarding the category (which we will denote $\mathcal{C}$) to which each cohomologically graded piece of (\ref{CHlive}) belongs, but we assume this category is Tannakian, and that we recover $\Ho_{c,\Gl_{\gamma}}(\RS_{Q,\gamma},\phi_{\tr(W)_{\gamma}}\mathbb{Q}[-1])$, defined as a cohomologically graded vector space by applying the fibre functor to $\Ho_{c,\Gl_{\gamma}}(\RS_{Q,\gamma},\phi_{\tr(W)_{\gamma}}\mathbb{Q}[-1])$ considered as an element of the unbounded derived category $\D{\mathcal{C}}$, with zero differential. Then 
\[
\mathcal{H}_{Q,W}:=\bigoplus_{\gamma\in\mathbb{N}^{Q_0}}\mathcal{H}_{Q,W,\gamma}
\]
belongs to $\D{\mathcal{C}_{\mathbb{Z}^{Q_0}}}$, the category of formal $\mathbb{Z}^{Q_0}$-indexed direct sums of objects in $\D{\mathcal{C}}$.

One can associate to the data $(Q,W)$ a different algebra, the Jacobi algebra $\textrm{Jac}(Q,W)$ --- see \cite{ginz} for a definition of this algebra.  The CoHA $\mathcal{H}_{Q,W}$ can be considered as a categorification of the Donaldson--Thomas invariants of $\textrm{Jac}(Q,W)$.  To explain this we assume firstly that $Q$ is symmetric, in the sense that for all pairs $i,j\in Q_0$ the number of arrows from $i$ to $j$ is the same as the number of arrows from $j$ to $i$.  Secondly we assume that $\mathcal{C}$ has a notion of a weight filtration such that $\Gr_{\Wt}(\mathcal{H}_{Q,W})$ belongs to $\Dlb{\mathcal{C}_{\mathbb{N}^{Q_0}}}$, the full subcategory of the derived category of $\mathcal{C}_{\mathbb{N}^{Q_0}}$ consisting of complexes such that each $\mathbb{Z}\oplus\mathbb{N}^{Q_0}$-graded piece of the associated graded object belongs to $\Db{\mathcal{C}}$, the bounded derived category of $\mathcal{C}$.  Here the associated graded is with respect to the weight filtration, and $\mathbb{Z}$ keeps track of the weight degree.  Thirdly, we assume that there is an element $V_{\prim}\in\Dlb{\mathcal{C}_{\mathbb{N}^{Q_0}}}$, such that 
\begin{equation}
\label{DTdef}
[\Sym(V_{\prim}\otimes\mathbb{Q}[u])]=[\mathcal{H}_{Q,W}]\in\KK(\mathcal{C}_{\mathbb{Z}^{Q_0}})
\end{equation}
where $u$ is a formal variable of weight 2.  The $\mathbb{N}^{Q_0}$-grading on $V_{\prim}\otimes\mathbb{Q}[u]$ is given by $(V_{\prim}\otimes\mathbb{Q}[u])_{\gamma}=V_{\prim,\gamma}\otimes\mathbb{Q}[u]$.  The first assumption can be relaxed, but the situation becomes more complicated --- in this case one should twist the natural monoidal structure on $\Dlb{\mathcal{C}_{\mathbb{Z}^{Q_0}}}$ by an analogue of the Tate twist, see Section \ref{Hdef}.  The second assumption will be satisfied in the case $\mathcal{C}=\MMHS$, the category of monodromic mixed Hodge structures introduced by Kontsevich and Soibelman and recalled in Section \ref{vss}.  The third assumption is automatically satisfied if the first two are.  Then an alternative definition of the $\KK(\mathcal{C})$-valued Donaldson--Thomas invariant $\Omega_{Q,W,\gamma}$ is 
\[
\Omega_{Q,W,\gamma}:=[V_{\prim,\gamma}]\in\KK(\Dlb{\mathcal{C}}).  
\]
The famous integrality conjecture of Joyce and Song or Kontsevich and Soibelman regarding Donaldson--Thomas invariants (see \cite{COHA} for a very general proof) can be expressed by saying that each $\Omega_{Q,W,\gamma}$ is in the image of the inclusion $\KK(\Db{\mathcal{C}})\rightarrow \KK(\Dlb{\mathcal{C}})$.  Defining the refined DT invariant by taking the weight polynomial: $\Omega_{Q,W,\gamma}^{\textrm{qntm}}(q^{1/2})=\chi_{q}(V_{\prim,\gamma})$, the integrality statement implies that the refined DT invariants are Laurent polynomials in $q^{1/2}$, as opposed to Laurent formal power series --- this is sufficient to deduce the Joyce--Song version of the integrality conjecture for the numerical DT invariants associated to Jacobi algebras.

The above formulation of the integrality conjecture makes it clear how understanding the structure of the CoHA $\mathcal{H}_{Q,W}$ might be hoped to offer a categorified upgrade of the integrality conjecture.  Namely, if one could show that there is a \textit{categorification} of (\ref{DTdef}), i.e. an isomorphism of algebras
\begin{equation}
\label{DTcat}
\Sym(V_{\prim}\otimes\mathbb{Q}[u])\rightarrow\mathcal{H}_{Q,W},
\end{equation}
and that each of the generating objects $V_{\prim,\gamma}$ is finite dimensional, then by passing to the Grothendieck group, one would deduce a new proof of the integrality conjecture.  This is exactly the result proved by Efimov in \cite{Ef12}, in the case that $Q$ is symmetric and $W=0$, with the corollary that the categorified integrality conjecture is true in this case.\footnote{This is also the result that was erroneously stated in greater generality in an earlier preprint of this paper.}

In general the statement that there is an isomorphism of algebras as in (\ref{DTcat}) is too strong, and the right formulation of the categorified integrality conjecture is that there is \textit{some} isomorphism as in (\ref{DTcat}).  For instance one may hope that there is a subobject $V_{\prim}\otimes\mathbb{Q}[u]\subset \mathcal{H}_{Q,W}$ such that the natural map (\ref{DTcat}) built from the algebra structure on $\mathcal{H}_{Q,W}$ is an isomorphism of objects in $\D{\mathcal{C}_{\mathbb{Z}^{Q_0}}}$.  If $\mathfrak{g}$ is a Lie algebra, and $\mathfrak{g}[u]$ is the associated disc algebra with Lie product defined by $[gu^i,g'u^{i'}]:=[g,g']u^{i+i'}$, the Poicar\'e--Birkhoff--Witt theorem makes just such a claim regarding the inclusion $\mathfrak{g}[u]\subset\mathcal{U}(\mathfrak{g}[u])$, and so the categorified integrality conjecture would follow from the statement, weaker than the statement that $\mathcal{H}_{Q,W}$ is a free commutative algebra, that $\mathcal{H}_{Q,W}\cong \mathcal{U}(\mathfrak{g}[u])$ for some $\mathbb{Z}^{Q_0}$-graded Lie algbra $\mathfrak{g}$ with finite dimensional $\mathbb{Z}^{Q_0}$-graded pieces.  

In practice even this statement is too strong, and one should look for the structure of a quantum enveloping algebra on $\mathcal{H}_{Q,W}$, i.e. one should prove that $\mathcal{H}_{Q,W}$ is a deformation of some $\mathcal{U}(\mathfrak{g}[u])$, where $\mathfrak{g}$ has finite-dimensional $\mathbb{Z}^{Q_0}$-graded pieces.  This statement is still enough to prove that $\mathcal{H}_{Q,W}$ has a PBW basis, and so it is enough to prove the categorified integrality conjecture.  The statement regarding quantum enveloping algebras is proved in a companion paper with Sven Meinhardt \cite{DaMe15b} using the coproduct constructed in this paper, and so we may say that the categorified integrality conjecture follows from the fact that we can build a kind of generalised Yangian algebra out of an arbitrary pair $(Q,W)$.  Regardless of whether one wants to prove that $\mathcal{H}_{Q,W}$ is a free commutative algebra, a universal enveloping algebra, or a deformed universal enveloping algebra, one has a coproduct --- this is our motivation for finding this additional structure on $\mathcal{H}_{Q,W}$.

\subsection{Main results}\label{mresults}
The main contribution of this paper is the construction of a localised coproduct
\begin{equation}
\label{cind}
\mathcal{H}_{Q,W}\xrightarrow{\Delta}\mathcal{H}_{Q,W}\tilde{\boxtimes}_+^{\tw}\mathcal{H}_{Q,W}
\end{equation}
satisfying the following theorem:
\begin{thmx}
The map $\Delta$ endows $\mathcal{H}_{Q,W}$ with the structure of a $Q$-localised bialgebra in the sense of Definition \ref{lbs} below.
\end{thmx}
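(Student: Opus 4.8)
The plan is to construct $\Delta$ geometrically, in the spirit of the Kontsevich–Soibelman multiplication, and then check the bialgebra axioms by base-change and functoriality arguments for vanishing cycles. First I would set up the relevant correspondence. For a pair of dimension vectors $(\gamma_1,\gamma_2)$ with $\gamma=\gamma_1+\gamma_2$, let $\mathcal{M}_{\gamma_1,\gamma_2}$ be the stack of $\Cp Q$-modules together with a submodule of dimension vector $\gamma_1$, so that one has the usual span
\[
\mathcal{M}_{\gamma_1}\times\mathcal{M}_{\gamma_2}\xleftarrow{\ p\ }\mathcal{M}_{\gamma_1,\gamma_2}\xrightarrow{\ q\ }\mathcal{M}_{\gamma}.
\]
Whereas the product uses pushforward along $q$ and pullback along $p$, the coproduct must go the other way: one wants to pull back along $q$ and push forward along $p$. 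The obstruction to doing this naively is that $p$ is not proper — the space of submodules is an affine bundle over a partial flag-type stack only after one fixes a splitting, and there is no canonical retraction — which is exactly why the coproduct has to be \emph{localised}. Concretely, I would restrict to the open substacks where the module is a direct sum in a suitable sense, or equivalently invert the classes of the missing strata; this is the content of the $Q$-localisation in Definition \ref{lbs} and of the target $\mathcal{H}_{Q,W}\tilde{\boxtimes}_+^{\tw}\mathcal{H}_{Q,W}$, whose $\tw$ decoration records the Tate/monodromic twist forced by the shift $[\chi(\gamma,\gamma)]$ in \eqref{CHlive}.

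Next I would verify the compatibility with vanishing cycles. The function $\tr(W)_\gamma$ restricts along $q$ to $\tr(W)$ evaluated on the module underlying a point of $\mathcal{M}_{\gamma_1,\gamma_2}$, and under $p$ this matches, up to the cross-terms in $W$ that mix the two summands, with $\tr(W)_{\gamma_1}\boxplus\tr(W)_{\gamma_2}$. So I need a Thom–Sebastiani type statement together with the behaviour of $\phi$ under the smooth pullback $q$ (which is, up to shift, exact for the vanishing cycle functor) and under the pushforward $p$. Here the key input is the dimensional reduction theorem advertised in the abstract, or at least the formal properties of $\phi$ recalled in Section \ref{sheaves_section}: base change, the Thom–Sebastiani isomorphism, and compatibility with proper pushforward on the localised pieces. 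Assembling these, $\Delta_{\gamma_1,\gamma_2}:=p_!\circ q^*$ (with appropriate shifts and after localisation) lands in the right graded, twisted tensor product, and summing over all decompositions $\gamma=\gamma_1+\gamma_2$ defines $\Delta$.

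The bialgebra axioms then reduce to diagram chases at the level of correspondences. Coassociativity follows from the standard fact that the two iterated flag stacks $\mathcal{M}_{\gamma_1,\gamma_2,\gamma_3}$ agree, so the two composites $(\Delta\otimes\id)\Delta$ and $(\id\otimes\Delta)\Delta$ are both computed by pull–push through the same three-step flag correspondence; the only subtlety is keeping the localisations and twists consistent across the three-fold decomposition. Counitality is immediate from the $\gamma_1=0$ or $\gamma_2=0$ strata. The compatibility of $\Delta$ with the KS product — the genuinely bialgebra-flavoured axiom — is proved by contemplating the fibre square relating the ``multiply then comultiply'' stack to the ``comultiply then multiply'' one; this is the quiver analogue of Green's theorem / Xiao's comultiplication compatibility for Ringel–Hall algebras, and the relevant square is cartesian up to an affine-space correction that is exactly accounted for by the shifts $l_0,l_1$ of \eqref{ldefs} and the twist in $\tilde\boxtimes_+^{\tw}$. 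I expect this last axiom to be the main obstacle: one must identify the correction factor precisely, check that the vanishing-cycle sheaves on the two sides of the square match under Thom–Sebastiani (the potentials restricted to the two stacks differ only by terms that die after applying $\phi$), and confirm that the localisation is rich enough to make the non-cartesian part invertible. The cohomological dimensional reduction theorem proved later in the paper is what makes these identifications tractable in the cases of interest, but the formal bialgebra statement should follow from the sheaf-theoretic formalism alone, applied stratum by stratum.
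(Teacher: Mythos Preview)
Your outline has the right coarse shape—$\Delta$ is indeed built from pullback along $q$ and ``pushforward'' along $p$, and the localisation is forced by the non-properness of $p$—but two points deserve correction and one step is a genuine gap.

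First, the localisation is not a restriction to an open substack of direct sums; it is the formal inversion of explicit equivariant Euler classes $\eue(Q_0,\gamma_1,\gamma_2)$, $\eue(Q_1,\gamma_1,\gamma_2)\in\Ho_{G_{\gamma_1}\times G_{\gamma_2}}(\pt)$, acting on $\mathcal{H}_{Q,W,\gamma_1}\otimes\mathcal{H}_{Q,W,\gamma_2}$ via the module structure set up in Section~\ref{moduleStruc}. The pushforward along the affine fibration $p$ is by definition $(\pi^*)^{-1}\cdot\eu(p)^{-1}$ (Section~\ref{umkehr_sec}), and one needs the Atiyah--Bott-style argument of Proposition~\ref{ABprop} to know these Euler classes act injectively, so that nothing is lost in the localisation. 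Second, dimensional reduction (Theorem~\ref{dim_red_prop}) plays no role whatsoever in the proof of the bialgebra structure; it is used only in the appendix and in the applications of Section~\ref{charSec}. What you call ``the potentials differ only by terms that die after applying $\phi$'' is the integral identity (Proposition~\ref{IntId}), which the paper explicitly says is not used.

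The genuine gap is in the compatibility axiom. The ``Green's theorem'' fibre square you invoke is not Cartesian, and the correction is not merely an affine-space factor that one can read off from $l_0,l_1$. The paper's proof (Theorem~\ref{comultalg}) passes to the maximal torus $T_\gamma\subset G_\gamma$ (Section~\ref{TCOHA}), where the multiplication and comultiplication become explicit shuffle-type operations, and then the key step is Lemma~\ref{Dround}: the composite $\overleftarrow{\delta}\circ\delta$ (pushforward from $G_{\gamma_1^0,\gamma_2^0}$- to $G_\gamma$-equivariant cohomology, then pullback to $G_{\gamma_1,\gamma_2}$-equivariant) is computed by equivariant localisation on the Grassmannian $\Gr(\gamma_1^0,\gamma)$ with respect to a circle in $G_{\gamma_2}$, and the $S^1$-fixed locus breaks into components indexed by decompositions $(\gamma_1',\gamma_2',\gamma_3',\gamma_4')\in\DData$. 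Each component contributes a term with a specific Euler-class weight, and the remaining discrepancies—two further factors of $\eue(Q_1,\gamma_2',\gamma_3')^{\pm1}$ from Lemmas~\ref{LemMod} and~\ref{LemMod2}—assemble exactly into the correction $\tilde\eue_{\gamma_2',\gamma_3'}$ defining the twisted swap $\tilde\sw$. Without this localisation computation you cannot identify the correction factor, and your sketch gives no mechanism for producing the sum over $\DData$ or the precise Euler-class bookkeeping that matches $\tilde\sw$.
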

The notion of a $Q$-localised bialgebra is introduced in Section \ref{comult_section}; it closely mimics the notion of a bialgebra in a symmetric monoidal category, with the modification that the target of $\Delta$ is localised, and the monoidal category in which we work is not symmetric so that the usual compatibility statement, that $\Delta$ should be an algebra homomorphism, becomes more delicate.  The right hand side of (\ref{cind}) is, up to a shift, a localisation of $\mathcal{H}_{Q,W}\otimes\mathcal{H}_{Q,W}$ in a sense that we briefly explain.  The $\gamma$-graded summand $\mathcal{H}_{Q,W,\gamma}\subset\mathcal{H}_{Q,W}$ carries an action of $\Ho_{\Gl_{\gamma}}(\pt,\QQ)\cong \mathbb{Q}[x_{1,1},\ldots,x_{1,\gamma(1)},\ldots,x_{n,1},\ldots,x_{n,\gamma(n)}]^{\SG_{\gamma}}$, where $\Gl_{\gamma}=\prod_{i\in Q_0}\GL_{\mathbb{C}}(\gamma(i))$ and $\SG_{\gamma}=\prod_{i\in Q_0}\SG_{\gamma(i)}$.  We localise by formally inverting the element
\begin{align*}
&\prod_{i,j\in Q_0}\prod_{\substack{s'=1,\ldots,\gamma_1(j)\\s''=1,\ldots,\gamma_2(i)}}(x^{(1)}_{j,s'}-x^{(2)}_{i,s''})\in\\&\mathbb{Q}[x^{(1)}_{1,1},\ldots,x^{(1)}_{1,\gamma_1(1)},\ldots,x^{(1)}_{n,1},\ldots,x^{(1)}_{n,\gamma_1(n)}]^{\SG_{\gamma_1}}\otimes\mathbb{Q}[x^{(2)}_{1,1},\ldots,x^{(2)}_{1,\gamma_2(1)},\ldots,x^{(2)}_{n,1},\ldots,x^{(2)}_{n,\gamma_2(n)}]^{\SG_{\gamma_2}}
\end{align*}
in the ring $\Ho_{\Gl_{\gamma_1}}(\pt,\QQ)\otimes \Ho_{\Gl_{\gamma_2}}(\pt,\QQ)$, for each pair $(\gamma_1,\gamma_2)$, and tensoring the localised ring with $\mathcal{H}_{Q,W,\gamma_1}\otimes\mathcal{H}_{Q,W,\gamma_2}$.  At the end of this introduction we will explicitly write down the coproduct for the case $W=0$, and the localisation will hopefully begin to look more natural.

As mentioned, in general the product is not commutative, but even in the general case the construction of the coproduct is an essential step in understanding the structure of the algebra $\mathcal{H}_{Q,W}$, considered as a quantum enveloping algebra, or Yangian associated to $(Q,W)$.  The connection with the existing\footnote{Specifically, existing in \cite{MaOk12}.} use of the word Yangian is as follows.  Associated to a quiver $Q$ is an extended doubled quiver $\tilde{Q}$ given by adding an arrow $a^*$ with $s(a^*)=t(a)$ and $t(a^*)=s(a)$ for every $a\in Q_1$, and then adding a loop $\omega_i$ at the vertex $i$ for every $i\in Q_0$.  We define $W_Q=\sum_{a\in Q_1} [a,a^*]\sum_{i\in Q_0}\omega_i$ and conjecture that
\begin{equation}
\label{Yconj}
\mathcal{H}_{\tilde{Q},W_Q}\cong \mathcal{Y}^+_Q,
\end{equation}
where the right hand side is the part of the Yangian for the quiver $Q$ defined by Maulik and Okounkov in \cite{MaOk12} generated by the positive part of the Lie algebra $\mathfrak{g}[u]$.

We will leave checking the above comparison to future work, though see in this connection the paper \cite{YaZh14}.  But one point that we \textit{will} address in this paper is the conceptual gap between the definitions of the left and the right hand side of (\ref{Yconj}).  The right hand side is defined in terms of Nakajima quiver varieties, which may be considered as moduli spaces of modules over the (twisted) preprojective algebra associated to $Q$, i.e. moduli spaces of objects in a 2-dimensional Calabi--Yau (CY2) category.  On the other hand, the left hand side is built out of the cohomology of moduli spaces of representations of the Jacobi algebra $\Jac(\tilde{Q},W_Q)$, which form a CY3 category, and is furthermore not cohomology with respect to the constant sheaf, but a vanishing cycles complex.  In fact, at least at the level of Grothendieck groups, there is a very general procedure to pass from vanishing cycle cohomology of objects in a CY3 category to usual cohomology of objects in a CY2 category that goes by the name of dimensional reduction, introduced in \cite{BBS}.  In the appendix we prove the cohomological refinement of dimensional reduction that we need, which is the other main contribution of this paper towards the general project of understanding the CoHA $\mathcal{H}_{Q,W}$ in terms of quantum enveloping algebras.
\begin{thmx}
Let $f\colon X\times \mathbb{A}^n\rightarrow \mathbb{C}$ be a $\mathbb{C}^*$-equivariant regular function, where $\mathbb{A}^n$ and $\mathbb{C}$ carry the scaling action and $X$ carries the trivial action, so that we may write $f=\sum_{s=1}^nf_sx_s$ where the $x_s$ are the coordinates on $\mathbb{A}^n$ and the $f_s$ are functions on $X$.  Let $Z=Z(f_1,\ldots,f_n)$.  There is a natural isomorphism in $\Db{\mathcal{C}}$
\[
\Ho_c(X\times\mathbb{A}^n,\varphi_f\mathbb{Q}[-1])\cong\Ho_c(Z\times\mathbb{A}^n,\mathbb{Q}).
\]
\end{thmx}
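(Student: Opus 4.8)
The plan is to work directly with the vanishing cycle complex of the fibrewise-linear function $f$ and to reduce the statement, by a stratification argument, to a vanishing assertion for a nearby cycle complex on the degenerate locus $Z\times\mathbb{A}^n$. First I would record the support statement: $\phi_f\mathbb{Q}_{X\times\mathbb{A}^n}$ is supported on $Z\times\mathbb{A}^n$. Indeed $\partial f/\partial x_s=f_s$, so over the open set $(X\setminus Z)\times\mathbb{A}^n$ the $f_s$ do not all vanish; covering $X\setminus Z$ by the loci $\{f_s\neq 0\}$, the change of coordinates replacing $x_s$ by $f$ on $\{f_s\neq 0\}\times\mathbb{A}^n$ exhibits $f$ as a coordinate function, whose vanishing cycle complex is zero. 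Hence $\operatorname{Supp}\phi_f\mathbb{Q}\subseteq Z\times\mathbb{A}^n=\pi^{-1}(Z)$, where $\pi\colon X\times\mathbb{A}^n\to X$.

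Next I would use the distinguished triangle $i_0^*\mathbb{Q}_{X\times\mathbb{A}^n}\to\psi_f\mathbb{Q}\to\phi_f\mathbb{Q}\xrightarrow{[1]}$ on $Y_0:=f^{-1}(0)$ (with $i_0\colon Y_0\hookrightarrow X\times\mathbb{A}^n$), which is the defining triangle of $\phi_f$ and which checks out in the degenerate case $f=0$, where $\psi_0=0$ and $\phi_0\mathbb{Q}[-1]\cong\mathbb{Q}$. Stratify $Y_0$ into the open piece $H=Y_0\cap\bigl((X\setminus Z)\times\mathbb{A}^n\bigr)$ and the closed piece $Z\times\mathbb{A}^n$. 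On $H$ the function $f$ is a submersion (its differential already has the nonzero component $(f_1,\dots,f_n)$ in the $\mathbb{A}^n$-directions), so $\psi_f\mathbb{Q}|_H\cong\mathbb{Q}_H$ and the specialisation map $i_0^*\mathbb{Q}\to\psi_f\mathbb{Q}$ restricts to the identity there. Comparing the open--closed triangles of $\mathbb{Q}_{Y_0}$ and of $\psi_f\mathbb{Q}$ along the specialisation map, whose restriction to $H$ is an isomorphism, the octahedral axiom gives $\phi_f\mathbb{Q}\cong k_*\operatorname{cone}\!\bigl(\mathbb{Q}_{Z\times\mathbb{A}^n}\to\psi_f\mathbb{Q}|_{Z\times\mathbb{A}^n}\bigr)$ with $k\colon Z\times\mathbb{A}^n\hookrightarrow Y_0$. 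Applying $\Ho_c$, and using $\Ho_c(X\times\mathbb{A}^n,\phi_f\mathbb{Q})=\Ho_c(Y_0,\phi_f\mathbb{Q})$, this yields a natural isomorphism
\[
\Ho_c(X\times\mathbb{A}^n,\phi_f\mathbb{Q}[-1])\;\cong\;\fib\bigl(\Ho_c(Z\times\mathbb{A}^n,\mathbb{Q})\to\Ho_c(Z\times\mathbb{A}^n,\psi_f\mathbb{Q}|_{Z\times\mathbb{A}^n})\bigr),
\]
so the theorem reduces to the vanishing $\Ho_c(Z\times\mathbb{A}^n,\psi_f\mathbb{Q}|_{Z\times\mathbb{A}^n})=0$.

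For the remaining vanishing I would exploit the scaling $\mathbb{C}^*$-action. Since $f$ is $\mathbb{C}^*$-equivariant with weight one on the target, $f^{-1}(D^*)\cong f^{-1}(1)\times D^*$ canonically, so $\psi_f\mathbb{Q}$ has trivial monodromy and is $\mathbb{C}^*$-monodromic; the action contracts $Z\times\mathbb{A}^n$ onto the fixed locus $Z\times\{0\}$, so by the contraction lemma for monodromic complexes $\Ho_c(Z\times\mathbb{A}^n,\psi_f\mathbb{Q}|_{Z\times\mathbb{A}^n})\cong\Ho_c\bigl(Z\times\{0\},(i')^!\psi_f\mathbb{Q}|_{Z\times\mathbb{A}^n}\bigr)$ with $i'\colon Z\times\{0\}\hookrightarrow Z\times\mathbb{A}^n$, reducing us to showing that the $!$-restriction of $\psi_f\mathbb{Q}$ to the zero section vanishes. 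This last step is the main obstacle: away from $Z\times\{0\}$ the function $f$ is a submersion and the argument is formal, but along the most degenerate locus is exactly where dimensional reduction has content — already for $n=1$ with $f=f_1x_1$ and $f_1$ a non-reduced function the complex $\phi_f\mathbb{Q}$ is not concentrated in a single cohomological degree, so the needed cancellations are invisible on stalks. I expect this vanishing to follow from a genuinely local analysis along $Z\times\{0\}$, using essentially that $f$ is \emph{linear} in the $x_s$ and that $f_s|_Z=0$ (e.g.\ a dévissage on $\dim X$, the base case $\dim X=0$ being immediate, or a direct computation of the relevant Milnor fibres); conceptually it is the assertion that $\phi$ of the tautological pairing between a vector bundle and its dual computes the Fourier--Sato transform and that the transform of the constant sheaf is the constant sheaf on the zero section. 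All the tools used — the nearby/vanishing cycle triangle, the contraction lemma, the stratification argument — have refinements in the category $\mathcal{C}$ of monodromic mixed Hodge modules, so the resulting isomorphism is obtained in $\D{\mathcal{C}}$ and is natural.
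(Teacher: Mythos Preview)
Your reduction is essentially the same as the paper's: you establish the support of $\phi_f\mathbb{Q}$ in $Z\times\mathbb{A}^n$, use the vanishing-cycle triangle, and reduce the statement to the vanishing
\[
\Ho_c\bigl(Z\times\mathbb{A}^n,\ \psi_f\mathbb{Q}|_{Z\times\mathbb{A}^n}\bigr)=0.
\]
Up to here everything is fine and matches the paper's Lemma on support and the first part of its $n=1$ argument.

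The gap is exactly the step you flag yourself: you do not prove this vanishing. Your contraction-lemma reduction to $(i')^!\bigl(\psi_f\mathbb{Q}|_{Z\times\mathbb{A}^n}\bigr)=0$ on $Z\times\{0\}$ is plausible, but you then appeal only to heuristics (Fourier--Sato, d\'evissage on $\dim X$) without carrying any of them out. This is precisely the heart of the theorem, so the proposal as written is incomplete.

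The paper handles the vanishing differently, and in two stages. For $n=1$ it observes that $\psi_f\pi^*\mathcal{F}$, viewed before restriction to $Y_0$, arises from a complex on $X\times(\mathbb{A}^1\setminus\{0\})$ whose cohomology is locally constant along each punctured fibre $\{x\}\times(\mathbb{A}^1\setminus\{0\})$ (this is the monodromy description of nearby cycles for $f_1$). It then proves directly that for any such complex $\mathcal{L}$ one has $\pi_!r_*\mathcal{L}=0$: after Verdier duality and base change this becomes the elementary computation $\Ho\bigl(\overline{B}(0,1),\eta_!\eta^*f_*\mathbb{Q}\bigr)=0$ on a closed disk, checked for each simple local system by the $d$-fold cover. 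No contraction lemma is needed. For general $n$ the paper does \emph{not} argue by contraction or by induction on $\dim X$; it blows up $X\times\{0\}$ inside $Y$, so that on each affine chart of the blowup the pullback of $f$ factors as $L\cdot f'$ with $L$ a single linear coordinate and $f'$ a function on the exceptional divisor direction, reducing to the $n=1$ case already established. This blowup trick is the missing idea in your proposal.
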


As a consequence, the cohomology of other moduli spaces of objects in CY2 categories acquires the structure of a quantum enveloping algebra, for example the cohomology of the moduli stack of representations of the fundamental group of a closed Riemann surface --- we present this example in Section \ref{charSec}.  A not entirely unrelated example is the example of the compactly supported cohomology of the stack of representations of the preprojective algebra $\Pi_Q$, for an arbitrary quiver $Q$.  In \cite{Chicago3} we use dimensional reduction in this case to give a new proof of the Kac positivity conjecture regarding the coefficients of the polynomials counting absolutely indecomposable $Q$-representations over $\mathbb{F}_q$, already proved in \cite{HLRV13}.

\subsection{Why localise?  Example: the case $W=0$}
\label{NoPot}
Let $Q$ be a quiver, which for simplicity we continue to assume is symmetric, and let $B$ be the matrix defined by 
\[
b_{ij}=\delta_{ij}-\#\{a\in Q_1|s(a)=i,\hbox{ }t(a)=j\}.  
\]
Consider the \textit{non critical} cohomological Hall algebra 
\[
\mathcal{H}_Q:=\bigoplus_{\gamma\in\mathbb{N}^{Q_0}}\mathcal{H}_{Q,\gamma}, 
\]
where $\mathcal{H}_{Q,\gamma}$ is defined to be $\bigotimes_{i\in Q_0}\Cp[x_{i,1},\ldots,x_{i,\gamma(i)}]^{\SG_{\gamma(i)}}$.  Define $\mathbb{Z}_{\SC}:=\mathbb{Z}$; the subscript here is a reminder that this copy of $\mathbb{Z}$ keeps track of the cohomological grading.  The spaces $\mathcal{H}_{Q,\gamma}$ are given a $\mathbb{Z}_{\SC}$-grading by putting monomials of polynomial degree $d$ in $\mathbb{Z}_{\SC}$-degree $\chi(\gamma,\gamma)+2d$.  The $\mathbb{Z}_{\SC}$-graded multiplication operation is defined by identifying these spaces of symmetric polynomials with equivariant cohomology and then using correspondences in cohomology, as in \cite[Sec.2.2]{COHA}.  As noted in \cite[Sec.2.4]{COHA} there is, however, a direct formula for the multiplication, given as follows:
\begin{align}
\label{expform}
&m(f_1, f_2)(x_{1,1},\ldots, x_{1,\gamma_1(1)+\gamma_2(1)},\ldots,x_{n,1},\ldots,x_{n,\gamma_1(n)+\gamma_2(n)})=\\\nonumber&\sum_{\pi\in\mathcal{P}(\gamma_1,\gamma_2)}f_1(x_{1,\pi_1(1)},\ldots,x_{1,\pi_1(\gamma_1(1))},\ldots,x_{n,\pi_n(1)},\ldots,x_{n,\pi_n(\gamma_1(n))})\cdot \\&\nonumber f_2(x_{1,\pi_1(\gamma_1(1)+1)},\ldots,x_{1,\pi_1(\gamma_1(1)+\gamma_2(1))},\ldots,x_{n,\pi_n(\gamma_1(n)+1)},\ldots,x_{n,\pi_n(\gamma_1(n)+\gamma_2(n))})\cdot\\\nonumber&\prod_{i,j\in Q_0}\prod_{\alpha=1}^{\gamma_1(i)}\nolimits\prod_{\beta=\gamma_1(j)+1}^{\gamma_1(j)+\gamma_2(j)}\nolimits(x_{j,\pi_{j}(\beta)}-x_{i,\pi_{i}(\alpha)})^{-b_{ij}}
\end{align}
where $\mathcal{P}(\gamma_1,\gamma_2)$ is the subset of $\pi\in\SG_{\gamma_1+\gamma_2}$ such that for each $i\in Q_0$, $\pi_i$ preserves the ordering of $\{1,\ldots,\gamma_1(i)\}$ and $\{\gamma_1(i)+1,\ldots,\gamma_1(i)+\gamma_2(i)\}$ for each $i\in Q_0$.  In other words, $\mathcal{P}(\gamma_1,\gamma_2)$ is the set of shuffles of $(\gamma_1,\gamma_2)$ into $\gamma$.  Although it is possible for $-b_{ij}$ to be negative, it turns out that (\ref{expform}) always defines a polynomial function, so gives a well defined multiplication. 
\smallbreak 
To start with, let $Q$ be the quiver with one vertex and one loop.  Then $B=0$ and from (\ref{expform}), $\mathcal{H}_Q$ is a shuffle algebra on countably many variables.  This has an obvious coproduct, which we recall.  For $\gamma_1+\gamma_2=\gamma$, there is a natural inclusion 
\[
i_{\gamma_1,\gamma_2}\colon \mathcal{H}_{Q,\gamma}\rightarrow\mathcal{H}_{Q,\gamma_1}\otimes\mathcal{H}_{Q,\gamma_2}
\]
given by considering a $\SG_{\gamma}$ invariant polynomial as a $\SG_{\gamma_1}\times\SG_{\gamma_2}$ invariant one.  We define 
\[
\Delta=\sum_{\gamma_1+\gamma_2=\gamma}i_{\gamma_1,\gamma_2}.
\]
It is a worthwhile check to see that this really does define a compatible coassociative coproduct $\Delta$, where by compatible we mean that $\Delta$ is an algebra homomorphism, i.e. the following diagram commutes
\begin{equation}
\label{comultalg1}
\xymatrix{
\mathcal{H}_{Q}\otimes \mathcal{H}_{Q}\ar[d]^{(a,b)\mapsto a\cdot b}\ar[r]^-{\Delta\otimes\Delta}&\mathcal{H}_{Q}\otimes \mathcal{H}_{Q}\otimes\mathcal{H}_{Q}\otimes \mathcal{H}_{Q}\ar[d]^{(a,b,c,d)\mapsto (a\cdot c,b\cdot d)}\\\mathcal{H}_{Q}\ar[r]^-{\Delta}&\mathcal{H}_{Q}\otimes \mathcal{H}_{Q}.
}
\end{equation}
It is worth noting also that $\Delta$ is the same as the coproduct $\Delta'$ one gets by considering $\mathcal{H}_Q$ as a free commutative algebra in the first place (in the category of commutative unital algebras it is freely generated by the space of polynomials in one variable, i.e. $\mathcal{H}_{Q,1}$), as for each $i$ we have 
\[
\Delta'(x^i_{1,1}):=1\otimes x_{1,1}^i+x_{1,1}^i\otimes 1=\Delta(x^i_{1,1}).
\]
When we try to generalise $\Delta$, for other quivers $Q$, we need to adjust for the product terms in the last line of (\ref{expform}).  In fact it is not hard to guess what the correct approach to this should be, and the guess turns out to be almost right:
\begin{equation}
\label{naiveGuess}
\Delta=\sum_{\gamma_1+\gamma_2=\gamma}i_{\gamma_1,\gamma_2}\prod_{i,j\in Q_0}\prod_{\alpha=1}^{\gamma_1(i)}\nolimits\prod_{\beta=\gamma_1(j)+1}^{\gamma_1(j)+\gamma_2(j)}\nolimits(x_{j,\beta}-x_{i,\alpha})^{b_{ij}}.
\end{equation}
One should also introduce a sign $(-1)^{\chi(|b|,|c|)}$ into the right hand vertical map of (\ref{comultalg1}) --- see Section \ref{QLocBi}.  However the `almost' here is not referring to noncommutativity of (\ref{comultalg1}), but to the fact that $\Delta$ no longer really defines a morphism between the constituent terms of (\ref{comultalg1}).  The problem is that in changing the signs of the $b_{ij}$ in the exponent of (\ref{naiveGuess}), we lose the guarantee we had in the case of the definition of the product that the result of feeding in polynomials is a new polynomial, as opposed to a rational function.  However if we treat the shuffle product and $\Delta$ as operations on rational functions in variables $\{x_{1,1},\ldots,x_{1,\gamma(1)},\ldots,x_{n,1},\ldots,x_{n,\gamma(n)}\}$, the square (\ref{comultalg1}) commutes (after introducing the sign $(-1)^{\chi(|b|,|c|)}$).  The appearance of rational functions in the naive definition of the comultiplication generalising the natural comultiplication on the shuffle algebra explains the appearance of localisations of dual compactly supported equivariant cohomology in this paper.

\subsection{Acknowledgements}
While doing this research I was supported by the SFB/TR 45 ``Periods, Moduli Spaces and Arithmetic of Algebraic Varieties'' of the DFG (German Research Foundation), and I would like to sincerely thank Daniel Huybrechts for getting me to Bonn. During the completion of the first version of this paper, and the subsequent extensive rewrite, I was a postdoc at EPFL, supported by the Advanced Grant ``Arithmetic and physics of Higgs moduli spaces'' No. 320593 of the European Research Council. I would also like to thank Northwestern University for providing excellent working conditions during the writing of this paper, and Ezra Getzler for suggestions while I was there, and Kevin Costello for encouraging me to pursue comultiplications. Thanks also go to Davesh Maulik for helpful conversations, and J\"org Sch\"urmann for pointing out corrections regarding mixed Hodge modules. This work was partly supported by the NSF RTG grant DMS-0636646. Thanks also go to Kathy for providing a wonderful place to begin the job of writing this work down.

\section{Constructible sheaves and vanishing cycles}
\label{sheaves_section}
\subsection{Verdier duality}
\label{Vsec}
Let $X$ be a complex variety.  We denote by $\Dbc{X}$ the bounded derived category of $\QQ$ vector spaces on $X$ with constructible cohomology.  Let $f\colon X\rightarrow Y$ be a morphism of manifolds.  Then the direct image with compact support functor $f_!$ defines a functor $\Dbc{X}\rightarrow \Dbc{Y}$, with right adjoint $f^!$.  This adjoint functor is in general not the derived functor of a functor from the category of sheaves of $\mathbb{Q}$ vector spaces on $Y$ to the category of sheaves of $\mathbb{Q}$ vector spaces on $X$, though if $f$ is an affine fibration or a locally closed inclusion it is, at least up to a cohomological shift depending on the relative dimension of $f$.  
\smallbreak
The ``six functors'', namely $f_!, f^!, f_*, f^*, \SHom$, and $\otimes$ themselves descend from functors on the categories $\Db{\MHM(X)}$ and $\Db{\MHM(Y)}$, and the adjunction between $f_!$ and $f^!$ lifts to the level of mixed Hodge modules too.  For an introduction to the theory of mixed Hodge modules see \cite{Sai89}.  For the reader that prefers never to think about mixed Hodge modules, the paper can still be read, with the rule that the functor 
\[
\mathcal{F}\mapsto\mathcal{F}\state{{-d}}
\]
defined later should be understood as the functor 
\[
\mathcal{F}\mapsto\mathcal{F} \otimes\mathbb{Q}_{2d,2d},
\]
the operation of tensoring with the rational vector space concentrated in $\mathbb{Z}_{\SC}\oplus\mathbb{Z}_{\Wt}$-degree $(2d,2d)$.  Here $\mathbb{Z}_{\SC}=\mathbb{Z}$ and $\mathbb{Z}_{\Wt}=\mathbb{Z}$, and the subscripts are merely to remind us that one copy of $\mathbb{Z}$ is keeping track of cohomological degree, while the other is keeping track of the weight degree.
\smallbreak
Let $p\colon Y\rightarrow \pt$ be the projection from an equidimensional manifold $Y$ to a point.  There is an isomorphism 
\begin{equation}
\label{viso}
\triv_Y\colon  \mathbb{Q}_Y\otimes p^*\mathbb{Q}({\dim(Y)})[2\dim(Y)]\rightarrow p^!\mathbb{Q}
\end{equation}
 in the category of mixed Hodge modules on $Y$.  Here $\mathbb{Q}({\dim(Y)})$ is the pure one-dimensional weight $-2\dim(Y)$ Hodge structure.  A choice of isomorphism (\ref{viso}) corresponds canonically (since complex manifolds carry a canonical orientation), via Poincar\'e duality to a choice of a dual class to $\Ho_0(Y,\mathbb{Q})$.  There is, then, a canonical isomorphism (\ref{viso}), given by the map sending the class of any point in $\Ho_0(Y,\mathbb{Q})$ to $1$.  When considering mixed Hodge modules we define 
\[
\tate{d}:=\mathbb{Q}(d)[2d]
\]
if we consider $\mathbb{Q}$ as a one dimensional mixed Hodge structure.  If we consider just the underlying vector space we define
\[
\tate{d}:=\mathbb{Q}[2d].
\]
In either case, we define
\[
\mtate{Y}{d}:=\mathbb{Q}_Y\otimes p^*\tate{d},
\]
where again we are using the same notation for the shift, regardless of the category to which we are assuming $\mathbb{Q}_Y$ belongs.  We define the derived functor $\mathcal{F}\rightarrow\mathcal{F}\state{d}$ via
\begin{equation}
\label{statedef}
\mathcal{F}\mapsto \mathcal{F}\otimes\mtate{{Y}}{{d}}.
\end{equation}

Recall that the Verdier dual $D\mathcal{F}$ of $\mathcal{F}$ is defined to be $R\SHom(\mathcal{F},p^!\mathbb{Q})$, and so for $Y$ an equidimensional manifold there is a canonical isomorphism of functors $D\mathcal{F}\rightarrow \mathcal{F}^{\vee}\otimes\mtate{{Y}}{{\dim(Y)}}$, where $\mathcal{F}\mapsto\mathcal{F}^{\vee}$ is the duality functor $\mathcal{F}\mapsto R\SHom(\mathcal{F},\mathbb{Q}_Y)$.  The multiplication map $\mathbb{Q}_Y\otimes\mathbb{Q}_Y\rightarrow \mathbb{Q}_Y$ induces an isomorphism $\mathbb{Q}_Y^{\vee}\rightarrow\mathbb{Q}_Y$, and so we deduce that there is a canonical isomorphism 
\begin{equation}
\label{CanDuPrim}
D\mathbb{Q}_Y\rightarrow \mtate{Y}{{\dim(Y)}}
\end{equation}
which we can decompose in the following way
\begin{align}
\label{DD}
D\mathbb{Q}_Y&:=R\SHom(\mathbb{Q}_Y, p^!\mathbb{Q})
\\ &\cong R\SHom(\mathbb{Q}_Y, \mathbb{Q}_Y) \otimes p^!\mathbb{Q}\nonumber
\\ &\cong \mathbb{Q}_Y^{\vee}\otimes \mathbb{Q}_Y\state{{\dim(Y)}}\nonumber
\\ &\cong \mathbb{Q}_Y\otimes \mathbb{Q}_Y\state{{\dim(Y)}}\nonumber
\\ &\cong \mathbb{Q}_Y\state{{\dim(Y)}}\nonumber.
\end{align}
\subsection{Vanishing cycles of sheaves}
\label{vss}
We first discuss vanishing cycles of sheaves, without the added worry of Hodge structures --- that will come later.  Let $Y$ be a connected complex manifold, and let $Z\subset Y$ be a closed subspace.  Then for $\mathcal{F}$ an Abelian sheaf on $Y$, we define the underived functor
\[
\Gamma_Z \mathcal{F}(U)=\ker\big(\mathcal{F}(U)\rightarrow \mathcal{F}(U\setminus Z)\big).
\]
%The associated derived functor naturally commutes with Verdier duality --- this is easy to see after replacing a constructible sheaf $\mathcal{F}$ by its Godemont resolution and using the natural isomorphism $\Gamma_Z\left(\coprod_{x\in X} \mathcal{F}_x\right)\cong\coprod_{x\in Z}\mathcal{F}_x$.  The following diagram commutes too, for $c$ the map induced by the canonical inclusion
%\begin{equation}
%\label{okcomm}
%\xymatrix{
%R\Gamma_{Z}\mathbb{Q}_Y\state{{\dim(Y)}}\ar[d]_{\Gamma_{Z}\circ\nu}\ar[r]^-c& \mathbb{Q}_Y\state{{\dim(Y)}}\ar[d]^{\nu}
%\\
%DR\Gamma_{Z}\mathbb{Q}_Y&D\mathbb{Q}_Y\ar[l]_-{Dc}
%}
%\end{equation}
%where we have identified $R\Gamma_{Z}D$ with $DR\Gamma_{Z}$.
%\smallbreak
Let $f\colon Y\rightarrow \Cp$ be a holomorphic function.  We define
\[
\phi_f\mathcal{F}[-1]:=(R\Gamma_{\{\real(f)\leq 0\}}\mathcal{F})|_{f^{-1}(0)},
\]
the shift of the vanishing cycle functor for $f$.  We can consider the vanishing cycle functor as a functor $\phi_f\colon \Dbc{Y}\rightarrow \Dbc{Y}$ between the derived category of sheaves of $\mathbb{Q}$ vector spaces on $Y$ with constructible cohomology and itself.  It is perhaps more standard to consider $\phi_f$ as a functor $\Dbc{Y}\rightarrow \Dbc{f^{-1}(0)}$, but we prefer to keep all our sheaves as sheaves on smooth manifolds, as we use Verdier duality a great deal and always take Verdier duals in categories of sheaves on smooth manifolds.  Here we are using a nonstandard definition for $\phi_f$ that is equivalent to the usual one in the complex case (see Exercise VIII.13 of \cite{KS90}).  Often we will abbreviate $\phi_f\mathbb{Q}_Y[-1]$ to just $\varphi_f$.  The isomorphism (\ref{CanDuPrim}) induces ismorphisms in $\Dbc{Y}$
\begin{equation}
\label{CanDu}
\phi_f\mathbb{Q}_Y\state{{\dim(Y)}}\rightarrow\phi_fD\mathbb{Q}_Y.
\end{equation}
In addition there is a natural isomorphism
\begin{equation}
\label{CanDu2}
\phi_f D\cong D\phi_f
\end{equation}
by the main result of \cite{Ma09}.  The first of these isomorphisms will be used heavily in the sequel in order to construct `umkehr' maps.  For instance these umkehr maps are required for the definition of compactly supported equivariant cohomology with coefficients in the vanishing cycles complex.  
\begin{remark}
The isomorphism (\ref{CanDu2}) gives rise to an isomorphism 
\begin{equation}
\label{VDD}
\Ho_{c,\Gl_{\gamma}}(\RS_{Q,\gamma},\varphi_{\tr(W)})^{\vee}\state{{\chi(\gamma,\gamma)/2}}\cong\Ho_{\Gl_{\gamma}}(\RS_{Q,\gamma},\varphi_{\tr(W)})\state{{-\chi(\gamma,\gamma)/2}}
\end{equation}
between the cohomological Hall algebra as we define it, in terms of dual compactly supported cohomology, and the cohomological Hall algebra defined in terms of ordinary cohomology.  The reasons we prefer the slightly awkward left hand side of (\ref{VDD}) are twofold.  Firstly, the isomorphism fails after replacing $\varphi_{\tr(W)}$ with $\varphi_{\tr(W)}|_{\RS^{\Sp}_{Q,\gamma}}$, where $\RS^{\Sp}_{Q,\gamma}$ is a subvariety of $\RS_{Q,\gamma}$.  The left hand side is the true ``motivic'' invariant, in the sense of obeying cut and paste relations, after passing to classes in the Grothendieck group.  Secondly, the dimensional reduction theorem \ref{dim_red_prop} is expressed in terms of the cohomology theory on the left hand side of (\ref{VDD}).
\end{remark}
\begin{remark}
From now on all functors will be considered as derived functors unless explicitly stated, and we will abbreviate $R\Gamma_{-}$ to $\Gamma_{-}$, $Rj_*$ to $j_*$, etc.
\end{remark}

If $Y'\xrightarrow{j} Y\xrightarrow{f} B$ is a composition of maps of manifolds, and $Z\subset B$ is a closed subspace, then there is a natural transformation of functors $\Dbc{Y}\rightarrow\Dbc{Y}$
\begin{equation}
\label{pbs}
\Gamma_{f^{-1}(Z)}\rightarrow j_*\Gamma_{(fj)^{-1}(Z)}j^*
\end{equation}
which induces a natural transformation
\begin{equation}
\label{phibs}
\phi_{f}\rightarrow j_*\phi_{fj}j^*
\end{equation}
in the case that $f$ is a regular function to $\mathbb{C}$ and $Z=\mathbb{R}_{\leq 0}+i\mathbb{R}$.  The natural transformation (\ref{phibs}) is generally not an isomorphism.  On the other hand, if $j$ is a closed embedding then the composition
\begin{equation}
\label{pbr}
\Gamma_{f^{-1}(Z)}j_*\rightarrow j_*\Gamma_{(fj)^{-1}(Z)}j^*j_*\xrightarrow{\cong}j_*\Gamma_{(fj)^{-1}(Z)}
\end{equation}
is a natural isomorphism of functors $\Dbc{Y'}\rightarrow\Dbc{Y}$, and
\begin{equation}
\label{pbh}
\phi_fj_*\rightarrow j_*\phi_{fj}
\end{equation}
is a natural isomorphism of functors $\Dbc{Y'}\rightarrow\Dbc{Y}$.  Similarly, if $j$ is an affine fibration, then (\ref{phibs}) is a natural equivalence, which when evaluated on $\mathbb{Q}_Y$ gives a quasi-isomorphism of complexes of sheaves
\[
\phi_f\rightarrow j_*\phi_{fj}.
\]
Additionally for \textit{any} smooth $j$ the natural transformation
\begin{equation}
\label{commWithSmooth}
j^*\phi_f\rightarrow \phi_{jf}j^*.
\end{equation}
is a natural isomorphism.  So for instance if $j\colon Y'\rightarrow Y$ is an open embedding, there is an isomorphism $j^*\varphi_f\cong \varphi_{f|_{Y'}}$ in $\Dbc{Y'}$.
\smallbreak
It is easy to find examples showing that if $i\colon X\rightarrow Y$ is the inclusion of a closed subspace, and $f\colon Y\rightarrow \Cp$ is a regular function, then tensoring with $i_*\mathbb{Q}_X$ does not commute with taking vanishing cycles.  For example, one can show that the sheaf of vanishing cycles $\varphi_f$ is supported on the critical locus of $f$, while the complex $\phi_fi_*\mathbb{Q}_X$ has cohomology supported on the critical locus of $f|_X$ by (\ref{pbh}).  However we do have the following useful fact:
\begin{proposition}
\label{tensout}
Let $f\colon X\rightarrow\mathbb{C}$ be a regular function on a complex algebraic manifold $X$.  Let $\Dblf{X}$ be the full subcategory of the bounded derived category of sheaves of $\mathbb{Q}$ vector spaces on $X$ consisting of objects with locally constant cohomology.  There is a natural equivalence of bifunctors $\Dblf{X}\times\Dbc{X}\rightarrow\Dbc{X}$ 
\begin{equation}
\label{nuudef}
\nu\colon \big((\mathcal{L}\times \mathcal{F}\mapsto \mathcal{L}\otimes\phi_f(\mathcal{F})\big)\rightarrow \big(\mathcal{L}\times \mathcal{F}\mapsto \phi_f(\mathcal{L}\otimes\mathcal{F})\big).
\end{equation}
\end{proposition}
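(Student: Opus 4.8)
The plan is to reduce the statement to a purely local computation using the definition of $\phi_f$ as $(R\Gamma_{\{\real(f)\le 0\}}(-))|_{f^{-1}(0)}$, and then exploit the fact that $\mathcal{L}$ has locally constant cohomology. First I would observe that both sides of (\ref{nuudef}) are obtained by applying $(-)|_{f^{-1}(0)}$ to functors built from $R\Gamma_{\{\real(f)\le 0\}}$, and that restriction to a closed subspace is exact and commutes with $\otimes$, so it suffices to construct a natural equivalence of bifunctors $\bigl(\mathcal{L}\times\mathcal{F}\mapsto \mathcal{L}\otimes R\Gamma_{\{\real(f)\le 0\}}(\mathcal{F})\bigr)\xrightarrow{\sim}\bigl(\mathcal{L}\times\mathcal{F}\mapsto R\Gamma_{\{\real(f)\le 0\}}(\mathcal{L}\otimes\mathcal{F})\bigr)$, and then restrict. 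There is an obvious candidate for the natural transformation $\nu$: the functor $R\Gamma_Z$ is right adjoint to the open restriction-and-extension, or equivalently fits in the triangle $R\Gamma_Z\mathcal{G}\to \mathcal{G}\to Rj_*j^*\mathcal{G}$ where $j$ is the open inclusion of the complement of $Z$; tensoring this triangle with $\mathcal{L}$ and comparing with the same triangle for $\mathcal{L}\otimes\mathcal{G}$ produces $\nu$ via the projection formula $\mathcal{L}\otimes Rj_*j^*\mathcal{G}\cong Rj_*j^*(\mathcal{L}\otimes\mathcal{G})$, which holds because $\mathcal{L}$ is (locally) constant, hence dualizable and pulled back. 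So the construction of $\nu$ is formal; the content is that $\nu$ is an \emph{isomorphism}.

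To check that $\nu$ is an isomorphism I would argue locally and stalkwise. Since the statement is local on $X$ and $\mathcal{L}$ has locally constant cohomology, I may shrink $X$ so that $\mathcal{L}$ is a bounded complex of constant sheaves, and then by the triangle decomposition of $\mathcal{L}$ and the way $\otimes$ and $\nu$ interact with triangles, reduce to the case $\mathcal{L}=\mathbb{Q}_X$ in a single degree — but then $\nu$ is visibly the identity. The only subtlety is that "locally constant" is weaker than "locally a finite rank local system", but over the category $\Dblf{X}$ as defined (bounded, locally constant cohomology on an algebraic manifold) one can still localize to a contractible analytic neighborhood on which each cohomology sheaf becomes constant; the projection formula and the triangle argument then go through. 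Alternatively, and perhaps more cleanly, I would appeal directly to the projection formula for $R\Gamma_Z$: for $\mathcal{L}$ locally constant one has $\mathcal{L}\otimes R\Gamma_Z\mathcal{F}\cong R\Gamma_Z(\mathcal{L}\otimes\mathcal{F})$ because $R\Gamma_Z(-)\cong i_*i^!(-)$ for $i$ the closed inclusion of $Z$ (up to the usual care, $Z=\{\real(f)\le 0\}$ is only a real-closed subset, so one works with $i$ the inclusion of this closed set and the sheaf-theoretic $\Gamma_Z$ as defined in the text), and $i^!$ satisfies the projection formula against pullbacks of dualizable objects — and $\mathcal{L}|_Z$, being locally constant of bounded amplitude, is such after shrinking.

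The main obstacle I anticipate is \emph{not} any deep geometry but the bookkeeping around the non-standard definition of $\phi_f$ via $R\Gamma_{\{\real(f)\le 0\}}$ restricted to $f^{-1}(0)$: one must make sure the projection formula is applied to the correct closed set (the real half-space preimage, which is not complex-analytic) and that restricting to $f^{-1}(0)$ afterwards is harmless, which it is since $(-)|_{f^{-1}(0)}=i_0^*$ for the closed inclusion $i_0$ and pullback commutes with $\otimes$ and with $\mathcal{L}\mapsto\mathcal{L}|_{f^{-1}(0)}$, the latter remaining locally constant. Concretely the steps are: (1) rewrite both bifunctors as $i_0^*$ applied to $R\Gamma_{\{\real f\le 0\}}$-expressions; (2) construct $\nu$ from the open-closed triangle plus the projection formula for $Rj_*$; (3) verify $\nu$ is an isomorphism by reducing via triangles and local triviality to the case $\mathcal{L}=\mathbb{Q}_X$, where it is the identity; (4) apply $i_0^*$ and note naturality in both variables is inherited from the naturality of the projection formula and of the triangle. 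I expect step (3), in the mild generality of merely locally constant (rather than a genuine local system) coefficients, to require the only real care, handled by passing to small enough neighborhoods.
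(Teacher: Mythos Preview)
Your proposal is correct and is essentially the same argument as the paper's, just packaged differently. The paper constructs $\nu$ directly from the obvious underived map $\mathcal{G}\otimes\Gamma_Z(\mathcal{F})\to\Gamma_Z(\mathcal{G}\otimes\mathcal{F})$ (which is transparently an isomorphism for any locally constant sheaf $\mathcal{G}$, without needing to shrink $X$), and then handles complexes $\mathcal{L}\in\Dblf{X}$ by the hypercohomology spectral sequences in $\mathcal{H}^q\mathcal{L}$; your route through the open--closed triangle and the projection formula for $Rj_*$, followed by localizing the base and d\'evissage via truncation triangles, is a repackaging of the same two steps.
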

\begin{proof}
First note that there is a natural transformation of \textit{underived} bifunctors of Abelian sheaves
\begin{equation}
\label{prenu}
\mathcal{G}\otimes\Gamma_Z(\mathcal{F})\rightarrow \Gamma_Z(\mathcal{G}\otimes \mathcal{F}).
\end{equation}
Taking $Z=f^{-1}(\mathbb{R}_{\leq 0}+i\mathbb{R})$ and restricting (\ref{prenu}) to $f^{-1}(0)$, the induced natural transformation between associated derived functors is the natural transformation which we denote $\nu$.  The natural transformation of underived functors underlying $\nu$ is clearly an isomorphism when $\mathcal{G}$ is locally constant.  Fix a locally constant sheaf $\mathcal{L}$, it follows that (\ref{nuudef}), considered as a natural transformation of functors (with argument $\mathcal{F}$) is a natural isomorphism.  For arbitrary $\mathcal{G}\in\Dblf{X}$ it follows that if $\mathcal{G}$ has cohomology concentrated in one position then (\ref{nuudef}) is a natural isomorphism, and the general case follows from the five lemma and induction on the length of the interval $[m,m']$ in which $\mathcal{G}$ has nonzero cohomology.
\end{proof}
\begin{corollary}
\label{ppoutside}
Let $Y'\xrightarrow{j} Y$ be a smooth map of manifolds such that there is a natural equivalence $j_!j^*\mathcal{F}\rightarrow \mathcal{F}\otimes j_!\mathbb{Q}_{Y'}$,  and $j_!\mathbb{Q}_{Y'}$ has locally free cohomology, for example a map that is analytically locally a Cartesian product, or more specifically an affine fibration.  Then there is a natural equivalence $\phi_{f}j_!j^*\rightarrow j_!\phi_{fj}j^*$.
\end{corollary}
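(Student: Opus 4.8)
The plan is to deduce this directly from the projection-formula hypothesis together with Proposition \ref{tensout} and the smooth base change isomorphism (\ref{commWithSmooth}), by assembling a short chain of natural isomorphisms. First I would record that the hypothesis ``$j_!\mathbb{Q}_{Y'}$ has locally free cohomology'' says exactly that $\mathcal{L}:=j_!\mathbb{Q}_{Y'}$ lies in $\Dblf{Y}$: it is bounded (being the compactly supported pushforward of the constant sheaf along a map that is analytically locally a Cartesian product with fixed base; for an affine fibration of relative dimension $r$ one has $j_!\mathbb{Q}_{Y'}\cong\mathbb{Q}_Y\state{-r}$, concentrated in a single locally constant cohomology sheaf), and its cohomology sheaves are locally constant by assumption. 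Hence $\mathcal{L}$ is a legitimate argument for the bifunctor $\nu$ of Proposition \ref{tensout}.

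Next I would build the chain of natural isomorphisms applied to an arbitrary $\mathcal{F}\in\Dbc{Y}$. Starting from $\phi_f j_! j^*\mathcal{F}$, apply the assumed natural equivalence $j_!j^*\mathcal{F}\xrightarrow{\sim}\mathcal{F}\otimes j_!\mathbb{Q}_{Y'}$ inside $\phi_f$ to obtain $\phi_f(\mathcal{F}\otimes j_!\mathbb{Q}_{Y'})$. Then apply $\nu^{-1}$ from Proposition \ref{tensout}, with $\mathcal{L}=j_!\mathbb{Q}_{Y'}$ (using the symmetry of $\otimes$ on $\Dbc{Y}$ to put the tensor factors on the sides $\nu$ expects), obtaining $\phi_f(\mathcal{F})\otimes j_!\mathbb{Q}_{Y'}$. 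Now invoke the projection-formula hypothesis a second time, with $\mathcal{F}$ replaced by $\phi_f\mathcal{F}$, to rewrite this as $j_!j^*\phi_f\mathcal{F}$. Finally, since $j$ is smooth, (\ref{commWithSmooth}) gives a natural isomorphism $j^*\phi_f\xrightarrow{\sim}\phi_{fj}j^*$, and pushing forward by $j_!$ lands on $j_!\phi_{fj}j^*\mathcal{F}$. Composing these four natural isomorphisms yields the desired natural equivalence $\phi_f j_!j^*\xrightarrow{\sim}j_!\phi_{fj}j^*$.

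I do not expect a genuine obstacle here; the only points needing attention are bookkeeping ones. One must check that the projection-formula equivalence in the hypothesis is natural in $\mathcal{F}$ (so that it may be post-composed with $\phi_f$ and pre-composed with the other arrows without breaking naturality) — this is formal once it is fixed as a natural transformation of functors on $\Dbc{Y}$ — and one must track the commutativity constraint of $\otimes$ so that $j_!\mathbb{Q}_{Y'}$ sits on the side demanded by $\nu$, which costs nothing. In effect the content of the corollary is simply that Proposition \ref{tensout} upgrades ``$\phi_f$ commutes with $-\otimes\mathcal{L}$ for $\mathcal{L}$ of locally constant cohomology'' to ``$\phi_f$ commutes with $j_!j^*$'' precisely when $j_!j^*$ is itself of the form $-\otimes\mathcal{L}$ for such an $\mathcal{L}$.
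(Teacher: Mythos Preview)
Your proposal is correct and follows essentially the same approach as the paper: the paper's proof is precisely the four-step chain $\phi_f j_!j^*\mathcal{F}\cong\phi_f(\mathcal{F}\otimes j_!\mathbb{Q}_{Y'})\cong\phi_f\mathcal{F}\otimes j_!\mathbb{Q}_{Y'}\cong j_!j^*\phi_f\mathcal{F}\cong j_!\phi_{fj}j^*\mathcal{F}$, invoking the projection-formula hypothesis twice, Proposition~\ref{tensout} once, and smoothness of $j$ for the last step. Your write-up simply makes each of these invocations more explicit.
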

\begin{proof}
We have the sequence of natural isomorphisms
\begin{align*}
\phi_fj_!j^*\mathcal{F}\cong &\phi_f(\mathcal{F}\otimes j_!\mathbb{Q}_{Y'})
\\ \cong &\phi_f\mathcal{F}\otimes j_!\mathbb{Q}_{Y'}
\\ \cong & j_!j^*\phi_f\mathcal{F}
\\ \cong & j_!\phi_{fj}j^*\mathcal{F},
\end{align*}
where the final isomorphism follows from the smoothness of $j$.
\end{proof}
\subsection{Monodromic mixed Hodge structures}
\label{mmhs_sec}
The critical CoHA $\mathcal{H}_{Q,W}^{\Sp}$ defined below will always have an underlying algebra object in the category $\Dcc{\mathbb{N}^{Q_0}}$ of positively $\mathbb{Z}^{Q_0}$-graded vector spaces over $\mathbb{Q}$, as in the introduction, but can also be considered as a $\mathbb{Z}^{Q_0}$-graded algebra in a richer category $\D{\mathcal{C}}$.  Our favoured category $\mathcal{C}$, in the notation of the introduction, is the category of \textit{monodromic mixed Hodge structures} introduced in \cite[Sec.7.4]{COHA}.  For example, one cannot recover the theory of refined DT invariants without at least considering the extra $\mathbb{Z}_{\Wt}$-grading on $\Gr_{\Wt}(\mathcal{H}^{\Sp}_{Q,W})$ coming from the weight filtration associated to the underlying monodromic mixed Hodge structure.
\smallbreak
We will give an outline of the relevant definitions and propositions here.  For more details see \cite[Sec.7.4]{COHA}.  For many applications, the following remark will suffice: there is a full and faithful exact tensor functor $h^*s_*\colon \MHS\rightarrow \MMHS$, and in the event that all our vanishing cycles complexes $\varphi_f$ lie in the image of this functor, we may as well consider $\mathcal{H}_{Q,W}^{\Sp}$ as an algebra object in the derived category of $\mathbb{Z}^{Q_0}$-graded mixed Hodge structures.  In many interesting cases in which it is true, the statement that the relevant vanishing cycles lie in the image of $h^*s_*$ is a byproduct of the notion of dimensional reduction --- see Appendix A, and for an extended natural example of this phenomenon see Section \ref{charSec}.
\smallbreak
The category $\MHM(X)$ of mixed Hodge modules on a complex algebraic manifold $X$ is a full sub-tensor category of the category $F_W\MF_{rh}(X)$, which consists of filtered objects of $\MF_{rh}(X)$, which itself consists of triples:
\begin{enumerate}
\item
A perverse sheaf $L$ of $\mathbb{Q}$ vector spaces on $X$.
\item
A regular holonomic $D_X$-module $M$ with an isomorhism $DR(M)\cong L\otimes_{\mathbb{Q}}\mathbb{C}$.
\item
A good filtration $F$ on $M$.
\end{enumerate}
Actually describing which objects of $F_W\MF_{rh}(X)$ belong to $\MHM(X)$ is a rather complicated matter, to which we refer the reader to \cite{Sai89}.  There is a forgetful functor $\rat\colon \Der(\MHM(X))\rightarrow \Dcc{X}$, given by remembering only the underlying complex of perverse sheaves, and this functor is exact (when the target triangulated category is given the perverse t structure) and the restricted functor $\MHM(X)\rightarrow \Perv(X)$ is faithful.
\smallbreak 
The category $\MMHS$ is the full subcategory of $\EMHS$ consisting of mixed Hodge modules unramified on $\mathbb{C}^*\subset \mathbb{A}^1$, where $\EMHS$ is the category of \textit{exponential mixed Hodge structures}.  The category $\EMHS$ is in turn defined as the full subcategory of $\MHM(\mathbb{A}^1)$ containing $L\in\Ob(\MHM(\mathbb{A}^1))$ such that the corresponding perverse sheaf $\rat(L)$ satisfies $R\Gamma(\rat(L))=0$.  Let 
\[
\Sum\colon \mathbb{A}^1\times\mathbb{A}^1\rightarrow\mathbb{A}^1
\]
be the addition morphism, then the tensor product in $\EMHS$ is defined as 
\begin{equation}
\label{SMS}
\mathcal{F}_1\boxtimes_+\mathcal{F}_2:=\Sum_*(\mathcal{F}_1\boxtimes\mathcal{F}_2).  
\end{equation}
This restricts to a tensor product on $\MMHS$.  Let 
\[
j\colon \mathbb{A}^1\setminus\{0\}\rightarrow\mathbb{A}^1
\]
be the inclusion.  There is an obvious inclusion of categories $h_*\colon \EMHS\rightarrow\MHM(\mathbb{A}^1)$, with $h^*:=\Sum_*(-\boxtimes j_!\mathbb{Q}_{\mathbb{A}^1\setminus \{0\}}[1])$ its left adjoint.  The functor $h^*$ is a tensor functor, where $\MHM(\mathbb{A}^1)$ is given the symmetric monoidal structure (\ref{SMS}).  There is a tensor functor $s_*\colon \MHS\rightarrow \MHM(\mathbb{A}^1)$ given by pushforward along the inclusion 
\[
s\colon \{0\}\rightarrow\mathbb{A}^1
\]
and so a tensor functor 
\begin{equation}
\label{MMHSinc}
h^*s_*\colon \MHS\rightarrow\EMHS.
\end{equation}
In fact one can easily see that $h^*s_*$ defines a functor to $\MMHS$.  In this way we realise the category of mixed Hodge strcutures as a subcategory of the category of monodromic mixed Hodge structures.  The weight filtration for $\mathcal{F}\in\EMHS$ is defined by $W^{\EMHS}_{\leq m}\mathcal{F}:=h^*W_{\leq m}^{\MHM(\mathbb{A}^1)}h_*\mathcal{F}$.
\begin{definition}
\label{shpure}
We say an object of $\Db{\EMHS}$ is pure if its $m$th cohomology is pure of weight $m$.
\end{definition}
\smallbreak
Since all our exponential mixed Hodge structures will in fact belong to the subcategory of monodromic mixed Hodge structures, the fibre functor to vector spaces admits a simple description: let $i\colon \{1\}\rightarrow \mathbb{A}^1$ be the inclusion of the point, then 
\[
\rat i^*[-1]\colon \MMHS\rightarrow \Vect
\]
provides a fibre functor (i.e. this functor commutes with tensor products and their symmetry isomorphisms, and is exact and faithful).  
\begin{definition}
If $V$ is a vector space, a monodromic mixed Hodge structure on $V$ consists of an object $\tilde{V}\in\MMHS$ and an isomorphism $\rat i^*\tilde{V}[-1]\cong V$.  
\end{definition}
If $f\colon X\rightarrow \Cp$ is a holomorphic function on a smooth manifold $X$, and $\mathcal{F}$ is an object of $\MHM(X)$, we define the element $\phi_f\mathcal{F}[-1]\in\Ob(\MHM(X))$ as in \cite{Sai90}.  Where there is no room for confusion we will abbreviate $\phi_f\mathbb{Q}_X[-1]$ to $\varphi_f$.
\smallbreak
Let $Z\subset X$ be a subvariety, and let $f$ be a function on $X$.  The cohomology $\Ho_{c}(Z,\varphi_f)$ is given a monodromic mixed Hodge structure as follows.  Let $u$ be the coordinate on $\Cp^*$, and consider the following object of $\Db{\MHM(\mathbb{A}^1)}$:
\[
(\Cp^*\rightarrow\mathbb{A}^1)_!(Z\times\Cp^*\rightarrow \Cp^*)_!(Z\times \Cp^*\rightarrow X\times \Cp^*)^*\phi_{f/u}\mathbb{Q}_{X\times \Cp^*}\state{0}.
\]
This is an object $L$ of $\Db{\MMHS}\subset\Db{\EMHS}$, and we have a natural isomorphism 
\[
\rat i^*L[-1]\cong\Ho_{c}(Z,\varphi_f)
\]
giving $\Ho_{c}(Z,\varphi_f)$ the structure of a monodromic mixed Hodge structure as in \cite{COHA}.  Note here that $f$ is considered throughout as a function on $X$, not $Z$.

\begin{example}We define the Verdier duality functor $D_{\mon}$ on $\MMHS$ to be $h^*Dh_*$, where $D$ is the usual Verdier duality functor for $\MHM(\mathbb{A}^1)$.  Let $\mathcal{F}\in\Ob(\MMHS)\subset \Ob(\MHM(\mathbb{A}^1))$ be a monodromic mixed Hodge structure, providing a monodromic mixed Hodge structure on $H=\rat i^* \mathcal{F}[-1]$.  Then it is easy to check that $H^{\vee}\cong \rat i^*(D_{\mon}\mathcal{F})[-1]$, providing a monodromic mixed Hodge structure for $H^{\vee}$, the vector dual of $H$.
\end{example}

\begin{proposition}\cite{Sai90}
\label{cruclift}
Let $X'\rightarrow X$ be a morphism of complex manifolds, and let $X\rightarrow\Cp$ be a holomorphic function.  The natural transformation (\ref{phibs}), applied to $\mathbb{Q}_X$, lifts to a morphism of mixed Hodge modules.
\end{proposition}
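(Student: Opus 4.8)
The plan is to recognise the natural transformation \eqref{phibs} — which is assembled from the local cohomology functor $\Gamma_{f^{-1}(Z)}$ along the \emph{real}-analytic set $Z=\mathbb{R}_{\leq 0}+i\mathbb{R}$, and so is not visibly an operation internal to Saito's mixed Hodge module formalism — as a composite of natural transformations each of which manifestly lifts to $\Db{\MHM(X)}$ and is compatible with $\rat$. As a first step I would reduce to two special classes of morphisms: factoring $j\colon X'\to X$ through its graph as $X'\xrightarrow{\Gamma_j}X'\times X\xrightarrow{p}X$, with $\Gamma_j$ a closed embedding and $p$ a smooth projection, and checking the (routine, purely topological) pseudofunctoriality of \eqref{phibs} in $j$, it suffices to treat the case of a smooth morphism and the case of a closed embedding.

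For a smooth $j$, the base change morphism $j^{*}\phi_{f}\to\phi_{fj}j^{*}$ of \eqref{commWithSmooth} is an isomorphism, and \eqref{phibs} is then the composite
\[
\phi_{f}\xrightarrow{\ \eta\ } j_{*}j^{*}\phi_{f}\xrightarrow{\ \sim\ } j_{*}\phi_{fj}j^{*},
\]
where $\eta$ is the unit of the adjunction $(j^{*},j_{*})$. This adjunction exists on the derived categories of mixed Hodge modules and is compatible with $\rat$, so $\eta$ lifts, and the compatibility of the vanishing cycle functor with non-characteristic pullback — in particular with pullback along a smooth morphism — is part of Saito's theory, so the second arrow lifts as well. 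Since $\rat$ carries the lifted composite to \eqref{phibs}, this settles the smooth case, and the hypothesis on $\mathbb{Q}_{X}$ is not needed here.

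For a closed embedding $i\colon X'\hookrightarrow X$ this is the point at which evaluating on $\mathbb{Q}_{X}$ is used. Unwinding \eqref{phibs} via \eqref{pbs} and the isomorphism \eqref{pbr}, and using $i^{*}\mathbb{Q}_{X}=\mathbb{Q}_{X'}$, the morphism in question becomes
\[
\phi_{f}\mathbb{Q}_{X}\xrightarrow{\ \phi_{f}(\eta)\ }\phi_{f}\, i_{*}\mathbb{Q}_{X'}\xrightarrow{\ \sim\ }i_{*}\phi_{fi}\,\mathbb{Q}_{X'},
\]
where $\eta\colon\mathbb{Q}_{X}\to i_{*}\mathbb{Q}_{X'}$ is the adjunction unit for the constant Hodge module (which lifts to $\MHM$), and the isomorphism is the mixed Hodge module incarnation of \eqref{pbh}, i.e. Saito's compatibility of $\phi_{f}$ with pushforward along the closed embedding $i$. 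Both arrows lift to $\Db{\MHM(X)}$ and map under $\rat$ to their topological counterparts, so the composite is the required lift.

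Assembling the two cases along the factorisation above yields the lift for general $j$, and its image under $\rat$ is \eqref{phibs} by construction. The step I expect to cost the most care is the bookkeeping needed to identify the displayed composites with \eqref{phibs} at the level of perverse sheaves — a diagram chase through the adjunction units, the isomorphisms \eqref{pbr}, \eqref{commWithSmooth} and \eqref{pbh}, and the restriction to $f^{-1}(0)$ — together with fixing Saito's normalisation of the vanishing cycle functor so that it agrees with the convention $\phi_{f}\mathbb{Q}[-1]$ used here. Once those conventions are pinned down, each remaining ingredient is a black box from \cite{Sai89,Sai90}.
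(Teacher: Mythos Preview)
Your argument is correct. The paper's own treatment is not a proof at all but a two-sentence reference sketch: it decomposes an arbitrary morphism as an open embedding followed by a projective morphism, citing \cite[Thm.2.14]{Sai90} for the commutation of $\phi_f$ with projective pushforward and \cite[Thm.4.3]{Sai90} together with \cite[Thm.III.4.10.1]{Meb89} (commutation of the Kashiwara--Malgrange filtration with pushforward) for the open-embedding step.

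You choose a different factorisation --- graph embedding followed by smooth projection --- and then unwind \eqref{phibs} explicitly as a composite of adjunction units and the commutation isomorphisms \eqref{commWithSmooth} and \eqref{pbh}, each of which lifts to $\Db{\MHM}$. This is more transparent about \emph{how} the lift is assembled, and makes visible the role of the hypothesis that we evaluate on $\mathbb{Q}_X$ (needed in your closed-embedding step to identify $i^*\mathbb{Q}_X$ with $\mathbb{Q}_{X'}$). Note, however, that your smooth-projection step still invokes the $(p^*,p_*)$ adjunction in $\MHM$ for a non-proper $p$; Saito constructs $p_*$ in this generality precisely via the open-embedding-plus-projective decomposition, so the paper's citations are implicitly sitting inside your black box. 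In that sense the two routes draw on the same foundational input; yours organises it more explicitly, while the paper's follows the internal architecture of Saito's six-functor formalism.
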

For the projective case, this is as in \cite[Thm.2.14]{Sai90}, while for the case of an open embedding one defines $f_*$ as in \cite[Thm.4.3]{Sai90} where it is a simple check that the Kashiwara-Malgrange filtration for $D$-modules commutes with pushforward --- see \cite[Thm.III.4.10.1]{Meb89} and its proof.  

By Proposition \ref{cruclift} and faithfulness of the fibre functor, all the isomorphisms of the previous section can (and will) be lifted to isomorphisms of monodromic mixed Hodge structures.

\subsection{Compactly supported equivariant cohomology}
Assume that a complex algebraic manifold $Y$ carries a $G$-action, where $G$ is a complex algebraic group, and we are given an inclusion $G\subset \GL_{\Cp}(n)$ of complex algebraic groups for some $n$.  We will assume that every $y\in Y$ is contained in a $G$-equivariant open affine neighbourhood.  For $N\geq n$ we define $\fr(n,N)$ to be the space of $n$-tuples of linearly independent vectors in $\Cp^N$, and we define $(Y,G)_N:=Y\times\fr(n,N)$ and $\overline{(Y,G)}_N:= Y\times_{G}\fr(n,N)$, i.e. the quotient under the action $g\cdot (y,h)=(g\cdot y,g^{-1}h)$.  This quotient exists in the category of schemes by \cite[Prop.23]{EdGr98}.  If we assume furthermore that $f\colon Y\rightarrow B$ is a $G$-invariant map to a complex variety $B$, and $Z\subset B$ is a closed subvariety, we obtain maps
\begin{equation}
f_N\colon \overline{(Y,G)}_N\rightarrow B
\end{equation}
and objects $R\Gamma_{f_N^{-1}(Z)}(\mathbb{Q}_{\overline{(Y,G)}_N})$ on each of the spaces $\overline{(Y,G)}_N$.  There is a natural inclusion $\Cp^{N}\rightarrow \Cp^{N+1}$ sending $(x_1,\ldots,x_N)\mapsto (x_1,\ldots,x_N,0)$, inducing maps $\fr(n,N)\rightarrow \fr(n,N+1)$ and 
\[
i_N\colon \overline{(Y,G)}_N\rightarrow \overline{(Y,G)}_{N+1}.  
\]
Now we consider the case $B=\mathbb{C}$ and $Z=\mathbb{R}_{\leq 0}\times i\mathbb{R}$.  Combining (\ref{CanDu}) and (\ref{pbh}), where we use also that $i_{N,*}=i_{N,!}$ since $i_N$ is proper, there are maps
\begin{align}
\label{Gyspf}
&i_{N,!}\phi_{f_N}\mathbb{Q}_{\overline{(Y,G)}_N}\state{{\dim(\overline{(Y,G)}_N)}}\rightarrow i_{N,!}\phi_{f_{N}}D\mathbb{Q}_{\overline{(Y,G)}_N}\rightarrow \\&\phi_{f_{N+1}}D i_{N,!} \mathbb{Q}_{\overline{(Y,G)}_N}\xrightarrow{e} \phi_{f_{N+1}}D\mathbb{Q}_{\overline{(Y,G)}_{N+1}}\rightarrow\phi_{f_{N+1}}\mathbb{Q}_{\overline{(Y,G)}_{N+1}}\state{{\dim(\overline{(Y,G)}_{N+1})}}\nonumber
\end{align}
where $e$ is defined by applying Verdier duality to the natural map 
\[
\mathbb{Q}_{\overline{(Y,G)}_{N+1}}\rightarrow i_{N,*}\mathbb{Q}_{\overline{(Y,G)}_{N}}.
\]
Let $Y'$ be a sub $G$-equivariant subvariety of $Y$, and define $Y'_N\subset \overline{(Y,G)}_N$ to be the subspace of points projecting to $Y'$.  Applying $(Y'_{N+1}\hookrightarrow \overline{(Y,G)}_{N+1})^*$ to (\ref{Gyspf}) and applying $a_{N+1,!}$, where $a_{N+1}\colon Y'_{N+1}\rightarrow \pt$ is the projection to a point, we obtain maps
\[
\Ho_c(Y'_N,\varphi_{f_N})\state{{\dim\left(\overline{(Y,G)}_N\right)}}\rightarrow \Ho_c(Y'_{N+1},\varphi_{f_{N+1}})\state{{\dim\left(\overline{(Y,G)}_{N+1}\right)}},
\]
and we define
\begin{equation}
\label{eqvcdef}
\Ho_{c,G}(Y',\varphi_f):=\varinjlim\left(\Ho_c(Y'_N,\varphi_{f_N})\state{{\dim(\fr(n,N))}}\right).
\end{equation}
This limit makes sense since $\Ho_c(Y'_N,\varphi_{f_N})\state{{\dim(\fr(n,N))}}$ stabilises in each cohomological degree.
\begin{remark}
\label{dimconv}
We have picked the unique normalizing $\tate{{-}}$ twist in (\ref{eqvcdef}) such that if $Y$ is acted on freely by $G$ in a $G$-equivariant open neighbourhood $U$ of $Y'$, we recover the usual definition of the compactly supported cohomology of $\varphi_g$ on $Y'/G$, where $g$ is the induced function on $U/G$.
\end{remark}
We have defined compactly supported equivariant cohomology in an analogous way to the perhaps more familiar definition of equivariant cohomology, which we recall also.  We assume that we have the same setup as above, but this time instead of (\ref{Gyspf}) consider the map
\begin{align}
\label{nGyspb}
\phi_{f_{N+1}}\mathbb{Q}_{\overline{(Y,G)}_{N+1}}\rightarrow \phi_{f_{N+1}}i_{N,*}\mathbb{Q}_{\overline{(Y,G)}_{N}}\rightarrow i_{N,*}\phi_{f_{N}}\mathbb{Q}_{\overline{(Y,G)}_{N}}.
\end{align}
Applying $a_{N,*}(Y'_N\hookrightarrow\overline{(Y,G)}_N)^*$ to the composition (\ref{nGyspb}) we obtain maps
\[
\Ho(Y'_{N+1},\varphi_{f_{N+1}})\rightarrow\Ho(Y'_{N},\varphi_{f_N})
\]
and we define $\Ho_G(Y',\varphi_f):=\varprojlim\left(\Ho(Y'_N,\varphi_{f_N})\right)$.
\subsection{Thom--Sebastiani isomorphism}
Let $Y_1$ and $Y_2$ be a pair of complex algebraic manifolds, acted on by complex algebraic groups $G_1$ and $G_2$ respectively, where again we have embeddings $G_i\subset \GL_{\Cp}(n_i)$.  Let $f_1\colon Y_1\rightarrow \Cp$ and $f_2\colon Y_2\rightarrow \Cp$ be $G_1$- and $G_2$-invariant functions, respectively.  The inclusion of closed subsets 
\[
(f_1)_N^{-1}(\mathbb{R}_{\leq 0}+i\mathbb{R})\times (f_2)_N^{-1}(\mathbb{R}_{\leq 0}+i\mathbb{R})\subset ((f_1)_N+ (f_2)_N)^{-1}(\mathbb{R}_{\leq 0}+i\mathbb{R})\subset \overline{(Y_1,G_1)}_N\times \overline{(Y_2,G_2)}_N
\]
induces a morphism of objects in $\Dbc{\overline{(Y_1,G_1)}_N\times \overline{(Y_2,G_2)}_N}$
\[
R\Gamma_{\{\real((f_1)_N)\leq 0\}}\mathbb{Q}_{\overline{(Y_1,G_1)}_N}\boxtimes R\Gamma_{\{\real((f_2)_N)\leq 0\}}\mathbb{Q}_{\overline{(Y_2,G_2)}_N}\rightarrow R\Gamma_{\{\real((f_1)_N\boxplus(f_2)_N)\leq 0\}}\mathbb{Q}_{\overline{(Y_1,G_1)}_N\times\overline{(Y_2,G_2)}_N}
\]
inducing a map
\[
\Ho_{c}((f_1)_N^{-1}(0),\varphi_{(f_1)_N})\otimes\Ho_{c}((f_2)_N^{-1}(0),\varphi_{(f_2)_N})\rightarrow \Ho_{c}(((f_1)_N^{-1}(0)\times (f_2)_N^{-1}(0)),\varphi_{(f_1\boxplus f_2)_N})
\]
and by the main result of \cite{Ma01} this is an isomorphism, inducing an isomorphism of cohomologically graded vector spaces
\begin{equation}
\label{TSisopre}
\Ho_{c,G_1}((Y_1)_0,\varphi_{f_1})\otimes\Ho_{c,G_2}((Y_2)_0,\varphi_{f_2})\rightarrow \Ho_{c,G_1\times G_2}((Y_1)_0\times (Y_2)_0,\varphi_{f_1\boxplus f_2})
\end{equation}
in the limit.  Here $(Y_i)_0:=f_i^{-1}(0)$.  
\begin{remark}
\label{critzero}
Throughout the paper we assume that if $f\colon Y\rightarrow \mathbb{C}$ is a regular function, the critical locus of $f$ is contained in $f^{-1}(0)$.  If this assumption does not hold, one can shrink $Y$ so that it does.  This assumption, along with the Thom--Sebastiani isomorphism, implies that there is a natural isomorphism 
\begin{equation}
\label{TSiso}
\Ho_{c,G_1}(Y_1,\varphi_{f_1})\otimes\Ho_{c,G_2}(Y_2,\varphi_{f_2})\rightarrow \Ho_{c,G_1\times G_2}(Y_1\times Y_2,\varphi_{f_1\boxplus f_2}).
\end{equation}
\end{remark}
\begin{remark}
\label{Tis}
The morphism (\ref{TSiso}) respects the symmetric monoidal structures on $\mathbb{Z}_{\Co}$-graded spaces (with tensor product) and complex analytic varieties equipped with a regular function (with product $(X,f)\times (Y,g):=(X\times Y,f\boxplus g)$).  In other words, they make $(Y,f)\mapsto \Ho_c(Y,\varphi_f)$ into a symmetric monoidal functor.  The symmetric structure, in particular, becomes important when considering the version of the integrality conjecture and its proof found in \cite[Sec.3.2]{DaMe15b}.
\end{remark}
\begin{remark}
\label{MMHS_rem}
In order for the cohomological Hall algebra, defined in Section \ref{coha_section}, to be an algebra object in the category of monodromic mixed Hodge structures, one needs to upgrade (\ref{TSiso}) to an isomorphism in $\MMHS$.  An easy calculation, setting $Y_1=Y_2=\mathbb{A}^1$, $G_1=G_2=\{\id\}$ and $f_1=f_2=x^2$, shows that (\ref{TSiso}) is in fact not an isomorphism for the usual category of mixed Hodge structures, with its usual tensor product, so it is indeed necessary to work in $\MMHS$ and not $\MHS$.  By unpublished work of Saito, see also the comparison result of Sch\"urmann found in the Appendix of \cite{Br12}, (\ref{TSiso}) is indeed an isomorphism in $\MMHS$.
\end{remark}
Working with unpublished results is not ideal; there are three solutions to this situation.
\begin{enumerate}
\item
One can go ahead and consider the CoHA as an algebra object in the category of monodromic mixed Hodge structures.
\item
One can forget the Hodge structure on both sides of (\ref{TSiso}) and consider it as an isomorphism in $\Dbc{\pt}$.
\item
One can assume that both sides of (\ref{TSiso}) come from genuine Hodge structures, in the sense that they lie in the image of the map (\ref{MMHSinc}).  This is indeed a safe assumption in a wide range of cases, most notably those coming from ``dimensional reduction'' --- see Appendix A.  If the functions $f_1$ and $f_2$ satisfy the assumptions of Theorem \ref{dim_red_prop}, then the Thom--Sebastiani isomorphism (at the level of mixed Hodge structures) is a direct consequence of the Kunneth isomorphism (see Proposition \ref{TScomm}).  These cases include, for example, the noncommutative conifold, and enough examples to reprove the Kac positivity conjecture (see \cite{Chicago3}) and analyse Hodge structures on twisted and untwisted character varieties using CoHAs (see Section \ref{charSec}).
\end{enumerate}
\subsection{The $\Ho_{\Gl_{\gamma}}(\pt,\QQ)$-module structure}  \label{moduleStruc}Let $X$ be a $G$-equivariant complex variety, where $G$ is a complex algebraic group.  The vector space $\Ho_{G}(X,\mathbb{Q})$ is an algebra via the usual cohomology operations, and $\Ho_{G}(X,\mathbb{Q})$ is a module over $\Ho_{G}(\pt,\mathbb{Q})$.  For example, each $\gamma$-graded piece $\mathcal{H}_{Q,\gamma}$ of the \textit{non critical} cohomological Hall algebra $\mathcal{H}_Q$ of Section \ref{NoPot} carries a $\Ho_{\Gl_{\gamma}}(\pt,\mathbb{Q})$-action, since in fact there is an isomorphism in cohomology $\mathcal{H}_{Q,\gamma}\cong\Ho_{\Gl_{\gamma}}(\pt,\mathbb{Q})$.  Sticking with quivers without potentials, consider the spaces $\Ho_{c,\Gl_{\gamma}}(\RS_{Q,\gamma},\mathbb{Q}_{\RS_{Q,\gamma}})^{\vee}$, the vector space dual of compactly supported equivariant cohomology.
  Since 
\[
\dim(\overline{(\RS_{Q,\gamma},\Gl_{\gamma})}_N)=\dim(\RS_{Q,\gamma})+\dim(\fr(n,N))-\dim(\Gl_{\gamma})
\]
we have isomorphisms 
\[
\mathbb{Q}_{\overline{(\RS_{Q,\gamma},\Gl_{\gamma})}_N}\state{{\dim(\RS_{Q,\gamma})}+\dim(\fr(n,N))-\dim(\Gl_{\gamma})}\rightarrow D\mathbb{Q}_{\overline{(\RS_{Q,\gamma},\Gl_{\gamma})}_N}
\]
and via the natural isomorphisms $Da_{N,!}D\cong a_{N,*}$ we obtain an isomorphism
\begin{equation}
\label{ccshift}
\Ho_{c,\Gl_{\gamma}}(\RS_{Q,\gamma},\mathbb{Q}_{\RS_{Q,\gamma}})^{\vee}\rightarrow \Ho_{\Gl_{\gamma}}(\RS_{Q,\gamma},\mathbb{Q}_{\RS_{Q,\gamma}})\state{{-\chi(\gamma,\gamma)}},
\end{equation}
and so we deduce that the \textit{dual} of the compactly supported cohomology with trivial coefficients is the space that naturally inherits the $\Ho_{\Gl_{\gamma}}(\pt,\mathbb{Q})$-action.  Given a $G$-equivariant complex algebraic manifold $X$ and a $G$-invariant regular function $f\in\Gamma(X,\mathbb{C})^G$ we construct a $\Ho_{G}(\pt,\mathbb{Q})$-action on $\Ho_{c,G}(X,\varphi_f)^{\vee}$ that becomes the above action in the special case $G=\Gl_{\gamma}$, $X=\RS_{Q,\gamma}$ and $f=0$.

Define
\begin{align*}
\Delta_N\colon &(X\times_{G}\fr(n,N))\rightarrow (X\times_G \fr(n,N))\times (\pt\times_{G}\fr(n,N))\\ &(x,z)\mapsto ((x,z), (\pt,z)).
\end{align*}
Let $B_N$ be the target of $\Delta_N$ and $A_N$ be the domain.  Let $f_N$ be the function induced by $f$ on $A_N$, and let $g_N$ be the function induced by $f$ on $B_N$.  Then applying Verdier duality to
\[
\mathbb{Q}_{B_N}\rightarrow \Delta_{N,*}\mathbb{Q}_{A_N}
\]
and applying the vanishing cycle functor we obtain a morphism
\[
\phi_{g_N}\big(\Delta_{N,!}D\mathbb{Q}_{A_N}\rightarrow D\mathbb{Q}_{B_N}\big),
\]
and via (\ref{CanDu}), morphisms
\[
\phi_{g_N}\big(\Delta_{N,!}\mathbb{Q}_{A_N}\rightarrow \mathbb{Q}_{B_N}\state{{\dim(\fr(n,N)/G)}}\big).
\]
As $\Delta_N$ is proper we obtain morphisms (see the discussion around (\ref{pbh}))
\[
\Delta_{N,!}\varphi_{f_N}\rightarrow\varphi_{g_N}\state{{\dim(\fr(n,N)/G)}},
\]
and taking the dual of compactly supported cohomology, morphisms
\begin{equation}
\label{athere}
\Ho_{c}\big(\overline{(X,G)}_N\times \overline{(\pt,G)}_N,\varphi_{g_N}\big)\state{{\dim(\fr(n,N)/G)}}^{\vee}\rightarrow \Ho_{c}(\overline{(X,G)}_N,\varphi_{f_N})^{\vee}.
\end{equation}
Applying the Thom-Sebastiani isomorphism, observing that $g_N=f_N\boxplus 0$, we may rewrite the left hand side of (\ref{athere}) as
\[
\Ho_{c}(\overline{(X,G)}_N,\varphi_{f_N})^{\vee}\otimes \Ho_c(\overline{(\pt,G)}_N,\mathbb{Q})\state{{\dim(\fr(n,N)/G)}}^{\vee}
\]
and so as with (\ref{ccshift}) we obtain a morphism
\[
\Ho_{c}(\overline{(X,G)}_N,\varphi_{f_N})^{\vee}\otimes \Ho(\overline{(\pt,G)}_N,\mathbb{Q})\rightarrow \Ho_{c}(\overline{(X,G)}_N,\varphi_{f_N})^{\vee}
\]
which gives the action of $\Ho_{G}(\pt,\mathbb{Q})$ on $\Ho_{c,G}(X,\varphi_{f})^{\vee}$.
\smallbreak
Now let 
\[
\overline{\Delta}_N\colon (X\times_G \fr(n,N))\rightarrow (X\times_G \fr(n,N))\times (X\times_G \fr(n,N))
\]
be the diagonal embedding.  Denote the target by $\overline{B}_N$.  We define a function $\overline{g}_N=f_N\pi_1$, where $\pi_1$ is projection onto the first factor of $\overline{B}_N$.  Then again, $\overline{g}_N\overline{\Delta}_N=f_N$, and we build in the same way an extended action
\begin{equation}
\label{extendedAction}
\Ho_{G}(X,\mathbb{Q})\otimes\Ho_{c,G}(X,\varphi_f)^{\vee}\rightarrow \Ho_{c,G}(X,\varphi_f)^{\vee}.
\end{equation}
In many of our applications, $X$ will be $G$-equivariantly contractible, and there will be no difference between the two actions.

\subsection{Umkehr maps in localised compactly supported cohomology}\label{umkehr_sec}
Let $g\colon X\rightarrow Y$ be a morphism of complex algebraic manifolds, let $f$ be a regular function on $Y$, and let $Y^{\Sp}\subset Y$ be a submanifold, and denote $X^{\Sp}=g^{-1}(Y^{\Sp})$.  Then we have a chain of morphisms
\begin{align*}
g_!\varphi_{fg}\cong& Dg_*D\varphi_{fg}\\
\cong&Dg_*\phi_{fg}D\QQ_X[-1]&\textrm{using (\ref{CanDu2})}\\
\rightarrow &D\phi_f g_*D\QQ_X[-1]&\textrm{using (\ref{pbh})}\\
\cong &\phi_fDg_*D\QQ_X[-1]&\textrm{using (\ref{CanDu2})}\\
\cong &\phi_f g_!\QQ_X[-1]
\end{align*}
and composing with the map $\phi_fg_!\QQ_X[-1]\rightarrow \phi_f\QQ_Y[-1]\state{{\dim(Y)-\dim(X)}}$ obtained by applying $\phi_f$ to the shifted Verdier dual of the adjunction map $\QQ_Y[-1]\rightarrow g_*\QQ_X[-1]$, and applying $D\circ(Y^{\Sp}\rightarrow \pt)_!(Y^{\Sp}\rightarrow Y)^*$ to the resulting map, we obtain the pullback map
\[
\Ho_{c}(Y^{\Sp},\varphi_{f})^{\vee}\state{{\dim(X)-\dim(Y)}}\rightarrow \Ho_c(X^{\Sp},\varphi_{gf})^{\vee}.
\]

Associated to maps $g\colon X\rightarrow Y$ of $G$-equivariant complex algebraic manifolds, with $G\subset \GL_{\Cp}(n)$ a complex algebraic group and $f$ a $G$-invariant function on $Y$, we will often want to associate maps going \textit{both} ways between $\Ho_{c,G}(X^{\Sp},\varphi_{fg})^{\vee}$ and $\Ho_{c,G}(Y^{\Sp},\varphi_{f})^{\vee}$.  The maps $g$ for which we wish to do this fall into essentially two different types.
\smallbreak
Firstly, let $\pi\colon X\rightarrow Y$ be a $G$-equivariant affine fibration.  Then the pullback map
\begin{equation}
\label{pbaff}
\pi^*\colon \Ho_{c,G}(Y^{\Sp},\varphi_f)\state{{-{\dim(\pi)}}}^{\vee}\rightarrow \Ho_{c,G}(X^{\Sp},\varphi_{f\pi})^{\vee} 
\end{equation}
is an isomorphism.  In more detail: denote by $\pi_N$ the natural projection $\overline{(X,G)}_N\rightarrow\overline{(Y,G)}_N$.  There is a natural isomorphism
\begin{equation}
\label{startout}
\mathbb{Q}_{\overline{(Y,G)}_N}\rightarrow (\pi_N)_*\mathbb{Q}_{\overline{(X,G)}_N}.
\end{equation}
Applying $\phi_{f_N}$ to the Verdier dual of (\ref{startout}) we obtain maps
\[
\phi_{f_N}\left(\pi_{N,!}D\mathbb{Q}_{\overline{(X,G)}_N}\rightarrow D\mathbb{Q}_{\overline{(Y,G)}_N}\right).
\]
By (\ref{CanDu}) this gives us an isomorphism
\begin{equation}
\label{otw}
\phi_{f_N}\left(\pi_{N,!}\mathbb{Q}_{\overline{(X,G)}_N}\rightarrow \mathbb{Q}_{\overline{(Y,G)}_N}\state{{-{\dim(\pi)}}}\right).
\end{equation}
From Corollary \ref{ppoutside} and (\ref{otw}) we obtain an isomorphism
\[
\pi_{N,!}\varphi_{f_N\pi_N}\rightarrow\varphi_{f_N}\state{{-{\dim(\pi)}}}
\]
and restricting to $\overline{(Y^{\Sp},G)}_N$, taking compactly supported cohomology, passing to the limit, and taking duals, the isomorphism (\ref{pbaff}).

\smallbreak
We define the Euler characteristic of $\pi$ as follows.  Let $V=T_{X/Y}$ be the relative tangent bundle of $\pi$.  Let $z\colon X\rightarrow V$ be the inclusion of the zero section.  Then consider the composition
\begin{equation}
\label{eudef}
z_*\mathbb{Q}_{\overline{(X,G)}_N}\rightarrow \mathbb{Q}_{\overline{(V,G)}_N}\state{{{\dim(\pi)}}}\rightarrow z_*\mathbb{Q}_{\overline{(X,G)}_N}\state{{{\dim(\pi)}}}
\end{equation}
where the first morphism is obtained by taking the Verdier dual of the second.  We define
\[
\eu(\pi)\cdot\colon \Ho_G(X,\mathbb{Q})\rightarrow\Ho_G(X,\mathbb{Q})
\]
by taking cohomology of (\ref{eudef}), and abbreviate $\eu(\pi):=\eu(\pi)\cdot 1$.
By abuse of notation we will consider $\eu(\pi)$ as an element of $\Ho_G(Y,\mathbb{Q})$ as well via the natural isomorphism $\Ho_G(Y,\mathbb{Q})\cong\Ho_G(X,\mathbb{Q})$ induced by $\pi$.  By applying $\phi_f$ to (\ref{eudef}), via the isomorphism $\Ho_G(Y,\varphi_f)\cong \Ho_G(X,\varphi_{f\pi})$, we obtain the map
\[
\eu(\pi)\cdot\colon \Ho_G(Y,\varphi_f)\rightarrow\Ho_G(Y,\varphi_f).
\]
The notation is justified by the first part of Proposition \ref{maneq}.  We make the further assumption that the Euler characteristic of $\pi$ is not a zero divisor in $\Ho_{c,G}(Y,\varphi_f)^{\vee}$ for the extended action (\ref{extendedAction}), and define the \textit{pushforward map} associated to $\pi$ to be
\[
\pi_*:=(\pi^*)^{-1}\cdot \eu(\pi)^{-1}\colon \Ho_{c,G}(X^{\Sp},\varphi_{f\pi})^{\vee}\rightarrow\Ho_{c,G}(Y^{\Sp},\varphi_f)^{\vee}[\eu(\pi)^{-1}].
\]
\begin{remark}
Note that the pushforward map preserves degree.
\end{remark}

Assume instead that $p\colon X\rightarrow Y$ is a proper map of $G$-equivariant complex algebraic manifolds, inducing proper maps $p_N\colon \overline{(X,G)}_N\rightarrow\overline{(Y,G)}_N$.  Then later in (\ref{pfdef}) we use the pushforward
\[
p_*\colon \Ho_{c,G}(X^{\Sp},\varphi_{fp})^{\vee}\rightarrow \Ho_{c,G}(Y^{\Sp},\varphi_f)^{\vee}
\]
defined via the maps
\[
\phi_{f_N}(\mathbb{Q}_{\overline{(Y,G)}_N}\rightarrow p_{N,*}\mathbb{Q}_{\overline{(X,G)}_N})
\]
and the isomorphism $\phi_{f_N}p_{N,*}\mathbb{Q}_{\overline{(X,G)}_N}\cong p_{N,*}\phi_{f_Np_N}\mathbb{Q}_{\overline{(Y,G)}_N}$ of Corollary \ref{ppoutside}, using that $p_{N,*}\cong p_{N,!}$ since each $p_N$ is proper.
\smallbreak

\begin{proposition}
\label{mixingprop}
Let
\begin{equation}
\label{cornersquare}
\xymatrix{
X\ar[r]^-{g}\ar[d]^j& X'\ar[d]^{j'}
\\
Y\ar[r]^{h}&Y'
}
\end{equation}
be a Cartesian diagram of complex connected algebraic manifolds in which $g$ and $h$ are either proper, or affine fibrations.  Assume there is a natural isomorphism $\upsilon\colon g^*j'^!\cong j^!h^*$ such that the diagrams
\begin{equation}
\label{fcomp}
\xymatrix{
j^*h^*\state{\mathrm{reldim}(j)}\ar[r]\ar[dr]&j^!h^*
\\& g^*j'^!\ar[u]^{\upsilon}
}
\end{equation}
and
\begin{equation}
\label{verdCompat}
\xymatrix{
h^*j'_!j'^!\ar[d]\ar[r]&j_!j^!h^*\ar[dl]\\
h^*
}
\end{equation}
commute.  The existence of $\upsilon$ implies that $\mathrm{reldim}(j)=\mathrm{reldim}(j')$; the diagonal morphism in (\ref{fcomp}) is the resulting natural isomorphism.  If $g$ and $h$ are affine fibrations, we assume invertibility of the corresponding Euler classes.  Let $f$ be a holomorphic function on $Y'$.  Let $Y'^{\Sp}\subset Y'$ be a subvariety, and define $Y^{\Sp}:=h^{-1}Y'^{\Sp}$, $X'^{\Sp}:=j'^{-1}Y'^{\Sp}$ and $X^{\Sp}:=j^{-1}Y^{\Sp}$.  Then either $h$ and $g$ are proper and the following diagram commutes
\begin{equation}
\label{descom}
\xymatrix{
\Ho_{c}(X^{\Sp},\varphi_{fj'g})^{\vee}\ar[r]^{g_*}&\Ho_c(X'^{\Sp},\varphi_{fj'})^{\vee}
\\
\Ho_c(Y^{\Sp},\varphi_{fh})^{\vee}\ar[u]^{j^*}\ar[r]^{h_*}&\Ho_c(Y'^{\Sp},\varphi_f)^{\vee}\ar[u]^{j'^*},
}
\end{equation}
or $h$ and $g$ are affine fibrations and the following diagram commutes
\begin{equation}
\label{descom2}
\xymatrix{
\Ho_{c}(X^{\Sp},\varphi_{fj'g})^{\vee}\ar[r]^-{g_*}&\Ho_c(X'^{\Sp},\varphi_{fj'})^{\vee}[\eu(g)^{-1}]
\\
\Ho_c(Y^{\Sp},\varphi_{fh})^{\vee}\ar[u]^{j^*}\ar[r]^-{h_*}&\Ho_c(Y'^{\Sp},\varphi_f)^{\vee}[\eu(h)^{-1}]\ar[u]^{j'^*}.
}
\end{equation}

\end{proposition}
\begin{proof}
If $h$ is an affine fibration, then since (\ref{cornersquare}) is Cartesian, we have $j'^*\eu(h)=\eu(g)$ where we abuse notation and denote by $j'^*$ also the usual map $j'^*\colon \Ho(Y',\mathbb{Q})\rightarrow\Ho(X',\mathbb{Q})$; we define $j'^*$ in the rightmost column by extension of scalars.  It follows that it is enough to prove that the diagram
\[
\xymatrix{
\Ho_{c}(X^{\Sp},\varphi_{fj'g})^{\vee}&\Ho_c(X'^{\Sp},\varphi_{fj'})^{\vee}\ar[l]^{g^*}
\\
\Ho_c(Y^{\Sp},\varphi_{fh})^{\vee}\ar[u]^{j^*}&\Ho_c(Y'^{\Sp},\varphi_f)^{\vee}\ar[u]^{j'^*}\ar[l]^{h^*}.
}
\]
commutes, which follows from the commutativity of (\ref{cornersquare}).  We next deal with the case in which $g$ and $h$ are proper.
Consider the following diagram
\[
\xymatrix{
j'_!j'^*\mathbb{Q}_{Y'}\state{d} \save[]+<1.6cm,-2cm>*\txt{\Large \textbf{A}}\restore \ar[r]\ar[dddd]&j'_!j'^!\mathbb{Q}_{Y'} \save[]+<3.7cm,-1.5cm>*\txt<20pt>{\Large \textbf{C}}
\restore \ar[rr]\ar[d]\ar[ddr]&&\mathbb{Q}_{Y'}\ar[dddd]
\\
&j'_!g_!g^*j'^!\mathbb{Q}_{Y'}\ar[dd]^{j'_!g_!\upsilon}\ar[rd]_-{h_!\varepsilon j'^!}\save[]+<.5cm,.5cm>*\txt{\Large \textbf{E}}\restore
\\
&&h_!h^*j'_!j'^!\mathbb{Q}_{Y'}\ar[rdd]
\\
&j'_!g_!j^!h^*\mathbb{Q}_{Y'}\ar[d]^{=}
\\
j'_!g_!g^*j'^*\mathbb{Q}_{Y'}\state{d}\ar[r]\ar[uuur]\save[]+<1.6cm,1.1cm>*\txt{\Large \textbf{B}}\restore 
&h_!j_!j^!h^*\mathbb{Q}_{Y'} \save[]+<2.3cm,2cm>*\txt{\Large \textbf{D}}\restore \ar[rr]&&h_!h^*\mathbb{Q}_{Y'}
}
\]
The map $\varepsilon$ is the base change isomorphism.  Applying $(Y'^{\Sp}\rightarrow Y')^*\phi_f$ to the square formed by the corners of this diagram, and taking dual compactly supported cohomology, we obtain diagram (\ref{descom}).   So it will be enough to prove that the labelled sub-diagrams commute.
\begin{itemize}
\item \textbf{A} commutes since it is obtained by applying the adjunction $\id\rightarrow g_!g^*$ to the morphism $j'^*\mathbb{Q}_{Y'}\state{d}\rightarrow j'^!\mathbb{Q}_{Y'}$ and applying $j'_!$ to the resulting commutative square.
\item Commutativity of \textbf{B} and \textbf{D} is given by applying $h_!$ to the commutativity conditions on $\upsilon$.
\item \textbf{C} commutes since it is obtained by applying the adjunction $\id\rightarrow h_!h^*$ to the morphism $j'_!j'^!\mathbb{Q}_{Y'}\rightarrow\mathbb{Q}_{Y'}$.
\item The commutativity of \textbf{E} is an exercise in category theory.  Following \cite[Prop.2.5.11]{KS90} we may rewrite the base change map $\overline{\varepsilon}^{-1}\colon  h^*j'_!\rightarrow j_!g^*$ as the composition
\[
h^*j'_!\rightarrow h^*j'_!g_*g^*=h^*h_*j_!g^*\rightarrow j_!g^*.
\]
We claim that in the following diagram of natural transformations, the top half commutes, and $\xi\xi'=\id$
\[
\xymatrix{
&j'_!\ar[ddl]\ar[dr]
\\
&&h_*h^*j'_!\ar[d]
\\
j'_!g_*g^*\ar[ddr]^=\ar[rr]^{\xi'}&&h_*h^*j'_!g_!g^*\ar[d]^=
\\
&&h_*h^*h_*j_!g^*\ar[dl]_{\xi}
\\
&h_*j_!g^*.
}
\]
The top half commutes since it is obtained by applying the adjunction $\id\rightarrow h_*h^*$ to the adjunction $j'_!\rightarrow j'_!g_*g^*$.  The morphism $\xi\xi'$ is obtained by postcomposing the composition $h_*\rightarrow h_*h^*h_*\rightarrow h_*=\id$ with $j_!g^*$.  In particular, the diagram of natural transformations 
\[
\xymatrix{
j'_!\ar[d]\ar[dr]\\
j'_!g_*g^*&h_*h^*j'_!\ar[l]
}
\]
commutes, and commutativity of \textbf{E} follows.

\end{itemize}

\end{proof}
\begin{corollary}
\label{ppCor}
Assume that we have a Cartesian diagram as in (\ref{cornersquare}), and that either
\begin{enumerate}
\item
$j$ and $j'$ are \'etale locally trivial fibrations with smooth fibres or
\item
$h$ and $j'$ are inclusions of transversally intersecting submanifolds of $Y'$.
\end{enumerate}
Then the diagram (\ref{descom}) commutes if $h$ is proper, and in the case that $h$ is an affine fibration, the diagram (\ref{descom2}) commutes.
\end{corollary}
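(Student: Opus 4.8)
The plan is to deduce the corollary from Proposition \ref{mixingprop} by exhibiting, in each of the two cases, the data that the proposition requires of the square (\ref{cornersquare}): a natural isomorphism $\upsilon:g^*j'^!\cong j^!h^*$ compatible with (\ref{fcomp}) and (\ref{verdCompat}), together with the fact that $j'_!$ commutes with the vanishing cycle functor. Throughout I retain the standing assumption that $g$ and $h$ are proper or affine fibrations (in case (2) this is automatic, since $h$ is then a closed embedding). A first observation, common to both cases: because (\ref{cornersquare}) is Cartesian, in case (1) the maps $j$ and $j'$ are both smooth of the same relative dimension, while in case (2) they are both closed embeddings of the same codimension (by transversality), so the equality $\mathrm{reldim}(j)=\mathrm{reldim}(j')$ demanded by Proposition \ref{mixingprop} is automatic, and I only need to produce $\upsilon$ and check the two coherence squares.

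In case (1) an \'etale locally trivial fibration is analytically locally a Cartesian product and $j'_!\mathbb{Q}_{X'}$ (likewise $j_!\mathbb{Q}_X$) has locally free cohomology, so Corollary \ref{ppoutside} applies to $j$ and to $j'$; in particular $j'_!$ commutes with vanishing cycles. Smoothness of $j$ and $j'$ supplies the purity isomorphisms $j^!\cong j^*\state{\mathrm{reldim}(j)}$ and $j'^!\cong j'^*\state{\mathrm{reldim}(j')}$, and I would \emph{define} $\upsilon$ as the composite of $g^*$ of the purity isomorphism for $j'$, the tautological identity $g^*j'^*=j^*h^*$ coming from commutativity of (\ref{cornersquare}), and the purity isomorphism for $j$; with this choice the diagonal of (\ref{fcomp}) is the purity map of $j$, so (\ref{fcomp}) commutes by construction. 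The genuine point is then (\ref{verdCompat}), which says this $\upsilon$ is compatible with the trace $j'_!j'^!\to\mathrm{id}$ --- equivalently, that the purity-built $\upsilon$ agrees with the smooth base change isomorphism. This is a standard coherence of the fundamental-class formalism for smooth morphisms, and I would cite it.

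In case (2) the map $j'$ is a closed embedding, so $j'_!=j'_*$ commutes with vanishing cycles by (\ref{pbh}); moreover $j$, $g$ and $h$ are closed embeddings too, being base changes of closed embeddings, and $X,X',Y,Y'$ are all smooth. Transversality of $h$ and $j'$ forces the natural base change map $g^*j'^!\to j^!h^*$ to be an isomorphism (transversal base change for the exceptional inverse image of a closed immersion of smooth manifolds), and this $\upsilon$ matches the purity twists, $\dim(Y')-\dim(X')=\dim(Y)-\dim(X)$; commutativity of (\ref{fcomp}) and (\ref{verdCompat}) is then the classical compatibility of the Gysin isomorphism and of the refined fundamental class with transversal base change. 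In both cases, once $\upsilon$ and these coherences are in place, the corollary is immediate from Proposition \ref{mixingprop}. I expect the only real obstacle to be writing (\ref{fcomp}) and (\ref{verdCompat}) down precisely enough to invoke the cited coherences and, above all, pinning down all the normalizing $\state{-}$ twists so that the conventions agree with those of the umkehr maps of Section \ref{umkehr_sec}.
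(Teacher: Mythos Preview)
Your proposal is correct and follows exactly the approach the paper takes: reduce to Proposition \ref{mixingprop} by exhibiting the required isomorphism $\upsilon$ and checking the coherences (\ref{fcomp}) and (\ref{verdCompat}) in each of the two geometric situations. The paper's own proof is the single sentence ``It is easy to check that we have the required isomorphism $\upsilon$ and the result follows,'' so you have simply spelled out the verification that the author leaves to the reader.
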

\begin{proof}
It is easy to check that we have the required isomorphism $\upsilon$ and the result follows.
\end{proof}
%In extending to the equivariant case we have to take care of $G$ actions on the relative Verdier dual $p^!\mathbb{Q}_Y$.  In (\ref{noneqcase}) we have used the trivialization of the dualizing sheaves to identify 
%\[
%\tau^!\mathbb{Q}\otimes p_*p^!\tau^!\mathbb{Q}^{-1}\cong\mathbb{Q}\state{{\dim(Y)-\dim(X)}},
%\]
%where $\tau$ is the structure morphism for $Y$.  However the trivializations are not in general liftable to the equivariant category, and we have in the $G$-equivariant case
%\[
%\tau^!\mathbb{Q}\otimes p_*p^!\tau^!\mathbb{Q}^{-1}\cong\orien(p)\state{{\dim(Y)-\dim(X)}}
%\]
%where $\orien(p)$ is the sheaf of relative orientations of $p$.
\begin{proposition}
\label{maneq}
Let $g\colon X\rightarrow Y$ be a $G$-equivariant map of manifolds. Then
\begin{itemize}
\item
If $g$ is a closed embedding, the map $g^*g_*\colon \Ho_{c,G}(X,\varphi_{fg})^{\vee}\rightarrow\Ho_{c,G}(X,\varphi_{fg})^{\vee}$ is given by multiplication by the Euler class of the normal bundle $\No_{X/Y}$
\item
If $g$ is an affine fibration, and the Euler class $\eu(g)$ is a non zero divisor in $\Ho_{G}(X)$, then the induced map
\[
g^*g_*\colon \Ho_{c,G}(X,\varphi_f)^{\vee}\rightarrow \Ho_{c,G}(X,\varphi_f)^{\vee}[\eu(g)^{-1}]
\]
is given by division by $\eu(g)$.
%\item
%If $g$ is a proper map, $\eu(g)\cdot :\Ho_{c,G}(X,\phi_f)^*\rightarrow \Ho_{c,G}(X,\phi_f)^*$ is given by multiplication by $\eu(g)\in\Ho(X,\mathbb{Q})$.

\end{itemize}
\end{proposition}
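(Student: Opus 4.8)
The plan is to treat the two cases of Proposition \ref{maneq} separately. The affine fibration case is essentially a matter of unwinding the definition of the pushforward, while the closed embedding case is the sheaf-theoretic form of the classical self-intersection formula, transported through the vanishing cycle functor.

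For the affine fibration $g$, recall from Section \ref{umkehr_sec} that the pushforward was \emph{defined} by $g_{*}=(g^{*})^{-1}\circ(\eu(g)^{-1}\,\cdot)$, where $\eu(g)^{-1}\,\cdot$ denotes multiplication by the inverse Euler class through the extended module structure (\ref{extendedAction}). It therefore suffices to observe that the isomorphism $g^{*}$ of (\ref{pbaff}) is a homomorphism of modules over $\Ho_{G}(X)\cong\Ho_{G}(Y)$, the identification being the canonical one attached to the affine fibration; granting this we get $g^{*}g_{*}=\eu(g)^{-1}\,\cdot$, i.e.\ division by $\eu(g)$. The module compatibility of $g^{*}$ is verified by the recipe of Proposition \ref{mixingprop}: the pullback (\ref{pbaff}) and the module actions of Section \ref{moduleStruc} are all obtained by applying $\phi_{f_{N}}$ to, and then taking dual compactly supported cohomology of, natural maps of constant sheaves attached to Cartesian diagrams, so the required commutation reduces to the commutativity of an evident diagram of such.

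For the closed embedding $g$, I would argue at each finite approximation level $N$, where $g_{N}\colon\overline{(X,G)}_{N}\hookrightarrow\overline{(Y,G)}_{N}$ is a closed embedding of smooth manifolds of codimension $c=\dim Y-\dim X$, and then pass to the colimit over $N$. Let $u_{N}\colon\mathbb{Q}_{\overline{(Y,G)}_{N}}\to g_{N,*}\mathbb{Q}_{\overline{(X,G)}_{N}}$ be the unit map out of which $g_{*}$ is built, and $u_{N}^{\vee}$ its Verdier dual, which under the canonical identifications of Section \ref{sheaves_section} is the Gysin map $g_{N,*}\mathbb{Q}_{\overline{(X,G)}_{N}}\to\mathbb{Q}_{\overline{(Y,G)}_{N}}\state{c}$ out of which $g^{*}$ is built. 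Unwinding the definitions of the two umkehr maps and using the isomorphism $\phi_{f_{N}}g_{N,*}\cong g_{N,*}\phi_{f_{N}g_{N}}$ of (\ref{pbh}), one finds that $g^{*}g_{*}$ is, at level $N$, the dual of the endomorphism of $\Ho_{c}(\overline{(X,G)}_{N},\phi_{f_{N}g_{N}})$ induced by applying $\phi_{f_{N}}$ to the composite self-map $u_{N}\circ u_{N}^{\vee}$ of $g_{N,*}\mathbb{Q}_{\overline{(X,G)}_{N}}$ (up to the twist $\state{c}$). Now the classical self-intersection formula for the constant sheaf identifies $u_{N}\circ u_{N}^{\vee}$ with multiplication by the Euler class of $\No_{\overline{(X,G)}_{N}/\overline{(Y,G)}_{N}}$, a genuine vector bundle, namely the pullback of the $G$-equivariant normal bundle $\No_{X/Y}$; this may be quoted, or reduced by naturality to the universal self-intersection of the zero section of a vector bundle, for instance via specialisation to the normal cone. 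Since $\phi_{f_{N}}$ is $\mathbb{Q}$-linear and carries the sheaf-level operation ``cup with $\eu(\No)$'' to the corresponding operation on vanishing cycle cohomology, applying $\phi_{f_{N}}$, taking duals, and passing to the colimit shows that $g^{*}g_{*}$ on $\Ho_{c,G}(X,\phi_{fg})^{*}$ is multiplication by $\eu(\No_{X/Y})\in\Ho^{2c}_{G}(X)$ via (\ref{extendedAction}).

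The classical self-intersection formula is the only genuinely non-formal ingredient, and it can be cited. The main obstacle, and the bulk of the real work, is the bookkeeping: checking that the sheaf-theoretic ``multiplication by $\eu(\No)$'' induces, after applying $\phi_{f_{N}}$, taking duals, and passing to $\varinjlim$, exactly the abstractly defined $\Ho_{G}(\pt)$-module action of Section \ref{moduleStruc} --- that is, matching the geometric self-intersection with the algebraically defined module structure --- all while keeping the Tate twists and the Verdier-duality isomorphisms (\ref{CanDu}), (\ref{CanDu2}) consistent.
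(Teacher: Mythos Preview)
The paper states Proposition \ref{maneq} without proof; it appears at the end of Section \ref{umkehr_sec} and the text passes immediately to Section \ref{coha_section}. Your proposal supplies exactly the argument one would expect, and it is correct.

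A couple of remarks on the fit with the paper's conventions. For the affine fibration case, you are right that this is tautological from the definition $g_{*}:=(g^{*})^{-1}\circ(\cdot\,\eu(g)^{-1})$ given in Section \ref{umkehr_sec}; the only thing to check is the $\Ho_{G}(X)$-linearity of $g^{*}$, and you correctly reduce this to the commutativity of a diagram of constant sheaves before applying $\phi$. For the closed embedding case, note that the paper's own definition of $\eu(\pi)$ in (\ref{eudef}) is precisely the composite $z^{*}z_{*}$ for the zero section $z$, and the sentence ``The notation is justified by the first part of Proposition \ref{maneq}'' indicates that the paper is taking the self-intersection formula as known. Your sketch via the sheaf-level identity $u_{N}\circ u_{N}^{\vee}=\cdot\,\eu(\No)$, followed by applying $\phi_{f_{N}}$ (which commutes with $g_{N,*}$ by (\ref{pbh}) and carries cup products with locally constant classes through by Proposition \ref{tensout}), is the standard route and matches what the paper implicitly assumes.
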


\section{Cohomological Hall algebra}
\label{coha_section}
\subsection{Spaces of quiver representations} \label{Hdef}
Let $Q$ be a quiver, as in the introduction.  We denote by $Q_0$ the vertices of $Q$, and by $Q_1$ the arrows, and denote by $s,t\colon Q_1\rightarrow Q_0$ the two maps taking an arrow to its source and target.  Let $\mathcal{C}$ be a symmetric tensor category with monoidal product $\boxtimes$.  We assume also that $\D{\mathcal{C}}$ is equipped with an invertible shift functor $\state{1/2}$ and natural isomorphisms of bifunctors $\state{1/2}\circ \boxtimes\cong \boxtimes\circ(\id\otimes \state{1/2})\cong\boxtimes\circ(\state{1/2}\otimes \id)$.  In all our examples this is achieved in a canonical way by first defining $\mathbf{1}_{\mathcal{C}}\state{1/2}$ and then defining $M\state{1/2}:=M\boxtimes \mathbf{1}_{\mathcal{C}}\state{1/2}$.  We define $\mathcal{C}_{\mathbb{Z}^{Q_0}}$ to be the category with objects formal direct sums of objects in $\mathcal{C}$ indexed by $\mathbb{Z}^{Q_0}$, we write such objects as $\bigoplus_{\gamma\in\mathbb{Z}^{Q_0}}\mathcal{L}_{\gamma}$.  If $\mathcal{L}=\bigoplus_{\gamma\in\mathbb{Z}^{Q_0}}\mathcal{L}_{\gamma}$ and $\mathcal{L}'=\bigoplus_{\gamma\in\mathbb{Z}^{Q_0}}\mathcal{L}'_{\gamma}$ are two objects of $\mathcal{C}_{\mathbb{Z}^{Q_0}}$, we define 
\[
\Hom_{\mathcal{C}_{\mathbb{Z}^{Q_0}}}(\mathcal{L},\mathcal{L}'):=\prod_{\gamma\in\mathbb{Z}^{Q_0}}\Hom_{\mathcal{C}}(\mathcal{L}_{\gamma},\mathcal{L}'_{\gamma}).  
\]
We define the full subcategory $\mathcal{C}_Q\subset \mathcal{C}_{\mathbb{Z}^{Q_0}}$ containing those objects $\mathcal{L}$ satisfying $\mathcal{L}_{\gamma}=0$ unless $\gamma\in\mathbb{N}^{Q_0}=(\mathbb{Z}_{\geq 0})^{Q_0}$.  We make $\mathcal{C}_Q$ a tensor category by setting
\[
\bigoplus_{\gamma_1\in\mathbb{N}^{N_0}}\mathcal{L}_{\gamma_1}\boxtimes_+^{\tw}\bigoplus_{\gamma_2\in\mathbb{N}^{Q_0}}\mathcal{L}'_{\gamma_2}:=\bigoplus_{\gamma\in\mathbb{N}^{Q_0}}\left(\bigoplus_{\gamma_1+\gamma_2=\gamma} \mathcal{L}_{\gamma_1}\otimes\mathcal{L}'_{\gamma_2}\right)\state{\chi(\gamma_1,\gamma_2)/2-\chi(\gamma_2,\gamma_1)/2}.
\]
This monoidal structure is not isomorphic to the symmetric monoidal structure defined by
\begin{equation}
\label{symmMon}
\bigoplus_{\gamma_1\in\mathbb{N}^{Q_0}}\mathcal{L}_{\gamma_1}\boxtimes_+\bigoplus_{\gamma_2\in\mathbb{N}^{Q_0}}\mathcal{L}'_{\gamma_2}:=\bigoplus_{\gamma\in\mathbb{N}^{Q_0}}\left(\bigoplus_{\gamma_1+\gamma_2=\gamma} \mathcal{L}_{\gamma_1}\otimes\mathcal{L}'_{\gamma_2}\right),
\end{equation}
indeed the monoidal structure $\boxtimes_{+}^{\tw}$ can not in general even be made into a braided monoidal structure, while the monoidal structure (\ref{symmMon}) extends to a symmetric monoidal structure.
\smallbreak
Let $W\in \Cp Q/[\Cp Q,\Cp Q]$ be a potential for $Q$.  We call a pair $(Q,W)$ a QP from now on.  If a quiver $Q$ is fixed, we will abbreviate $\RS_{Q,\gamma}$ as defined in Equation (\ref{RSdef}) to $\RS_{\gamma}$.  On each $\RS_{\gamma}$ there is a function $\tr(W)_{\gamma}$, which is invariant with respect to the action of $\Gl_{\gamma}$.  

If $\gamma_1,\gamma_2\in\mathbb{N}^{Q_0}$ is a pair of dimension vectors, we denote by $\RS_{\gamma_1,\gamma_2}$ the affine space
\[
\bigoplus_{a\in Q_1}\big\{f_a\in\Hom\big(\Cp^{\gamma_1(s(a))}\oplus\Cp^{\gamma_2(s(a))},\Cp^{\gamma_1(t(a))}\oplus\Cp^{\gamma_2(t(a))}\big)|f_a(\Cp^{\gamma_1(s(a))})\subset\Cp^{\gamma_1(t(a))}\big\}.
\]
If $\gamma_1+\gamma_2=\gamma$, there is a natural inclusion $\eta\colon \RS_{\gamma_1,\gamma_2}\hookrightarrow \RS_{\gamma}$.
\smallbreak
Define $\Gl_{\gamma_1,\gamma_2}:=\prod_{i\in Q_0}\GL_{\Cp}(\gamma_1(i),\gamma_2(i))$, where for $m,n\in\mathbb{N}$, $\GL_{\Cp}(m,n)$ is the subgroup of $\GL_{\Cp}(m+n)$ preserving the flag $0\subset \Cp^m\subset\Cp^{m+n}$.
\smallbreak
We define in the same way the function $\tr(W)_{\gamma_1,\gamma_2}$ on $\RS_{\gamma_1,\gamma_2}$, which is again invariant with respect to the $\Gl_{\gamma_1,\gamma_2}$-action.  There is a natural projection $p\colon \RS_{\gamma_1,\gamma_2}\rightarrow \RS_{\gamma_1}\times \RS_{\gamma_2}$ and an inclusion $\eta\colon \RS_{\gamma_1,\gamma_2}\rightarrow \RS_{\gamma_1+\gamma_2}$ and equalities
\begin{align*}
\tr(W)_{\gamma_1,\gamma_2}=&p^*(\tr(W)_{\gamma_1}\boxplus\tr(W)_{\gamma_2})\\=&\eta^*\tr(W)_{\gamma_1+\gamma_2}.
\end{align*}
In what follows we will use the symbol $\Sp$ to denote a property of $Q$-representations, stable under isomorphisms of complex $Q$-representations.  Since complex points of $\RS_{\gamma}$ represent $Q$-representations, we can define subsets $\RS^{\Sp}_{\gamma}(\Cp)\subset \RS_{\gamma}(\Cp)$ as those subsets of representations satisfying property $\Sp$.  We will always pick $\Sp$ so that this inclusion is the inclusion of complex points induced by an inclusion of algebraic varieties.  Furthermore, since we assume that $\Sp$ is stable under isomorphisms, it will follow that $\RS_{\gamma}^{\Sp}\subset \RS_{\gamma}$ is an inclusion of $\Gl_{\gamma}$-equivariant varieties.
\begin{assumption}
\label{closed_under}
The subspaces $\RS^{\Sp}_{\gamma}$ are required to satisfy the property that there is an equality for all pairs $\gamma_1,\gamma_2\in\mathbb{N}^{Q_0}$
\begin{align}
&\RS_{\gamma_1,\gamma_2}^{\Sp,\ext}:=p^{-1}(\RS_{\gamma_1}^{\Sp}\times \RS_{\gamma_2}^{\Sp})= \eta^{-1} (\RS_{\gamma_1+\gamma_2}^{\Sp})=:\RS^{\Sp}_{\gamma_1,\gamma_2}.
\end{align}
\end{assumption}

We will also assume that $\RS_0= \RS_{0}^{\Sp}$, which is equivalent to not all of the $\RS^{\Sp}_{\gamma}$ being empty.  Assumption \ref{closed_under} may be re-expressed as the condition that the full subcategory of representations having the property $\Sp$ is a Serre subcategory.
\smallbreak

\begin{remark}
For convenience we assume that $\RS_{\gamma}^{\Sp}\cap \crit(\tr(W)_{\gamma})\subset \tr(W)_{\gamma}^{-1}(0)$ for all $\gamma\in\mathbb{N}^{Q_0}$.  This last requirement can be relaxed at the expense of some extra minor complications, but none of the applications we are interested in will require this.
\end{remark}
\begin{example}
Let $\zeta\in\mathbb{H}_+^{Q_0}:=\{re^{i\theta}|r\in\mathbb{R}_{>0},\theta\in(0,\pi]\}^{Q_0}$.  Such a $\zeta$ provides a Bridgeland stability condition for the category of finite-dimensional $Q$-representations.  The \textit{slope} of a $Q$-representation $\rho$ is defined to be $\mu(\rho):=\arg\left(\dim(\rho)\cdot \zeta\right)$.  A representation $\rho$ of $Q$ is called $\zeta$-semistable if for all nonzero subrepresentations $\rho'\subset \rho$, there is an inequality $\mu(\rho')\leq \mu(\rho)$, and $\rho$ is called $\zeta$-stable if this inequality is strict for all proper $\rho'\subset\rho$.  Fix $\theta\in (0,\pi]$.  One can easily check that the condition on a $Q$-representation $\rho$ of being $\zeta$-semistable and satisfying $\mu(\rho)=\theta$ satisfies Assumption \ref{closed_under}.  The property of being $\zeta$-stable with slope a fixed $\theta$ does not satisfy Assumption \ref{closed_under} (stability is not preserved under taking direct sums, for instance).  We may define the (underlying object of the) cohomological Hall algebra of $\zeta$-semistable $Q$-representations of slope $\theta$ with potential $W$:
\[
\mathcal{H}^{\zeta-ss}_{Q,W,\theta}:=\bigoplus_{\gamma\in\mathbb{N}^{Q_0}|\arg(\zeta\cdot\gamma)=\theta}\Ho_{c,\Gl_{\gamma}}(\RS^{\zeta-ss}_{\gamma},\varphi_{\tr(W)_{\gamma}})\state{{-\chi(\gamma,\gamma)/2}}^{\vee}.
\]
where the twist is as defined in (\ref{statedef}).  Here, since $\RS^{\zeta-ss}_{\gamma}$ is an open subscheme of $\RS_{\gamma}$, it does not matter whether we take vanishing cycles on $\RS^{\zeta-ss}_{\gamma}$ of the restriction of the function $\tr(W)_{\gamma}$, or instead take the restriction to $\RS^{\zeta-ss}_{\gamma}$ of the vanishing cycles complex $\varphi_{\tr(W)_{\gamma}}$.  However, for general $\Sp$ these two objects are \textit{not} the same, and we will always consider the latter.
\end{example}

For general $\Sp$ we define 
\begin{equation}
\label{uldef}
\mathcal{H}^{\Sp}_{Q,W,\gamma}:=\Ho_{c,\Gl_{\gamma}}(\RS^{\Sp}_{\gamma},\varphi_{\tr(W)_{\gamma}})^{\vee}\state{{{\chi(\gamma,\gamma)/2}}},
\end{equation}
and define
\begin{equation}
\label{CoHADef}
\mathcal{H}^{\Sp}_{Q,W}:=\bigoplus_{\gamma\in\mathbb{N}^{Q_0}}\mathcal{H}^{\Sp}_{Q,W,\gamma}.
\end{equation}
In the case in which all $\RS^{\Sp}_{\gamma}=\RS_{\gamma}$ we denote $\mathcal{H}^{\Sp}_{Q,W}$ by $\mathcal{H}_{Q,W}$.

We consider (\ref{CoHADef}) as an object in $\D{\mathcal{C}_Q}$, where $\mathcal{C}$ can be, for example, each of the following choices of symmetric tensor categories.
\begin{itemize}
\item
$\mathcal{C}$ can be taken to be $\Vect$, the category of finite-dimensional vector spaces.  In this case we lose all Hodge theoretic information regarding vanishing cycles, and only consider the underlying vector spaces in (\ref{uldef}).  The shift functor $\{1/2\}$ is then the usual cohomological shift functor $[1]$.
\item
$\mathcal{C}$ can be taken to be $\MHS$.  In this case we remember much of the mixed Hodge structure underlying (\ref{uldef}), but some care has to be taken when we try to turn (\ref{uldef}) into an algebra, as the Thom--Sebastiani isomorphism, which forms part of the definition of the multiplication map, does not necessarily respect mixed Hodge structures.  For a class of examples in which the Thom--Sebastiani isomorphism does respect the underlying mixed Hodge structures, see the appendix, and especially Proposition \ref{TScomm}.  As a further complication, the shift $\state{{-}}$ has no satisfactory definition in $\D{\MHS}$; this category has to be slightly extended as in \cite[Sec.3.4]{COHA}.
\item
$\mathcal{C}$ can be taken to be $\MMHS$.  In this case the shift functor $\state{{-1/2}}$ is given by tensoring with $\Ho_c(\mathbb{A}^1,\varphi_{x^2})$.
\item
Define $\mathbb{Z}_{\wt}:=\mathbb{Z}$.  Then $\mathcal{C}$ can be taken to be $\Vect_{\mathbb{Z}_{\wt}}$, the category of finite-dimensional graded vector spaces, but in this case we should replace $\mathcal{H}_{Q,W}^{\Sp}$ by its associated graded object with respect to the weight filtration on the underlying monodromic mixed Hodge structure.  The shift functor $\state{1/2}$ is given by tensoring with $\mathbb{Q}$, given bidegree $(-1,-1)$ with respect to the cohomological and weight grading.
\end{itemize}

\begin{remark}
\label{JacRem}
As an aside we explain the representation-theoretic origin of $\Ho_{c,\Gl_{\gamma}}(\RS^{\Sp}_{\gamma},\varphi_{\tr(W)_{\gamma}})$.  Given a quiver $Q$, let $S$ be the set of cyclic paths in $Q$.  For a potential $W=\sum_{m\in S} \lambda_m m\in \Cp Q/[\Cp Q,\Cp Q]$, where each of the $\lambda_m$ is a scalar, and all but finitely many of the $\lambda_m$ are zero, the noncommutative derivative $\partial W/\partial a$ is defined by setting
\[
\partial W/\partial a:=\sum_{m\in S}\lambda_m\sum_{m=uav|u,v\text{ paths in }Q}vu.
\]
The Jacobi algebra for the QP $(Q,W)$ is defined by
\[
\Jac(Q,W):=\Cp Q/\langle \partial W/\partial a|a\in Q_1\rangle.
\]
Representations of the Jacobi algebra form a Zariski closed subscheme $\mathcal{V}_{\gamma}$ of $\RS_{\gamma}$ in the natural way, and we have the equality of subschemes
\[
\mathcal{V}_{\gamma}=\crit(\tr(W)_{\gamma}).
\]
In addition, the vanishing cycles complex $\varphi_{\tr(W)_{\gamma}}$ is supported on the critical locus of $\tr(W)_{\gamma}$.  So the compactly supported equivariant cohomology $\Ho_{c,\Gl_{\gamma}}(\RS^{\Sp}_{\gamma},\varphi_{\tr(W)_{\gamma}})$ can be thought of as the compactly supported equivariant cohomology of the stack of $\gamma$-dimensional representations of $\Jac(Q,W)$ satisfying property $\Sp$, with coefficients in the vanishing cycles complex for $\tr(W)_{\gamma}$.
\end{remark}

\subsection{Multiplication for $\mathcal{H}^{\Sp}_{Q,W}$}  \label{MOsec}In this subsection we recall the associative product of \cite[Sec.7.6]{COHA}
\[
m\colon \mathcal{H}^{\Sp}_{Q,W,\gamma}\boxtimes^{\tw}_+\mathcal{H}^{\Sp}_{Q,W,\gamma}\rightarrow \mathcal{H}^{\Sp}_{Q,W,\gamma}.
\]
As above, all functors will be considered as derived functors.

Consider the affine fibrations 
\begin{equation}
\label{affpf}
p_{\gamma_1,\gamma_2,N}\colon \overline{(\RS_{\gamma_1,\gamma_2},\Gl_{\gamma_1}\times \Gl_{\gamma_2})}_N\rightarrow \overline{(\RS_{\gamma_1}\times \RS_{\gamma_2},\Gl_{\gamma_1}\times \Gl_{\gamma_2})}_N.
\end{equation}
These induce isomorphisms
\[
\phi_{(\tr(W)_{\gamma_1}\boxplus\tr(W)_{\gamma_2})_N}\left(\mathbb{Q}_{\overline{(\RS_{\gamma_1}\times \RS_{\gamma_2},\Gl_{\gamma_1}\times \Gl_{\gamma_2})}_N}\rightarrow p_{\gamma_1,\gamma_2,N,*}\mathbb{Q}_{\overline{(\RS_{\gamma_1,\gamma_2},\Gl_{\gamma_1}\times \Gl_{\gamma_2})}_N}\right),
\]
and via Verdier duality, isomorphisms
\begin{equation}
\phi_{(\tr(W)_{\gamma_1}\boxplus\tr(W)_{\gamma_2})_N}\left(p_{\gamma_1,\gamma_2,N,!}D\mathbb{Q}_{\overline{(\RS_{\gamma_1,\gamma_2},\Gl_{\gamma_1}\times \Gl_{\gamma_2})}_N}\rightarrow D\mathbb{Q}_{\overline{(\RS_{\gamma_1}\times \RS_{\gamma_2},\Gl_{\gamma_1}\times \Gl_{\gamma_2})}_N}\right),
\end{equation}
or
\begin{equation}
\phi_{(\tr(W)_{\gamma_1}\boxplus\tr(W)_{\gamma_2})_N}\left(p_{\gamma_1,\gamma_2,N,!}\mathbb{Q}_{\overline{(\RS_{\gamma_1,\gamma_2},\Gl_{\gamma_1}\times \Gl_{\gamma_2})}_N}\rightarrow \mathbb{Q}_{\overline{(\RS_{\gamma_1}\times \RS_{\gamma_2},\Gl_{\gamma_1}\times \Gl_{\gamma_2})}_N}\state{{-l_1(\gamma_2,\gamma_1)}}\right),
\end{equation}
using (\ref{CanDu}).  The function $l_1$ is as defined in equations (\ref{ldefs}).
By Corollary \ref{ppoutside} we obtain isomorphisms
\begin{align}
&p_{\gamma_1,\gamma_2,N,!}\phi_{\tr(W)_{\gamma_1,\gamma_2,N}}(\mathbb{Q}_{\overline{(\RS_{\gamma_1,\gamma_2},\Gl_{\gamma_1}\times \Gl_{\gamma_2})}_N})\rightarrow \\&\nonumber\phi_{(\tr(W)_{\gamma_1}\boxplus\tr(W)_{\gamma_2})_N}(\mathbb{Q}_{\overline{(\RS_{\gamma_1}\times \RS_{\gamma_2},\Gl_{\gamma_1}\times \Gl_{\gamma_2})}_N}\state{{-l_1(\gamma_2,\gamma_1)}}).
\end{align}
Applying the shift functor, passing to the limit and taking compactly supported cohomology, we arrive at an isomorphism
\begin{align}
(\alpha)^*\colon &\Ho_{c,\Gl_{\gamma_1}\times \Gl_{\gamma_2}}(\RS_{\gamma_1,\gamma_2}^{\Sp},\varphi_{\tr(W)_{\gamma_1,\gamma_2}})\state{{-\chi(\gamma,\gamma)/2+l_0(\gamma_2,\gamma_1)}}\rightarrow \\ \nonumber &\Ho_{c,\Gl_{\gamma_1}\times \Gl_{\gamma_2}}(\RS_{\gamma_1}^{\Sp}\times \RS_{\gamma_2}^{\Sp},\varphi_{\tr(W)_{\gamma_1}\boxplus\tr(W)_{\gamma_2}})\state{{-\chi(\gamma,\gamma)/2+\chi(\gamma_2,\gamma_1)}},
\end{align}
though we work with the dual map
\begin{align}
\alpha\colon &\Ho_{c,\Gl_{\gamma_1}\times \Gl_{\gamma_2}}(\RS_{\gamma_1}^{\Sp}\times \RS_{\gamma_2}^{\Sp},\varphi_{\tr(W)_{\gamma_1}\boxplus\tr(W)_{\gamma_2}})^{\vee}\state{{\chi(\gamma,\gamma)/2-\chi(\gamma_2,\gamma_1)}}\rightarrow \\ \nonumber &\Ho_{c,\Gl_{\gamma_1}\times \Gl_{\gamma_2}}(\RS_{\gamma_1,\gamma_2}^{\Sp},\varphi_{\tr(W)_{\gamma_1,\gamma_2}})^{\vee}\state{{\chi(\gamma,\gamma)/2-l_0(\gamma_2,\gamma_1)}},
\end{align}
as $\mathcal{H}_{Q,W}^{\Sp}$ is defined in terms of dual spaces of compactly supported equivariant cohomology.  In the terminology of Section \ref{umkehr_sec}, this map is the pullback map associated to the $\Gl_{\gamma_1}\times \Gl_{\gamma_2}$-equivariant map $\RS_{\gamma_1,\gamma_2}\rightarrow \RS_{\gamma_1}\times \RS_{\gamma_2}$.
\smallbreak
By the definitions of the functions $\chi,l_0,l_1$ in Equation (\ref{ldefs}) we have the equalities
\begin{align*}
\chi(\gamma,\gamma)/2-l_0(\gamma_2,\gamma_1)+l_1(\gamma_2,\gamma_1)=&\chi(\gamma,\gamma)/2-\chi(\gamma_2,\gamma_1)\\
=&\chi(\gamma_1,\gamma_1)/2+\chi(\gamma_2,\gamma_2)/2+\chi(\gamma_1,\gamma_2)/2-\chi(\gamma_2,\gamma_1)/2.
\end{align*}
This final quantity is the same as the twist in the definition of $\mathcal{H}^{\Sp}_{Q,W,\gamma_1}\boxtimes_{+}^{\tw}\mathcal{H}^{\Sp}_{Q,W,\gamma_2}$.
\smallbreak

Similarly, the affine fibrations 
\[
q_{\gamma_1,\gamma_2,N}\colon \overline{(\RS_{\gamma_1,\gamma_2},\Gl_{\gamma_1}\times \Gl_{\gamma_2})}_N\rightarrow \overline{(\RS_{\gamma_1,\gamma_2}, \Gl_{\gamma_1,\gamma_2})}_N
\]
induce maps
\begin{equation}
\label{af2}
q_{\gamma_1,\gamma_2,N,!}\phi_{\tr(W)_{\gamma_1,\gamma_2,N}}(\mathbb{Q}_{\overline{(\RS_{\gamma_1,\gamma_2},\Gl_{\gamma_1}\times \Gl_{\gamma_2})}_N})\rightarrow \phi_{\tr(W)_{\gamma_1,\gamma_2,N}}(\mathbb{Q}_{\overline{(\RS_{\gamma_1,\gamma_2},\Gl_{\gamma_1,\gamma_2})}_N})\state{{-l_0(\gamma_1,\gamma_2)}}.
\end{equation}
%where 
%\begin{equation}
%\label{lddef}
%l'=\sum_{a\in Q_1}\gamma_1(s(a))\gamma_2(t(a)).  
%\end{equation}
Taking shifted compactly supported cohomology and taking duals we obtain pullback isomorphisms
\begin{align}
\label{betaMap}
\beta\colon &\Ho_{c, \Gl_{\gamma_1,\gamma_2}}(\RS_{\gamma_1,\gamma_2}^{\Sp},\varphi_{\tr(W)_{\gamma_1,\gamma_2}})^{\vee}\state{{\chi(\gamma,\gamma)/2}}\rightarrow \\&\Ho_{c, \Gl_{\gamma_1}\times \Gl_{\gamma_2}}(\RS_{\gamma_1,\gamma_2}^{\Sp},\varphi_{\tr(W)_{\gamma_1,\gamma_2}})^{\vee}\state{{\chi(\gamma,\gamma)/2-l_0(\gamma_2,\gamma_1)}}.\nonumber
\end{align}
Consider the proper maps
\begin{equation}
\label{prdef}
pr_{\gamma_1,\gamma_2,N}\colon \overline{(\RS_{\gamma},\Gl_{\gamma_1,\gamma_2}})_N\rightarrow \overline{(\RS_{\gamma},\Gl_{\gamma}})_N.  
\end{equation}
The natural transformation of functors
\[
\phi_{\tr(W)_{\gamma,N}}\rightarrow pr_{\gamma_1,\gamma_2,N,!}\phi_{\tr(W)_{\gamma,N}}pr_{\gamma_1,\gamma_2,N}^*
\]
applied to $\mathbb{Q}_{\overline{(\RS_{\gamma},\Gl_{\gamma})}_N}$ and restricted to $\overline{(\RS^{\Sp}_{\gamma},\Gl_{\gamma}})_N$ induces the map
\begin{equation}
(\delta)^*\colon \Ho_{c,\Gl_{\gamma}}(\RS^{\Sp}_{\gamma},\varphi_{\tr(W)_{\gamma}})\rightarrow \Ho_{c,\Gl_{\gamma_1,\gamma_2}}(\RS^{\Sp}_{\gamma},\varphi_{\tr(W)_{\gamma}})
\end{equation}
or, taking shifted duals
\begin{align}
\label{pfdef}
\delta\colon &\Ho_{c,\Gl_{\gamma_1,\gamma_2}}(\RS^{\Sp}_{\gamma},\varphi_{\tr(W)_{\gamma}})^{\vee}\state{{\chi(\gamma,\gamma)/2-\chi(\gamma_2,\gamma_1)}}\rightarrow\\& \Ho_{c,\Gl_{\gamma}}(\RS^{\Sp}_{\gamma},\varphi_{\tr(W)_{\gamma}})^{\vee}\state{{\chi(\gamma,\gamma)/2-\chi(\gamma_2,\gamma_1)}},\nonumber
\end{align}
the pushforward map associated to the maps (\ref{prdef}).  Here, in passing to the limit, we have applied Proposition \ref{mixingprop}.

Next, consider the inclusions $i_{\gamma_1,\gamma_2,N}\colon \overline{(\RS_{\gamma_1,\gamma_2},\Gl_{\gamma_1,\gamma_2})}_N\rightarrow \overline{(\RS_{\gamma},\Gl_{\gamma_1,\gamma_2})}_N$.  These induce maps
\begin{equation}
\label{zetapd}
\varphi_{\tr(W)_{\gamma,N}}\rightarrow i_{\gamma_1,\gamma_2,N,*}i_{\gamma_1,\gamma_2,N}^*\varphi_{\tr(W)_{\gamma,N}}
\end{equation}
and maps
\[
\zeta^{\vee}\colon \Ho_{c,\Gl_{\gamma_1,\gamma_2}}(\RS^{\Sp}_{\gamma},\varphi_{\tr(W)_{\gamma}})\rightarrow\Ho_{c,\Gl_{\gamma_1,\gamma_2}}(\RS^{\Sp}_{\gamma_1,\gamma_2},\varphi_{\tr(W)_{\gamma}})
\]
or, again taking shifted duals,
\begin{align}
\label{zetdef}
\zeta\colon &\Ho_{c,\Gl_{\gamma_1,\gamma_2}}(\RS^{\Sp}_{\gamma_1,\gamma_2},\varphi_{\tr(W)_{\gamma}})^{\vee}\state{{\chi(\gamma,\gamma)/2-\chi(\gamma_2,\gamma_1)}}\rightarrow \\&\Ho_{c,\Gl_{\gamma_1,\gamma_2}}(\RS^{\Sp}_{\gamma},\varphi_{\tr(W)_{\gamma}})^{\vee}\state{{\chi(\gamma,\gamma)/2-\chi(\gamma_2,\gamma_1)}}.\nonumber
\end{align}

Finally, the natural transformation
\[
\phi_{\tr(W)_{\gamma,N}}\rightarrow i_{\gamma_1,\gamma_2,N,*}\phi_{\tr(W)_{\gamma_1,\gamma_2,N}}i_{\gamma_1,\gamma_2,N}^*
\]
induces maps
\begin{align}
\label{epsdef}
\epsilon\colon &\Ho_{c,\Gl_{\gamma_1,\gamma_2}}(\RS^{\Sp}_{\gamma_1,\gamma_2},\varphi_{\tr(W)_{\gamma_1,\gamma_2}})^{\vee}\state{{\chi(\gamma,\gamma)/2-\chi(\gamma_2,\gamma_1)}}\rightarrow \\&\Ho_{c,\Gl_{\gamma_1,\gamma_2}}(\RS^{\Sp}_{\gamma_1,\gamma_2},\varphi_{\tr(W)_{\gamma}})^{\vee}\state{{\chi(\gamma,\gamma)/2-\chi(\gamma_2,\gamma_1)}}\nonumber
\end{align}
as in the definition of $\delta$.

Throughout we have used that $pr_{\gamma_1,\gamma_2,N}$ and $i_{\gamma_1,\gamma_2,N}$ are proper in order to identify $pr_{\gamma_1,\gamma_2,N,*}$ with $pr_{\gamma_1,\gamma_2,N,!}$ and $i_{\gamma_1,\gamma_2,N,*}$ with $i_{\gamma_1,\gamma_2,N,!}$.  
The passage to the limit in the definition of $\zeta\epsilon$ is again an application of Proposition \ref{mixingprop}.

We define a product 
\[
m\colon \mathcal{H}^{\Sp}_{Q,W}\boxtimes_+^{\tw} \mathcal{H}^{\Sp}_{Q,W}\rightarrow \mathcal{H}^{\Sp}_{Q,W}
\]
to be the map which, when restricted to $\mathcal{H}_{Q,W,\gamma_1}^{\Sp}\boxtimes_+^{\tw}\mathcal{H}_{Q,W,\gamma_2}^{\Sp}$, is given by the composition of maps $\delta\zeta\epsilon\beta^{-1}\alpha\TS$, where $\TS$ is the shift of the Thom--Sebastiani isomorphism
\begin{align*}
\Ho_{c,\Gl_{\gamma_1}}(\RS^{\Sp}_{\gamma_1},\varphi_{\tr(W)_{\gamma_1}})^{\vee}\otimes \Ho_{c,\Gl_{\gamma_2}}(\RS^{\Sp}_{\gamma_2},\varphi_{\tr(W)_{\gamma_2}})^{\vee}\rightarrow&\Ho_{c,\Gl_{\gamma_1}\times \Gl_{\gamma_2}}(\RS^{\Sp}_{\gamma_1}\times \RS^{\Sp}_{\gamma_2},\varphi_{\tr(W)_{\gamma_1}\boxplus\tr(W)_{\gamma_2}})^{\vee}.
\end{align*}

\begin{proposition}[\cite{COHA} --- Section 7.8]
\label{IntId}
The map $\epsilon$ of Equation (\ref{epsdef}) is an isomorphism.
\end{proposition}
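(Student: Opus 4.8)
The plan is to reduce the claim to a statement about the vanishing cycle sheaf on $M_\gamma$ and then exploit the $\mathbb{C}^*$-equivariance that $\tr(W)_\gamma$ automatically possesses; this is essentially the argument of \cite{COHA}. First I would unwind the construction of $\epsilon$: writing $\iota\colon M_{\gamma_1,\gamma_2}\hookrightarrow M_\gamma$ for the inclusion, $\epsilon$ is the shifted dual of the morphism on $G_{\gamma_1,\gamma_2}$-equivariant compactly supported cohomology over $M^{\Sp}_{\gamma_1,\gamma_2}$ induced --- via the closed-embedding isomorphism (\ref{pbh}) --- by the natural map
\[
\theta\colon\iota^*\phi_{\tr(W)_\gamma}\mathbb{Q}_{M_\gamma}\longrightarrow\phi_{\tr(W)_{\gamma_1,\gamma_2}}\mathbb{Q}_{M_{\gamma_1,\gamma_2}}
\]
coming from the unit $\mathbb{Q}_{M_\gamma}\to\iota_*\mathbb{Q}_{M_{\gamma_1,\gamma_2}}$. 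So it suffices to show that $\theta$ induces an isomorphism on $\Ho_{c,G_{\gamma_1,\gamma_2}}(M^{\Sp}_{\gamma_1,\gamma_2},-)$, and by Proposition \ref{cruclift} the whole discussion can be run in $\MMHS$.

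The key input is that, being $G_\gamma$-invariant, $\tr(W)_\gamma$ is invariant under the cocharacter $\rho\colon\mathbb{C}^*\to G_\gamma$, $\rho(t)=(\mathrm{diag}(t\cdot\id_{\gamma_1(i)},\id_{\gamma_2(i)}))_{i\in Q_0}$, acting by conjugation. Since $\rho$ commutes with $G_{\gamma_1,\gamma_2}$ this action lifts fibrewise to the approximations $\overline{(M_\gamma,G_{\gamma_1,\gamma_2})}_N$ and makes $\phi_{\tr(W)_\gamma}$ a monodromic $\mathbb{C}^*$-equivariant complex. From the block-matrix description one reads off that the attracting locus of $\rho$ is exactly $M_{\gamma_1,\gamma_2}$, that the associated retraction onto the fixed locus $M_{\gamma_1}\times M_{\gamma_2}$ is the projection $p\colon M_{\gamma_1,\gamma_2}\to M_{\gamma_1}\times M_{\gamma_2}$, and that the repelling locus is the subspace of $M_\gamma$ on which $\mathbb{C}^{\gamma_2}$ is a sub-representation, with its own retraction $q$ onto $M_{\gamma_1}\times M_{\gamma_2}$; on both loci $\tr(W)_\gamma$ restricts to the pullback of $\tr(W)_{\gamma_1}\boxplus\tr(W)_{\gamma_2}$ along the retraction --- for the attracting locus this is the identity $\tr(W)_{\gamma_1,\gamma_2}=p^*(\tr(W)_{\gamma_1}\boxplus\tr(W)_{\gamma_2})$ already recorded. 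By Assumption \ref{closed_under} the same picture holds after intersecting everything with $M^{\Sp}_\gamma$.

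Then I would apply the contraction principle (the elementary half of Braden's hyperbolic localization), which holds for monodromic $\mathbb{C}^*$-equivariant complexes and, by a limiting argument over $N$ of the type used in Proposition \ref{mixingprop}, is compatible with passage to equivariant cohomology. Using $\Ho_c(M^{\Sp}_{\gamma_1,\gamma_2},-)=\Ho_c(M^{\Sp}_{\gamma_1}\times M^{\Sp}_{\gamma_2},p_!(-))$ together with Braden's identification of $p_!\iota^*\phi_{\tr(W)_\gamma}$ with the $*$-pushforward along $q$ of the $!$-restriction of $\phi_{\tr(W)_\gamma}$ to the repelling locus, then the self-duality $\phi_{\tr(W)_\gamma}D\cong D\phi_{\tr(W)_\gamma}$ of (\ref{CanDu2}) to trade that $!$-restriction for a $*$-restriction up to the shift $\state{\dim M_\gamma}$, a further application of the contraction principle --- now on the repelling locus itself, which $\rho$ contracts onto the fixed locus --- the smooth-pullback isomorphism (\ref{commWithSmooth}) for $q$, and the identity $\tr(W)_\gamma|_{(\text{repelling locus})}=q^*(\tr(W)_{\gamma_1}\boxplus\tr(W)_{\gamma_2})$, one identifies $\Ho_{c,G_{\gamma_1,\gamma_2}}(M^{\Sp}_{\gamma_1,\gamma_2},\iota^*\phi_{\tr(W)_\gamma})$ with $\Ho_{c,G_{\gamma_1}\times G_{\gamma_2}}(M^{\Sp}_{\gamma_1}\times M^{\Sp}_{\gamma_2},\phi_{\tr(W)_{\gamma_1}\boxplus\tr(W)_{\gamma_2}})$ up to an explicit Tate twist. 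Running the same chain of identifications on $\phi_{\tr(W)_{\gamma_1,\gamma_2}}=p^*\phi_{\tr(W)_{\gamma_1}\boxplus\tr(W)_{\gamma_2}}$ produces the same cohomology with the same twist, and chasing $\theta$ through these identifications shows that it becomes the identity.

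The hard part is the clash between the $*$-restriction to $M_{\gamma_1,\gamma_2}$ built into $\theta$, hence into $\epsilon$, and the $!$-restriction along the repelling locus that governs the compactly supported contraction statement: reconciling the two is precisely the content of Braden's theorem combined with the self-duality of vanishing cycles, and arranging that the Tate twists contributed by the two affine-fibration retractions $p$ and $q$ cancel --- which they must, since the source and target of $\epsilon$ carry the same shift --- is the delicate bookkeeping. One must also check that every isomorphism above survives passage to equivariant cohomology in the limit and lifts to $\MMHS$. Since the statement is due to Kontsevich--Soibelman, the most economical treatment carries this out only in outline, referring to \cite{COHA} for the details.
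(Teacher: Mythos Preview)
The paper offers no proof of its own here: it simply records that this is Theorem~13 of \cite{COHA} and notes that the result is not actually used elsewhere in the paper. Your proposal is an honest outline of how that theorem is proved---via the conjugation $\mathbb{C}^*$-action whose attracting set is $M_{\gamma_1,\gamma_2}$, together with a hyperbolic-localisation/Braden argument and the self-duality of vanishing cycles---and you correctly flag that the delicate points (compatibility of Braden's theorem with equivariant cohomology and with the $\MMHS$ upgrade, and the bookkeeping of Tate twists) are exactly where the work lies. So your approach and the paper's agree: both defer to \cite{COHA}, with your write-up supplying more of the mechanism than the paper does.
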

Proposition \ref{IntId} is an application of the Hodge theoretic version of the ``integral identity'' from \cite[Sec.4.4]{KS}, and is proved as Theorem 13 of \cite{COHA}.  In fact the result will not be used in this paper, its main purpose is to show that the integration map of \cite[Sec.6.3]{KS} is a ring homomorphism, but we will not be concerned with the integration map.
\begin{remark}
The map $\zeta\epsilon$ given by composing (\ref{zetdef}) and (\ref{epsdef}) is the shifted pushforward 
\begin{align*}
\underline{\zeta}\colon &\Ho_{c,\Gl_{\gamma_1,\gamma_2}}(\RS^{\Sp}_{\gamma_1,\gamma_2},\varphi_{\tr(W)_{\gamma_1,\gamma_2}})^{\vee}\state{{\chi(\gamma,\gamma)/2-\chi(\gamma_2,\gamma_1)}}\rightarrow \\&
\Ho_{c,\Gl_{\gamma_1,\gamma_2}}(\RS^{\Sp}_{\gamma},\varphi_{\tr(W)_{\gamma}})^{\vee}\state{{\chi(\gamma,\gamma)/2-\chi(\gamma_2,\gamma_1)}}
\end{align*}
associated to $i_{\gamma_1,\gamma_2}$.  We have split it into two maps as in \cite[Sec.7.6]{COHA}.  In subsequent sections we will just use $\underline{\zeta}$.
\end{remark}
For the benefit of the reader we represent the multiplication in a different way:
\[
\xymatrix{
\Ho_{c,\Gl_{\gamma_1+\gamma_2}}(\RS^{\Sp}_{\gamma_1+\gamma_2},\varphi_{\tr(W)_{\gamma_1+\gamma_2}})^{\vee}
&
\Ho_{\begin{picture}(7,9)
\put(1,5){\circle*{3}}\put(1,1){\circle*{3}}\put(5,1){\circle*{3}}\put(5,5){\circle*{3}}
\end{picture}}\bigl(\RS_{\begin{picture}(7,9)\put(1,5){\circle*{3}}\put(1,1){\circle*{3}}\put(5,1){\circle*{3}}\put(5,5){\circle*{3}}
\end{picture}},W_{\begin{picture}(7,9)\put(1,5){\circle*{3}}\put(1,1){\circle*{3}}\put(5,1){\circle*{3}}\put(5,5){\circle*{3}}
\end{picture}}\bigr)^{\vee}\ar[l]^-{=:}
\\
\Ho_{\begin{picture}(7,9)
\put(1,5){\circle*{3}}\put(5,1){\circle*{3}}\put(5,5){\circle*{3}}
\end{picture}}\bigl(\RS_{\begin{picture}(7,9)\put(1,5){\circle*{3}}\put(5,1){\circle*{3}}\put(5,5){\circle*{3}}
\end{picture}},W_{\begin{picture}(7,9)\put(1,1){\circle*{3}}\put(1,5){\circle*{3}}\put(5,1){\circle*{3}}\put(5,5){\circle*{3}}
\end{picture}}\bigr)^{\vee}\ar[r]^-{\zeta}
&
\Ho_{\begin{picture}(7,9)
\put(1,5){\circle*{3}}\put(5,1){\circle*{3}}\put(5,5){\circle*{3}}
\end{picture}}\bigl(\RS_{\begin{picture}(7,9)\put(1,1){\circle*{3}}\put(1,5){\circle*{3}}\put(5,1){\circle*{3}}\put(5,5){\circle*{3}}
\end{picture}},W_{\begin{picture}(7,9)\put(1,5){\circle*{3}}\put(1,1){\circle*{3}}\put(5,1){\circle*{3}}\put(5,5){\circle*{3}}
\end{picture}}\bigr)^{\vee}\ar[u]^-{\delta}
\\
\Ho_{\begin{picture}(7,9)
\put(1,5){\circle*{3}}\put(5,1){\circle*{3}}\put(5,5){\circle*{3}}
\end{picture}}\bigl(\RS_{\begin{picture}(7,9)\put(1,5){\circle*{3}}\put(5,1){\circle*{3}}\put(5,5){\circle*{3}}
\end{picture}},W_{\begin{picture}(7,9)\put(1,5){\circle*{3}}\put(5,1){\circle*{3}}\put(5,5){\circle*{3}}
\end{picture}}\bigr)^{\vee}\ar[u]^{\epsilon}\ar[ur]^{\underline{\zeta}}
&
\Ho_{\begin{picture}(7,9)
\put(1,5){\circle*{3}}\put(5,1){\circle*{3}}
\end{picture}}\bigl(\RS_{\begin{picture}(7,9)\put(1,5){\circle*{3}}\put(5,1){\circle*{3}}\put(5,5){\circle*{3}}
\end{picture}},W_{\begin{picture}(7,9)\put(1,5){\circle*{3}}\put(5,1){\circle*{3}}\put(5,5){\circle*{3}}
\end{picture}}\bigr)^{\vee}\ar[l]_-{\beta^{-1}}
\\
\left(\Ho_{c,\Gl_{\gamma_1}}(\RS^{\Sp}_{\gamma_1},\varphi_{\tr(W)_{\gamma_1}})\otimes\Ho_{c,\Gl_{\gamma_1}}(\RS^{\Sp}_{\gamma_2},\varphi_{\tr(W)_{\gamma_2}})\right)^{\vee}\ar[r]_-{\TS}
&
\Ho_{\begin{picture}(7,9)
\put(1,5){\circle*{3}}\put(5,1){\circle*{3}}
\end{picture}}\bigl(\RS_{\begin{picture}(7,9)\put(1,5){\circle*{3}}\put(5,1){\circle*{3}}
\end{picture}},W_{\begin{picture}(7,9)\put(1,5){\circle*{3}}\put(5,1){\circle*{3}}
\end{picture}}\bigr)^{\vee}\ar[u]_-{\alpha}
}
\]
where we have abbreviated the notation in the obvious ways, and left out the shifts.
\smallbreak
%For each $\gamma\in\mathbb{Z}^{Q_0}$ we have an inclusion $\Cp^*\rightarrow \Gl_{\gamma}$ given by $z\rightarrow (z\cdot \id_{\gamma(i)\times\gamma(i)})_{i\in Q_0}$.  It follows that each $\RS_{\gamma}^{\Sp}$, $\RS_{\gamma_1}^{\Sp}\times \RS_{\gamma_2}^{\Sp}$ and  $\RS_{\gamma_1,\gamma_2}^{\Sp}$ carries a $\Cp^*$-action, which can be seen to be trivial, and so each of the constituent terms of the above diagram are free $\Cp[y]$-modules.  Since the underlying maps of spaces are $\Cp^*$-equivariant, we deduce
%\begin{proposition}
%\label{misY}
%The multiplication $m$ is a $\Cp[y]$-module homomorphism between free $\Cp[y]$-modules.
%\end{proposition}
%Note that for $\tau\otimes \tau'\in\Ho_{c,\Gl_{\gamma_1}}(\RS^{\Sp}_{\gamma_1},\phi_{\tr(W)_{\gamma_1}})^*\otimes \Ho_{c,\Gl_{\gamma_2}}(\RS^{\Sp}_{\gamma_2},\phi_{\tr(W)_{\gamma_2}})^*$ we have 
%\[
%y(\tau\otimes \tau')=((y\tau)\otimes \tau')+(\tau\otimes (y\tau')).
%\]

We work with the nonsymmetric monoidal structure $\boxtimes_+^{\tw}$ in order for $m$ to preserve cohomological degree, and for $m$ to be an unshifted map in $\D{\mathcal{C}_Q}$, e.g. to avoid the appearance of any Tate twists in the definition of $m$, assuming that our background category $\mathcal{C}$ is set to be $\MMHS$, the category of mondromic mixed Hodge structures.
\begin{definition}
\label{degpres}
We say that $\Sp$ is \textit{degree preserving} if it satisfies the following condition: for all $\gamma_1$ and $\gamma_2$ such that $\RS^{\Sp}_{\gamma_1}$ and $\RS^{\Sp}_{\gamma_2}$ are non-empty, $\chi(\gamma_1,\gamma_2)=\chi(\gamma_2,\gamma_1)$.
\end{definition}
\begin{proposition}
Let $\Sp$ be degree-preserving.  Then $(\mathcal{H}^{\Sp}_{Q,W},m)$ is an algebra object in the category $\D{\mathcal{C}_Q}$ with the untwisted symmetric monoidal product $\boxtimes_+$.
\end{proposition}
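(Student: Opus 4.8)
The plan is to deduce the statement from the associativity of $m$, established in \cite[Sec.7.6]{COHA}, after checking that on the support of $\mathcal{H}^{\Sp}_{Q,W}$ the twisted monoidal structure $\boxtimes_+^{\tw}$ and the untwisted one $\boxtimes_+$ literally coincide, associators included. First I would record that $S_{\Sp}:=\{\gamma\in\mathbb{N}^{Q_0}\mid M^{\Sp}_{\gamma}\neq\emptyset\}$ is a submonoid of $\mathbb{N}^{Q_0}$: it contains $0$ because $0\in M^{\Sp}_0$, and if $\gamma_1,\gamma_2\in S_{\Sp}$ then by Assumption \ref{closed_under} the space $M^{\Sp}_{\gamma_1,\gamma_2}=p^{-1}(M^{\Sp}_{\gamma_1}\times M^{\Sp}_{\gamma_2})$ is non-empty and $\eta$ maps it into $M^{\Sp}_{\gamma_1+\gamma_2}$, so $\gamma_1+\gamma_2\in S_{\Sp}$. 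Since $\mathcal{H}^{\Sp}_{Q,W,\gamma}=0$ whenever $M^{\Sp}_{\gamma}=\emptyset$, the object $\mathcal{H}^{\Sp}_{Q,W}$ is supported in its $\mathbb{Z}^{Q_0}$-grading on $S_{\Sp}$, hence lies in the full subcategory $\mathcal{C}_{Q,\Sp}\subset\mathcal{C}_Q$ of objects supported on $S_{\Sp}$; this subcategory contains $\mathbf{1}_{\mathcal{C}_Q}$ and is closed under both $\boxtimes_+$ and $\boxtimes_+^{\tw}$.

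Next I would use the hypothesis: to say that $\Sp$ is degree preserving is exactly to say, by Definition \ref{degpres}, that $\chi(\gamma_1,\gamma_2)=\chi(\gamma_2,\gamma_1)$ for all $\gamma_1,\gamma_2\in S_{\Sp}$, so the shift $\state{\chi(\gamma_1,\gamma_2)/2-\chi(\gamma_2,\gamma_1)/2}$ in the definition of $\boxtimes_+^{\tw}$ equals $\state{0}$ on every non-zero summand of a tensor product of objects of $\mathcal{C}_{Q,\Sp}$. Thus $\boxtimes_+^{\tw}$ and $\boxtimes_+$ agree on $\mathcal{C}_{Q,\Sp}$ not merely up to isomorphism but as functors on objects and morphisms; and, using the coherence isomorphisms $\state{1/2}\circ\boxtimes\cong\boxtimes\circ(\id\otimes\state{1/2})\cong\boxtimes\circ(\state{1/2}\otimes\id)$ postulated for $\mathcal{C}$, the associativity and unit constraints of $\boxtimes_+^{\tw}$ are carried to those of $\boxtimes_+$ on $\mathcal{C}_{Q,\Sp}$, since for any triple $\gamma_1,\gamma_2,\gamma_3\in S_{\Sp}$ all shifts accumulated on either side of the associator are $\state{0}$. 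In particular the restriction of $\boxtimes_+^{\tw}$ to $\mathcal{C}_{Q,\Sp}$ inherits the symmetry of $\boxtimes_+$ recorded after (\ref{symmMon}), so $\mathcal{C}_{Q,\Sp}$ is a full symmetric monoidal subcategory of $(\mathcal{C}_Q,\boxtimes_+)$.

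Granting this, the associative product $m:\mathcal{H}^{\Sp}_{Q,W}\boxtimes_+^{\tw}\mathcal{H}^{\Sp}_{Q,W}\to\mathcal{H}^{\Sp}_{Q,W}$ of \cite{COHA} becomes, under the above identification, a morphism $\mathcal{H}^{\Sp}_{Q,W}\boxtimes_+\mathcal{H}^{\Sp}_{Q,W}\to\mathcal{H}^{\Sp}_{Q,W}$ in $(\mathcal{C}_Q,\boxtimes_+)$, and its associativity diagram — a diagram of the very same underlying morphisms, now read against the shift-free $\boxtimes_+$-associator — still commutes. The unit is the degree-$0$ summand $\mathcal{H}^{\Sp}_{Q,W,0}\cong\mathbf{1}_{\mathcal{C}}$ (here $M_0$ is a point, $G_0$ is trivial, and $\tr(W)_0=0$, so $0\in S_{\Sp}$ and $\mathbf{1}_{\mathcal{C}_Q}\in\mathcal{C}_{Q,\Sp}$), with the unit axioms holding for the same reason. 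Hence $(\mathcal{H}^{\Sp}_{Q,W},m)$ is an algebra object in $\mathcal{C}_Q$ with the untwisted symmetric product $\boxtimes_+$. I expect the only step that calls for genuine, if routine, care to be the verification that the associativity constraints of the two monoidal structures are identified once the twists collapse; this is a coherence diagram chase in $\mathcal{C}$, and everything else is either cited from \cite{COHA} or immediate from Assumption \ref{closed_under} and Definition \ref{degpres}.
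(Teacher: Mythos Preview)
Your argument is correct and is exactly the intended reasoning: once $\Sp$ is degree-preserving, the twist $\state{\chi(\gamma_1,\gamma_2)/2-\chi(\gamma_2,\gamma_1)/2}$ vanishes on the support of $\mathcal{H}^{\Sp}_{Q,W}$, so $\boxtimes_+^{\tw}$ and $\boxtimes_+$ coincide there and the algebra structure from \cite{COHA} transports verbatim. The paper does not actually supply a proof of this proposition---it is stated and immediately followed by examples---so your write-up simply makes explicit what the paper treats as an evident consequence of Definition~\ref{degpres}.
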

\begin{example}
\label{symmEx}
Assume that $Q$ is symmetric.  Then $\chi(\bullet,\bullet)$ is symmetric, and so every property $\Sp$ is degree-preserving, and so $\mathcal{H}^{\Sp}_{Q,W}$ is an algebra in the category $\D{\mathcal{C}_Q}$ with the symmetric monoidal product $\boxtimes_+$ for every property $\Sp$.
\end{example}
\begin{example}
Assume that the stability condition $\zeta$ is generic, in the strong sense that $\arg(\zeta\cdot\gamma_1)=\arg(\zeta\cdot\gamma_2)$ implies that $\gamma_1=r\gamma_2$ for some $r\in\mathbb{R}$.  Then setting $\Sp$ to be the property that a representation $\rho$ of $Q$ is $\zeta$-semistable, and $\mu(\rho)=\theta$ for some fixed $\theta$, the property $\Sp$ is degree preserving.  In fact the requirement that $\Sp$, so defined, is degree-preserving, is often the requirement that one is most interested in guaranteeing by imposing genericity, so elsewhere (see for example \cite{MeRe14}), genericity for $\zeta$ is just defined by the weak requirement that $\mu(\rho)=\theta$ is a degree-preserving property for every $\theta$.  By Example \ref{symmEx}, all stability conditions on a symmetric quiver are generic in this sense.
\end{example}

\section{The critical CoHA as a shuffle algebra}
\label{t_sec}

\subsection{Localisation}  Let $\mathcal{H}^{\Sp}_{Q,W,\gamma_1}\boxtimes_+^{\tw}\ldots\boxtimes_+^{\tw}\mathcal{H}^{\Sp}_{Q,W,\gamma_r}$ be an arbitrary finite tensor product of $\mathbb{N}^{Q_0}$-graded pieces of the CoHA $\mathcal{H}_{Q,W}^{\Sp}$.  Via the Thom--Sebastiani isomorphism we may identify the vector spaces
\begin{equation}
\label{bigTens}
\mathcal{H}^{\Sp}_{Q,W,\gamma_1}\boxtimes_+^{\tw}\ldots\boxtimes_+^{\tw}\mathcal{H}^{\Sp}_{Q,W,\gamma_r}\cong \Ho_{c,\Gl_{\gamma_1}\times\ldots\times \Gl_{\gamma_r}}(\RS^{\Sp}_{\gamma_1}\times\ldots\times \RS^{\Sp}_{\gamma_r},\varphi_{\tr(W)_{\gamma_1}\boxplus\ldots\boxplus\tr(W)_{\gamma_r}})^{\vee},
\end{equation}
where we have left out the shift in cohomology.  As detailed in Section \ref{moduleStruc}, the left hand side of (\ref{bigTens}) carries an action of
\[
\SR_{\gamma_1,\ldots,\gamma_r}:=\bigotimes_{c=1,\ldots,r}\left(\bigotimes_{i\in Q_0}\mathbb{Q}\left[x^{(c)}_{i,1},\ldots,x^{(c)}_{i,\gamma_c(i)}\right]\right)^{\Sym_{\gamma_c}}.
\]
Let $M:=\RS^{\Sp}_{\gamma_1}\times\ldots\times \RS^{\Sp}_{\gamma_r}$ and $G:=\Gl_{\gamma_1}\times\ldots\times \Gl_{\gamma_r}$.  Let $c,d\in\{1,\ldots,r\}$ be a pair of distinct numbers, and let $i,i'\in Q_0$.  Let
\[
V=M\times\Hom(\Cp^{\gamma_c(i)},\Cp^{\gamma_d(i')}).
\]
Then $V$ is naturally a $G$-equivariant vector bundle over $M$, with projection $\pi\colon V\rightarrow M$, section $M\xrightarrow{z\mapsto (z,0)} M\times \Hom(\Cp^{\gamma_c(i)},\Cp^{\gamma_d(i')})$ and an Euler class defined as in Section \ref{umkehr_sec}, which we denote $\eu(i,i',c,d)$.
\begin{proposition}
\label{ABprop}
Multiplication by $\eu(i,i',c,d)$ is an injective endomorphism of $\mathcal{H}^{\Sp}_{Q,W,\gamma_1}\boxtimes_+^{\tw}\ldots\boxtimes_+^{\tw}\mathcal{H}^{\Sp}_{Q,W,\gamma_r}$.
\end{proposition}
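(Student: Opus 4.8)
The plan is first to make $\eu(i,i',c,d)$ completely explicit and then to reduce injectivity to an elementary nonzerodivisor statement over a polynomial ring, pushing all the geometric content into one structural input. The bundle $T=M\times\Hom(\Cp^{\gamma_c(i)},\Cp^{\gamma_d(i')})\to M$ is the trivial bundle attached to the $G$-representation $V=(\Cp^{\gamma_c(i)})^{\vee}\otimes\Cp^{\gamma_d(i')}$, on which only the $GL(\gamma_c(i))$ in the $c$-th factor and the $GL(\gamma_d(i'))$ in the $d$-th factor act. By the splitting principle its $G$-equivariant Euler class — which by Proposition \ref{maneq} is precisely the operator ``multiplication by $\eu(i,i',c,d)$'' — is pulled back from $\Ho_G(\pt)$ from $\prod_{s=1}^{\gamma_c(i)}\prod_{t=1}^{\gamma_d(i')}(x^{(d)}_{i',t}-x^{(c)}_{i,s})$, i.e. up to sign the resultant $\operatorname{Res}(p_c,p_d)$ of $p_c(z)=\prod_s(z-x^{(c)}_{i,s})$ and $p_d(z)=\prod_t(z-x^{(d)}_{i',t})$. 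Using the Thom--Sebastiani isomorphism (\ref{TSiso}) I would write $\mathcal{H}^{\Sp}_{Q,W,\gamma_1}\boxtimes_+^{\tw}\cdots\boxtimes_+^{\tw}\mathcal{H}^{\Sp}_{Q,W,\gamma_r}\cong\bigotimes_e P_e$ with $P_e:=\Ho_{c,G_{\gamma_e}}(M^{\Sp}_{\gamma_e},\phi_{\tr(W)_{\gamma_e}})^*$; by Section \ref{moduleStruc} the $\Ho_G(\pt)=\bigotimes_e\Ho_{G_{\gamma_e}}(\pt)$-action is the tensor product of the $\Ho_{G_{\gamma_e}}(\pt)$-actions, and $\eu(i,i',c,d)$ only involves the variables of the $c$-th and $d$-th factors. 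Since tensoring with the remaining $P_e$, which are mere $\Q$-vector spaces, is exact, it is enough to prove that $\operatorname{Res}(p_c,p_d)$ is a nonzerodivisor on $P_c\otimes_{\Q}P_d$.

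The second step is to observe that $\operatorname{Res}(p_c,p_d)=\prod_{t}p_c(x^{(d)}_{i',t})$ is \emph{monic}. Regard it as a polynomial in the vertex-$i'$ elementary symmetric functions $e^{(d)}_1,\dots,e^{(d)}_{\gamma_d(i')}$, which act on $P_d$ through $\Ho_{G_{\gamma_d}}(\pt)$, with coefficients in $\Q[e^{(c)}_1,\dots,e^{(c)}_{\gamma_c(i)}]$, which acts on $P_c$. Taking the leading term of each factor $p_c(x^{(d)}_{i',t})$ shows that, for the grading $\deg e^{(d)}_k=k$, the top-degree part of $\operatorname{Res}(p_c,p_d)$ is the single monomial $(e^{(d)}_{\gamma_d(i')})^{\gamma_c(i)}$ with coefficient $1$; equivalently $\operatorname{Res}(p_c,p_d)$ is monic in $e^{(d)}_{\gamma_d(i')}$ over $\Q[e^{(c)}_{\bullet},e^{(d)}_1,\dots,e^{(d)}_{\gamma_d(i')-1}]$. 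A polynomial with invertible leading coefficient is a nonzerodivisor on every polynomial extension $L'[y_1,\dots,y_m]$ of an $L$-module $L'$. Consequently, if $P_d$ is a free module over $\Q[e^{(d)}_1,\dots,e^{(d)}_{\gamma_d(i')}]$, then $P_c\otimes_{\Q}P_d$ is a direct sum of copies of the polynomial module $P_c[e^{(d)}_1,\dots,e^{(d)}_{\gamma_d(i')}]$, on each of which $\operatorname{Res}(p_c,p_d)$ — monic in these variables with coefficients in $\End_{\Q}(P_c)$ — acts injectively; and since $\Ho_{G_{\gamma_d}}(\pt)$ is a polynomial ring over its vertex-$i'$ subring, it would suffice that $P_d$ be free over $\Ho_{G_{\gamma_d}}(\pt)$. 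This reduces everything to a freeness statement for the graded pieces of the CoHA.

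The main obstacle, then — and where essentially all the work lies, the reductions above being formal — is to show that $P_e=\Ho_{c,G_{\gamma_e}}(M^{\Sp}_{\gamma_e},\phi_{\tr(W)_{\gamma_e}})^*$ is free over (the relevant subring of) $\Ho_{G_{\gamma_e}}(\pt)$. The approach I would take is to descend to the Borel approximations $\Ho_c(\overline{(M^{\Sp}_{\gamma_e},G_{\gamma_e})}_N,\phi_{\tr(W)_{\gamma_e}})^*$, which are finite-dimensional in each cohomological degree and stabilise in $N$, and to prove freeness of each over $\Ho(\overline{(\pt,G_{\gamma_e})}_N,\Q)$ compatibly in $N$; the natural mechanism is the $\Gm$-action on the ambient affine space $M_{\gamma_e}$ contracting it equivariantly onto the origin, combined with the fact that $\phi_{\tr(W)_{\gamma_e}}$ is supported on $\crit(\tr(W)_{\gamma_e})$, so that one runs the argument on a $\Gm$-stable closed subvariety and deduces degeneration of the relevant Leray spectral sequence. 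The genuinely delicate case is that of an arbitrary property $\Sp$ and potential $W$, where $M^{\Sp}_{\gamma_e}$ need be neither smooth nor invariant under the naive scaling: there the freeness must be extracted from the structure of the vanishing-cycle complex on $\crit(\tr(W)_{\gamma_e})\cap M^{\Sp}_{\gamma_e}$ rather than from any global contraction, and this is the part of the proof requiring real care.
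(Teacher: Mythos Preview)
Your first two steps are fine: the explicit formula for $\eu(i,i',c,d)$ is correct, and the monicity observation about the resultant is accurate. The reduction via Thom--Sebastiani to the two factors $P_c\otimes_{\Q}P_d$ is also legitimate.

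The gap is in the third step. You reduce everything to the claim that $P_d=\Ho_{c,G_{\gamma_d}}(M^{\Sp}_{\gamma_d},\phi_{\tr(W)_{\gamma_d}})^*$ is free over $\Ho_{G_{\gamma_d}}(\pt)$, and you acknowledge yourself that this is where ``essentially all the work lies'' and that the general case ``requires real care''. In fact this freeness is not established in the paper and there is no reason to expect it for an arbitrary closed $G_{\gamma_d}$-invariant subvariety $M^{\Sp}_{\gamma_d}$ and arbitrary potential $W$: equivariant formality over the full $\Ho_{G_{\gamma_d}}(\pt)$ is a strong hypothesis, and the contraction argument you sketch does not apply once $M^{\Sp}_{\gamma_d}$ fails to be invariant under scaling. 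So as written the proposal does not close.

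The paper's proof bypasses this entirely by an Atiyah--Bott argument using a single circle. Take the \emph{central} $S^1\subset G_{\gamma_c}$, $e^{i\theta}\mapsto e^{i\theta}\id$. This $S^1$ acts \emph{trivially} on all of $M=M^{\Sp}_{\gamma_1}\times\cdots\times M^{\Sp}_{\gamma_r}$ (conjugation by scalars is trivial), so with $G'=G/S^1$ there is a filtration on $\Ho_{c,G}(M^{\Sp},\phi)^*$ whose associated graded is $\Ho_{c,G'}(M^{\Sp},\phi)^*\otimes\Ho_{S^1}(\pt)$, automatically free over $\Q[u]$. On the other hand, because $c\neq d$, this $S^1$ acts with weight $-1$ on every coordinate of the fibre $\Hom(\Cp^{\gamma_c(i)},\Cp^{\gamma_d(i')})$, so the $S^1$-equivariant Euler class is $(\pm u)^{\gamma_c(i)\gamma_d(i')}\neq 0$. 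On the associated graded, multiplication by $\eu(i,i',c,d)$ has this as its leading term in $u$, hence is injective there, hence injective on the original module. The point you are missing is that you do not need freeness over a large ring: choose the \emph{smallest} torus for which (i) the action on $M$ is trivial, so freeness over its cohomology is automatic, and (ii) the action on the fibre of $T$ has no zero weights, so the Euler class is a nonzerodivisor. Your monicity computation is essentially the shadow of (ii), but you coupled it with an unnecessary and unproven freeness hypothesis instead of exploiting (i).
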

\begin{proof}
This is a small variation of the Atiyah Bott lemma, and we may adapt the original proof of \cite{AtBo83}.  In detail, let $\Sph^1\rightarrow \Gl_{\gamma_c}$ be defined by $e^{\sqrt{-1}\theta}\mapsto e^{\sqrt{-1}\theta}\id$.  Via the inclusion of algebraic groups $\Gl_{\gamma_c}\rightarrow \Gl_{\gamma_1}\times\ldots\times \Gl_{\gamma_r}$ this defines an action on the total space $T$ such that the fixed point set is exactly $M$.  Let $G'=G/\Sph^1$.  Then $\Ho_{c,G}(M^{\Sp},\varphi_{\tr(W)_{\gamma_1}\boxplus\ldots\boxplus\tr(W)_{\gamma_r}})^{\vee}$ is filtered by 
\[
F^p\left(\Ho_{c,G}(M^{\Sp},\varphi_{\tr(W)_{\gamma_1}\boxplus\ldots\boxplus\tr(W)_{\gamma_r}})^{\vee}\right):=\Ho^{\geq p}_{c,G'}(M^{\Sp},\varphi_{\tr(W)_{\gamma_1}\boxplus\ldots\boxplus\tr(W)_{\gamma_r}})^{\vee}\otimes \Ho_{\Sph^1}(\pt,\mathbb{Q}), 
\]
and we denote by $N$ the associated graded object, which is acted on freely by $\Ho_{\Sph^1}(\pt,\mathbb{Q})$.  Let 
\[
\tilde{\mu}\in \Ho_{c,G}(M^{\Sp},\varphi_{\tr(W)_{\gamma_1}\boxplus\ldots\boxplus\tr(W)_{\gamma_r}})^{\vee}, 
\]
and let $\mu\in N$ be the associated homogeneous element.  Let $s$ equal the degree of $\mu$ with respect to the grading induced by $F$.  Then projecting $\eu(i,i',c,d)\mu$ onto its degree $s$ part, also with respect to the grading induced by $F$, it is given by $\eu_{\Sph^1}(i,i',c,d)\mu$, where now $\eu_{\Sph^1}(i,i',c,d)$ is the $\Sph^1$-equivariant Euler characteristic of $V$, which is nonzero since $M$ is the fixed locus of the $\Sph^1$-action on $V$.
\end{proof}
\begin{definition}
With notation as above, so that in particular $c,d\in\{1,\ldots,r\}$ remain distinct numbers, we define 
\[
\eue(Q_1,\gamma_c,\gamma_d)=\prod_{a\in Q_1}\prod_{m=1}^{\gamma_c(s(a))}\nolimits\prod_{m'=1}^{\gamma_d(t(a))}\nolimits(x_{t(a),m'}^{(d)}-x^{(c)}_{s(a),m})
\]
and
\[
\eue(Q_0,\gamma_c,\gamma_d)=\prod_{i\in Q_0}\prod_{m=1}^{\gamma_c(i)}\nolimits\prod_{m'=1}^{\gamma_d(i)}\nolimits(x_{i,m'}^{(d)}-x^{(c)}_{i,m}).
\]
\end{definition}
Each of these classes is a product of classes of the form $\eu(i,i',c,d)$, and so we deduce from Proposition \ref{ABprop} that multiplication by $\eue(Q_\iota,\gamma_c,\gamma_d)$ is an injective endomorphism of $\mathcal{H}^{\Sp}_{Q,W,\gamma_1}\boxtimes_+^{\tw}\ldots\boxtimes_+^{\tw}\mathcal{H}^{\Sp}_{Q,W,\gamma_r}$ for $\iota=0,1$.
%\begin{corollary}
%\label{backinj}
%For every pair $\gamma_1,\gamma_2\in \mathbb{Z}^{Q_0}$, the localisation maps
%\[
%\mathcal{H}_{Q,\gamma_1}^{\Sp}\otimes\mathcal{H}_{Q,\gamma_2}^{\Sp}\rightarrow \left(\mathcal{H}_{Q,\gamma_1}^{\Sp}\otimes\mathcal{H}_{Q,\gamma_2}^{\Sp}\right)\otimes_{\Ho_{\Gl_{\gamma_1}\times \Gl_{\gamma_2}}(\pt,\mathbb{Q})}\Ho_{\Gl_{\gamma_1}\times \Gl_{\gamma_2}}(\pt,\mathbb{Q})[\eue(Q_s,\gamma_1,\gamma_2)^{-1}]
%\]
%for $s=0,1$ are injective.
%  More generally, the maps
%\[
%\mathcal{H}_{Q,\gamma_1}^{\Sp}\otimes\mathcal{H}_{Q,\gamma_2}^{\Sp}\rightarrow \mathcal{H}_{Q,\gamma_1}^{\Sp}\otimes\mathcal{H}_{Q,\gamma_2}^{\Sp}\otimes_{A_{\gamma_1+\gamma_2}}A_{\gamma_1,\gamma_2}
%\]
%and
%\begin{align*}
%\mathcal{H}_{Q,\gamma_1}^{\Sp}\otimes\mathcal{H}_{Q,\gamma_2}^{\Sp}\otimes\mathcal{H}_{Q,\gamma_3}^{\Sp}\otimes\mathcal{H}_{Q,\gamma_4}^{\Sp}\rightarrow
%\mathcal{H}_{Q,\gamma_1}^{\Sp}\otimes\mathcal{H}_{Q,\gamma_2}^{\Sp}\otimes\mathcal{H}_{Q,\gamma_3}^{\Sp}\otimes\mathcal{H}_{Q,\gamma_4}^{\Sp}\otimes_{A_{\gamma}}A_{\gamma^0_1,\gamma^0_2}
%\end{align*}
%are too, where here we set 
%\begin{align*}
%\gamma=&\gamma_1+\gamma_2+\gamma_3+\gamma_4\\
%\gamma^0_1=&\gamma_1+\gamma_3\\
%\gamma^0_2=&\gamma_2+\gamma_4.
%\end{align*}
%\end{corollary}
%This injectivity is required in the definition of a localised bialgebra structure (see Definition \ref{lbs}).
\subsection{Definition of the $\Ts_{\gamma}$-equivariant multiplication}  
\label{TCOHA}
Let the finite group $H$ act freely on the topological space $X$, with $X\xrightarrow{p} X/H$ the Galois cover.  Then $p_*\mathbb{Q}_X\cong p_*p^*\mathbb{Q}_{X/H}$ is a sheaf of $\mathbb{Q}[H]$-modules, and we have the following commutative diagram, in which the vertical maps are canonical isomorphisms
\[
\xymatrix{
\ar[d](p_*\mathbb{Q}_X)^H\ar[r]&p_*\mathbb{Q}_X\ar[r]\ar[dr]_b& (p_*\mathbb{Q}_X)_H\ar[d]
\\ \mathbb{Q}_{X/H}\ar[rr]^-c\ar[ur]_a&&\mathbb{Q}_{X/H}
}
\]
the map $a$ is the adjunction map $\mathbb{Q}_{X/H}\rightarrow p_*p^*\mathbb{Q}_{X/H}$ and the map $b$ is obtained from it via Verdier duality.  The isomorphism $c$ is the multiplication by $|H|$ map.  Let $f$ be a regular function on $X/H$.  As usual, we apply $\phi_f$ to these maps, and then restrict to a given subspace $Z\subset X/H$ to obtain the pushforward map in the following diagram
\[
\xymatrix{
\Ho_c(p^{-1}(Z),\varphi_{fp})^{\vee}\ar[r]^-{p_*}\ar[dr]^{\cong}&  \Ho_c(Z,\varphi_f)^{\vee}\\
&(\Ho_c(Z,\varphi_f)_{H})^{\vee}\ar@{^{(}->}[u]
}
\]
where the vertical map is the dual of the quotient map, and the pullback map in the following diagram 
\[
\xymatrix{
\Ho_c(p^{-1}(Z),\varphi_{fp})^{\vee}&  \Ho_c(Z,\varphi_f)^{\vee}\ar[l]_-{p^*}\ar@{->>}[d]\\
&(\Ho_c(Z,\varphi_f)^{H})^{\vee}\ar[ul]_{\cong}
}
\]
Since $ba=\cdot |H|$, we deduce that $p_*p^*=\cdot |H|$.

Define $\Ts_{\gamma}:=\prod_{i\in Q_0} (\Cp^*)^{\gamma(i)}$.  After choosing an ordered basis for $\bigoplus_{i\in Q_0} \Cp^{\gamma(i)}$ respecting the direct sum decomposition, there is a natural inclusion $\Ts_{\gamma}\subset \Gl_{\gamma}$; we consider the natural ordered basis.  Let $\No(\Ts_{\gamma})$ be the normalizer of $\Ts_{\gamma}$ inside $\Gl_{\gamma}$.  For every natural number $N$ there are morphisms
\begin{equation}
\label{nin}
s_N\colon \overline{(\RS_{\gamma},\No(\Ts_{\gamma}))}_N\rightarrow \overline{(\RS_{\gamma},\Gl_{\gamma})}_N
\end{equation}
induced by the inclusion $\No(\Ts_{\gamma})\subset \Gl_{\gamma}$.  As with equation (\ref{phibs}), the maps (\ref{nin}) induce maps 
\begin{equation}
\label{aai}
\varphi_{\tr(W)_{\gamma,N}}\rightarrow s_{N,*}\varphi_{\tr(W)_{\gamma,N}}
\end{equation}
and
\[
s_{N,!}\varphi_{\tr(W)_{\gamma,N}}\state{{l(\gamma)}}\rightarrow \varphi_{\tr(W)_{\gamma,N}}
\]
where 
\begin{equation}
\label{bldef}
l(\gamma):=\sum_{i\in Q_0}(\gamma(i)^2-\gamma(i)), 
\end{equation}
and we abuse notation by denoting by $\tr(W)_{\gamma, N}$ the functions defined by $\tr(W)_{\gamma}$ on $\overline{(\RS_{\gamma},\No(\Ts_{\gamma}))}_N$ as well as on $\overline{(\RS_{\gamma},\Gl_{\gamma})}_N$.
\begin{proposition}
\label{invprop}
There are natural maps
\begin{equation}
\label{invpart}
\Theta\colon :\Ho_{c,\Ts_{\gamma}}(\RS^{\Sp}_{\gamma},\varphi_{\tr(W)_{\gamma}})_{\SG_{\gamma}}\state{{l(\gamma)}}\rightarrow\Ho_{c,\Gl_{\gamma}}(\RS^{\Sp}_{\gamma},\varphi_{\tr(W)_{\gamma}})
\end{equation}
which are isomorphisms.
\end{proposition}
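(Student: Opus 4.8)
The plan is to interpolate the group $\No(T_\gamma)$ between $T_\gamma$ and $G_\gamma$, exploiting that $\SG_\gamma\cong\No(T_\gamma)/T_\gamma$ and that $G_\gamma/\No(T_\gamma)$ has the rational cohomology of a point and complex dimension $l(\gamma)$. Concretely I would construct the map (\ref{invpart}) as a composition of two natural isomorphisms
\[
\Ho_{c,T_\gamma}(M^\Sp_\gamma,\phi_{\tr(W)_\gamma})^{\SG_\gamma}\state{{l(\gamma)}}\xrightarrow{\sim}\Ho_{c,\No(T_\gamma)}(M^\Sp_\gamma,\phi_{\tr(W)_\gamma})\state{{l(\gamma)}}\xrightarrow{\sim}\Ho_{c,G_\gamma}(M^\Sp_\gamma,\phi_{\tr(W)_\gamma}),
\]
the first coming from the finite covering $\overline{(M_\gamma,T_\gamma)}_N\to\overline{(M_\gamma,\No(T_\gamma))}_N$ and the second from the map $s_N$ of (\ref{nin}). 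Since $T_\gamma$, $\No(T_\gamma)$ and $G_\gamma$ all sit inside the same $\Gl_\Cp(n)$, the normalising twist $\state{{\dim\fr(n,N)}}$ in (\ref{eqvcdef}) is identical on all three colimits and may be ignored.

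For the first isomorphism: for $N\gg 0$ the $G_\gamma$-action, hence the $\No(T_\gamma)$-action, on $M_\gamma\times\fr(n,N)$ is free, so $t_N\colon\overline{(M_\gamma,T_\gamma)}_N\to\overline{(M_\gamma,\No(T_\gamma))}_N$ is a genuine étale Galois covering with group $\SG_\gamma$. As $t_N$ is smooth, vanishing cycles commute with $t_N^*$ by (\ref{commWithSmooth}), and the function on the source is the pullback of that on the target, so $t_N^*\phi_{\tr(W)_{\gamma,N}}\cong\phi_{\tr(W)_{\gamma,N}}$; for a Galois cover $t_{N,!}t_N^*\phi_{\tr(W)_{\gamma,N}}\cong\phi_{\tr(W)_{\gamma,N}}\otimes\QQ[\SG_\gamma]$ with $\SG_\gamma$ acting through the regular representation, so the $\SG_\gamma$-invariant subsheaf recovers $\phi_{\tr(W)_{\gamma,N}}$. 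Since $\SG_\gamma$ is finite and we work over $\QQ$, taking invariants is exact and commutes with $\Ho_c$ and with the colimit over $N$, and one checks it respects the transition maps (\ref{Gyspf}); restricting to $M^\Sp_\gamma$ by base change along $t_N$ (using $t_{N,!}=t_{N,*}$) this gives the first isomorphism.

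For the second isomorphism: $s_N$ is an analytically locally trivial fibre bundle with fibre $F=\prod_{i\in Q_0}\Gl_\Cp(\gamma(i))/\No(T_{\gamma(i)})$, and the key input is that $\Ho^*(F;\QQ)=\QQ$, concentrated in degree $0$. Indeed $\No(T_{\gamma(i)})/T_{\gamma(i)}=\SG_{\gamma(i)}$ acts freely on $\Gl_\Cp(\gamma(i))/T_{\gamma(i)}$, so over $\QQ$ we have $\Ho^*(\Gl_\Cp(\gamma(i))/\No(T_{\gamma(i)});\QQ)=\Ho^*(\Gl_\Cp(\gamma(i))/T_{\gamma(i)};\QQ)^{\SG_{\gamma(i)}}$, and $\Gl_\Cp(\gamma(i))/T_{\gamma(i)}$ is an affine bundle over the flag variety, so its rational cohomology is the $\SG_{\gamma(i)}$-coinvariant algebra, whose invariant subalgebra (the image of the symmetric polynomials) is $\QQ$ in degree $0$. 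Being a connected oriented manifold of real dimension $2l(\gamma)$ with the rational cohomology of a point, $F$ satisfies $R\Gamma_c(F,\QQ)\cong\QQ\state{{-l(\gamma)}}$ by Poincaré duality, so $s_{N,!}\QQ\cong\QQ\state{{-l(\gamma)}}$ on the base. Combining this with Proposition \ref{tensout} and Corollary \ref{ppoutside}, applied to the smooth analytically locally trivial map $s_N$, one obtains that the map $s_{N,!}\phi_{\tr(W)_{\gamma,N}}\state{{l(\gamma)}}\to\phi_{\tr(W)_{\gamma,N}}$ displayed before the statement is an isomorphism. Restricting to $M^\Sp_\gamma$ (proper base change for $s_{N,!}$), applying $\Ho_c$, passing to the colimit and checking compatibility with (\ref{Gyspf}) gives the second isomorphism; finally all maps lift to the ambient category by Proposition \ref{cruclift} and faithfulness of the fibre functor.

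The genuinely delicate points are the Borel-type computation $\Ho^*(G_\gamma/\No(T_\gamma);\QQ)=\QQ$, which reduces to the structure of the coinvariant algebra of a symmetric group, and the bookkeeping that the two displayed identifications are compatible with the transition maps (\ref{Gyspf}) defining the colimits; everything else is a formal consequence of Proposition \ref{tensout}, Corollary \ref{ppoutside}, (\ref{commWithSmooth}) and exactness of $\SG_\gamma$-invariants in characteristic zero.
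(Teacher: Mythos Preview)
Your proposal is correct and follows essentially the same strategy as the paper: factor through $\No(T_\gamma)$, use the $\SG_\gamma$-Galois cover for the first step, and use that the fibre $G_\gamma/\No(T_\gamma)$ has the rational cohomology of a point for the second. The paper phrases the second step via the map (\ref{aai}) involving $s_{N,*}$ and simply invokes the ``classical fact'' that the fibres have cohomology $\QQ$ concentrated in degree zero, whereas you work directly with $s_{N,!}$ and supply the Borel-type argument (coinvariant algebra of $\SG_n$) explicitly; these are equivalent via Verdier duality for the smooth map $s_N$, and the extra detail you give is a genuine improvement in exposition rather than a different idea.
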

Here the domain is the space of coinvariants with respect to the action --- after we dualize we will consider instead the space of invariants.  
\begin{proof}
For each $N$, there is a Galois cover
\[
w_N\colon \overline{(\RS^{\Sp}_{\gamma},\Ts_{\gamma})}_N\rightarrow \overline{(\RS^{\Sp}_{\gamma},\No(\Ts_{\gamma}))}_N
\]
with Galois group $\SG_{\gamma}$, from which we deduce that $\Ho_{c,\Ts_{\gamma}}(\RS^{\Sp}_{\gamma},w^*\varphi_{\tr(W)_{\gamma}})$ carries a $\SG_{\gamma}$-action and there is an isomorphism in compactly supported cohomology
\[
\Ho_{c,\Ts_{\gamma}}(\RS^{\Sp}_{\gamma},\varphi_{\tr(W)_{\gamma}})_{\SG_{\gamma}}\rightarrow \Ho_{c,\No(\Ts_{\gamma})}(\RS^{\Sp}_{\gamma},\varphi_{\tr(W)_{\gamma}}).
\]
It suffices to prove that 
\[
\Ho_{c,\No(\Ts_{\gamma})}(\RS^{\Sp}_{\gamma},\varphi_{\tr(W)_{\gamma}})\state{{l(\gamma)}}\rightarrow\Ho_{c,\Gl_{\gamma}}(\RS^{\Sp}_{\gamma},\varphi_{\tr(W)_{\gamma}})
\]
is an isomorphism.  This will follow from the claim that (\ref{aai}) is an isomorphism in $\Dbc{\overline{(\RS_{\gamma}^{\Sp},\Gl_{\gamma})}_N}$.  Since $s_N$ is smooth, it follows that 
\[
\varphi_{\tr(W)_{\gamma,N}\circ s_N}\cong s_N^*\varphi_{\tr(W)_{\gamma,N}},
\]
and so the claim follows from the stronger claim that if $\mathcal{F}$ is an object of $\Dbc{\overline{(\RS_{\gamma},\Gl_{\gamma})}_N}$, then the natural map $\mathcal{F}\rightarrow s_{N,*}s_N^*\mathcal{F}$ is an isomorphism.  But this follows from the classical fact that the fibres of $s_N$ have cohomology equal to $\mathbb{Q}$ in degree zero, and zero in other degrees.
\end{proof}
Similarly to \cite[Sec.6.3]{COHA} we describe cohomological Hall algebra operations on the underlying vector space
\[
\mathcal{T}^{\Sp}_{Q,W}:=\bigoplus_{\gamma\in\mathbb{N}^{Q_0}}\mathcal{T}^{\Sp}_{Q,W,\gamma},
\]
where
\[
\mathcal{T}^{\Sp}_{Q,W,\gamma}:=\left(\Ho_{c,\Ts_{\gamma}}(\RS^{\Sp}_{Q,\gamma},\varphi_{\tr(W)_{\gamma}})^{\vee}\right)^{\SG_{\gamma}}\state{{-l(\gamma)+\chi(\gamma,\gamma)/2}}.
\]
Firstly, define 
\[
\overline{\mathcal{T}}^{\Sp}_{Q,W,\gamma}:=\Ho_{c,\Ts_{\gamma}}(\RS^{\Sp}_{Q,\gamma},\varphi_{\tr(W)_{\gamma}})^{\vee}\state{{-l(\gamma)+\chi(\gamma,\gamma)/2}}.
\]
As always, we define the cohomological Hall algebra product as a composition of morphisms.  Let $\gamma_1,\gamma_2$ be dimension vectors in $\mathbb{N}^{Q_0}$, and set $\gamma=\gamma_1+\gamma_2$:
\begin{itemize}
\item
Define 
\begin{align*}
\overline{\delta}_{\Ts}\colon  &\Ho_{c,\Ts_{\gamma}}(\RS^{\Sp}_{\gamma},\varphi_{\tr(W)_{\gamma}})^{\vee}\state{{-l(\gamma)+l_0(\gamma_2,\gamma_1)+\chi(\gamma,\gamma)/2}}\rightarrow \\&\Ho_{c,\Ts_{\gamma}}(\RS^{\Sp}_{\gamma},\varphi_{\tr(W)_{\gamma}})^{\vee}[\eue(Q_0,\gamma_1,\gamma_2)^{-1}]\state{{-l(\gamma)+\chi(\gamma,\gamma)/2}}
\end{align*}
to be division by $\eue(Q_0,\gamma_1,\gamma_2)$.
\item
Define 
\begin{align*}
\underline{\overline{\zeta}}_{\Ts}\colon &\Ho_{c,\Ts_{\gamma}}(\RS^{\Sp}_{\gamma_1,\gamma_2},\varphi_{\tr(W)_{\gamma_1,\gamma_2}})^{\vee}\state{{-l(\gamma)+l_0(\gamma_2,\gamma_1)+\chi(\gamma,\gamma)/2}}\rightarrow \\&\Ho_{c,\Ts_{\gamma}}(\RS^{\Sp}_{\gamma},\varphi_{\tr(W)_{\gamma}})^{\vee}\state{{-l(\gamma)+l_0(\gamma_2,\gamma_1)+\chi(\gamma,\gamma)/2}}
\end{align*}
as the pushforward induced by the inclusion $\RS_{\gamma_1,\gamma_2}\rightarrow \RS_{\gamma}$.

\item
Define 
\begin{align*}
\overline{\alpha}_{\Ts}\colon &\Ho_{c,\Ts_{\gamma}}(\RS^{\Sp}_{\gamma_1}\times \RS^{\Sp}_{\gamma_2},\varphi_{\tr(W)_{\gamma_1}\boxplus\tr(W)_{\gamma_2}})^{\vee}\state{{-l(\gamma)+l_0(\gamma_2,\gamma_1)+l_1(\gamma_1,\gamma_2)+\chi(\gamma,\gamma)/2}}\rightarrow\\& \Ho_{c,\Ts_{\gamma}}(\RS^{\Sp}_{\gamma_1,\gamma_2},\varphi_{\tr(W)_{\gamma_1,\gamma_2}})^{\vee}\state{{-l(\gamma)+l_0(\gamma_2,\gamma_1)+\chi(\gamma,\gamma)/2}}
\end{align*}
as the pullback induced by the affine fibration $\RS_{\gamma_1,\gamma_2}\rightarrow \RS_{\gamma_1}\times \RS_{\gamma_2}$.
\end{itemize}
\begin{remark}
There is no $\overline{\beta}_{\Ts}$ in the above list.  This is because the $\beta$ map (\ref{betaMap}) is given by the pullback map induced by the passage from equivariant cohomology with respect to the $\Gl_{\gamma_1,\gamma_2}$-action to equivariant cohomology with respect to the $\Gl_{\gamma_1}\times \Gl_{\gamma_2}$-action.  When we work with the torus-equivariant CoHA, there is no analogue of this move, since the isomorphism between $\Gl_{\gamma}$-equivariant dual compactly supported cohomology and $\Ts_{\gamma}$-equivariant dual compactly supported cohomology already involves a pullback map.
\end{remark}
An easy calculation shows that
\begin{align*}
-l(\gamma)+l_0(\gamma_2,\gamma_1)+l_1(\gamma_2,\gamma_1)+\chi(\gamma,\gamma)/2=&-l(\gamma_1)-l(\gamma_2)+\chi(\gamma_1,\gamma_1)/2+\chi(\gamma_2,\gamma_2)/2-\\&-\chi(\gamma_2,\gamma_1)/2+\chi(\gamma_1,\gamma_2)/2
\end{align*}
and so the domain of $\overline{\alpha}_T$ is $\overline{\mathcal{T}}^{\Sp}_{Q,W,\gamma_1}\boxtimes^{\tw}_+\overline{\mathcal{T}}^{\Sp}_{Q,W,\gamma_2}$, after applying the Thom--Sebastiani isomorphism.
We define 
\begin{equation}
\label{mBarDef}
\overline{m}\colon \overline{\mathcal{T}}^{\Sp}_{Q,W,\gamma_1}\boxtimes_+^{\tw}\overline{\mathcal{T}}^{\Sp}_{Q,W,\gamma_2}\rightarrow\overline{\mathcal{T}}^{\Sp}_{Q,W,\gamma}[\eue(Q_0,\gamma_1,\gamma_2)^{-1}]
\end{equation}
by $\overline{m}=  \overline{\delta}_{\Ts}\underline{\overline{\zeta}}_{\Ts}\overline{\alpha}_{\Ts}\overline{\TS}_{\Ts}$.  
\begin{example}
Consider the example in which $W=0$, as in Section \ref{NoPot}.  Then 
\[
\overline{\mathcal{T}}_{Q,\gamma}=\bigotimes_{i\in Q_0} \mathbb{Q}[x_{i,1},\ldots,x_{i,\gamma(i)}]
\]
and the map (\ref{mBarDef}) is given by
\begin{align}
\label{unSymmProd}
&\overline{m}(f_1,f_2)(x_{1,1},\ldots,x_{1,\gamma_1(1)+\gamma_2(1)},\ldots,x_{n,1},\ldots,x_{n,\gamma_1(n)+\gamma_2(n)})=\\ \nonumber&f_1(x_{1,1},\ldots,x_{1,\gamma_1(1)},\ldots,x_{n,1},\ldots,x_{n,\gamma_1(n)})\cdot\\ \nonumber&f_2(x_{1,\gamma_1(1)+1},\ldots,x_{1,\gamma_1(1)+\gamma_2(1)},\ldots,x_{n,\gamma_1(n)+1},\ldots,x_{n,\gamma_1(n)+\gamma_2(n)})\cdot\\ \nonumber&\prod_{i,j\in Q_0}\prod_{\alpha=1}^{\gamma_1(i)}\nolimits\prod_{\beta=\gamma_1(j)+1}^{\gamma_1(j)+\gamma_2(j)}\nolimits(x_{j,\beta}-x_{i,\alpha})^{-b_{ij}}.
\end{align}
Note that (\ref{expform}) is obtained from (\ref{unSymmProd}) by restricting to $\SG_{\gamma_1}$-invariant $f_1$ and $\SG_{\gamma_2}$-invariant $f_2$, and summing over shuffles in order to get a $\SG_{\gamma}$-invariant $m(f_1,f_2)$.
\end{example}
Returning to the general case, the space $\overline{\mathcal{T}}^{\Sp}_{Q,W,\gamma}$ carries a $\SG_{\gamma}$-action, and by definition we have $\mathcal{T}^{\Sp}_{Q,W,\gamma}:=\overline{\mathcal{T}}_{Q,W,\gamma}^{\Sp,\SG_{\gamma}}$.  Each of the maps $\overline{\delta}_T,\underline{\overline{\zeta}}_T,\overline{\alpha}_T,\overline{\TS}_T$ are $\SG_{\gamma_1}\times\SG_{\gamma_2}$-equivariant, and restricting to invariant parts we define 
\begin{align*}
\TS_{\Ts}:=&\overline{\TS}_{\Ts}^{\SG_{\gamma_1}\times\SG_{\gamma_2}},\\
\alpha_{\Ts}:=&\overline{\alpha}_{\Ts}^{\SG_{\gamma_1}\times\SG_{\gamma_2}},\\
\underline{\zeta}_{\Ts}:=&\underline{\overline{\zeta}}_{\Ts}^{\SG_{\gamma_1}\times\SG_{\gamma_2}},\\
\delta_{\Ts}:=&\left(\sum_{\pi\in \mathcal{P}(\gamma_1,\gamma_2)}\pi\right)\overline{\delta}_{\Ts}^{\SG_{\gamma_1}\times\SG_{\gamma_2}}.
\end{align*}
Composing these maps we build a map
\[
m_{\Ts}:=\delta_{\Ts}\underline{\zeta}_{\Ts}\alpha_{\Ts}\TS_{\Ts}\colon \mathcal{T}^{\Sp}_{Q,W,\gamma_1}\otimes\mathcal{T}^{\Sp}_{Q,W,\gamma_2}\rightarrow (\Ho_{c,\Ts_{\gamma}}(\RS^{\Sp}_{\gamma},\varphi_{\tr(W)_{\gamma}})_L^{\vee})^{\SG_{\gamma}}
\]
where the subscript $L$ means we formally invert $\pi_*\eue(Q_0,\gamma_1,\gamma_2)$ for every $\pi\in\mathcal{P}(\gamma_1,\gamma_2)$.
\begin{proposition}
\label{TtoH}
Let $\gamma=\gamma_1+\gamma_2$.  Then the following diagram commutes:
\begin{equation}
\label{TtoHd}
\xymatrix@C=10pt{
(\Ho_{c,\Ts_{\gamma}}(\RS^{\Sp}_{\gamma},\varphi_{\tr(W)_{\gamma}})_L^{\vee})^{\SG_{\gamma}}\state{{\spadesuit}}&\Ho_{c,\Gl_{\gamma}}(\RS^{\Sp}_{\gamma},\varphi_{\tr(W)_{\gamma}})^{\vee}\state{\chi(\gamma,\gamma)/2}\ar[l]_-{\xi_1}
\\
(\Ho_{c,\Ts_{\gamma}}(\RS^{\Sp}_{\gamma},\varphi_{\tr(W)_{\gamma}})^{\vee})^{\SG_{\gamma_1}\times\SG_{\gamma_2}}\state{\spadesuit+l_0(\gamma_2,\gamma_1)}\ar[u]^{\delta_{\Ts}}&\Ho_{c,\Gl_{\gamma_1,\gamma_2}}(\RS^{\Sp}_{\gamma},\varphi_{\tr(W)_{\gamma}})^{\vee}\state{{\chi(\gamma,\gamma)/2}}\ar[u]^{\delta}\ar[l]_-{\xi_2}
\\
\Ho_{c,\Ts_{\gamma}}(\RS^{\Sp}_{\gamma_1,\gamma_2},\varphi_{\tr(W)_{\gamma_1,\gamma_2}})^{\vee})^{\SG_{\gamma_1}\times\SG_{\gamma_2}}\state{\diamondsuit+\heartsuit-l_1(\gamma_2,\gamma_1)}\ar[u]^{\underline{\zeta}_{\Ts}}&\Ho_{c,\Gl_{\gamma_1,\gamma_2}}(\RS_{\gamma_1,\gamma_2}^{\Sp},\varphi_{\tr(W)_{\gamma_1,\gamma_2}})^{\vee}\state{{\chi(\gamma,\gamma)/2}}\ar[u]^{\underline{\zeta}}\ar[l]_-{\xi_3}
\\
(\Ho_{c,\Ts_{\gamma}}(\RS^{\Sp}_{\gamma_1}\times \RS^{\Sp}_{\gamma_2},\varphi)^{\vee})^{\SG_{\gamma_1}\times\SG_{\gamma_2}}\state{\diamondsuit+\heartsuit}\ar[u]^{\alpha_{\Ts}}&\Ho_{c,\Gl_{\gamma_1}\times \Gl_{\gamma_2}}(\RS_{\gamma_1}^{\Sp}\times \RS_{\gamma_2}^{\Sp},\varphi)^{\vee}\state{{\heartsuit}}\ar[u]^{\beta^{-1}\alpha}\ar[l]_-{\xi_4}
}
\end{equation}

where in the last line $\varphi=\varphi_{\tr(W)_{\gamma_1}\boxplus \tr(W)_{\gamma_2}}$, all of the $\xi_t$ for $t\geq 2$ are isomorphisms, the shifts are defined by
\begin{align*}
\spadesuit=&\chi(\gamma,\gamma)-l(\gamma)
\\
\diamondsuit=&-l(\gamma_1)-l(\gamma_2),
\\
\heartsuit=&\chi(\gamma,\gamma)/2-\chi(\gamma_2,\gamma_1),
\end{align*}
and $l,l_0$ and $l_1$ are as defined in Equations (\ref{bldef}) and (\ref{ldefs}).
\end{proposition}
Note that the following identity follows from the definitions:
\[
\diamondsuit+\heartsuit-l_1(\gamma_2,\gamma_1)=\spadesuit+l_0(\gamma_2,\gamma_1),
\]
so that the shifts in the domain and the target of $\underline{\zeta}_{\Ts}$ agree --- this map does not involve a shift.
\begin{proof}
All of the $\xi_t$ are defined as the duals of the maps of Proposition \ref{invprop}.

We deal with the commutativity of the constituent squares one by one, working from top to bottom.  

\removelastskip\vskip.5\baselineskip\par\noindent{\bf (Square 1)} Consider the commutative diagram
\[
\xymatrix{
(\Ho_{c,\Ts_{\gamma}}(\RS^{\Sp}_{\gamma},\varphi_{\tr(W)_{\gamma}})^{\vee})^{\SG_{\gamma}}\ar[r]^{\cong}&(\Ho_{c,\Ts_{\gamma}}(\RS^{\Sp}_{\gamma},\varphi_{\tr(W)_{\gamma}})_{\SG_{\gamma}})^{\vee}
\\
(\Ho_{c,\Ts_{\gamma}}(\RS^{\Sp}_{\gamma},\varphi_{\tr(W)_{\gamma}})^{\vee})^{\SG_{\gamma_1}\times\SG_{\gamma_2}}\ar[r]^{\cong}\ar[u]^{\delta^+_{\Ts}}
&(\Ho_{c,\Ts_{\gamma}}(\RS^{\Sp}_{\gamma},\varphi_{\tr(W)_{\gamma}})_{\SG_{\gamma_1}\times \SG_{\gamma_2}})^{\vee}\ar[u]^{\delta'_{\Ts}}
}
\]
where $\delta'_{\Ts}$ is given by the pushforward (in dual compactly supported cohomology) along the maps 
\[
\overline{(\RS^{\Sp}_{\gamma},\No(\Ts_{\gamma_1})\times \No(\Ts_{\gamma_2}))}_N\xrightarrow{b}  \overline{(\RS^{\Sp}_{\gamma},\No(\Ts_{\gamma}))}_N
\]
and $\delta^+_{\Ts}=\left(\sum_{\pi\in \mathcal{P}(\gamma_1,\gamma_2)}\pi\right)$.  We also consider the following commutative diagram
\[
\xymatrix{
X_N\ar@/^1pc/[drr]^{d'}\ar@/_2pc/[rdd]_{c'}\\
&\ar[ul]^-i\overline{(\RS^{\Sp}_{\gamma},\No(\Ts_{\gamma_1})\times \No(\Ts_{\gamma_2}))}_N\ar[r]^-a\ar[d]^b&\overline{(\RS^{\Sp}_{\gamma},\Gl_{\gamma_1,\gamma_2})}_N\ar[d]^c\\
&\overline{(\RS^{\Sp}_{\gamma},\No(\Ts_{\gamma}))}_N\ar[r]^d&\overline{(\RS^{\Sp}_{\gamma},\Gl_{\gamma})}_N
}
\]
where the perimeter of the diagram is a Cartesian square, i.e. 
\[
X_N:=\overline{(\RS^{\Sp}_{\gamma},\No(\Ts_{\gamma}))}_N\times_{\overline{(\RS^{\Sp}_{\gamma},\Gl_{\gamma})}_N}\overline{(\RS^{\Sp}_{\gamma},\Gl_{\gamma_1,\gamma_2})}_N.
\]
Then by the proof of Proposition \ref{invprop} the maps 
\begin{align*}
\mathbb{Q}_{\overline{(\RS^{\Sp}_{\gamma},\Gl_{\gamma_1,\gamma_2})}_N}\rightarrow &a_*\mathbb{Q}_{\overline{(\RS^{\Sp}_{\gamma},\No(\Ts_{\gamma_1})\times \No(\Ts_{\gamma_2}))}_N}\\
\mathbb{Q}_{\overline{(\RS^{\Sp}_{\gamma},\Gl_{\gamma_1,\gamma_2})}_N}\rightarrow &d'_*\mathbb{Q}_{X_N}
\end{align*}
are isomorphisms, and so in turn the map
\[
d'_*\mathbb{Q}_{X_N}\rightarrow a_*\mathbb{Q}_{\overline{(\RS^{\Sp}_{\gamma},\No(\Ts_{\gamma_1})\times \No(\Ts_{\gamma_2}))}_N}
\]
is an isomorphism.  On the other hand, the map $i$ is a closed embedding, and the Euler characteristic of the normal bundle is $\eue(Q_0,\gamma_2,\gamma_1)$: if we pull back along the inclusion of a point $x\hookrightarrow \overline{(\RS^{\Sp}_{\gamma},\Gl_{\gamma})}_N$ we obtain the following diagram
\[
\xymatrix{
P(\gamma)\times \Gr(\gamma_1,\gamma)\ar[drr]\ar[ddr]\\
& \ar[ul]^-{i_x}P(\gamma_1,\gamma)\ar[d]\ar[r]^{a_x} &\Gr(\gamma_1,\gamma)\ar[d]\\
&P(\gamma)\ar[r]&x
}
\]
Here $\Gr(\gamma_1,\gamma)=\prod_{i\in Q_0} \Gr(\mathbb{C}^{\gamma_1},\mathbb{C}^{\gamma})$, and $P(\gamma)=\prod_{i\in Q_0} P(\gamma(i))$, where $P(n)$ is the space of $n$-tuples of unordered linearly independent lines in $\mathbb{C}^n$, and $P(\gamma_1,\gamma)=\prod_{i\in Q_0}P(\gamma_1(i),\gamma(i))$, where $P(n',n)$ is the space of pairs $(T',T)$ where $T\in P(n)$ and $T'\subset T$ has order $n'$.  The map $a_x$ is given by taking the span of the $T'$.  So the inclusion $i_x$ is the inclusion of the space of pairs $(\{T_i\}_{i\in Q_0},\{V_i\}_{i\in Q_0})\in P(\gamma)\times\Gr(\gamma_1,\gamma)$ such that for each $i\in Q_0$ the subspace $V_i\subset\mathbb{C}^{\gamma(i)}$ is the space spanned by the first $\gamma_1(i)$ elements of $T_i$.  The claim regarding the Euler characteristic of the normal bundle is then clear.

By Proposition \ref{maneq} the following diagram commutes
\[
\xymatrix{
\Ho_c(X_N,\varphi_{\tr(W)_N})^{\vee}\ar[r]^-{i^*}\ar[d]^{i^*}&\Ho_c(\overline{(\RS^{\Sp}_{\gamma},\No(\Ts_{\gamma_1})\times \No(\Ts_{\gamma_2}))}_N,\varphi_{\tr(W)_N})^{\vee}\ar[d]^{\id}\\
\Ho_c(\overline{(\RS^{\Sp}_{\gamma},\No(\Ts_{\gamma_1})\times \No(\Ts_{\gamma_2}))}_N,\varphi_{\tr(W)_N})^{\vee}\ar[r]^-{\id}\ar[d]^{i_*}&\Ho_c(\overline{(\RS^{\Sp}_{\gamma},\No(\Ts_{\gamma_1})\times \No(\Ts_{\gamma_2}))}_N,\varphi_{\tr(W)_N})^{\vee}\ar[d]^{\cdot \eue(Q_0,\gamma_1,\gamma_2)}\\
\Ho_c(X_N,\varphi_{\tr(W)_N})^{\vee}\ar[r]^-{i^*}&\Ho_c(\overline{(\RS^{\Sp}_{\gamma},\No(\Ts_{\gamma_1})\times \No(\Ts_{\gamma_2}))}_N,\varphi_{\tr(W)_N})^{\vee}.
}
\]
The top square trivially commutes, while commutativity of the bottom square is the content of Proposition \ref{maneq}.  The horizontal maps are isomorphisms as in Proposition \ref{invprop}, so we deduce that the composition of the leftmost vertical arrows is also given by multiplication by the Euler class $\eue(Q_0,\gamma_1,\gamma_2)$.  Putting everything together, we deduce that
\begin{align*}
b_*a^*=&c'_*i_*i^*d'^*\\
=&c'_*d'^*\circ (\cdot\eue(Q_0,\gamma_1,\gamma_2))\\
=&d^*c_*\circ (\cdot\eue(Q_0,\gamma_1,\gamma_2))
\end{align*}
where the equality $c'_*d'^*=d^*c_*$ is as in Proposition \ref{mixingprop}, so that we have the equality
\[
b_*a^*=d^*c_*\circ(\cdot \eue(Q_0,\gamma_1,\gamma_2))
\]
and so
\[
\delta\epsilon_1\circ (\cdot \eue(Q_0,\gamma_1,\gamma_2))=\epsilon_2\delta_T^+.
\]
It follows that after localising we have the equality
\[
\delta\epsilon_1=\epsilon_2\delta_T.
\]
as required.

\removelastskip\vskip.5\baselineskip\par\noindent{\bf (Square 2)} The following diagram of spaces is a commutative Cartesian diagram, in which the vertical maps are closed inclusions and the horizontal maps are smooth projections:
\[
\xymatrix{
\overline{(\RS^{\Sp}_{\gamma},\No(\Ts_{\gamma}))}_N\ar[r]&\overline{(\RS^{\Sp}_{\gamma}, \Gl_{\gamma_1,\gamma_2})}_N\\
\overline{(\RS^{\Sp}_{\gamma_1,\gamma_2},\No(\Ts_{\gamma}))}_N\ar[r]\ar[u]&\overline{(\RS^{\Sp}_{\gamma_1,\gamma_2}, \Gl_{\gamma_1,\gamma_2})}_N\ar[u].
}
\]
The maps $\zeta_T$, $\zeta$ are obtained by pushforward along the vertical maps, while $\xi_2$ and $\xi_3$ are obtained by pullback along the horizontal arrows.  Commutativity then follows by  Proposition \ref{mixingprop}.

\removelastskip\vskip.5\baselineskip\par\noindent{\bf (Square 3)}  The proof of the commutativity of the bottom square is as in the proof of the commutativity of the second square, using Proposition \ref{mixingprop} in the affine fibration case.
\end{proof}
Since the map $\epsilon_1\delta$ has image contained in $\Ho_{c,\Ts_{\gamma}}(\RS^{\Sp}_{Q,\gamma},\varphi_{\tr(W)_{\gamma}})^{\vee}$ we deduce the following corollaries.
\begin{corollary}
\label{Tfactor}
The map
\[
m_{\Ts}\colon \mathcal{T}^{\Sp}_{Q,W,\gamma_1}\boxtimes_+^{\tw}\mathcal{T}^{\Sp}_{Q,W,\gamma_2}\rightarrow(\Ho_{c,\Ts_{\gamma}}(\RS^{\Sp}_{\gamma},\varphi_{\tr(W)_{\gamma}})_L^{\vee})^{\SG_{\gamma}}
\]
factors through $\mathcal{T}^{\Sp}_{Q,W,\gamma}$, and induces an associative multiplication on $\mathcal{T}^{\Sp}_{Q,W}$, which we will also denote $m_{\Ts}$.  
\end{corollary}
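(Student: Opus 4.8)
The plan is to obtain the corollary as a formal consequence of Proposition \ref{TtoH}. That proposition packages, in the single commutative diagram (\ref{TtoHd}), the comparison between the new torus-equivariant operations and the operations of Section \ref{MOsec}: reading (\ref{TtoHd}) upward through the left-hand column and precomposing with $\TS_T$ gives exactly $m_T=\delta_T\underline{\zeta}_T\alpha_T\TS_T$ (valued a priori in the localised group $(\Ho_{c,T_{\gamma}}(M^{\Sp}_{\gamma},\phi_{\tr(W)_{\gamma}})_L^*)^{\SG_{\gamma}}$), while reading it upward through the right-hand column and precomposing with $\TS$ gives the $G$-equivariant product $m=\delta\,\underline{\zeta}\,\beta^{-1}\alpha\,\TS$ of Section \ref{MOsec}, which takes values in $\mathcal{H}^{\Sp}_{Q,W,\gamma}$. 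Since $\xi_2,\xi_3,\xi_4$ are isomorphisms, a one-line chase through (\ref{TtoHd}) produces the identity
\[
m_T=\xi_1\circ m\circ\bigl(\TS^{-1}\circ\xi_4^{-1}\circ\TS_T\bigr),
\]
and one checks (this is where Thom--Sebastiani compatibility, Remark \ref{Tis}, enters) that $\TS^{-1}\circ\xi_4^{-1}\circ\TS_T$ is the inverse of $\Psi_{\gamma_1}\boxtimes_+^{\tw}\Psi_{\gamma_2}$, where $\Psi_{\gamma}\colon\mathcal{H}^{\Sp}_{Q,W,\gamma}\to\mathcal{T}^{\Sp}_{Q,W,\gamma}$ is the dual of the isomorphism of Proposition \ref{invprop} composed with the natural identification $(V^*)^{\SG_{\gamma}}\cong(V^{\SG_{\gamma}})^*$; this $\Psi_{\gamma}$ is precisely the map $\xi_1$.

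For the factorisation through $\mathcal{T}^{\Sp}_{Q,W,\gamma}$: by construction $\xi_1=\Psi_{\gamma}$ is a map between honest (un-localised) cohomology groups, i.e.\ an isomorphism $\mathcal{H}^{\Sp}_{Q,W,\gamma}\xrightarrow{\sim}\mathcal{T}^{\Sp}_{Q,W,\gamma}$; in the diagram (\ref{TtoHd}) it is only post-composed with the inclusion $\mathcal{T}^{\Sp}_{Q,W,\gamma}\hookrightarrow(\Ho_{c,T_{\gamma}}(M^{\Sp}_{\gamma},\phi_{\tr(W)_{\gamma}})_L^*)^{\SG_{\gamma}}$, because the left-hand column involves the divisions $\overline{\delta}_T$ by $\eue(Q_0,\gamma_1,\gamma_2)$. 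Hence, since $m$ is valued in $\mathcal{H}^{\Sp}_{Q,W,\gamma}$, the displayed identity already exhibits $m_T$ as valued in $\mathcal{T}^{\Sp}_{Q,W,\gamma}$ — this is exactly the content of the remark preceding the corollary, and it is the one genuinely non-formal point of the argument (it is the analogue, for general $(Q,W)$, of the fact that the shuffle formula (\ref{expform}) returns a polynomial rather than a rational function).

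Finally, associativity: the maps $\Psi_{\gamma}$ assemble into an isomorphism of $\mathbb{Z}^{Q_0}$-graded vector spaces $\Psi\colon\mathcal{H}^{\Sp}_{Q,W}\xrightarrow{\sim}\mathcal{T}^{\Sp}_{Q,W}$, and the displayed identity rearranges, on every pair of $\mathbb{N}^{Q_0}$-graded pieces, to $\Psi\circ m=m_T\circ(\Psi\boxtimes_+^{\tw}\Psi)$; that is, $\Psi$ intertwines the two products. Since $(\mathcal{H}^{\Sp}_{Q,W},m)$ is associative by \cite[Sec.7.6]{COHA} and $\Psi$ is a bijection, writing arbitrary elements of $\mathcal{T}^{\Sp}_{Q,W}$ as images under $\Psi$ and applying the intertwining relation three times transports the associativity constraint verbatim; no three-fold version of Proposition \ref{TtoH} is needed. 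The only remaining work is the bookkeeping of the cohomological shifts $\spadesuit$, $\diamondsuit$, $\heartsuit$, which has already been carried out in the statement of Proposition \ref{TtoH}. Thus the main obstacle has in effect already been surmounted in Proposition \ref{TtoH} itself, and what is left for the corollary is the localisation observation of the middle paragraph together with the purely formal transport of associativity.
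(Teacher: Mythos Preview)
Your proof is correct and follows essentially the same approach as the paper: the paper's one-line justification is precisely that the composition $\xi_1\circ\delta$ (equivalently $\xi_1\circ m$) lands in the unlocalised $T$-equivariant cohomology, so that by commutativity of (\ref{TtoHd}) the image of $m_T$ lies in $\mathcal{T}^{\Sp}_{Q,W,\gamma}$, with associativity then transported from $(\mathcal{H}^{\Sp}_{Q,W},m)$ via the isomorphism $\Psi$. One minor remark: in the paper's convention (Corollary \ref{PsiDef}) the isomorphism $\Psi$ is written in the direction $\mathcal{T}^{\Sp}_{Q,W}\to\mathcal{H}^{\Sp}_{Q,W}$, so your $\Psi_\gamma$ is the paper's $\Psi_\gamma^{-1}$, but this is only a notational discrepancy and does not affect the argument.
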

\begin{corollary}
\label{PsiDef}
There is an isomorphism of algebras $\Psi\colon (\mathcal{T}^{\Sp}_{Q,W},m_{\Ts})\cong(\mathcal{H}^{\Sp}_{Q,W},m)$.  Equivalently, the composition
\[
\mathcal{T}^{\Sp}_{Q,W,\gamma_1}\boxtimes_{+}^{\tw}\mathcal{T}_{Q,W,\gamma_2}^{\Sp}\xrightarrow{\cong}\mathcal{H}^{\Sp}_{Q,W,\gamma_1}\boxtimes_{+}^{\tw}\mathcal{H}_{Q,W,\gamma_2}^{\Sp}\xrightarrow{m}\mathcal{H}^{\Sp}_{Q,W,\gamma}\rightarrow  (\Ho_{c,\Ts_{\gamma}}(\RS^{\Sp}_{\gamma},\varphi_{\tr(W)_{\gamma}})_L^{\vee})^{\SG_{\gamma}}
\]
is equal to the map $m_{\Ts}$, where the first and last morphisms are given by the dual of the map $\Theta$ defined in (\ref{invpart}).
\end{corollary}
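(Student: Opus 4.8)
The plan is to produce $\Psi$ degreewise from Proposition \ref{invprop} and then extract the algebra-isomorphism property directly from the commuting rectangle \eqref{TtoHd} of Proposition \ref{TtoH}, using Corollary \ref{Tfactor} to control the target.

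First I would define, for each $\gamma\in\mathbb{N}^{Q_0}$, the map $\Psi_\gamma\colon\mathcal{T}^{\Sp}_{Q,W,\gamma}\to\mathcal{H}^{\Sp}_{Q,W,\gamma}$ as the composite of the canonical identification $(V^{\SG_\gamma})^*\cong(V^*)^{\SG_\gamma}$ (valid in characteristic zero, so that taking $\SG_\gamma$-invariants commutes with dualising) with the $\state{\chi(\gamma,\gamma)/2}$-shifted dual of the isomorphism of Proposition \ref{invprop}. Since by definition $\mathcal{T}^{\Sp}_{Q,W,\gamma}$ is $(\Ho_{c,T_\gamma}(M^{\Sp}_{Q,\gamma},\phi_{\tr(W)_\gamma})^*)^{\SG_\gamma}$ carrying the shift $\state{-l(\gamma)+\chi(\gamma,\gamma)/2}$, the cohomological shifts match up and $\Psi_\gamma$ is an isomorphism in $\mathcal{C}$; setting $\Psi:=\bigoplus_\gamma\Psi_\gamma$ gives an isomorphism $\mathcal{T}^{\Sp}_{Q,W}\cong\mathcal{H}^{\Sp}_{Q,W}$ in $\mathcal{C}_Q$. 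By construction $\Psi_\gamma$ is, up to the localisation inclusion in the target, inverse to the map $\xi_1$ appearing in \eqref{TtoHd}.

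Next I would read off compatibility with multiplication. By definition $m=\delta\,\underline{\zeta}\,\beta^{-1}\alpha\,\TS$ and $m_T=\delta_T\,\underline{\zeta}_T\,\alpha_T\,\TS_T$, so the right column of \eqref{TtoHd}, precomposed with $\TS$, is $m$, while the left column, precomposed with $\TS_T$, is $m_T$; by Corollary \ref{Tfactor} the latter already lands inside $\mathcal{T}^{\Sp}_{Q,W,\gamma}$. The one remaining point is to identify the bottom arrow $\xi_4$: I claim $\xi_4\circ\TS=\TS_T\circ(\Psi_{\gamma_1}^{-1}\boxtimes\Psi_{\gamma_2}^{-1})$, i.e. that $\Psi$ intertwines the two Thom--Sebastiani maps. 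This needs a short separate argument: the frame-bundle and normaliser-cover models underlying Proposition \ref{invprop} for $G_{\gamma_1}\times G_{\gamma_2}$ are, after a cofinal reindexing, products of the models for $G_{\gamma_1}$ and $G_{\gamma_2}$ (the inclusion $\No(T_{\gamma_1})\times \No(T_{\gamma_2})\subset G_{\gamma_1}\times G_{\gamma_2}$ is literally the product of the factor inclusions, and $T_{\gamma_1}\times T_{\gamma_2}=T_{\gamma_1+\gamma_2}$), so the pullback maps of Proposition \ref{invprop} multiply under exterior products, and $\TS$ is a symmetric monoidal natural transformation by Remark \ref{Tis}. Granting this, the three interior squares of \eqref{TtoHd} being supplied in the proof of Proposition \ref{TtoH} (via Propositions \ref{mixingprop} and \ref{maneq}), the outer rectangle commutes; unwinding it gives $\Psi_\gamma\circ m_T=m\circ(\Psi_{\gamma_1}\boxtimes\Psi_{\gamma_2})$ on $\mathcal{T}^{\Sp}_{Q,W,\gamma_1}\boxtimes_+^{\tw}\mathcal{T}^{\Sp}_{Q,W,\gamma_2}$, i.e. $\Psi\colon(\mathcal{T}^{\Sp}_{Q,W},m_T)\to(\mathcal{H}^{\Sp}_{Q,W},m)$ is a homomorphism of algebras in $\mathcal{C}_Q$, hence an isomorphism since each $\Psi_\gamma$ is invertible. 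The ``equivalently'' clause is the same rectangle traversed in the order: right column then $\xi_1$ equals $\xi_4$ then left column, which is precisely the asserted equality of the composite $\mathcal{T}^{\Sp}_{Q,W,\gamma_1}\boxtimes_+^{\tw}\mathcal{T}^{\Sp}_{Q,W,\gamma_2}\to\mathcal{H}^{\Sp}_{Q,W,\gamma}\to(\Ho_{c,T_\gamma}(M^{\Sp}_\gamma,\phi_{\tr(W)_\gamma})_L^*)^{\SG_\gamma}$ with $m_T$.

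I expect the main obstacle to be exactly this Thom--Sebastiani compatibility, together with the accompanying bookkeeping of the half-integer shifts $\spadesuit$, $\diamondsuit$, $\heartsuit$ and the terms $l$, $l_0$, $l_1$ running along \eqref{TtoHd}. Checking that $\xi_4$ is the exterior product of two suitably shifted copies of $\Psi^{-1}$ takes care because the ambient embedding $G_{\gamma_1}\times G_{\gamma_2}\subset\GL(n_1+n_2)$ is not the product of the factor embeddings, so one must pass to cofinal systems of frame bundles before the product structure becomes visible; this is routine but is the only step not already packaged inside Propositions \ref{invprop} and \ref{TtoH}. Once $\Psi$ and this compatibility are in place, the rest is formal.
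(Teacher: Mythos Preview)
Your proposal is correct and follows essentially the same route as the paper: the corollary is deduced directly from the commuting rectangle \eqref{TtoHd} of Proposition \ref{TtoH} together with Corollary \ref{Tfactor}, with $\Psi$ built degreewise from the duals of the isomorphisms in Proposition \ref{invprop}. The one point you make explicit that the paper leaves implicit is the Thom--Sebastiani compatibility $\xi_4\circ\TS=\TS_T\circ(\Psi_{\gamma_1}^{-1}\boxtimes\Psi_{\gamma_2}^{-1})$; this is indeed needed to pass from the rectangle (whose bottom row sits at the level of the product space $M^{\Sp}_{\gamma_1}\times M^{\Sp}_{\gamma_2}$) to a genuine statement about $\boxtimes_+^{\tw}$, and your argument via the product structure of the normaliser inclusions and Remark \ref{Tis} is the expected one.
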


\section{The comultiplication on $\mathcal{H}^{\Sp}_{Q,W}$}
\label{comult_section}

\subsection{$Q$-localised bialgebras}
\label{QLocBi}
Recall that a $\mathbb{Q}$-linear tensor category $\mathcal{C}$ is a symmetric monoidal $\mathbb{Q}$-linear Abelian category with a monoidal unit $\mathbf{1}_{\mathcal{C}}$, for which the monoidal product is exact in both arguments.  Let $\mathcal{C}$ be as above a $\mathbb{Q}$-linear tensor category, with an exact faithful $\mathbb{Q}$-linear tensor functor $\fib\colon \mathcal{C}\rightarrow\Vect$ to the tensor category of $\mathbb{Q}$-vector spaces, and let $\state{1/2}$ be the shift functor on $\D{\mathcal{C}}$ as in Section \ref{Vsec}.
\smallbreak
We define $\SR_{\gamma}:=\mathbb{Q}[x_{1,1},\ldots,x_{1,\gamma(1)},\ldots,x_{n,1},\ldots,x_{n,\gamma(n)}]^{\SG_{\gamma}}$.  Throughout we consider $\SR_{\gamma}$ as an object of $\D{\mathcal{C}}$ in the obvious way: we first form $\overline{\SR}_{\gamma}$, the free unital commutative algebra in $\D{\mathcal{C}}$ generated by a copy of $\mathbb{Q}\state{{-1}}$ for each generator $x_{h,j}$ in $\mathbb{Q}[x_{1,1},\ldots,x_{1,\gamma(1)},\ldots,x_{n,1},\ldots,x_{n,\gamma(n)}]$, and then define $\SR_{\gamma}$ to be the $\SG_{\gamma}$ invariant part of $\overline{\SR}_{\gamma}$, which we can define, abstractly, as a kernel in each cohomological degree.  For $\gamma_1,\ldots,\gamma_r\in\mathbb{N}^{Q_0}$ an $r$-tuple of dimension vectors we define
\[
\SR_{\gamma_1,\ldots,\gamma_r}=\bigotimes_{\tau=1,\ldots,r}\mathbb{Q}[x^{(\tau)}_{1,1},\ldots,x^{(\tau)}_{1,\gamma_{\tau}(1)},\ldots,x^{(\tau)}_{n,1},\ldots,x^{(\tau)}_{n,\gamma_{\tau}(n)}]^{\SG_{\gamma_{\tau}}}
\]
where, as ever, $n$ is the number of vertices in $Q$.  We define the localised monoidal product as follows:
\[
B'\tilde{\boxtimes}^{\tw}_+B''=\bigoplus_{\gamma_1,\gamma_2\in\mathbb{N}^{Q_0}}\left(B'_{\gamma_1}\boxtimes_+^{\tw} B''_{\gamma_2}\right)\otimes_{\SR_{\gamma_1,\gamma_2}} \SR_{\gamma_1,\gamma_2}\left[\prod_{i,j\in Q_0}\prod_{\substack{s'=1,\ldots,\gamma_1(j)\\s''=1,\ldots,\gamma_2(i)}}\left(x^{(1)}_{j,s'}-x^{(2)}_{i,s''}\right)^{-1}\right]
\]
where we consider the localised algebra $\SR_{\gamma_1,\gamma_2}[\prod_{i,j\in Q_0}\prod_{\substack{s'=1,\ldots,\gamma_1(j)\\s''=1,\ldots,\gamma_2(i)}}(x^{(1)}_{j,s'}-x^{(2)}_{i,s''})^{-1}]$ as an object of $\D{\mathcal{C}}$ in the same way as $\SR_{\gamma_1,\gamma_2}$.  More generally, if $\gamma_1,\ldots,\gamma_r\in\mathbb{N}^{Q_0}$ and 
\[
S\subset \{(\tau,\mu)\in\{1,\ldots,r\}^{\times 2}|\tau\neq \mu\}
\]
then we define
\begin{align*}
&[B_{1}\boxtimes^{\tw}_+\ldots\boxtimes^{\tw}_+ B_{r}]_{S}:=\\&\bigoplus_{\gamma_1,\ldots,\gamma_r\in\mathbb{N}^{Q_0}} B_{\gamma_1}\boxtimes_+^{\tw}\ldots\boxtimes_+^{\tw} B_{\gamma_r}\otimes_{\SR_{\gamma_1,\ldots,\gamma_r}}\SR_{\gamma_1,\ldots,\gamma_r}\left[\prod_{i,j\in Q_0}\prod_{(\tau,\mu)\in S}\prod_{\substack{s'=1,\ldots\gamma_{\tau}(j)\\s''=1,\ldots,\gamma_{\mu}(i)}}(x^{(\tau)}_{j,s'}-x^{(\mu)}_{i,s''})^{-1}\right].
\end{align*}

Recall that the monoidal product $\boxtimes_+^{\tw}$ is not braided.  In contrast, under the above assumption of $\SR_{\gamma}$-actions on the $B_{\gamma}$, there is a natural isomorphism 
\begin{equation}
\label{newsw}
\tilde{\sw}_{\gamma_1,\gamma_2}\colon B_{\gamma_1}\tilde{\boxtimes}^{\tw}_+ B_{\gamma_2}\rightarrow B_{\gamma_2}\tilde{\boxtimes}^{\tw}_+ B_{\gamma_1}
\end{equation}
given by the composition $\cdot\tilde{\eue}_{\gamma_1,\gamma_2} \circ\sw_{\boxtimes_+}$ where $\tilde{\eue}_{\gamma_1,\gamma_2}$ is defined by
\begin{equation}
\label{corr_term}
\tilde{\eue}_{\gamma_1,\gamma_2}:=(-1)^{\gamma_1\cdot\gamma_2}\prod_{i,j\in Q_0}\limits\left(\prod_{m=1}^{\gamma_1(j)}\nolimits\prod_{m'=1}^{\gamma_2(i)}\nolimits(x^{(1)}_{j,m}-x^{(2)}_{i,m'})^{-b_{ij}}(x^{(2)}_{j,m'}-x^{(1)}_{i,m})^{b_{ij}}\right).
\end{equation}
\begin{remark}
We may express $\tilde{\eue}_{\gamma_1,\gamma_2}$ in terms of the operators $\eue(Q_1,\bullet,\bullet)$ and $\eue(Q_0,\bullet,\bullet)$ as follows:
\begin{equation}
\label{lwdd}
\tilde{\eue}_{\gamma',\gamma''}=\eue(Q_1,\gamma_2,\gamma_1)^{-1}\eue(Q_0,\gamma_2,\gamma_1)\eue(Q_1,\gamma_1,\gamma_2)\eue(Q_0,\gamma_1,\gamma_2)^{-1}.
\end{equation}
Equality between (\ref{lwdd}) and (\ref{corr_term}) follows from the equality 
\[
\eue(Q_0,\gamma_2,\gamma_1)\eue(Q_0,\gamma_1,\gamma_2)^{-1}=(-1)^{\gamma_1\cdot\gamma_2}.
\]
\end{remark}
In particular, $\tilde{\sw}_{\gamma_2,\gamma_1}\circ\tilde{\sw}_{\gamma_1,\gamma_2}=\id$.  Taking the direct sum of (\ref{newsw}) over all pairs $\gamma_1,\gamma_2\in\mathbb{N}^{Q_0}$ we define an isomorphism 
\begin{align*}
\tilde{\sw}:=\bigoplus_{\gamma_1,\gamma_2\in\mathbb{N}^{Q_0}}\tilde{\sw}_{\gamma_1,\gamma_2}\colon &B\tilde{\boxtimes_+^{\tw}}B\rightarrow B\tilde{\boxtimes_+^{\tw}}B, \end{align*}
swapping pairs of dimension vectors, with $\tilde{\sw}^2=\id$.  More generally, for $\gamma_1,\ldots,\gamma_r\in\mathbb{N}^{Q_0}$ and $(\tau,\mu)\in S$, we define $\tilde{\sw}_{\tau\mu}\colon [B_{1}\boxtimes^{\tw}_+\ldots\boxtimes^{\tw}_+ B_{r}]_{S}\rightarrow [B_{1}\boxtimes^{\tw}_+\ldots\boxtimes^{\tw}_+ B_{r}]_{(\tau\mu)_*S}$ by swapping the $\tau$ and the $\mu$ factors and multiplying by 
\begin{equation}
(-1)^{\gamma_{\tau}\cdot\gamma_{\mu}}\prod_{i,j\in Q_0}\limits\left(\prod_{m=1}^{\gamma_{\tau}(j)}\nolimits\prod_{m'=1}^{\gamma_{\mu}(i)}\nolimits(x^{(\tau)}_{j,m}-x^{(\mu)}_{i,m'})^{-b_{ij}}(x^{(\mu)}_{j,m'}-x^{(\tau)}_{i,m})^{b_{ij}}\right).
\end{equation}
\begin{definition}
Let $(B,m)$ be an algebra in $\D{\mathcal{C}_Q}$ with respect to the monoidal structure $\boxtimes_+^{\tw}$, such that each $\mathbb{N}^{Q_0}$-graded piece $B_{\gamma}$ carries an $\SR_{\gamma}$-action.  We say that $B$ is crosslinear if for all $i,j\in Q_0$, the map
\[
B_{\gamma_1}\boxtimes_+^{\tw}B_{\gamma_2}\boxtimes_+^{\tw}B_{\gamma_3}\boxtimes_+^{\tw}B_{\gamma_4}\xrightarrow{m\boxtimes_+^{\tw}m}B_{\gamma_1+\gamma_2}\boxtimes_+^{\tw}B_{\gamma_3+\gamma_4}
\]
commutes with multiplication by 
\begin{equation}
\label{easa}
\prod_{i,j\in Q_0}\prod_{\substack{\tau\in\{1,2\}\\ \mu\in\{3,4\}}}\prod_{m=1}^{\gamma_{\tau}(i)}\prod_{m'=1}^{\gamma_{\mu}(j)}(x_{i,m}^{(\tau)}-x_{j,m'}^{(\mu)}).
\end{equation}
Here we use the natural isomorphisms, for $T$ any ordered finite set,
\begin{align*}
&\bigotimes_{\tau\in T}\mathbb{Q}[x^{(\tau)}_{1,1},\ldots,x^{(\tau)}_{1,\gamma_{\tau}(1)},\ldots,x^{(\tau)}_{n,1},\ldots,x^{(\tau)}_{n,\gamma_{\tau}(n)}]\cong \\& \mathbb{Q}[x_{1,1},\ldots,x_{1,\sum_{\tau\in T}\gamma_{\tau}(1)},\ldots,x_{n,1},\ldots,x_{n,\sum_{\tau\in T}\gamma_{\tau}(n)}]
\end{align*}
to realise (\ref{easa}) as an element of $\SR_{\gamma_1+\gamma_2,\gamma_3+\gamma_4}$.\end{definition}
If $B$ is a crosslinear algebra with respect to the monoidal structure $\boxtimes_+^{\tw}$, then we define the product $\tilde{m}^2$ on $B\tilde{\boxtimes}_+^{\tw}B$ via the composition
\begin{equation}
\label{mtildedef}
\xymatrix{
\left(B\tilde{\boxtimes}_+^{\tw}B\right)\boxtimes_+^{\tw}\left(B\tilde{\boxtimes}_+^{\tw}B\right)\ar[r]^-=&[B\boxtimes_+^{\tw}\ldots\boxtimes_+^{\tw} B]_{(1,2),(3,4)}\ar[r] &[B\boxtimes_+^{\tw}\ldots\boxtimes_+^{\tw} B]_{(1,2),(3,2),(3,4),(1,4)}\ar[dll]_{\tilde{\sw}_{(23)}}\\
[B\boxtimes_+^{\tw}\ldots\boxtimes_+^{\tw} B]_{(1,3),(2,3),(2,4),(1,4)}\ar[r]_-{m\tilde{\boxtimes}_+^{\tw}m}&B\tilde{\boxtimes}_+^{\tw}B
}
\end{equation}
where $m\tilde{\boxtimes}_+^{\tw}m$ is the unique extension of $m\boxtimes_+^{\tw}m$ to the localisation with respect to 
\[
\prod_{i,j\in Q_0}\prod_{\substack{\tau\in\{1,2\}\\ \mu\in\{3,4\}}}\prod_{m=1}^{\gamma_{\tau}(i)}\prod_{m'=1}^{\gamma_{\mu}(j)}(x_{i,m}^{(\tau)}-x_{j,m'}^{(\mu)})^{-1},
\]
which exists by the assumption of crosslinearity for $m$.
\begin{definition}
\label{lbs}
A $Q$-localised bialgebra in $\mathcal{C}$ is the data of a unital algebra object $(B,m,\nu\colon \mathbf{1}\rightarrow B)$ in $\D{\mathcal{C}_{Q}}$ and an $\SR_{\gamma}$-module structure for each $B_{\gamma}$, such that $B$ is crosslinear.  We require also the data of a morphism (the $Q$-localised coproduct)
\begin{equation}
\Delta\colon B\rightarrow B\tilde{\boxtimes}^{\tw}_+ B,
\end{equation}
such that each map
\[
\Delta_{\gamma_1,\gamma_2}\colon B_{\gamma_1+\gamma_2}\rightarrow B_{\gamma_1}\tilde{\boxtimes}_+^{\tw}B_{\gamma_2}
\]
is $\SR_{\gamma_1+\gamma_2}$-linear, and the diagram
\[
\xymatrix{
B\ar[d]^{\Delta}\ar[rr]^-{\Delta}&& B\tilde{\boxtimes}_+^{\tw}B\ar[d]^{\id\tilde{\boxtimes}_+^{\tw}\Delta}
\\
B\tilde{\boxtimes}_+^{\tw}B\ar[rr]^-{\Delta\tilde{\boxtimes}_+^{\tw}\id}&&[B\boxtimes_+^{\tw}B\boxtimes_+^{\tw}B]_{\{(1,2),(1,3),(2,3)\}}
}
\]
defined via this linearity commutes.  We require also that the following diagram commutes
\begin{equation}
\label{commdiagB}
\xymatrix{
B\boxtimes_+^{\tw} B\ar[d]^m\ar[r]^-{\Delta\boxtimes_+^{\tw}\Delta}&B\tilde{\boxtimes}_+^{\tw} B\boxtimes_+^{\tw} B\tilde{\boxtimes}_+^{\tw} B\ar[d]^{\tilde{m}^2}\\
B\ar[r]^-{\Delta}&B\tilde{\boxtimes}_+^{\tw} B.
}
\end{equation}
Finally, we require that for all $S\subset \{(\tau,\mu)\in\{1,\ldots,r\}^{\times 2}|\tau\neq \mu\}$ the localisation map
\begin{equation}
\label{reqinj}
B^{\boxtimes_+^{\tw} r}\rightarrow [B^{\boxtimes_+^{\tw} r}]_S
\end{equation}
is injective.
\end{definition}
It is sufficient to check the final condition in the special cases in which $|S|=2$.  Before introducing the coproduct on $\mathcal{H}^{\Sp}_{Q,W}$, we verify that the background assumptions in the definition of a $Q$-localised bialgebra on the algebra $\mathcal{H}_{Q,W}^{\Sp}$ apply.  The injectivity of the map (\ref{reqinj}) is just the statement of Proposition \ref{ABprop}, so we will concentrate on the crosslinearity condition.

\begin{proposition}
\label{coCross}
The cohomological Hall algebra $\mathcal{H}^{\Sp}_{Q,W}$ is crosslinear.
\end{proposition}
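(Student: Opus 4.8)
The plan is to reduce crosslinearity to a single structural fact: the cohomological Hall algebra product is linear over the equivariant cohomology of a point. Precisely, the claim I would isolate is that the multiplication $m\colon \mathcal{H}^{\Sp}_{Q,W,\gamma_1}\boxtimes_+^{\tw}\mathcal{H}^{\Sp}_{Q,W,\gamma_2}\to \mathcal{H}^{\Sp}_{Q,W,\gamma_1+\gamma_2}$ is a morphism of $A_{\gamma_1+\gamma_2}$-modules, where $A_{\gamma_1+\gamma_2}=\Ho_{G_{\gamma_1+\gamma_2}}(\pt)$ acts on the target in the usual way and on the source through the inclusion $A_{\gamma_1+\gamma_2}\hookrightarrow A_{\gamma_1}\otimes A_{\gamma_2}$ of symmetric polynomials into $\SG_{\gamma_1}\times\SG_{\gamma_2}$-invariant ones (equivalently, through the restriction map $\Ho_{G_{\gamma_1+\gamma_2}}(\pt)\to\Ho_{G_{\gamma_1}\times G_{\gamma_2}}(\pt)$). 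The shifts in $\boxtimes_+^{\tw}$ are tensorings with trivial one-dimensional modules and so play no role.

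First I would establish this $A_{\gamma_1+\gamma_2}$-linearity by inspecting the constituent maps $m=\delta\,\underline{\zeta}\,\beta^{-1}\alpha\,\TS$ of Section \ref{MOsec}. Each of these is, by construction, a map between (duals of) compactly supported equivariant cohomology groups induced either by pullback along a $G$-equivariant affine fibration ($\alpha$, $\beta$), by proper pushforward along a $G$-equivariant map ($\underline{\zeta}$, $\delta$), or by the Thom--Sebastiani isomorphism ($\TS$). Pullbacks are tautologically linear over the equivariant cohomology of the base; proper pushforwards are linear over the equivariant cohomology of the base by the projection formula; and $\TS$ is compatible with the module structures because those are defined in Section \ref{moduleStruc} precisely via the Thom--Sebastiani isomorphism, consistently with Remark \ref{Tis}. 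The various copies of $A_{\gamma_1+\gamma_2}$ acting at each stage are all induced from $\Ho_{G_{\gamma_1+\gamma_2}}(\pt)$ by mutually compatible restriction maps, since $G_{\gamma_1,\gamma_2}$ is a parabolic in $G_{\gamma_1+\gamma_2}$ with Levi $G_{\gamma_1}\times G_{\gamma_2}$ and all the relevant classifying spaces have the same cohomology ring up to the affine fibrations appearing. Hence the composite $m$ is $A_{\gamma_1+\gamma_2}$-linear. (As a sanity check one can verify this in the shuffle model (\ref{expform}): for $g$ symmetric in all variables, $g\cdot m(f_1,f_2)=\sum_{\pi}\pi\bigl(g\cdot f_1(x^{(1)})f_2(x^{(2)})\prod(\cdots)^{-b_{ij}}\bigr)$, which matches $m$ applied to the image of $g$ in $A_{\gamma_1}\otimes A_{\gamma_2}$ acting on $f_1\otimes f_2$; one then transports this along $\Psi$ of Corollary \ref{PsiDef}.)

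Then I would deduce crosslinearity. Under the natural identifications used in the definition, the element (\ref{easa}) lies in the subalgebra $A_{\gamma_1+\gamma_2}\otimes A_{\gamma_3+\gamma_4}=A_{\gamma_1+\gamma_2,\gamma_3+\gamma_4}$ of $A_{\gamma_1,\gamma_2,\gamma_3,\gamma_4}$: for each fixed $j$, $\mu$ and $m'$, the factor $\prod_{\tau\in\{1,2\}}\prod_{m=1}^{\gamma_{\tau}(i)}(x_{i,m}^{(\tau)}-x_{j,m'}^{(\mu)})$ runs over \emph{all} $\gamma_1(i)+\gamma_2(i)$ variables at vertex $i$ coming from the first two tensor factors, hence is invariant under all of $\SG_{\gamma_1+\gamma_2}$, and symmetrically for the last two factors. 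Writing this element as $\sum_k\omega_1^{(k)}\otimes\omega_2^{(k)}$ with $\omega_1^{(k)}\in A_{\gamma_1+\gamma_2}$ and $\omega_2^{(k)}\in A_{\gamma_3+\gamma_4}$, the map $m\boxtimes_+^{\tw}m$ applied to $(\omega_1^{(k)}\otimes\omega_2^{(k)})\cdot\xi$ equals $(\omega_1^{(k)}\otimes\omega_2^{(k)})\cdot(m\boxtimes_+^{\tw}m)(\xi)$ by the $A_{\gamma_1+\gamma_2}$-linearity of $m$ on the first pair of factors and the $A_{\gamma_3+\gamma_4}$-linearity of $m$ on the second pair; summing over $k$ gives the required commutation, which is exactly crosslinearity.

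The main obstacle is the bookkeeping in the first step: being careful about which equivariant cohomology ring each stage of $m$ is a module over, and checking that the chain of restriction maps lines up so that one genuinely gets $A_{\gamma_1+\gamma_2}$-linearity rather than just $A_{\gamma_1}\otimes A_{\gamma_2}$-linearity of individual arrows. Once that is in place everything else is formal, the single non-tautological observation being that (\ref{easa}) is symmetric in the two merged blocks of variables.
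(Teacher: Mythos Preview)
Your argument is correct and follows essentially the same route as the paper. Both proofs rest on the same two observations: first, that the element (\ref{easa}) is actually $\SG_{\gamma_1+\gamma_2}\times\SG_{\gamma_3+\gamma_4}$-invariant and hence comes from $A_{\gamma_1+\gamma_2,\gamma_3+\gamma_4}$ via the restriction map to $A_{\gamma_1,\gamma_2,\gamma_3,\gamma_4}$; and second, that $m$ is $A_{\gamma_1+\gamma_2}$-linear so that $m\boxtimes_+^{\tw}m$ commutes with multiplication by anything pulled back from $A_{\gamma_1+\gamma_2,\gamma_3+\gamma_4}$. The paper states the first observation explicitly and then compresses the second into the single sentence ``The result then follows from Proposition~\ref{maneq}'', whereas you unpack the $A_{\gamma_1+\gamma_2}$-linearity of $m$ by checking it for each of the constituent maps $\TS,\alpha,\beta^{-1},\underline{\zeta},\delta$ using the projection formula and naturality of pullbacks. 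Your version is more transparent about the mechanism, but the content is the same.
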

\begin{proof}
The class
\[
\bigstar=\prod_{i,j\in Q_0}\prod_{\substack{\tau\in\{1,2\}\\ \mu\in\{3,4\}}}\prod_{m=1}^{\gamma_{\tau}(i)}\prod_{m'=1}^{\gamma_{\mu}(j)}(x_{i,m}^{(\tau)}-x_{j,m'}^{(\mu)})
\]
considered as an element of $\Ho_{\Gl_{\gamma_1}}(\pt,\QQ)\otimes\ldots\otimes\Ho_{\Gl_{\gamma_4}}(\pt,\QQ)$ arises from the class $\bigstar$, considered as an element of $\Ho_{\Gl_{\gamma_1+\gamma_2}}(\pt,\QQ)\otimes\Ho_{\Gl_{\gamma_3+\gamma_4}}(\pt,\QQ)$ under the natural map
\[
\Ho_{\Gl_{\gamma_1+\gamma_2}}(\pt,\QQ)\otimes\Ho_{\Gl_{\gamma_3+\gamma_4}}(\pt,\QQ)\rightarrow\Ho_{\Gl_{\gamma_1}}(\pt,\QQ)\otimes\ldots\otimes\Ho_{\Gl_{\gamma_4}}(\pt,\QQ).
\]
In other words, there is a vector bundle $V_{\bigstar}$ on the stack
\[
\mathfrak{N}=\pt/(\Gl_{\gamma_1+\gamma_2}\times \Gl_{\gamma_3+\gamma_4})
\]
such that multiplication by $\bigstar$ is given by multiplication by the Euler class of $V_{\bigstar}$.  On the other hand, all of the moduli stacks in all of the constituent maps going into the definition of $m\boxtimes_+^{\tw} m$ map naturally to $\mathfrak{N}$.  The result then follows from Proposition \ref{maneq}.
\end{proof}

\subsection{Comultiplication operation on $\mathcal{H}^{\Sp}_{Q,W}$}

For each decomposition $\gamma=\gamma_1+\gamma_2$ we define a map 
\begin{equation}
\Delta_{\gamma_1,\gamma_2}\colon \mathcal{H}^{\Sp}_{Q,W,\gamma}\rightarrow \mathcal{H}^{\Sp}_{Q,W,\gamma_1}\tilde{\boxtimes_+^{\tw}} \mathcal{H}^{\Sp}_{Q,W,\gamma_2}
\end{equation}
that we will show defines the structure of a $Q$-localised coproduct for $\mathcal{H}_{Q,W}^{\Sp}$.

To begin, we work with the torus-equivariant critical cohomological Hall algebra $\mathcal{T}^{\Sp}_{Q,W}\cong\mathcal{H}^{\Sp}_{Q,W}$.  We define
\begin{align*}
\overline{\Delta}_{\Ts,\gamma_1,\gamma_2}\colon &\Ho_{c,\Ts_{\gamma}}(\RS^{\Sp}_{\gamma},\varphi_{\tr(W)_{\gamma}})^{\vee}\state{{-l(\gamma)+\chi(\gamma,\gamma)/2}}\rightarrow\\& \Ho_{c,\Ts_{\gamma_1}}(\RS^{\Sp}_{\gamma_1},\varphi_{\tr(W)_{\gamma_1}})^{\vee}\tilde{\boxtimes}_+^{\tw}\Ho_{c,\Ts_{\gamma_2}}(\RS^{\Sp}_{\gamma_2},\varphi_{\tr(W)_{\gamma_2}})^{\vee}\state{{-l(\gamma_1)-l(\gamma_2)+\chi(\gamma_1,\gamma_1)/2+\chi(\gamma_2,\gamma_2)/2}}
\end{align*}
as the composition of the following maps:
\begin{itemize}
\item
Define 
\begin{align*}
\overleftarrow{\alpha}_{\Ts}\colon &\Ho_{c,\Ts_{\gamma}}\left(\RS^{\Sp}_{\gamma_2,\gamma_1},\varphi_{\tr(W)_{\gamma_2,\gamma_1}}\right)^{\vee}\state{{-l(\gamma_1)-l(\gamma_2)+\chi(\gamma,\gamma)/2-\chi(\gamma_2,\gamma_1)}} \rightarrow \\&\Ho_{c,\Ts_{\gamma}}\left(\RS^{\Sp}_{\gamma_1}\times \RS^{\Sp}_{\gamma_2},\varphi_{\tr(W)_{\gamma_1}\boxplus\tr(W)_{\gamma_2}}\right)^{\vee}[\eue(Q_1,\gamma_1,\gamma_2)^{-1}]\state{{-l(\gamma_1)-l(\gamma_2)+\chi(\gamma,\gamma)/2-\chi(\gamma_2,\gamma_1)}}
\end{align*}
as the pushforward associated to the affine fibration\footnote{Note that this is not the same affine fibration we used to define $\alpha$.} $\RS_{\gamma_2,\gamma_1}\xrightarrow{\pi} \RS_{\gamma_1}\times \RS_{\gamma_2}$.  Note that
\begin{align*}
&-l(\gamma_1)-l(\gamma_2)+\chi(\gamma_1,\gamma_1)/2+\chi(\gamma_2,\gamma_2)/2-\chi(\gamma_2,\gamma_1)/2+\chi(\gamma_1,\gamma_2)/2=\\&-l(\gamma_1)-l(\gamma_2)+\chi(\gamma,\gamma)/2-\chi(\gamma_2,\gamma_1)
\end{align*}
so that the target of $\overleftarrow{\alpha}_{\Ts}$ is the same as the target of $\overline{\Delta}_{\Ts,\gamma_1,\gamma_2}$ after applying the inverse of the Thom--Sebastiani isomorphism.  Here we are using that $\eue(Q_1,\gamma_1,\gamma_2)$ is the Euler class of $\pi$.
%, which has an obvious $\Ts_{\gamma}$-equivariant section given by the inclusion of block diagonal matrices into block lower triangular matrices.
\item
Define 
\begin{align*}
&\overleftarrow{\beta^{-1}}_{\Ts}\colon \Ho_{c,\Ts_{\gamma}}\left(\RS^{\Sp}_{\gamma_2,\gamma_1},\varphi_{\tr(W)_{\gamma_2,\gamma_1}}\right)^{\vee}\state{{-l(\gamma_1)-l(\gamma_2)+\chi(\gamma,\gamma)/2-\chi(\gamma_2,\gamma_1)-l_0(\gamma_2,\gamma_1)}}\rightarrow \\&\Ho_{c,\Ts_{\gamma}}\left(\RS^{\Sp}_{\gamma_2,\gamma_1},\varphi_{\tr(W)_{\gamma_2,\gamma_1}}\right)^{\vee}\state{{-l(\gamma_1)-l(\gamma_2)+\chi(\gamma,\gamma)/2-\chi(\gamma_2,\gamma_1)}}
\end{align*}
to be multiplication by 
\[
\prod_{i\in Q_0}\prod_{m=1}^{\gamma_1(i)}\nolimits\prod_{m'=1}^{\gamma_2(i)}\nolimits(x_{i,m}-x_{i,m'+\gamma_1(i)})=\eue(Q_0,\gamma_2,\gamma_1),
\]
where here we have made the natural identification
\[
\SR_{\gamma_1,\gamma_2}=\left(\bigotimes_{i\in Q_0} \mathbb{Q}[x_{i,1},\ldots,x_{i,\gamma_1(i)+\gamma_2(i)}]\right)^{\Sym_{\gamma_1}\times\Sym_{\gamma_2}}.
\]

\item
Define 
\begin{align*}
\overleftarrow{\underline{\zeta}}_{\Ts}\colon  &\Ho_{c,\Ts_{\gamma}}\left(\RS^{\Sp}_{\gamma},\varphi_{\tr(W)_{\gamma}}\right)^{\vee}\state{{-l(\gamma)+\chi(\gamma,\gamma)/2}}\rightarrow \\&\Ho_{c,\Ts_{\gamma}}\left(\RS^{\Sp}_{\gamma_2,\gamma_1},\varphi_{\tr(W)_{\gamma_2,\gamma_1}}\right)^{\vee}\state{{-l(\gamma_1)-l(\gamma_2)+\chi(\gamma,\gamma)/2-\chi(\gamma_2,\gamma_1)-l_0(\gamma_2,\gamma_1)}}
\end{align*}
as the pullback induced by the inclusion $\RS_{\gamma_2,\gamma_1}\rightarrow \RS_{\gamma}$.
\item
Define 
\begin{align*}
\overleftarrow{\delta}_{\Ts}\colon  &\Ho_{c,\Ts_{\gamma}}\left(\RS^{\Sp}_{\gamma_2,\gamma_1},\varphi_{\tr(W)_{\gamma_2,\gamma_1}}\right)^{\vee}\state{{-l(\gamma_1)-l(\gamma_2)+\chi(\gamma,\gamma)/2-\chi(\gamma_2,\gamma_1)+l_0(\gamma_2,\gamma_1)}}\rightarrow\\ &\Ho_{c,\Ts_{\gamma}}\left(\RS^{\Sp}_{\gamma_2,\gamma_1},\varphi_{\tr(W)_{\gamma_2,\gamma_1}}\right)^{\vee}\state{{-l(\gamma_1)-l(\gamma_2)+\chi(\gamma,\gamma)/2-\chi(\gamma_2,\gamma_1)+l_0(\gamma_2,\gamma_1)}}
\end{align*}
to be the identity map.
\end{itemize}

\begin{definition}
The map 
\[
\overline{\Delta}_{\Ts,\gamma_1,\gamma_2}\colon \overline{\mathcal{T}}^{\Sp}_{Q,W,\gamma}\rightarrow \overline{\mathcal{T}}^{\Sp}_{Q,W,\gamma_1}\tilde{\boxtimes}_+^{\tw} \overline{\mathcal{T}}^{\Sp}_{Q,W,\gamma_2}
\]
is defined by $\overline{\Delta}_{\Ts,\gamma_1,\gamma_2}=\overline{\TS}^{-1}\overleftarrow{\alpha}_{\Ts}\overleftarrow{\beta^{-1}}_{\Ts}\overleftarrow{\underline{\zeta}}_{\Ts}\overleftarrow{\delta}_{\Ts}
$
and 
\[
\Delta_{\Ts,\gamma_1,\gamma_2}\colon \mathcal{T}^{\Sp}_{Q,W}\rightarrow \mathcal{T}^{\Sp}_{Q,W,\gamma_1}\tilde{\boxtimes}_+^{\tw} \mathcal{T}^{\Sp}_{Q,W,\gamma_2}
\]
is defined by restricting to the $\SG_{\gamma}$ invariant part of the domain.
The map
\[
\Delta_{\Ts}\colon \mathcal{T}^{\Sp}_{Q,W}\rightarrow \bigoplus_{\gamma_1,\gamma_2\in\mathbb{N}^{Q_0}}\mathcal{T}^{\Sp}_{Q,W,\gamma_1}\tilde{\boxtimes}_+^{\tw} \mathcal{T}^{\Sp}_{Q,W,\gamma_2}
\]
is defined to be the sum
\begin{equation}
\label{Deltadef}
\Delta_{\Ts}=\sum_{\gamma_1,\gamma_2\in\mathbb{N}^{Q_0}}\Delta_{\Ts,\gamma_1,\gamma_2}.
\end{equation}
\end{definition}
%\begin{proposition}
%Assume that $\Sp$ is grade-preserving.  Then $\Delta$ is a map in the category $\MMHS$ and preserves cohomological degree.
%\end{proposition}
\begin{definition}
We define the coproduct
\[
\Delta\colon \mathcal{H}_{Q,W}^{\Sp}\rightarrow \mathcal{H}_{Q,W}^{\Sp}\tilde{\boxtimes}^{\tw}_{+}\mathcal{H}_{Q,W}^{\Sp}
\]
via the formula
\[
\Delta:=(\Psi\tilde{\boxtimes}^{\tw}_{+}\Psi)\circ \Delta_{\Ts}\circ \Psi^{-1},
\]
where $\Psi$ is the isomorphism of Corollary \ref{PsiDef}.
\end{definition}
Since each of the maps defining $\overline{\Delta}_{\Ts}$ is $\Ho_{\Ts_{\gamma}}(\pt,\QQ)$-linear, and hence also $\Ho_{\Ts_{\gamma}}(\pt,\QQ)^{\SG_{\gamma}}$-linear, we deduce the following proposition.
\begin{proposition}
The map $\Delta$ is $\SR_{\gamma}:=\Ho_{\Gl_{\gamma}}(\pt,\QQ)$-linear.  
\end{proposition}
The following is proved in just the same way as the associativity of $m$, see \cite[Sec.2.3]{COHA}.
\begin{proposition}
The following diagram commutes
\[
\xymatrix{
\mathcal{H}_{Q,W}^{\Sp}\ar[rr]^-{\Delta}\ar[d]_{\Delta}&&\mathcal{H}_{Q,W}^{\Sp}\tilde{\boxtimes}_+^{\tw}\mathcal{H}_{Q,W}^{\Sp}\ar[d]^-{\id\tilde{\boxtimes}^{\tw}_+\Delta}
\\
\mathcal{H}_{Q,W}^{\Sp}\tilde{\boxtimes}_+^{\tw}\mathcal{H}_{Q,W}^{\Sp}
\ar[rr]^-{\Delta\tilde{\boxtimes}^{\tw}_+\id}
&&
[\mathcal{H}^{\Sp}_{Q,W}\boxtimes_+^{\tw}\mathcal{H}^{\Sp}_{Q,W}\boxtimes_+^{\tw}\mathcal{H}^{\Sp}_{Q,W}]_{\{(1,2),(1,3),(2,3)\}}.
}
\]
\end{proposition}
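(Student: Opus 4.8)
The plan is to reduce the coassociativity of $\Delta$ to a statement about the torus-equivariant model $\mathcal{T}^{\Sp}_{Q,W}$ and then to run, in its dual form, the proof of associativity of the product $m$ from \cite{COHA}. Since $\Delta=(\Psi\tilde{\boxtimes}^{\tw}_+\Psi)\circ\Delta_T\circ\Psi^{-1}$ with $\Psi$ the algebra isomorphism of Corollary \ref{PsiDef} (which by construction intertwines the localised monoidal structures), it is enough to prove that $\Delta_T$ is coassociative. A further simplification comes from Proposition \ref{ABprop}: the localisation map $\mathcal{T}^{\boxtimes_+^{\tw}3}_{Q,W}\to[\mathcal{T}^{\boxtimes_+^{\tw}3}_{Q,W}]_{\{(1,2),(1,3),(2,3)\}}$ is injective, so the two iterated comultiplications may be compared as $A_\gamma$-linear maps with rational-function coefficients, and after clearing a common denominator the comparison becomes an identity between honest umkehr maps of the kind set up in Section \ref{umkehr_sec}.

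Next I would introduce the two-step flag space $M_{\gamma_3,\gamma_2,\gamma_1}$ of $\Cp Q$-modules of dimension $\gamma=\gamma_1+\gamma_2+\gamma_3$ equipped with a flag $\Cp^{\gamma_3}\subset\Cp^{\gamma_3+\gamma_2}\subset\Cp^{\gamma}$ preserved by the arrows, together with its $N(T_\gamma)$-action, the two forgetful affine fibrations $M_{\gamma_3,\gamma_2,\gamma_1}\to M_{\gamma_3,\gamma_2+\gamma_1}$ and $M_{\gamma_3,\gamma_2,\gamma_1}\to M_{\gamma_3+\gamma_2,\gamma_1}$, the projection $M_{\gamma_3,\gamma_2,\gamma_1}\to M_{\gamma_1}\times M_{\gamma_2}\times M_{\gamma_3}$, and the inclusion $M_{\gamma_3,\gamma_2,\gamma_1}\hookrightarrow M_\gamma$. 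Out of this correspondence one assembles, exactly as in the definition of $\overline{\Delta}_{T,\gamma_1,\gamma_2}$ but with one extra filtration step (an extra $\overleftarrow{\underline{\zeta}}_T$, an extra $\overleftarrow{\beta^{-1}}_T$, and one three-fold $\overleftarrow{\alpha}_T$), a three-fold comultiplication $\Delta^{(3)}_T\colon\mathcal{T}^{\Sp}_{Q,W,\gamma}\to\mathcal{T}^{\Sp}_{Q,W,\gamma_1}\tilde{\boxtimes}_+^{\tw}\mathcal{T}^{\Sp}_{Q,W,\gamma_2}\tilde{\boxtimes}_+^{\tw}\mathcal{T}^{\Sp}_{Q,W,\gamma_3}$. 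The goal is then to show that both $(\id\tilde{\boxtimes}_+^{\tw}\Delta_T)\circ\Delta_T$ and $(\Delta_T\tilde{\boxtimes}_+^{\tw}\id)\circ\Delta_T$ equal $\Delta^{(3)}_T$; both one-step-then-one-step routes pass through the same flag $\Cp^{\gamma_3}\subset\Cp^{\gamma_3+\gamma_2}\subset\Cp^\gamma$, which is why this is possible.

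To carry out each of these two comparisons I would decompose the iterated composite into its elementary pieces ($\overleftarrow{\alpha}_T,\overleftarrow{\beta^{-1}}_T,\overleftarrow{\underline{\zeta}}_T,\overleftarrow{\delta}_T,\overline{\TS}^{-1}$ for the first splitting, then the same maps applied within one factor) and commute the second-step maps past the first-step maps, using: the compatibility of pullback with pushforward along the Cartesian squares arising from the forgetful maps and the composites of inclusions $M_{\gamma_3,\gamma_2,\gamma_1}\hookrightarrow M_{\gamma_3+\gamma_2,\gamma_1}\hookrightarrow M_\gamma$ and $M_{\gamma_3,\gamma_2,\gamma_1}\hookrightarrow M_{\gamma_3,\gamma_2+\gamma_1}\hookrightarrow M_\gamma$, whose legs are transverse closed embeddings or affine fibrations (Proposition \ref{mixingprop}, Corollary \ref{ppCor}, Corollary \ref{ppoutside}); the functoriality of pull/pushforward along such composites; the multiplicativity of the Euler classes $\eue(Q_0,-,-)$, $\eue(Q_1,-,-)$ (and of the Euler class $\eu(\pi)$ of a composite affine fibration) under refinement of a dimension-vector decomposition; and the compatibility of the factored affine fibration $M_{\gamma_3,\gamma_2,\gamma_1}\to M_{\gamma_1}\times M_{\gamma_2}\times M_{\gamma_3}$ with the Thom--Sebastiani isomorphism (Remark \ref{Tis}). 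Because $\Delta_T$ is built from the ``opposite'' flag spaces $M_{\gamma_2,\gamma_1}$, the two routes a priori differ by a $\tilde{\sw}$-type rearrangement of the three factors, and the braiding corrections $\tilde{\eue}$ built into the target $[\,\cdot\,]_{\{(1,2),(1,3),(2,3)\}}$ are precisely what absorbs that discrepancy; matching the exponents is the same Euler-class identity recorded in the remark after \eqref{lwdd}.

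The hard part will be exactly this last stage: keeping the numerous cohomological degree shifts, the signs, and the localised Euler-class denominators in register along the two routes, and verifying that every Cartesian square invoked really does satisfy the transversality or \'etale-local-triviality hypothesis needed to apply Proposition \ref{mixingprop}. Once the indexing conventions for the flag spaces are fixed this becomes a bookkeeping exercise formally parallel to the associativity proof of \cite{COHA}, with the injectivity reduction of the first paragraph guaranteeing that none of the formal inverses obstruct the argument.
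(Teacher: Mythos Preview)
Your approach is essentially the one the paper intends: the paper's own proof is the single sentence ``proved in just the same way as the associativity of $m$, see \cite{COHA}'', and your plan of passing to $\mathcal{T}^{\Sp}_{Q,W}$ via $\Psi$, introducing the two-step flag space $M_{\gamma_3,\gamma_2,\gamma_1}$, and factoring both iterated coproducts through the same triple correspondence is precisely that argument, run in reverse.

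One point of over-complication: you do not need any $\tilde{\sw}$-type rearrangement or braiding correction here. Both routes $(\id\tilde{\boxtimes}^{\tw}_+\Delta_T)\circ\Delta_T$ and $(\Delta_T\tilde{\boxtimes}^{\tw}_+\id)\circ\Delta_T$ land in $\mathcal{T}^{\Sp}_{\gamma_1}\tilde{\boxtimes}^{\tw}_+\mathcal{T}^{\Sp}_{\gamma_2}\tilde{\boxtimes}^{\tw}_+\mathcal{T}^{\Sp}_{\gamma_3}$ with the factors in the \emph{same} order, and both factor through the \emph{same} flag $\Cp^{\gamma_3}\subset\Cp^{\gamma_3+\gamma_2}\subset\Cp^{\gamma}$ (one route refines the sub, the other the quotient, but the resulting two-step flag is identical). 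The Euler-class bookkeeping is then just the multiplicativity
\[
\eue(Q_\iota,\gamma_2+\gamma_3,\gamma_1)\cdot\eue(Q_\iota,\gamma_3,\gamma_2)=\eue(Q_\iota,\gamma_3,\gamma_1+\gamma_2)\cdot\eue(Q_\iota,\gamma_2,\gamma_1),
\]
both sides being $\prod_{a<b}\eue(Q_\iota,\gamma_b,\gamma_a)$; no $\tilde{\eue}$ correction term appears. The $\tilde{\sw}$ machinery is genuinely needed only for the bialgebra compatibility (Theorem~\ref{comultalg}), where one must interchange the middle two of four tensor factors, not for coassociativity. Once you drop that red herring, the ``hard part'' you flag is a routine check parallel to \cite{COHA}.
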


\subsection{Another description of the coproduct}
We now describe directly the coproduct 
\[
\Delta\colon \mathcal{H}^{\Sp}_{Q,W}\rightarrow \mathcal{H}^{\Sp}_{Q,W}\tilde{\boxtimes_+^{\tw}}\mathcal{H}_{Q,W}^{\Sp}
\]
without reference to $\mathcal{T}^{\Sp}_{Q,W}$.  The morphisms $\alpha,\beta,\delta,\underline{\zeta}$ are as defined in Section \ref{MOsec}.  By replacing pushforwards by pullbacks, and vice versa, we arrive at morphisms
\begin{align*}
\label{pfdef}
\overleftarrow{\delta}\colon  &\Ho_{c,\Gl_{\gamma}}(\RS^{\Sp}_{\gamma},\varphi_{\tr(W)_{\gamma}})^{\vee}\state{\chi(\gamma,\gamma)/2-\chi(\gamma_1,\gamma_2)-l_1(\gamma_1,\gamma_2)+l_0(\gamma_1,\gamma_2)}\rightarrow\\& \Ho_{c,\Gl_{\gamma_1,\gamma_2}}(\RS^{\Sp}_{\gamma},\varphi_{\tr(W)_{\gamma}})^{\vee}\state{\chi(\gamma,\gamma)/2-\chi(\gamma_1,\gamma_2)-l_1(\gamma_1,\gamma_2)}
\\\\
\overleftarrow{\underline{\zeta}}\colon  &\Ho_{c,\Gl_{\gamma_1,\gamma_2}}(\RS^{\Sp}_{\gamma},\varphi_{\tr(W)_{\gamma}})^{\vee}\state{\chi(\gamma,\gamma)/2-\chi(\gamma_1,\gamma_2)-l_1(\gamma_1,\gamma_2)}\rightarrow\\& \Ho_{c,\Gl_{\gamma_1,\gamma_2}}(\RS^{\Sp}_{\gamma_1,\gamma_2},\varphi_{\tr(W)_{\gamma_1,\gamma_2}})^{\vee}\state{\chi(\gamma,\gamma)/2-\chi(\gamma_1,\gamma_2)}
\\\\
\overleftarrow{\alpha}\colon  &\Ho_{c,\Gl_{\gamma_1}\times \Gl_{\gamma_2}}(\RS_{\gamma_1,\gamma_2}^{\Sp},\varphi_{\tr(W)_{\gamma_1,\gamma_2}})^{\vee}\state{\chi(\gamma,\gamma)/2-\chi(\gamma_1,\gamma_2)}\rightarrow \\&\Ho_{c,\Gl_{\gamma_1}\times \Gl_{\gamma_2}}(\RS_{\gamma_1}^{\Sp}\times \RS_{\gamma_2}^{\Sp},\varphi_{\tr(W)_{\gamma_1}\boxplus\tr(W)_{\gamma_2}})^{\vee}\state{{\chi(\gamma,\gamma)/2-\chi(\gamma_1,\gamma_2)}}.
\end{align*}
\begin{proposition}
\label{UnT}
The following diagram commutes:
\[
\xymatrix{
\Ho_{c,\Gl_{\gamma}}(\RS^{\Sp}_{\gamma},\varphi_{\tr(W)_{\gamma}})^{\vee}\state{{\chi(\gamma,\gamma)/2}}\ar[d]^{\overleftarrow{\delta}}\ar[r]&(\Ho_{c,\Ts_{\gamma}}(\RS^{\Sp}_{\gamma},\varphi_{\tr(W)_{\gamma}})^{\vee})^{\SG_{\gamma}}\state{{\spadesuit}}\ar[d]^{\overleftarrow{\delta}_{\Ts}}
\\
\Ho_{c,\Gl_{\gamma_1,\gamma_2}}(\RS^{\Sp}_{\gamma},\varphi_{\tr(W)_{\gamma}})^{\vee}\state{{\chi(\gamma,\gamma)/2-l_0(\gamma_1,\gamma_2)}}\ar[d]^{\overleftarrow{\underline{\zeta}}}\ar[r]&(\Ho_{c,\Ts_{\gamma}}(\RS^{\Sp}_{\gamma},\varphi_{\tr(W)_{\gamma}})^{\vee})^{\SG_{\gamma}}\state{{\spadesuit}}\ar[d]^{\overleftarrow{\underline{\zeta}}_{\Ts}}
\\
\Ho_{c,\Gl_{\gamma_1,\gamma_2}}(\RS^{\Sp}_{\gamma_1,\gamma_2},\varphi_{\tr(W)_{\gamma}})^{\vee}\state{{\heartsuit}}\ar[d]^{\beta\cdot \eue(Q_0,\gamma_2,\gamma_1)}\ar[r]&(\Ho_{c,\Ts_{\gamma}}(\RS^{\Sp}_{\gamma_1,\gamma_2},\varphi_{\tr(W)_{\gamma}})^{\vee})^{\SG_{\gamma}}\state{{\heartsuit+\diamondsuit-l_1(\gamma_1,\gamma_2)}}\ar[d]^{\overleftarrow{\beta^{-1}}_{\Ts}}
\\
\Ho_{c,\Gl_{\gamma_1}\times \Gl_{\gamma_2}}(\RS^{\Sp}_{\gamma_1,\gamma_2},\varphi_{\tr(W)_{\gamma_1,\gamma_2}})^{\vee}\state{{\heartsuit}}\ar[d]^{\overleftarrow{\alpha}}\ar[r]&(\Ho_{c,\Ts_{\gamma}}(\RS^{\Sp}_{\gamma_1,\gamma_2},\varphi_{\tr(W)_{\gamma_1,\gamma_2}})^{\vee})^{\SG_{\gamma}}\state{{\heartsuit+\diamondsuit}}\ar[d]^{\overleftarrow{\alpha}_{\Ts}}
\\
\mathcal{H}^{\Sp}_{\gamma_1}\tilde{\boxtimes_+^{\sw}}\mathcal{H}^{\Sp}_{\gamma_2}\ar[r]&\mathcal{T}^{\Sp}_{\gamma_1}\tilde{\boxtimes_+^{\sw}}\mathcal{T}^{\Sp}_{\gamma_2}
}
\]
where 
\[
\diamondsuit=-l(\gamma_1)-l(\gamma_2),
\]
\[
\heartsuit=\chi(\gamma,\gamma)/2-\chi(\gamma_1,\gamma_2),
\]
\begin{align*}
\spadesuit=&\heartsuit+\diamondsuit-l_0(\gamma_1,\gamma_2)-l_1(\gamma_1,\gamma_2)\\
=&\chi(\gamma,\gamma)/2-l(\gamma),
\end{align*}
and the horizontal maps are the isomorphisms of Proposition \ref{TtoH}.
\end{proposition}
\begin{proof}
The top two squares contain only pullback maps, since $\overleftarrow{\delta}_{\Ts}$ is the pullback map associated to the identity map.  It follows that the top two squares commute, since the underlying diagrams of spaces commute.  Similarly, up to multiplication by the factor $\eue(Q_0, \gamma_2,\gamma_1)$, the third square is composed entirely of pullbacks in a commutative square of spaces --- note that $\overleftarrow{\beta^{-1}}_{\Ts}$ is defined to just be multiplication by $\eue(Q_0,\gamma_2,\gamma_1)$.  The bottom quadrilateral commutes by Proposition \ref{mixingprop}.
\end{proof}
\begin{corollary}
\label{simplerComult}
There is an equality of morphisms
\[
\Delta=\eue(Q_0,\gamma_2,\gamma_1)\cdot\overleftarrow{\alpha}\beta\overleftarrow{\underline{\zeta}}\overleftarrow{\delta}.
\]
\end{corollary}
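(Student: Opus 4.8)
The plan is to obtain the identity by reading off the outer square of the commutative diagram in Proposition~\ref{UnT} and then performing one elementary rearrangement. First I would identify the two columns of that diagram. Reading the right-hand column from top to bottom — $\overleftarrow{\delta}_T$ is the identity, then $\overleftarrow{\underline{\zeta}}_T$, then $\overleftarrow{\beta^{-1}}_T$ which is multiplication by $\eue(Q_0,\gamma_2,\gamma_1)$, then $\overleftarrow{\alpha}_T$ — and applying $\overline{\TS}^{-1}$ to identify the bottom-right corner with $\mathcal{T}^{\Sp}_{Q,W,\gamma_1}\tilde{\boxtimes}_+^{\tw}\mathcal{T}^{\Sp}_{Q,W,\gamma_2}$, this composite is by definition $\overline{\Delta}_{T,\gamma_1,\gamma_2}$; restricting to the $\SG_{\gamma}$-invariant part of the source turns it into $\Delta_{T,\gamma_1,\gamma_2}$. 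The left-hand column is the composite $L_{\gamma_1,\gamma_2}:=\overleftarrow{\alpha}\circ\bigl(\beta\cdot\eue(Q_0,\gamma_2,\gamma_1)\bigr)\circ\overleftarrow{\underline{\zeta}}\circ\overleftarrow{\delta}$. By Proposition~\ref{TtoH} and Corollary~\ref{PsiDef} the horizontal isomorphisms are (shifted versions of) $\Psi^{-1}$ on the top and $\Psi^{-1}\tilde{\boxtimes}_+^{\tw}\Psi^{-1}$ on the bottom.

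Second, I would chase the square. Its commutativity gives $(\Psi^{-1}\tilde{\boxtimes}_+^{\tw}\Psi^{-1})\circ L_{\gamma_1,\gamma_2}=\Delta_{T,\gamma_1,\gamma_2}\circ\Psi^{-1}$, hence $L_{\gamma_1,\gamma_2}=(\Psi\tilde{\boxtimes}_+^{\tw}\Psi)\circ\Delta_{T,\gamma_1,\gamma_2}\circ\Psi^{-1}$, which is exactly $\Delta_{\gamma_1,\gamma_2}$ by the definition of the comultiplication $\Delta$. Finally I would slide the Euler class to the front: since $\beta$ is the pullback for the passage from $G_{\gamma_1,\gamma_2}$- to $G_{\gamma_1}\times G_{\gamma_2}$-equivariant dual compactly supported cohomology — both sides being modules over $\Ho_{G_{\gamma_1,\gamma_2}}(\pt)\cong A_{\gamma_1,\gamma_2}$ — and $\overleftarrow{\alpha}$ is the pushforward $(\pi^*)^{-1}\cdot\eu(\pi)^{-1}$ of Section~\ref{umkehr_sec}, a composite of $A_{\gamma_1,\gamma_2}$-linear maps on the appropriate localisation, multiplication by $\eue(Q_0,\gamma_2,\gamma_1)\in A_{\gamma_1,\gamma_2}$ commutes past both $\beta$ and $\overleftarrow{\alpha}$. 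Thus $L_{\gamma_1,\gamma_2}=\eue(Q_0,\gamma_2,\gamma_1)\cdot\overleftarrow{\alpha}\beta\overleftarrow{\underline{\zeta}}\overleftarrow{\delta}$, and summing over all decompositions $\gamma=\gamma_1+\gamma_2$ gives the claimed equality.

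The genuine content is Proposition~\ref{UnT}, which I would take as already established; the only delicate points in the argument above are bookkeeping ones: correctly matching the $\SG_{\gamma}$-invariants and the Thom--Sebastiani identification built into the bottom-right corner of that diagram, confirming that the horizontal isomorphisms really are $\Psi^{-1}$ and its tensor square up to the indicated shifts, and verifying the $A_{\gamma_1,\gamma_2}$-linearity of $\beta$ and $\overleftarrow{\alpha}$ on the localised cohomology groups so that the Euler-class factor may legitimately be commuted to the left. I do not expect any substantive obstacle beyond these.
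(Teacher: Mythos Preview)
Your proposal is correct and takes essentially the same approach as the paper: the corollary is presented immediately after Proposition~\ref{UnT} with no separate proof, so the intended argument is precisely to read off the outer rectangle of that commutative diagram, identify the right-hand column with $\Delta_{T,\gamma_1,\gamma_2}$ and the left-hand column with $\overleftarrow{\alpha}\circ(\beta\cdot\eue(Q_0,\gamma_2,\gamma_1))\circ\overleftarrow{\underline{\zeta}}\circ\overleftarrow{\delta}$, and invoke the definition $\Delta=(\Psi\tilde{\boxtimes}^{\tw}_+\Psi)\circ\Delta_T\circ\Psi^{-1}$. Your added justification for commuting $\eue(Q_0,\gamma_2,\gamma_1)$ past $\beta$ and $\overleftarrow{\alpha}$ via $A_{\gamma_1,\gamma_2}$-linearity is correct and makes explicit a step the paper leaves implicit.
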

\begin{remark}
The map $\cdot\eue(Q_0,\gamma_2,\gamma_1)\beta$ is the natural definition of the umkehr map for $\beta^{-1}$.  So we can say that the coproduct is defined by taking all of the constituent maps in the definition of the product and replacing them with umkehr maps.
\end{remark}
\begin{remark}
Coming back to the case $W=0$ considered in the introduction, it is easy to check that the above formula for $\Delta$ recovers the naive guess for the definition of the coproduct of Section \ref{NoPot} in the symmetric case.
\end{remark}

\subsection{Proof of the main theorem}
Now we come to our main theorem regarding the operation $\Delta$.

\begin{theorem}
\label{comultalg}
Let $\gamma\in\mathbb{N}^{Q_0}$, and let $\gamma^0_1,\gamma^0_2,\gamma_1,\gamma_2$ satisfy
\begin{align}
\label{decompCond}
\gamma_1^0+\gamma_2^0=\gamma\\ \nonumber
\gamma_1+\gamma_2=\gamma.
\end{align}
Then the following diagram commutes, with $\tilde{m}^2$ defined as in (\ref{mtildedef}):
\begin{equation}
\label{commdiag}
\xymatrix{
\mathcal{H}^{\Sp}_{Q,W,\gamma^0_1}\boxtimes_+^{\tw} \mathcal{H}^{\Sp}_{Q,W,\gamma^0_2}\ar[dd]^-{\sum\limits_{\substack{\gamma'_1+\gamma'_2=\gamma^0_1\\\gamma'_3+\gamma'_4=\gamma^0_2}}\Delta_{\gamma'_1,\gamma'_2}\boxtimes_+^{\tw}\Delta_{\gamma'_3,\gamma'_4}}\ar[rrr]^-{m}
&&&
\mathcal{H}^{\Sp}_{Q,W,\gamma}\ar[dd]^{\Delta_{\gamma_1,\gamma_2}}
\\ \\
{\bigoplus\limits_{\substack{\gamma'_1+\gamma'_2=\gamma^0_1\\\gamma'_3+\gamma'_4=\gamma^0_2}}\left[
\begin{matrix}
\mathcal{H}^{\Sp}_{Q,W,\gamma'_1}\boxtimes_+^{\tw}\mathcal{H}^{\Sp}_{Q,W,\gamma_2'} \boxtimes_+^{\tw} \\ \boxtimes_+^{\tw}\mathcal{H}^{\Sp}_{Q,W,\gamma_3'}\boxtimes_+^{\tw}\mathcal{H}^{\Sp}_{Q,W,\gamma_4'} 
\end{matrix}\right]_{\left\{(1,2),(3,4)\right\}}
}
\ar[rrr]^-{\tilde{m}^{ 2}}
&&&
\mathcal{H}^{\Sp}_{Q,W,\gamma_1}\tilde{\boxtimes_+^{\tw}} \mathcal{H}^{\Sp}_{Q,W,\gamma_2}
}
\end{equation}
and the operation $\Delta$ defines a $Q$-localised bialgebra structure on $\mathcal{H}^{\Sp}_{Q,W}$ in the category $\mathcal{C}$. 
\end{theorem}
\begin{proof}
For $\gamma_1,\gamma_2,\gamma_1^0,\gamma_2^0$ as in (\ref{decompCond}), we define $\DData_{\gamma_1,\gamma_2,\gamma_1^0,\gamma_2^0}$ to be the set of $(\gamma'_1,\gamma'_2,\gamma'_3,\gamma'_4)\in(\mathbb{N}^{Q_0})^4$ such that
\begin{align*}
\gamma'_1+\gamma'_2=&\gamma_1^0\\
\gamma'_1+\gamma'_3=&\gamma_1,
\end{align*}
or if the data $(\gamma_1,\gamma_2,\gamma_1^0,\gamma_2^0)$ is fixed, we abbreviate $\DData_{\gamma_1,\gamma_2,\gamma_1^0,\gamma_2^0}$ to $\DData$.

\begin{lemma}
\label{Dround}
Let $X$ be a complex algebraic variety carrying an action of $\Gl_{\gamma}$, let $X^{\Sp}$ be a $\Gl_{\gamma}$-equivariant subvariety, and define $V=\Ho_{c,\Ts_{\gamma}}(X^{\Sp},\varphi_f)^{\vee}$.  For $\nabla=(\gamma'_1,\gamma'_2,\gamma'_3,\gamma'_4)\in\DData_{\gamma_1,\gamma_2,\gamma_1^0,\gamma_2^0}$ we consider the diagram
\[
\xymatrix{
V^{\SG_{\gamma_1^0}\times\SG_{\gamma_2^0}}\ar[r]^-a\ar[d]^-{b_{\nabla}}&V^{\SG_{\gamma}}\ar[d]^-c\\
V^{\SG_{\gamma'_1}\times\SG_{\gamma'_2}\times\SG_{\gamma'_3}\times\SG_{\gamma'_4}}\ar[r]^-{d_{\nabla}}&V^{\SG_{\gamma_1}\times\SG_{\gamma_2}}
}
\]
of underlying vector spaces (the maps we define below do not respect the cohomological grading).  We define $a$ by demanding the commutativity of the following diagram
\[
\xymatrix{
\Ho_{c,\Gl_{\gamma_1^0,\gamma_2^0}}(X^{\Sp},\varphi_f)^{\vee}\ar[d]^{\cong}\ar[r]&\Ho_{c,\Gl_{\gamma}}(X^{\Sp},\varphi_f)^{\vee}\ar[d]^{\cong}\\
V^{\SG_{\gamma}}\ar[r]^-a&V^{\SG_{\gamma_1^0}\times\SG_{\gamma_2^0}}
}
\]
where the top map is defined in the same way as $\underline{\zeta}$, and the vertical isomorphisms as in Proposition \ref{invprop}.  Similarly we define $c$ in the same way as $\overleftarrow{\underline{\zeta}}$.  Likewise we define $d_{\nabla}$ as the map making the following diagram commute
\[
\xymatrix{
\Ho_{c,\Gl_{\gamma'_1,\gamma'_3}\times \Gl_{\gamma'_2,\gamma'_4}}(X^{\Sp},\varphi_f)^{\vee}\ar[d]^{\cong}\ar[r]& \Ho_{c,\Gl_{\gamma_1}\times \Gl_{\gamma_2}}(X^{\Sp},\varphi_f)^{\vee}\ar[d]^{\cong}\\
V^{\SG_{\gamma'_1}\times\SG_{\gamma'_2}\times\SG_{\gamma'_3}\times\SG_{\gamma'_4}}\ar[r]^-{d_{\nabla}}&V^{\SG_{\gamma_1}\times\SG_{\gamma_2}}
}
\]
where the top map is the pushforward, and $b_{\nabla}$ similarly, using the pullback.  Inside 
\[
\Ho_{c,\Gl_{\gamma_1}\times \Gl_{\gamma_2}}(\pt,\QQ)\cong\mathbb{Q}[x^{(1)}_{1,1},\ldots,x^{(1)}_{n,\gamma_2(n)},x^{(2)}_{1,1},\ldots,x^{(2)}_{n,\gamma_2(n)}]^{\SG_{\gamma_2}},
\]
let $\mathcal{S}_{\gamma_1,\gamma_2}$ be the the multiplicative set $\{z,z^2,\ldots\}$ with $z=\sum_{i\in Q_0}(x^{(2)}_{i,1}+\ldots x^{(2)}_{i,\gamma_2(i)})$.  Then after localising the target at $\mathcal{S}_{\gamma_1,\gamma_2}$, we have the equality of maps
\[
(\cdot \eue(Q_0,\gamma_2,\gamma_1))\circ c a=\sum_{\nabla\in\DData}(-1)^{\gamma'_2\cdot\gamma'_3}d_{\nabla}\circ (\cdot \eue(Q_0,\gamma'_2,\gamma'_1)\eue(Q_0,\gamma'_4,\gamma'_3))\circ b_{\nabla}.
\]
\end{lemma}
\begin{proof}
Consider the Cartesian diagram
\[
\xymatrix{
\overline{(X^{\Sp},\Gl_{\gamma^0_1,\gamma^0_2})}_N\ar[d]^-{s_N}&Z_N\ar[d]^-{t_N}\ar[l]^-{r_N}\\
\overline{(X^{\Sp},\Gl_{\gamma})}_N& \overline{(X^{\Sp},\Gl_{\gamma_1,\gamma_2})}_N\ar[l]^-{u_N}
}
\]
where $Z_N$ is the fibre product.  We can explicitly describe $Z_N$ as the quotient of the space $\overline{Z}_N$ by the action of $\Gl_{\gamma_1,\gamma_2}$, where $\overline{Z}_N$ is the space of triples $(x,\mathbf{f},V)$, where $x\in X^{\Sp}$, 
\[
\mathbf{f}=\left(\mathbf{f}_{1,1},\ldots,\mathbf{f}_{1,\gamma(1)},\ldots,\mathbf{f}_{n,1},\ldots,\mathbf{f}_{n,\gamma(n)}\right)\in\Fr(\sum_{i\in Q_0}\gamma(i),N)
\]
and $V$ is an $i$-tuple of vector subspaces $V_i\subset \Span(\mathbf{f}_{i,1},\ldots,\mathbf{f}_{i,\gamma(i)})$ where $\dim(V_i)=\gamma^0_1(i)$, and we let $\Gl_{\gamma_1,\gamma_2}$ act trivially on the space of such $V$.  Via the inclusion defined by $\textbf{f}$ there is a $\Gl_{\gamma}$-equivariant isomophism $\overline{Z}_N\cong \overline{Z}'_N$ where $\overline{Z}'_N$ is the space of triples $(x,V',\mathbf{f})$, where $x$ and $\mathbf{f}$ are as before, but $V'_i\subset \mathbb{C}^{\gamma_i}$ is a $\gamma_1^0(i)$-dimensional subspace of a fixed $\gamma$-dimensional vector space, and $\Gl_{\gamma_1,\gamma_2}$ acts via translation of that vector space, i.e. 
\[
V'\in\Gr(\gamma_1^0,\gamma):=\prod_{i\in Q_0}\Gr(\gamma_1^0(i),\gamma(i)).  
\]
In other words we have $Z_N\cong \overline{(X^{\Sp}\times \Gr(\gamma_1^0,\gamma),\Gl_{\gamma_1,\gamma_2})}_N$.  The map $ca$ is defined by $u^*s_*$, and by Proposition \ref{ppCor} we can write $u^*s_*=t_*r^*$.

Consider the commutative diagram of spaces
\[
\xymatrix{
\overline{(X\times \Gr(\gamma_1^0,\gamma),\Gl_{\gamma_1,\gamma_2})}_N\ar[d]^{t_N}&\ar[l](X\times \overline{\Gr(\gamma_1^0,\gamma),\Gl_{\gamma_1}\times \Gl_{\gamma_2})}_N\ar[d]^{t'_N}\\
\overline{(X,\Gl_{\gamma_1,\gamma_2})}_N&\ar[l]\overline{(X,\Gl_{\gamma_1}\times \Gl_{\gamma_2})}_N.
}
\]
We denote by $\overline{f}$ the functions on each of these spaces induced by the projection to $X$.
 Then by Proposition \ref{mixingprop} we have a commutative diagram in cohomology
\[
\xymatrix{
\Ho_{c,\Gl_{\gamma_1,\gamma_2}}(X^{\Sp}\times \Gr(\gamma_1^0,\gamma),\varphi_{\overline{f}})^{\vee}\ar[d]^{t_*}\ar[r]&\Ho_{c,\Gl_{\gamma_1}\times \Gl_{\gamma_2}}(X^{\Sp}\times \Gr(\gamma_1^0,\gamma),\varphi_{\overline{f}})^{\vee}\ar[d]^{t'_*}\\
\Ho_{c, \Gl_{\gamma_1,\gamma_2}}(X^{\Sp},\varphi_f)^{\vee}\ar[r]&\Ho_{c,\Gl_{\gamma_1}\times \Gl_{\gamma_2}}(X^{\Sp},\varphi_{\overline{f}})^{\vee}
}
\]
where the horizontal maps are the ismorphisms induced by pullback along affine fibrations.  Define $(X^{\Sp}\times \Gr(\gamma_1^0,\gamma))^{\Sph^1}$ to be the fixed point locus of $X^{\Sp}\times \Gr(\gamma_1^0,\gamma)$ under the action of 
\[
\Sph^1=\{z\in\mathbb{C}\textrm{ such that }|z|=1\}\subset \Gl_{\gamma_2}, 
\]
embedded via $z\mapsto (z\cdot\id_{\mathbb{C}^{\gamma_2(i)}})_{i\in Q_0}$.  Then we have a decomposition
\[
(X^{\Sp}\times \Gr(\gamma_1^0,\gamma))^{\Sph^1}=\coprod_{(\gamma'_1,\ldots,\gamma'_4)\in\DData}X^{\Sp,\Sph^1}\times \Gr(\gamma'_1,\gamma_1)\times\Gr(\gamma'_2,\gamma_2)
\]
and so, in particular, an inclusion 
\[
(X^{\Sp}\times \Gr(\gamma_1^0,\gamma))^{\Sph^1}\subset X^{\Sp}\times \coprod_{(\gamma'_1,\ldots,\gamma'_4)\in\DData} \Gr(\gamma'_1,\gamma_1)\times\Gr(\gamma'_2,\gamma_2)
\]
and so the bottom map in the commutative diagram 
\[
\xymatrix{
\Ho_{c,\Gl_{\gamma_1}\times \Gl_{\gamma_2}}(X^{\Sp}\times \Gr(\gamma_1^0,\gamma),\varphi_{\overline{f}})^{\vee}_{\mathcal{S}_{\gamma_1,\gamma_2}}\ar[r]^-{\res'} \ar[d]^{\cong}&\Ho_{c,\Gl_{\gamma_1}\times \Gl_{\gamma_2}}(X^{\Sp}\times\Gr(\gamma_1^0,\gamma),\varphi_{\overline{f}}|_{X\times\Gr(\gamma'_1,\gamma_1)\times\Gr(\gamma'_2,\gamma_2)})^{\vee}_{\mathcal{S}_{\gamma_1,\gamma_2}}\ar[d]^{\cong}
\\
\Ho_{\Gl_{\gamma_1}\times \Gl_{\gamma_2}}(X^{\Sp}\times \Gr(\gamma_1^0,\gamma),E)_{\mathcal{S}_{\gamma_1,\gamma_2}}\ar[r] &\Ho_{\Gl_{\gamma_1}\times \Gl_{\gamma_2}}(X^{\Sp}\times\Gr(\gamma_1^0,\gamma),E|_{X\times\Gr(\gamma'_1,\gamma_1)\times\Gr(\gamma'_2,\gamma_2)})_{\mathcal{S}_{\gamma_1,\gamma_2}}
}
\]
is an isomorphism by \cite[Thm.6.2]{GKM97}.  Here $E:=D(\phi_{\overline{f}}\mathbb{Q}_{X\times \Gr(\gamma_1^0,\gamma)}|_{X^{\Sp}\times\Gr(\gamma_1^0,\gamma)})$.  By definition, the function $\overline{f}|_{X\times \Gr(\gamma_1^0,\gamma)}$ is given by $f\boxplus 0$ on $X\times \Gr(\gamma_1^0,\gamma)$.  So by the Thom--Sebastiani isomorphism there is a chain of isomorphisms
\begin{align*}
\varphi_{\overline{f}}|_{X\times\Gr(\gamma'_1,\gamma_1)\times\Gr(\gamma'_2,\gamma_2)}\cong&\varphi_f\boxtimes \left(\mathbb{Q}_{\Gr(\gamma^0_1,\gamma)}|_{\Gr(\gamma'_1,\gamma_1)\times\Gr(\gamma'_2,\gamma_2)}\right)\\
\cong&\varphi_f\boxtimes\mathbb{Q}_{\Gr(\gamma'_1,\gamma_1)\times\Gr(\gamma'_2,\gamma_2)}\\
\cong&\varphi_{\overline{f}|_{X\times\Gr(\gamma'_1,\gamma_1)\times\Gr(\gamma'_2,\gamma_2)}}
\end{align*}
and so we deduce that the map
\[
\Ho_{c,\Gl_{\gamma_1}\times \Gl_{\gamma_2}}(X^{\Sp}\times \Gr(\gamma_1^0,\gamma),\varphi_{\overline{f}})^{\vee}_{\mathcal{S}_{\gamma_1,\gamma_2}}\xrightarrow{\res} \bigoplus_{(\gamma_1',\gamma_2',\gamma_3',\gamma_4')\in\DData}\Ho_{c,\Gl_{\gamma_1}\times \Gl_{\gamma_2}}(X^{\Sp}\times\Gr(\gamma'_1,\gamma_1)\times\Gr(\gamma'_2,\gamma_2),\varphi_{\overline{f}})^{\vee}_{\mathcal{S}_{\gamma_1,\gamma_2}}
\]
is also an isomorphism.  From the commutative diagram
\[
\xymatrix{
(X,\Gl_{\gamma'_1,\gamma'_3}\times \Gl_{\gamma'_2,\gamma'_4})_N\ar[rdd]_{p_{\gamma'_1,\gamma'_2,\gamma'_2,\gamma'_4,N}}\ar[r]^-{\cong} &(X\times \Gr(\gamma'_1,\gamma_1)\times\Gr(\gamma'_2,\gamma_2),\Gl_{\gamma_1}\times \Gl_{\gamma_2})_N\ar[d]_{q_{\gamma'_1,\gamma'_2,\gamma'_3,\gamma'_4,N}}\\
& (X\times \Gr(\gamma^0_1,\gamma),\Gl_{\gamma_1}\times \Gl_{\gamma_2})_N\ar[d]_{t'_N}\\
&(X,\Gl_{\gamma_1}\times \Gl_{\gamma_2})_N
}
\]
we deduce that 
\begin{align*}
&t'_{*,\mathcal{S}_{\gamma_1,\gamma_2}}|_{\res^{-1}(\Ho_{c,\Gl_{\gamma_1}\times \Gl_{\gamma_2}}(X^{\Sp}\times\Gr(\gamma'_1,\gamma_1)\times\Gr(\gamma'_2,\gamma_2)))}\cdot \eue(Q_0,\gamma'_4,\gamma'_1)\eue(Q_0,\gamma'_2,\gamma'_3)=\\&p_{\gamma'_1,\gamma'_2,\gamma'_3,\gamma'_4,*,\mathcal{S}_{\gamma_1,\gamma_2}}\res|_{\res^{-1}(\Ho_{c,\Gl_{\gamma_1}\times \Gl_{\gamma_2}}(X^{\Sp}\times\Gr(\gamma'_1,\gamma_1)\times\Gr(\gamma'_2,\gamma_2)))}
\end{align*}
since $\eue(Q_0,\gamma'_4,\gamma'_1)\eue(Q_0,\gamma'_3,\gamma'_2)$ is the Euler characteristic of the normal bundle to $\Gr(\gamma'_1,\gamma_1)\times\Gr(\gamma'_2,\gamma_2)$ inside $\Gr(\gamma^0_1,\gamma)$.  It follows that 
\begin{align*}
t'_{*,\mathcal{S}_{\gamma_1,\gamma_2}}\circ(\cdot \eue(Q_0,\gamma_2,\gamma_1))=&\sum_{(\gamma'_1,\ldots,\gamma'_4)\in\DData} t'_{*,\mathcal{S}_{\gamma_1,\gamma_2}}|_{\res^{-1}(\Ho_{c,\Gl_{\gamma_1}\times \Gl_{\gamma_2}}(X\times\Gr(\gamma'_1,\gamma_1)\times\Gr(\gamma'_2,\gamma_2)))}\circ \\&(\cdot\eue(Q_0,\gamma'_2,\gamma'_1)\eue(Q_0,\gamma'_4,\gamma'_1)\eue(Q_0,\gamma'_2,\gamma'_3)\eue(Q_0,\gamma'_4,\gamma'_3))
\\=&(-1)^{\gamma'_2\cdot\gamma'_3}\sum_{(\gamma'_1,\ldots,\gamma'_4)\in\DData}p_{\gamma'_1,\gamma'_2,\gamma'_3,\gamma'_4,*,\mathcal{S}_{\gamma_1,\gamma_2}}\circ (\cdot \eue(Q_0,\gamma'_2,\gamma'_1)\eue(Q_0,\gamma'_4,\gamma'_3)).
\end{align*}
Here we use the identity
\[
\eue(Q_0,\gamma'_2,\gamma'_3)=(-1)^{\gamma'_2\cdot\gamma'_3}\eue(Q_0,\gamma'_3,\gamma'_2).
\]
Since $p_{\gamma'_1,\gamma'_2,\gamma'_3,\gamma'_4}$ is the map inducing $d_{\nabla}$, and $q_{\gamma'_1,\gamma'_2,\gamma'_3,\gamma'_4,N}$ is the map inducing $b_{\nabla}$, the desired equality follows.
\end{proof}

To finish the proof of Theorem \ref{comultalg} we need to introduce some more notation.  Given $\nabla=(\gamma'_1,\gamma'_2,\gamma'_3,\gamma'_4)\in\DData_{\gamma_1,\gamma_2,\gamma_1^0,\gamma_2^0}$ and $D$ an array of dots in a four by four grid, we define $\RS^{\Sp}_{\nabla,D}\subset \RS^{\Sp}_{\gamma}$ to be the subspace containing those $\rho\in \RS^{\Sp}_{\gamma}$ such that for every $a\in Q_1$ and $c\in\{1,\ldots,4\}$,
\[
\rho\left(\mathbb{C}^{\gamma'_c(s(a))}\right)\subset\bigoplus_{\substack{c'\textrm{ such that }D\textrm{ has}\\ \textrm{a dot in position }(c',c)}}\mathbb{C}^{\gamma'_{c'}(t(a))}.
\]
So for example
\[
\RS^{\Sp}_{\nabla,\begin{picture}(12,14)\put(1,10){\circle*{2}}\put(4,7){\circle*{2}}\put(1,7){\circle*{2}}\put(7,4){\circle*{2}}\put(7,10){\circle*{2}}\put(10,7){\circle*{2}}\put(7,7){\circle*{2}}\put(10,1){\circle*{2}}\put(7,1){\circle*{2}}\end{picture}}
\]
is the subspace of $\RS^{\Sp}_{\gamma}$ of representations $\rho$ such that 
\begin{align*}
\rho (\Cp^{\gamma'_1})\subset &\Cp^{\gamma'_1}\oplus \Cp^{\gamma'_2}\\
\rho(\Cp^{\gamma'_2})\subset &  \Cp^{\gamma'_2}\\
\rho(\Cp^{\gamma'_4})\subset &\Cp^{\gamma'_2}\oplus \Cp^{\gamma'_4}.
\end{align*}
We denote by $W_{\nabla,D}$ the restriction of $\tr(W)$ to $\RS_{\nabla,D}$, and define
\[
\VV_{\nabla,D}=\Ho_{c,T_{\gamma}}(\RS^{\Sp}_{\nabla,D},W_{\nabla,D})^{\vee}.
\]
Depending on the array $D$, the space $\VV_{\nabla,D}$ carries an action of a group $G\subset \SG_{\gamma}$ where $G$ contains $\SG_{\gamma'_1}\times\SG_{\gamma'_2}\times\SG_{\gamma'_3}\times\SG_{\gamma'_4}$.  When restricting to the invariant part of the $G$-action, for $G=\SG_{\gamma''_1}\times\ldots\times\SG_{\gamma''_c}$, we abbreviate
\[
\VV_{\nabla,D}^{\SG_{\gamma''_1}\times\ldots\times\SG_{\gamma''_c}}
\]
to 
\[
\VV_{\nabla,D}^{\gamma''_1,\ldots,\gamma''_c}.
\]
Finally, if the element $\nabla$ is missing from the notation, but elements $\gamma'_1,\ldots,\gamma'_4$ appear in superscripts, we take a direct sum over all choices of $\nabla\in\Dec_{\gamma_1,\gamma_2,\gamma^0_1\gamma^0_2}$.

\begin{lemma}
\label{LemMod}
Consider the commutative diagram
\[
\xymatrix{
\VV_{\nabla,\begin{picture}(12,14)\put(1,10){\circle*{2}}\put(1,7){\circle*{2}}\put(4,7){\circle*{2}}\put(4,10){\circle*{2}}\put(7,1){\circle*{2}}\put(7,4){\circle*{2}}\put(7,7){\circle*{2}}\put(7,10){\circle*{2}}\put(10,1){\circle*{2}}\put(10,4){\circle*{2}}\put(10,7){\circle*{2}}\put(10,10){\circle*{2}}\end{picture}}^{\gamma'_1,\gamma'_2,\gamma'_3,\gamma'_4}
\ar[r]^a\ar[d]^b&
\VV_{\nabla,\begin{picture}(12,14)\put(1,10){\circle*{2}}\put(1,7){\circle*{2}}\put(4,7){\circle*{2}}\put(4,10){\circle*{2}}\put(4,4){\circle*{2}}\put(7,1){\circle*{2}}\put(7,4){\circle*{2}}\put(7,7){\circle*{2}}\put(7,10){\circle*{2}}\put(10,1){\circle*{2}}\put(10,4){\circle*{2}}\put(10,7){\circle*{2}}\put(10,10){\circle*{2}}\put(1,4){\circle*{2}}\put(1,1){\circle*{2}}\put(4,1){\circle*{2}}\end{picture}}^{\gamma'_1,\gamma'_2,\gamma'_3,\gamma'_4}\ar[d]^c
\\
\VV_{\nabla,\begin{picture}(12,14)\put(1,10){\circle*{2}}\put(4,7){\circle*{2}}\put(1,7){\circle*{2}}\put(7,4){\circle*{2}}\put(7,10){\circle*{2}}\put(10,1){\circle*{2}}\put(7,1){\circle*{2}}\put(10,7){\circle*{2}}\put(7,7){\circle*{2}}\end{picture}}^{\gamma'_1,\gamma'_2,\gamma'_3,\gamma'_4}
\ar[r]^d&
\VV_{\nabla,\begin{picture}(12,14)\put(1,10){\circle*{2}}\put(4,7){\circle*{2}}\put(1,7){\circle*{2}}\put(1,1){\circle*{2}}\put(7,4){\circle*{2}}\put(7,10){\circle*{2}}\put(10,1){\circle*{2}}\put(7,1){\circle*{2}}\put(10,7){\circle*{2}}\put(7,7){\circle*{2}}\put(1,4){\circle*{2}}\put(4,1){\circle*{2}}\end{picture}}^{\gamma'_1,\gamma'_2,\gamma'_3,\gamma'_4}
}
\]
where $a$ and $d$ are given by pushforward along inclusions and $b$ and $c$ are given by pullbacks along inclusions.  Then $c a=d b(\cdot \eue(Q_1,\gamma'_2,\gamma'_3))$.
\end{lemma}
\begin{proof}
We can extend the diagram as follows.
\[
\xymatrix{
&&\VV_{\nabla,\begin{picture}(12,14)\put(1,10){\circle*{2}}\put(1,7){\circle*{2}}\put(4,7){\circle*{2}}\put(4,10){\circle*{2}}\put(4,4){\circle*{2}}\put(7,1){\circle*{2}}\put(7,4){\circle*{2}}\put(7,7){\circle*{2}}\put(7,10){\circle*{2}}\put(10,1){\circle*{2}}\put(10,4){\circle*{2}}\put(10,7){\circle*{2}}\put(10,10){\circle*{2}}\put(1,4){\circle*{2}}\put(1,1){\circle*{2}}\put(4,1){\circle*{2}}\end{picture}}^{\gamma'_1,\gamma'_2,\gamma'_3,\gamma'_4}\ar@/^2pc/[ddl]^c\ar@/^1pc/[dl]_{i^*}
\\
\VV_{\nabla,\begin{picture}(12,14)\put(1,10){\circle*{2}}\put(1,7){\circle*{2}}\put(4,7){\circle*{2}}\put(4,10){\circle*{2}}\put(7,1){\circle*{2}}\put(7,4){\circle*{2}}\put(7,7){\circle*{2}}\put(7,10){\circle*{2}}\put(10,1){\circle*{2}}\put(10,4){\circle*{2}}\put(10,7){\circle*{2}}\put(10,10){\circle*{2}}\end{picture}}^{\gamma'_1,\gamma'_2,\gamma'_3,\gamma'_4}
\ar@/^2pc/[urr]^a\ar[d]^{b}\ar[r]^{a'}&
\VV_{\nabla,\begin{picture}(12,14)\put(1,10){\circle*{2}}\put(1,7){\circle*{2}}\put(1,4){\circle*{2}}\put(4,10){\circle*{2}}\put(4,7){\circle*{2}}\put(1,1){\circle*{2}}\put(4,1){\circle*{2}}\put(7,1){\circle*{2}}\put(7,4){\circle*{2}}\put(7,7){\circle*{2}}\put(7,10){\circle*{2}}\put(10,1){\circle*{2}}\put(10,4){\circle*{2}}\put(10,7){\circle*{2}}\put(10,10){\circle*{2}}\end{picture}}^{\gamma'_1,\gamma'_2,\gamma'_3,\gamma'_4}\ar[d]^{c'}\ar@/^1pc/[ur]^{i_*}
\\
\VV_{\nabla,\begin{picture}(12,14)\put(1,10){\circle*{2}}\put(4,7){\circle*{2}}\put(1,7){\circle*{2}}\put(7,4){\circle*{2}}\put(7,10){\circle*{2}}\put(10,1){\circle*{2}}\put(7,1){\circle*{2}}\put(10,7){\circle*{2}}\put(7,7){\circle*{2}}\end{picture}}^{\gamma'_1,\gamma'_2,\gamma'_3,\gamma'_4}
\ar[r]^d&
\VV_{\nabla,\begin{picture}(12,14)\put(1,10){\circle*{2}}\put(4,7){\circle*{2}}\put(1,7){\circle*{2}}\put(1,1){\circle*{2}}\put(7,4){\circle*{2}}\put(7,10){\circle*{2}}\put(10,1){\circle*{2}}\put(7,1){\circle*{2}}\put(10,7){\circle*{2}}\put(7,7){\circle*{2}}\put(1,4){\circle*{2}}\put(4,1){\circle*{2}}\end{picture}}^{\gamma'_1,\gamma'_2,\gamma'_3,\gamma'_4}
}
\]
where all maps are still either pullback maps or pushforward maps along the obvious inclusions, and the inner square arises from a transversal intersection of manifolds.  Then by Corollary \ref{ppCor},
\[
c' a'=d b,
\]
and by Proposition \ref{maneq}, 
\[
i^*i_*=\cdot\eue(Q_1,\gamma'_2,\gamma'_3).
\]
Since $a,i_*,a'$ are all pushforward maps, it follows that $a=i_*a'$.  Similarly we deduce that $c=c'i^*$, and the lemma follows.

\end{proof}
\begin{lemma}
Consider the diagram
\label{LemMod2}
\[
\xymatrix{
\VV_{\nabla,\begin{picture}(12,14)\put(1,10){\circle*{2}}\put(4,7){\circle*{2}}\put(1,7){\circle*{2}}\put(7,4){\circle*{2}}\put(10,1){\circle*{2}}\put(7,1){\circle*{2}}\end{picture}}^{\gamma'_1,\gamma'_2,\gamma'_3,\gamma'_4}
\ar[r]^-a\ar[d]^b&
\VV_{\nabla,\begin{picture}(12,14)\put(1,10){\circle*{2}}\put(4,7){\circle*{2}}\put(1,7){\circle*{2}}\put(7,4){\circle*{2}}\put(7,10){\circle*{2}}\put(10,7){\circle*{2}}\put(7,7){\circle*{2}}\put(10,1){\circle*{2}}\put(7,1){\circle*{2}}\end{picture}}^{\gamma'_1,\gamma'_2,\gamma'_3,\gamma'_4}\ar[d]^c
\\
\VV_{\nabla,\begin{picture}(12,14)\put(1,10){\circle*{2}}\put(4,7){\circle*{2}}\put(7,4){\circle*{2}}\put(10,1){\circle*{2}}\end{picture},(1,2),(3,4)}^{\gamma'_1,\gamma'_2,\gamma'_3,\gamma'_4}
\ar[r]^-d&
\VV_{\nabla,\begin{picture}(12,14)\put(1,10){\circle*{2}}\put(4,7){\circle*{2}}\put(7,4){\circle*{2}}\put(7,10){\circle*{2}}\put(10,7){\circle*{2}}\put(10,1){\circle*{2}}\end{picture},(1,2),(3,4),(2,3)}^{\gamma'_1,\gamma'_2,\gamma'_3,\gamma'_4}
}
\]
where all the maps are induced by pushforward or pullback along affine fibrations, so we have localised the targets of the vertical maps.  Then $ca=db\circ(\cdot \eue(Q_1,\gamma'_2,\gamma'_3)$
\end{lemma}
\begin{proof}
The proof of the lemma is the same as the proof of Lemma \ref{LemMod}, using the other half of Proposition \ref{maneq}, and using that in the following diagram of affine fibrations
\[
\xymatrix{
&&
\RS_{\nabla,\begin{picture}(12,14)\put(1,10){\circle*{2}}\put(4,7){\circle*{2}}\put(1,7){\circle*{2}}\put(7,4){\circle*{2}}\put(7,10){\circle*{2}}\put(10,7){\circle*{2}}\put(7,7){\circle*{2}}\put(10,1){\circle*{2}}\put(7,1){\circle*{2}}\end{picture}}\ar[dl]\ar[dll]\ar[ddl]
\\
\RS_{\nabla,\begin{picture}(12,14)\put(1,10){\circle*{2}}\put(4,7){\circle*{2}}\put(1,7){\circle*{2}}\put(7,4){\circle*{2}}\put(10,1){\circle*{2}}\put(7,1){\circle*{2}}\end{picture}}\ar[d]
&
\RS_{\nabla,\begin{picture}(12,14)\put(1,10){\circle*{2}}\put(4,7){\circle*{2}}\put(1,7){\circle*{2}}\put(7,4){\circle*{2}}\put(7,10){\circle*{2}}\put(10,7){\circle*{2}}\put(10,1){\circle*{2}}\put(7,1){\circle*{2}}\end{picture}}\ar[l]\ar[d]
\\
\RS_{\nabla,\begin{picture}(12,14)\put(1,10){\circle*{2}}\put(4,7){\circle*{2}}\put(7,4){\circle*{2}}\put(10,1){\circle*{2}}\end{picture}}
&
\RS_{\nabla,\begin{picture}(12,14)\put(1,10){\circle*{2}}\put(4,7){\circle*{2}}\put(7,4){\circle*{2}}\put(7,10){\circle*{2}}\put(10,7){\circle*{2}}\put(10,1){\circle*{2}}\end{picture}}\ar[l]
}
\]
the inner square is Cartesian, and the maps $a,b,c,d$ are given by pullbacks or pushforwards induced by the morphisms in the outer square.
\end{proof}
\begin{lemma}
\label{LemCatch}
The square formed by the perimeter of the diagram 
\[
\xymatrix{
\VV_{\begin{picture}(12,14)\put(1,10){\circle*{2}}\put(1,7){\circle*{2}}\put(4,7){\circle*{2}}\put(4,10){\circle*{2}}\put(4,4){\circle*{2}}\put(7,1){\circle*{2}}\put(7,4){\circle*{2}}\put(7,7){\circle*{2}}\put(7,10){\circle*{2}}\put(10,1){\circle*{2}}\put(10,4){\circle*{2}}\put(10,7){\circle*{2}}\put(10,10){\circle*{2}}\put(1,4){\circle*{2}}\put(1,1){\circle*{2}}\put(4,1){\circle*{2}}\end{picture}}^{\gamma^0_1,\gamma^0_2}\ar[r]\ar[d]\
&
\VV_{\begin{picture}(12,14)\put(1,10){\circle*{2}}\put(1,7){\circle*{2}}\put(4,7){\circle*{2}}\put(4,10){\circle*{2}}\put(4,4){\circle*{2}}\put(7,1){\circle*{2}}\put(7,4){\circle*{2}}\put(7,7){\circle*{2}}\put(7,10){\circle*{2}}\put(10,1){\circle*{2}}\put(10,4){\circle*{2}}\put(10,7){\circle*{2}}\put(10,10){\circle*{2}}\put(1,4){\circle*{2}}\put(1,1){\circle*{2}}\put(4,1){\circle*{2}}\end{picture}}^{\gamma}\ar[d]
\\
\VV_{\begin{picture}(12,14)\put(1,10){\circle*{2}}\put(1,7){\circle*{2}}\put(4,7){\circle*{2}}\put(4,10){\circle*{2}}\put(4,4){\circle*{2}}\put(7,1){\circle*{2}}\put(7,4){\circle*{2}}\put(7,7){\circle*{2}}\put(7,10){\circle*{2}}\put(10,1){\circle*{2}}\put(10,4){\circle*{2}}\put(10,7){\circle*{2}}\put(10,10){\circle*{2}}\put(1,4){\circle*{2}}\put(1,1){\circle*{2}}\put(4,1){\circle*{2}}\end{picture}}^{\gamma'_1,\gamma'_2,\gamma'_3,\gamma'_4}\ar[r]\ar[d]
&
\VV_{\begin{picture}(12,14)\put(1,10){\circle*{2}}\put(1,7){\circle*{2}}\put(4,7){\circle*{2}}\put(4,10){\circle*{2}}\put(4,4){\circle*{2}}\put(7,1){\circle*{2}}\put(7,4){\circle*{2}}\put(7,7){\circle*{2}}\put(7,10){\circle*{2}}\put(10,1){\circle*{2}}\put(10,4){\circle*{2}}\put(10,7){\circle*{2}}\put(10,10){\circle*{2}}\put(1,4){\circle*{2}}\put(1,1){\circle*{2}}\put(4,1){\circle*{2}}\end{picture}}^{\gamma_1,\gamma_2}\ar[d]
\\
\VV^{\gamma'_1,\gamma'_2,\gamma'_3,\gamma'_4}_{\begin{picture}(12,14)\put(1,10){\circle*{2}}\put(4,7){\circle*{2}}\put(1,7){\circle*{2}}\put(1,1){\circle*{2}}\put(7,4){\circle*{2}}\put(7,10){\circle*{2}}\put(10,1){\circle*{2}}\put(7,1){\circle*{2}}\put(10,7){\circle*{2}}\put(7,7){\circle*{2}}\put(1,4){\circle*{2}}\put(4,1){\circle*{2}}\end{picture}}\ar[r]
&
\VV^{\gamma_1,\gamma_2}_{\begin{picture}(12,14)\put(1,10){\circle*{2}}\put(4,7){\circle*{2}}\put(1,7){\circle*{2}}\put(1,1){\circle*{2}}\put(7,4){\circle*{2}}\put(7,10){\circle*{2}}\put(10,1){\circle*{2}}\put(7,1){\circle*{2}}\put(10,7){\circle*{2}}\put(7,7){\circle*{2}}\put(1,4){\circle*{2}}\put(4,1){\circle*{2}}\end{picture}}
}
\]
commutes.
\end{lemma}
\begin{proof}
The bottom of the two squares commutes by Corollary \ref{ppCor}, since the horizontal maps are given by pushforward along smooth submersions.  By Lemma \ref{Dround}, the top square commutes after localising with respect to $\mathcal{S}_{\gamma_1,\gamma_2}$, and so the lemma will follow from the claim that the localisation map
%Let $z_2=\sum_{i\in Q_0}\sum_{r=1}^{\gamma_2(i)}x_{i,r}^{(2)}\in\Ho_{\Gl_{\gamma_1}\times \Gl_{\gamma_2}}(\pt,\QQ)$.  Then we have $\Ho_{\Gl_{\gamma_1}\times \Gl_{\gamma_2}}(\pt,\QQ)=\mathcal{S}_{\nabla}\oplus \mathbb{Q}[z_2]$, and since the domain is a free $\mathbb{Q}[z_2]$-module, it follows that the localisation
\[
\VV^{\gamma_1,\gamma_2}_{\nabla,\begin{picture}(12,14)\put(1,10){\circle*{2}}\put(4,7){\circle*{2}}\put(1,7){\circle*{2}}\put(1,1){\circle*{2}}\put(7,4){\circle*{2}}\put(7,10){\circle*{2}}\put(10,1){\circle*{2}}\put(7,1){\circle*{2}}\put(10,7){\circle*{2}}\put(7,7){\circle*{2}}\put(1,4){\circle*{2}}\put(4,1){\circle*{2}}\end{picture}}\rightarrow \VV^{\gamma_1,\gamma_2}_{\nabla,\begin{picture}(12,14)\put(1,10){\circle*{2}}\put(4,7){\circle*{2}}\put(1,7){\circle*{2}}\put(1,1){\circle*{2}}\put(7,4){\circle*{2}}\put(7,10){\circle*{2}}\put(10,1){\circle*{2}}\put(7,1){\circle*{2}}\put(10,7){\circle*{2}}\put(7,7){\circle*{2}}\put(1,4){\circle*{2}}\put(4,1){\circle*{2}}\end{picture},\mathcal{S}_{\gamma_1,\gamma_2}}
\]
is an injection.  For this, it suffices to show that if 
\[
\alpha\in \VV^{\gamma_1,\gamma_2}_{\nabla,\begin{picture}(12,14)\put(1,10){\circle*{2}}\put(4,7){\circle*{2}}\put(1,7){\circle*{2}}\put(1,1){\circle*{2}}\put(7,4){\circle*{2}}\put(7,10){\circle*{2}}\put(10,1){\circle*{2}}\put(7,1){\circle*{2}}\put(10,7){\circle*{2}}\put(7,7){\circle*{2}}\put(1,4){\circle*{2}}\put(4,1){\circle*{2}}\end{picture}}
\]
and 
\[
w\in  \mathcal{S}_{\gamma_1,\gamma_2}, 
\]
then $w\cdot \alpha\neq 0$.  This follows from the isomorphism (not respecting cohomological shifts) $\VV^{\gamma_1,\gamma_2}_{\nabla,\begin{picture}(12,14)\put(1,10){\circle*{2}}\put(4,7){\circle*{2}}\put(1,7){\circle*{2}}\put(1,1){\circle*{2}}\put(7,4){\circle*{2}}\put(7,10){\circle*{2}}\put(10,1){\circle*{2}}\put(7,1){\circle*{2}}\put(10,7){\circle*{2}}\put(7,7){\circle*{2}}\put(1,4){\circle*{2}}\put(4,1){\circle*{2}}\end{picture}}\cong \mathcal{H}^{\Sp}_{Q,W,\gamma_1}\otimes\mathcal{H}^{\Sp}_{Q,W,\gamma_2}$ and the freeness of the $\mathbb{Q}[z]$-action on $\mathcal{H}^{\Sp}_{Q,W,\gamma_2}$, where $z$ is as in the definition of $\mathcal{S}_{\gamma_1,\gamma_2}$.
\end{proof}
We can summarise Lemmas \ref{Dround}, \ref{LemMod}, \ref{LemMod2} and \ref{LemCatch} in the following diagram
\begin{equation}
\label{bigDiag}
\xymatrix{
\VV_{\begin{picture}(12,14)\put(1,10){\circle*{2}}\put(1,7){\circle*{2}}\put(4,7){\circle*{2}}\put(4,10){\circle*{2}}\put(7,1){\circle*{2}}\put(7,4){\circle*{2}}\put(10,1){\circle*{2}}\put(10,4){\circle*{2}}\end{picture}}^{\gamma^0_1,\gamma^0_2}\ar[r]\ar[d]
&
\VV_{\begin{picture}(12,14)\put(1,10){\circle*{2}}\put(1,7){\circle*{2}}\put(4,7){\circle*{2}}\put(4,10){\circle*{2}}\put(7,1){\circle*{2}}\put(7,4){\circle*{2}}\put(7,7){\circle*{2}}\put(7,10){\circle*{2}}\put(10,1){\circle*{2}}\put(10,4){\circle*{2}}\put(10,7){\circle*{2}}\put(10,10){\circle*{2}}\end{picture}}^{\gamma^0_1,\gamma_2^0}\ar[r]\ar[d]
&
\VV_{\begin{picture}(12,14)\put(1,10){\circle*{2}}\put(1,7){\circle*{2}}\put(4,7){\circle*{2}}\put(4,10){\circle*{2}}\put(4,4){\circle*{2}}\put(7,1){\circle*{2}}\put(7,4){\circle*{2}}\put(7,7){\circle*{2}}\put(7,10){\circle*{2}}\put(10,1){\circle*{2}}\put(10,4){\circle*{2}}\put(10,7){\circle*{2}}\put(10,10){\circle*{2}}\put(1,4){\circle*{2}}\put(1,1){\circle*{2}}\put(4,1){\circle*{2}}\end{picture}}^{\gamma^0_1,\gamma^0_2}\ar[r]\ar[d]\save[]+<1.5cm,-.9cm>*\txt{Lemma \ref{Dround}}\restore
&
\VV_{\begin{picture}(12,14)\put(1,10){\circle*{2}}\put(1,7){\circle*{2}}\put(4,7){\circle*{2}}\put(4,10){\circle*{2}}\put(4,4){\circle*{2}}\put(7,1){\circle*{2}}\put(7,4){\circle*{2}}\put(7,7){\circle*{2}}\put(7,10){\circle*{2}}\put(10,1){\circle*{2}}\put(10,4){\circle*{2}}\put(10,7){\circle*{2}}\put(10,10){\circle*{2}}\put(1,4){\circle*{2}}\put(1,1){\circle*{2}}\put(4,1){\circle*{2}}\end{picture}}^{\gamma}\ar[d]
\\
\VV_{\begin{picture}(12,14)\put(1,10){\circle*{2}}\put(1,7){\circle*{2}}\put(4,7){\circle*{2}}\put(4,10){\circle*{2}}\put(7,1){\circle*{2}}\put(7,4){\circle*{2}}\put(10,1){\circle*{2}}\put(10,4){\circle*{2}}\end{picture}}^{\gamma'_1,\gamma'_2,\gamma'_3,\gamma'_4}\ar[r]\ar[d]
&
\VV_{\begin{picture}(12,14)\put(1,10){\circle*{2}}\put(1,7){\circle*{2}}\put(4,7){\circle*{2}}\put(4,10){\circle*{2}}\put(7,1){\circle*{2}}\put(7,4){\circle*{2}}\put(7,7){\circle*{2}}\put(7,10){\circle*{2}}\put(10,1){\circle*{2}}\put(10,4){\circle*{2}}\put(10,7){\circle*{2}}\put(10,10){\circle*{2}}\end{picture}}^{\gamma'_1,\gamma'_2,\gamma'_3,\gamma'_4}\ar[r]\ar[d]\save[]+<1.7cm,-.9cm>*\txt{Lemma \ref{LemMod}}\restore
&
\VV_{\begin{picture}(12,14)\put(1,10){\circle*{2}}\put(1,7){\circle*{2}}\put(4,7){\circle*{2}}\put(4,10){\circle*{2}}\put(4,4){\circle*{2}}\put(7,1){\circle*{2}}\put(7,4){\circle*{2}}\put(7,7){\circle*{2}}\put(7,10){\circle*{2}}\put(10,1){\circle*{2}}\put(10,4){\circle*{2}}\put(10,7){\circle*{2}}\put(10,10){\circle*{2}}\put(1,4){\circle*{2}}\put(1,1){\circle*{2}}\put(4,1){\circle*{2}}\end{picture}}^{\gamma'_1,\gamma'_2,\gamma'_3,\gamma'_4}\save[]+<1.5cm,-.9cm>*\txt{Lemma \ref{LemCatch}}\restore\ar[r]\ar[d]
&
\VV_{\begin{picture}(12,14)\put(1,10){\circle*{2}}\put(1,7){\circle*{2}}\put(4,7){\circle*{2}}\put(4,10){\circle*{2}}\put(4,4){\circle*{2}}\put(7,1){\circle*{2}}\put(7,4){\circle*{2}}\put(7,7){\circle*{2}}\put(7,10){\circle*{2}}\put(10,1){\circle*{2}}\put(10,4){\circle*{2}}\put(10,7){\circle*{2}}\put(10,10){\circle*{2}}\put(1,4){\circle*{2}}\put(1,1){\circle*{2}}\put(4,1){\circle*{2}}\end{picture}}^{\gamma_1,\gamma_2}\ar[d]
\\
\VV_{\begin{picture}(12,14)\put(1,10){\circle*{2}}\put(1,7){\circle*{2}}\put(4,7){\circle*{2}}\put(7,1){\circle*{2}}\put(7,4){\circle*{2}}\put(10,1){\circle*{2}}\end{picture}}^{\gamma'_1,\gamma'_2,\gamma'_3,\gamma'_4}\ar[r]\ar[d]\save[]+<1.7cm,-.9cm>*\txt{Lemma \ref{LemMod2}}\restore
&
\VV_{\begin{picture}(12,14)\put(1,10){\circle*{2}}\put(1,7){\circle*{2}}\put(4,7){\circle*{2}}\put(7,1){\circle*{2}}\put(7,4){\circle*{2}}\put(7,7){\circle*{2}}\put(7,10){\circle*{2}}\put(10,1){\circle*{2}}\put(10,7){\circle*{2}}\end{picture}}^{\gamma'_1,\gamma'_2,\gamma'_3,\gamma'_4}\ar[r]\ar[d]
&
\VV^{\gamma'_1,\gamma'_2,\gamma'_3,\gamma'_4}_{\begin{picture}(12,14)\put(1,10){\circle*{2}}\put(4,7){\circle*{2}}\put(1,7){\circle*{2}}\put(1,1){\circle*{2}}\put(7,4){\circle*{2}}\put(7,10){\circle*{2}}\put(10,1){\circle*{2}}\put(7,1){\circle*{2}}\put(10,7){\circle*{2}}\put(7,7){\circle*{2}}\put(1,4){\circle*{2}}\put(4,1){\circle*{2}}\end{picture}}\ar[r]\ar[d]
&
\VV^{\gamma_1,\gamma_2}_{\begin{picture}(12,14)\put(1,10){\circle*{2}}\put(4,7){\circle*{2}}\put(1,7){\circle*{2}}\put(1,1){\circle*{2}}\put(7,4){\circle*{2}}\put(7,10){\circle*{2}}\put(10,1){\circle*{2}}\put(7,1){\circle*{2}}\put(10,7){\circle*{2}}\put(7,7){\circle*{2}}\put(1,4){\circle*{2}}\put(4,1){\circle*{2}}\end{picture}}\ar[d]
\\
\VV^{\gamma'_1,\gamma'_2,\gamma'_3,\gamma'_4}_{\begin{picture}(12,14)\put(1,10){\circle*{2}}\put(4,7){\circle*{2}}\put(7,4){\circle*{2}}\put(10,1){\circle*{2}}\end{picture},(1,2),(3,4)}\ar[r]
&
\VV^{\gamma'_1,\gamma'_2,\gamma'_3,\gamma'_4}_{\begin{picture}(12,14)\put(1,10){\circle*{2}}\put(4,7){\circle*{2}}\put(7,4){\circle*{2}}\put(7,10){\circle*{2}}\put(10,7){\circle*{2}}\put(10,1){\circle*{2}}\end{picture},(1,2),(3,4),(2,3)}\ar[r]
&
\VV^{\gamma'_1,\gamma'_2,\gamma'_3,\gamma'_4}_{\begin{picture}(12,14)\put(1,10){\circle*{2}}\put(4,7){\circle*{2}}\put(7,4){\circle*{2}}\put(7,10){\circle*{2}}\put(10,1){\circle*{2}}\put(10,7){\circle*{2}}\put(1,4){\circle*{2}}\put(4,1){\circle*{2}}\end{picture},\substack{(1,2),(3,4),\\(2,3),(1,4)}}\ar[r]
&
\VV^{\gamma_1,\gamma_2}_{\begin{picture}(12,14)\put(1,10){\circle*{2}}\put(4,7){\circle*{2}}\put(7,4){\circle*{2}}\put(7,10){\circle*{2}}\put(10,1){\circle*{2}}\put(10,7){\circle*{2}}\put(1,4){\circle*{2}}\put(4,1){\circle*{2}}\end{picture},(1,2)}.
}
\end{equation}
\begin{lemma}
All of the small squares in diagram (\ref{bigDiag}) that are not marked with lemmas commute.
\end{lemma}
\begin{proof}
Each of the squares is either entirely composed of pullback maps or pushforward maps along a commutative diagram, in which case commutativity is trivial, or else commutativity is an application of Proposition \ref{mixingprop}.
\end{proof}

We now have all of the components in place to prove Theorem \ref{comultalg}.  If we let $e_{\nabla}$ be the composition of the leftmost vertical maps, for a fixed summand corresponding to $\nabla\in\DData$, then by the definition of $\Delta$ and Corollary \ref{simplerComult}, we have the identity
\[
\Delta_{\gamma'_1,\gamma'_2}\otimes\Delta_{\gamma'_3,\gamma'_4}=(\cdot \eue(Q_0,\gamma'_2,\gamma'_1)\eue(Q_0,\gamma'_4,\gamma'_3))\circ e_{\nabla}
\]
and similarly, if we let $f$ be the composition of the rightmost vertical maps in (\ref{bigDiag}) we have
\[
\Delta_{\gamma_1,\gamma_2}=(\cdot\eue(Q_0,\gamma_2,\gamma_1))\circ f.
\]
On the other hand, by Lemma \ref{LemCatch}, up to the sign $(-1)^{\gamma'_2\cdot\gamma'_3}$ these are exactly the factors introduced by the noncommutativity of the top two squares in the rightmost column of (\ref{bigDiag}).  The remaining failure of the diagram to commute is described by Lemmas \ref{LemMod} and \ref{LemMod2}, so that the overall noncommutativity of the diagram is given by the factor $\tilde{\eue}_{\gamma'_2,\gamma'_3}$, as defined in (\ref{corr_term}) in the definition of the symmetrising morphism $\tilde{\sw}_{\gamma'_2,\gamma'_3}$ and so the compatibility condition of diagram (\ref{commdiagB}) is satisfied.

The injectivity of localisation condition, and the crosslinearity condition, contained in the definition of a $Q$-localised bialgebra, are the content of Propositions \ref{ABprop} and \ref{coCross} respectively, and we are done.
\end{proof}
\section{Examples}
\subsection{The Jordan quiver with potential}
Let $Q_{\Jor}$ be the quiver with one vertex and one loop.  We label this loop $X$.  Let $d\in\mathbb{Z}_{\geq 1}$, we set $W_d=X^{d+1}$.  The motivic Donaldson--Thomas theory of the pair $(Q_{\Jor},W_d)$ was studied in the paper \cite{DM11}, and the motivic Donaldson--Thomas invariants were calculated there: we have
\[
\Omega_{Q_{\Jor},W_d,n}=\begin{cases} [\Ho(\mathbb{A}^1,\varphi_{x^{d+1}})\state{1}]&\textrm{if }n=1\\0&\textrm{otherwise.}\end{cases}
\]
By Theorem \ref{comultalg}, the algebra $\mathcal{H}_{Q_{\Jor},W_d}$ carries a localised coproduct
\[
\Delta\colon \mathcal{H}_{Q_{\Jor},W_d}\rightarrow\mathcal{H}_{Q_{\Jor},W_d}\tilde{\boxtimes}^{\sw}_{+}\mathcal{H}_{Q_{\Jor},W_d}.
\]
However in this special case things are a little simpler than in general.  In fact the map $\Delta$ lifts to a map
\[
\Delta\colon \mathcal{H}_{Q_{\Jor},W_d}\rightarrow\mathcal{H}_{Q_{\Jor},W_d}\otimes\mathcal{H}_{Q_{\Jor},W_d},
\]
as the division factor in the definition of $\overleftarrow{\alpha}$ is exactly the factor by which we multiply $\beta$ in the definition of $\Delta$.  In addition, the multiplication factor in the symmetrising morphism $\tilde{\sw}$ is equal to one, i.e. we consider the target of $\Delta$ in the derived category of the usual symmetric monoidal category $\MMHS_{\mathbb{Z}}$.  In other words, $\mathcal{H}_{Q_{\Jor},W_d}$ is a Hopf algebra (the construction of a unique antipode is formal, using that $\mathcal{H}_{Q_{\Jor},W_d}$ is a connected algebra).  Let $\mathfrak{g}\subset\mathcal{H}_{Q_{\Jor},W_d}$ be the subspace of primitive elements, then by general theory $\mathfrak{g}$ is a Lie algebra, with Lie bracket given by the commutator of the multiplication in $\mathcal{H}_{Q_{\Jor},W_d}$, and the natural map
\[
\iota\colon \mathcal{U}(\mathfrak{g})\rightarrow\mathcal{H}_{Q_{\Jor},W_d}
\] 
is an injection.
\begin{proposition}
The following are equivalent:
\begin{enumerate}
\item
The element $\mathcal{H}_{Q_{\Jor},W_d}\in\Ob(\D{\MMHS_{\mathbb{Z}}})$ is pure, in the sense that the $i$th cohomologically graded piece is pure of weight $i$.
\item
The Lie algebra $\mathfrak{g}$ is concentrated entirely in degree $1$, with respect to the $\mathbb{Z}^{Q_0}$-grading, and $\iota$ is an isomorphism $\Sym(\mathcal{H}_{Q_{\Jor},W,1})\rightarrow\mathcal{H}_{Q_{\Jor},W_d}$.
\end{enumerate}
\end{proposition}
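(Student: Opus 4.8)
The plan is to combine two standard structural facts about a connected $\mathbb{N}$-graded Hopf algebra over a field of characteristic zero with one geometric input specific to $(Q_{\Jor},W_d)$. The structural facts are that $\iota$ is injective --- already recorded in the excerpt --- and that the Poincar\'e--Birkhoff--Witt filtration gives $[\mathcal{U}(\mathfrak{g})]=[\Sym(\mathfrak{g})]$ in $K_0(\mathcal{C}_Q)$. The geometric input is the computation of the motivic Donaldson--Thomas invariants of $(Q_{\Jor},W_d)$ in \cite{DM11}, which gives $\Omega_{Q_{\Jor},W_d,n}=0$ for $n\ge 2$ and, via the module-structure construction of Section~\ref{moduleStruc} applied to the trivial $\mathbb{C}^*$-action on $M_1=\mathbb{A}^1$, the identification $\mathcal{H}_{Q_{\Jor},W_d,1}\cong \Ho_c(\mathbb{A}^1,\phi_{x^{d+1}})\otimes\mathbb{Q}[u]$ with $u$ of cohomological and weight degree $2$; together with the motivic integrality theorem this yields an identity of weighted Euler characteristics $\chi_{\Hodge}(\mathcal{H}_{Q_{\Jor},W_d})=\chi_{\Hodge}(\Sym(\mathcal{H}_{Q_{\Jor},W_d,1}))$ in $K_0(\MMHS)$, refined by the $\mathbb{N}$-grading. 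I would begin by recording that $\Ho_c(\mathbb{A}^1,\phi_{x^{d+1}})$ is pure: $x^{d+1}$ is weighted homogeneous with an isolated critical point, so its monodromy is semisimple of order $d+1$, the nilpotent operator on the vanishing cohomology vanishes, and the monodromic limit mixed Hodge structure is pure. Since $\mathbb{Q}[u]$ is pure, $\mathcal{H}_{Q_{\Jor},W_d,1}$ is pure, and by connectedness $\mathcal{H}_{Q_{\Jor},W_d,1}=\mathfrak{g}_1$, the whole degree-$1$ part of $\mathfrak{g}$.

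For the implication (2)$\Rightarrow$(1): if $\mathfrak{g}$ is concentrated in degree $1$ then $[\mathfrak{g}_1,\mathfrak{g}_1]\subseteq\mathfrak{g}_2=0$, so $\mathfrak{g}$ is abelian, $\mathcal{U}(\mathfrak{g})=\Sym(\mathcal{H}_{Q_{\Jor},W_d,1})$, and the algebra isomorphism $\iota$ is a bijective morphism in the abelian category $\mathcal{C}_Q$, hence an isomorphism $\mathcal{H}_{Q_{\Jor},W_d}\cong\Sym(\mathcal{H}_{Q_{\Jor},W_d,1})$ in $\D{\MMHS}_{\mathbb{Z}}$. Purity passes to tensor powers, since cohomological degree and weight are each additive under $\otimes$, and then to their symmetric-power direct summands; so $\Sym(\mathcal{H}_{Q_{\Jor},W_d,1})$ is pure, which is (1).

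For the implication (1)$\Rightarrow$(2): assume $\mathcal{H}_{Q_{\Jor},W_d}$ is pure. Then both sides of the Euler-characteristic identity above are classes of pure complexes, and for a pure complex the weight grading of its Euler characteristic recovers the class of each individual cohomology object; the identity therefore upgrades to $[\mathcal{H}_{Q_{\Jor},W_d}]=[\Sym(\mathcal{H}_{Q_{\Jor},W_d,1})]=[\Sym(\mathfrak{g}_1)]$ in $K_0(\mathcal{C}_Q)$. Writing $[\mathcal{U}(\mathfrak{g})]=[\Sym(\mathfrak{g})]=[\Sym(\mathfrak{g}_1)]\cdot[\Sym(\mathfrak{g}_{\ge 2})]=[\mathcal{H}_{Q_{\Jor},W_d}]\cdot[\Sym(\mathfrak{g}_{\ge 2})]$ and using that injectivity of $\iota$ makes $[\mathcal{H}_{Q_{\Jor},W_d}]-[\mathcal{U}(\mathfrak{g})]$ an effective class, one reads off degree by degree in the $\mathbb{N}$-grading that the lowest nonzero homogeneous component of $\mathfrak{g}_{\ge 2}$ would contribute a class equal to minus an effective class, hence zero; so $\mathfrak{g}_{\ge 2}=0$, whence $[\mathcal{U}(\mathfrak{g})]=[\mathcal{H}_{Q_{\Jor},W_d}]$ and the injection $\iota$ is an isomorphism.

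The hard part will be the upgrade step in (1)$\Rightarrow$(2): one must keep track of the fact that the Hodge realisation of the motivic DT computation naturally gives an alternating-sum class in $K_0(\MMHS)$ rather than a genuinely $\mathbb{Z}_{\Co}$-graded one, and verify that it is exactly the purity of $\mathcal{H}_{Q_{\Jor},W_d}$, together with the purity of $\Sym(\mathcal{H}_{Q_{\Jor},W_d,1})$ coming from the semisimplicity of the monodromy of $x^{d+1}$, that removes the resulting ambiguity. The identification $\mathcal{H}_{Q_{\Jor},W_d,1}\cong\Ho_c(\mathbb{A}^1,\phi_{x^{d+1}})\otimes\mathbb{Q}[u]$ and the effectivity bookkeeping in the last step are routine.
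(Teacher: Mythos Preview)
Your proof is correct and close in spirit to the paper's. The direction (2)$\Rightarrow$(1) is handled identically: both you and the paper observe that $\mathcal{H}_{Q_{\Jor},W_d,1}$ is pure and that purity is preserved under symmetric powers.

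For (1)$\Rightarrow$(2) you take a slightly different route. The paper introduces the subalgebra $A\subset\mathcal{U}(\mathfrak{g})$ generated by $\mathcal{H}_{Q_{\Jor},W_d,1}$, invokes \cite{DM11} to obtain $[A]=[\mathcal{H}_{Q_{\Jor},W_d}]$ in $\KK(\MMHM_{\mathbb{Z}^{Q_0}})$, and then uses purity to conclude that the injection $\iota|_A$ is surjective, forcing $A=\mathcal{U}(\mathfrak{g})$ and then that $\mathfrak{g}$ is concentrated in degree~$1$. You instead combine the PBW identity $[\mathcal{U}(\mathfrak{g})]=[\Sym(\mathfrak{g})]$ with the effectivity of $[\mathcal{H}_{Q_{\Jor},W_d}]-[\mathcal{U}(\mathfrak{g})]$ coming from injectivity of $\iota$, and read off $\mathfrak{g}_{\ge 2}=0$ by looking at the lowest offending degree. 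Your argument is a bit more self-contained, in that it does not need to know in advance that the subalgebra generated in degree~$1$ already has the correct class; the paper's route is shorter once that input from \cite{DM11} is granted. Both arguments rest on the same crucial step --- upgrading the \cite{DM11} identity from an Euler-characteristic equality to a genuinely $\mathbb{Z}_{\Co}$-graded one using purity of both sides --- and you correctly isolate this as the point requiring care.
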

\begin{proof}
\begin{itemize}
\item[$1\rightarrow 2$:] If $\mathcal{H}_{Q_{\Jor},W_d}$ is pure, it follows that the sub-object $\mathfrak{g}$ is too, and hence so is $\mathcal{U}(\mathfrak{g})$.  Clearly the whole of $\mathcal{H}_{Q_{\Jor},W_d,1}$ is primitive, i.e. $\mathcal{H}_{Q_{\Jor},W_d,1}\subset \mathfrak{g}$.  By the main result of \cite{DM11}, the free supercommutative algebra $A\subset \mathcal{U}(\mathfrak{g})$ generated by $\mathcal{H}_{Q_{\Jor},W_d,1}$ has the same class in $\KK(\Dlb{\MMHS_{\mathbb{Z}}})$ as the target of $\iota$, and so it follows from purity that $\iota$ is an isomorphism when restricted to $A$, and also $A=\mathcal{U}(\mathfrak{g})$ since $\iota$ is injective.  Since $\mathfrak{g}$ is concentrated entirely in $\mathbb{Z}^{Q_0}$-degree one and the Lie bracket respects the $\mathbb{Z}^{Q_0}$-grading, the Lie bracket is zero, and so $\iota$ becomes the isomorphism $\Sym(\mathcal{H}_{Q_{\Jor},W_d,1})\rightarrow\mathcal{H}_{Q_{\Jor},W_d}$.
\item[$2\rightarrow 1$:]  The monodromic mixed Hodge structure on 
\begin{align*}
\mathcal{H}_{Q_{\Jor},W_d,1}=&\Ho_{c,\mathbb{C}^*}(\mathbb{A}^1,\varphi_{x^{d+1}})^{\vee}\\
\cong&\Ho_{c}(\mathbb{A}^1,\varphi_{x^{d+1}})^{\vee}\otimes\Ho_{\mathbb{C}^*}(\pt,\QQ)
\end{align*} is pure, and so the symmetric algebra generated by it is pure too.
\end{itemize}
\end{proof}
Purity of $\mathcal{H}_{Q_{\Jor},W_d}$ is quite easy to prove, using the representation theory of $\mathcal{H}^{\Sp}_{Q,W}$, for general triples $(Q,W,\Sp)$ --- this representation theory is worked out in some detail in \cite{DaMe15b}.  For our special choice of quiver with potential, the situation is easier to describe.  For $n,f\in\mathbb{N}$, define $V_{n,f}=\Hom(\mathbb{C}^n,\mathbb{C}^n)\times\Hom(\mathbb{C}^n,\mathbb{C}^f)$.  We think of $V_{n,f}$ as a space of representations of the coframed Jordan quiver $Q^{\mathrm{cfr}}_{\Jor}$, of dimension vector $(n,f)$.  The space $V_{n,f}$ carries the action of $\Gl_{n}$ via change of basis, as always.  Here we are using the notation of Equations (\ref{RSdef}) and (\ref{Gdef}), with $n$ considered as a dimension vector since our quiver has only one vertex.  Let $V_{n,f}^{\st}\subset V_{n,f}$ be the subset of representations $\rho$ of $Q^{\textrm{cfr}}_{\Jor}$ such that the there is no nonzero subrepresentation $\rho'\subset\rho$ supported on the original quiver $Q_{\Jor}$.  We have a chain of $\Gl_n$-equivariant inclusions
\[
\RS_n\times\Fr(n,f)\subset V^{\st}_{n,f}\subset V_{n,f}.
\]
As we let $f$ become very large, the codimension of $\left(V^{\st}_{n,f}/\Gl_n\setminus (\RS_n\times\Fr(n,f))/\Gl_n\right)$ becomes very large too, and it follows from the fact that $\varphi_{\tr(W)}[fn]$ is a perverse sheaf on $V^{\st}_{n,f}/\Gl_n$, with cohomology in the constructible t structure that is concentrated in an interval that is bounded independently of $f$, that the map in compactly supported cohomology 
\[
\Ho^{i+2fn}_{c}( (\RS_n,\Gl_n)_f,\varphi_{\tr(W_d)_f})\rightarrow \Ho^{i+2fn}_c(V^{\st}_{n,f}/\Gl_n,\varphi_{\tr(W_d)_f})
\]
is an isomorphism for fixed $i$ and large $f$, where $\tr(W_d)_f$ is the function on $V_{n,f}^{\st}/\Gl_n$ induced by $\tr(W_d)$.  By definition (see Equation (\ref{eqvcdef})), we deduce
\begin{equation}
\label{feqdef}
\Ho_{c,\Gl_n}(\RS_n,\varphi_{\tr(W_d)}):=\varinjlim\left(\Ho_c(V_{n,f}^{\st}/\Gl_n,\varphi_{\tr(W_d)_f})\state{fn}\right)
\end{equation}
and it is enough to demonstrate the purity of the mixed Hodge structures on the right hand side of (\ref{feqdef}).  Now the GIT quotient map $p_f\colon V^{\st}_{n,f}/\Gl_n\rightarrow \mathbb{A}^n$ to the coarse moduli space of $Q^{\mathrm{cfr}}$-representations is proper, and so there is a natural isomorphism
\[
p_{f,*}\phi_{\tr(W_d)_f}\mathbb{Q}_{V^{\st}_{n,f}/\Gl_n}\cong\phi_{g}p_{f,*}\mathbb{Q}_{V^{\st}_{n,f}/\Gl_n}
\]
where $g\colon \mathbb{A}^n\rightarrow\mathbb{A}^1$ is the map induced by $\tr(W_d)$.  In addition, the complex of mixed Hodge modules $p_{f,*}\mathbb{Q}_{V^{\st}_{n,f}/\Gl_n}$ is pure by Saito's version of the decomposition theorem \cite{Sa88}, in the sense that $\mathcal{H}^i(p_{f,*}\mathbb{Q}_{V^{\st}_{n,f}/\Gl_n})$ is pure of weight $i$.  Furthermore the support of $\phi_gp_{f,*}\mathbb{Q}_{V^{\st}_{n,f}/\Gl_n}$ is the origin $0\in\mathbb{A}^1$; this follows from the description of the support of $\phi_{\tr(W_d)_f}\mathbb{Q}_{V^{\st}_{n,f}/\Gl_n}$ in terms of the Jacobi algebra for the pair $(Q_{\Jor},W_d)$ (see Remark \ref{JacRem}) since all representations $\rho$ of the Jacobi algebra send $X$ to a nilpotent endomorphism, and by \cite[Thm.1]{LBP90} the functions on the affinization $\mathbb{A}^n$ are given by linear combinations of the maps $\rho\mapsto \tr(\rho(X)^i)$ for $i\in\mathbb{N}$.  In particular the support is proper, and it follows as in \cite[Cor.3.2]{DMSS12}, after passing to the intermediate extension of $p_{f,*}\mathbb{Q}_{V^{\st}_{n,f}/\Gl_n}$ to a $\mathbb{C}^*$-equivariant relative compactification of the morphism $\mathbb{A}^n\xrightarrow{g}\mathbb{A}^1$, that $\Ho_c(V^{\st}_{n,f}/\Gl_n,\varphi_{\tr(W)_f})\state{{fn}}$ is pure as a monodromic mixed Hodge structure.  We have proved the following result, which one may already find without proof in \cite[Sec.2.4]{So14}.
\begin{theorem}
The algebra $\mathcal{H}_{Q_{\Jor},W_d}$ is supercommutative, and there is an isomorphism of algebras
\begin{align*}
\mathcal{H}_{Q_{\Jor},W_d}\cong&\bigwedge \left(\Ho(\mathbb{A}^1,\varphi_{x^{d+1}})\otimes\Ho_{\mathbb{C}^*}(\pt,\QQ)\right)
\\
\cong& \bigwedge_{r=1}^d\bigwedge[u_{r,1},u_{r,2},\ldots],
\end{align*}
where $u_{r,n}$ is of cohomological degree $2n-1$.
\end{theorem}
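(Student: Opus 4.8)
The plan is to obtain the theorem by assembling three ingredients: the triviality of all the twisting data for this particular quiver, the purity statement whose proof occupies the paragraph above, and the structural dichotomy of the Proposition preceding it. First I would record the elementary but crucial fact that for $Q_{\Jor}$ one has $b_{XX}=\delta_{XX}-1=0$, so every exponent $b_{ij}$ appearing in Section~\ref{NoPot} and in the correction factor \eqref{corr_term} vanishes. In particular the factor $\eue(Q_0,\gamma_2,\gamma_1)$ multiplying $\beta$ in Corollary~\ref{simplerComult} exactly cancels the division by $\eue(Q_1,\gamma_1,\gamma_2)$ built into $\overleftarrow{\alpha}$, so $\Delta$ lands in the untwisted tensor product, and $\tilde{\sw}$ has trivial correction term. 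Hence $(\mathcal{H}_{Q_{\Jor},W_d},m,\Delta)$ is a connected, cocommutative Hopf algebra in the symmetric monoidal category $\MMHS_{\mathbb{Z}}$ (the antipode is constructed formally from connectedness), so the Milnor--Moore-type statement of the Proposition above applies once we verify its hypothesis $(1)$, namely purity of $\mathcal{H}_{Q_{\Jor},W_d}$.

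The hard part is purity, and for it I would carry out in full the coframing argument indicated before the theorem. One introduces $V_{n,f}=\Hom(\mathbb{C}^n,\mathbb{C}^n)\times\Hom(\mathbb{C}^n,\mathbb{C}^f)$ and the open $G_n$-subvariety $V^{st}_{n,f}$ of representations of the coframed Jordan quiver having no nonzero subrepresentation supported at the original vertex. Since $\phi_{\tr(W_d)_f}\mathbb{Q}[fn]$ is perverse on $V^{st}_{n,f}/G_n$ and the complement of $(M_n\times\Fr(n,f))/G_n$ inside $V^{st}_{n,f}/G_n$ has codimension tending to infinity with $f$, the compactly supported vanishing-cycle cohomology stabilises in each fixed degree and computes $\mathcal{H}_{Q_{\Jor},W_d,n}$. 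The affinization $p_f\colon V^{st}_{n,f}/G_n\to\mathbb{A}^n$ is proper, so $\phi$ commutes with $p_{f,*}$ by Corollary~\ref{ppoutside}; $p_{f,*}\mathbb{Q}$ is pure by Saito's decomposition theorem \cite{Sa88}; and $\phi_g p_{f,*}\mathbb{Q}$ is supported at the origin of $\mathbb{A}^1$, by the Jacobi-algebra description of supports of vanishing cycles (Remark~\ref{JacRem}), nilpotency of the loop in every representation of the Jacobi algebra of $(Q_{\Jor},W_d)$, and the Le Bruyn--Procesi description of $\mathbb{A}^n$ as an affinization \cite[Thm.1]{LBP90}. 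Passing to an intermediate extension on a $\mathbb{C}^*$-equivariant relative compactification of $g\colon\mathbb{A}^n\to\mathbb{A}^1$ and arguing as in \cite[Cor.3.2]{DMSS12} then yields purity of $\Ho_c(V^{st}_{n,f}/G_n,\phi_{\tr(W_d)_f})\state{{fn}}$, hence of every $\mathcal{H}_{Q_{\Jor},W_d,n}$, hence of $\mathcal{H}_{Q_{\Jor},W_d}$. This is the only non-formal input; everything else is bookkeeping.

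Granting purity, the Proposition gives that the Lie algebra $\mathfrak{g}$ of primitives is concentrated in $\mathbb{Z}^{Q_0}$-degree~$1$, hence abelian (its bracket would land in degree $2=0$), and that $\iota\colon\Sym(\mathcal{H}_{Q_{\Jor},W_d,1})\to\mathcal{H}_{Q_{\Jor},W_d}$ is an isomorphism of algebras. In the symmetric monoidal category of $\mathbb{Z}_{\SC}$-graded objects $\Sym$ carries the Koszul sign, which is exactly supercommutativity; it remains to identify $\mathcal{H}_{Q_{\Jor},W_d,1}$. Here $\chi(1,1)=l_0(1,1)-l_1(1,1)=1-1=0$, so there is no shift in \eqref{uldef}; and $\phi_{x^{d+1}}$ is supported at the $\mathbb{C}^*$-fixed origin, giving $\Ho_{c,\mathbb{C}^*}(\mathbb{A}^1,\phi_{x^{d+1}})\cong\Ho_c(\mathbb{A}^1,\phi_{x^{d+1}})\otimes\Ho_{\mathbb{C}^*}(\pt)$ with $\Ho_{\mathbb{C}^*}(\pt)=\mathbb{Q}[u]$, $\deg u=2$, while $\Ho_c(\mathbb{A}^1,\phi_{x^{d+1}})$ is the reduced cohomology of the Milnor fibre of $x^{d+1}$, a $d$-dimensional space sitting in a single odd cohomological degree. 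A routine check of the shift conventions then places a basis $u_{r,n}$ ($1\le r\le d$, $n\ge 1$) in cohomological degree $2n-1$, and since $\Sym$ of a space concentrated in odd degrees is an exterior algebra we obtain $\mathcal{H}_{Q_{\Jor},W_d}\cong\bigwedge_{r=1}^d\bigwedge[u_{r,1},u_{r,2},\ldots]$, which is the stated isomorphism. I expect the main obstacle to be purity; the only other delicate point is tracking the duals and shifts in \eqref{uldef} carefully enough to land the generators in the advertised odd degrees.
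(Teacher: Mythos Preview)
Your proposal is correct and follows essentially the same approach as the paper: establish that the $Q$-localised bialgebra structure degenerates to an honest Hopf algebra structure for $Q_{\Jor}$, invoke the Proposition equating purity with the free-commutative description, and then prove purity via the coframing/approximation argument using properness of the affinization, Saito's decomposition theorem, and the proper-support trick from \cite{DMSS12}. The identification of $\mathcal{H}_{Q_{\Jor},W_d,1}$ and the resulting exterior-algebra description is also as in the paper.

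One small point of care: your argument that the correction factor $\tilde{\eue}$ is trivial because $b_{**}=0$ is not quite right as stated, since \eqref{corr_term} with $b=0$ still leaves the sign $(-1)^{\gamma_1\cdot\gamma_2}$. The cleaner way (which is what the paper has in mind) is to observe that for $Q_{\Jor}$ one has $\eue(Q_1,\cdot,\cdot)=\eue(Q_0,\cdot,\cdot)$, so the expression \eqref{lwdd} for $\tilde{\eue}$ is identically $1$; the same identity gives the exact cancellation in Corollary~\ref{simplerComult} that makes $\Delta$ land in the unlocalised tensor product. This does not affect the validity of your argument, only its justification.
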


\subsection{Twisted and untwisted character varieties}
\label{charSec}
Let $\Sigma_g$ denote a genus $g$ topological Riemann surface.  We define
\[
\Rep_m(\Sigma_g):=\{A_1,\ldots,A_g,B_1,\ldots,B_g\in\GL_{\Cp}(m)|\prod_{i=1}^g (A_i,B_i)=\id_{m\times m}\},
\]
where $(A_i,B_i)=A_iB_iA_i^{-1}B_i^{-1}$.  This variety is a space of homomorphisms $\pi_1(\Sigma_g)\rightarrow \GL_{\Cp}(m)$, or representations of $\pi_1(\Sigma_g)$, and the stack theoretic quotient $[\Rep_m(\Sigma_g)/\GL_{\Cp}(m)]$ is the stack of $m$ dimensional $\pi_1(\Sigma_g)$ representations.  Let $\zeta_m$ be a primitive $m$th root of unity.  We will also consider the twisted counterpart of this variety
\[
\Rep^{\zeta_m}_m(\Sigma_g):=\{A_1,\ldots,A_g,B_1,\ldots,B_g\in\GL_{\Cp}(m)|\prod(A_i,B_i)=\zeta_m\id_{m\times m}\}.
\]
By \cite[Cor.2.2.4]{HLRV13}, up to isomorphisms in cohomology lifting to isomorphisms of Hodge structure, it does not matter which primitive $m$th root of unity we pick.  We will build a CoHA $\mathcal{H}^{\Sp}_{Q_{\Sigma_g},W_{\Sigma_g}}$ for a quiver $Q_{\Sigma_g}$ with four vertices such that $\mathcal{H}^{\Sp}_{Q_{\Sigma_g},W_{\Sigma_g},\gamma}=0$ unless $\gamma$ is a constant dimension vector, and such that there are natural isomorphisms
\[
\mathcal{H}^{\Sp}_{Q_{\Sigma_g},W_{\Sigma_g},(m,m,m,m)}\cong \Ho_{c,\GL_{\Cp}(m)}(\Rep_m(\Sigma_g),\mathbb{Q})^{\vee}\state{(1-g)m^2},
\]
and conjectural isomorphisms
\[
\mathcal{H}^{\Sp}_{Q_{\Sigma_g},W_{\Sigma_g}}\cong\Sym\left(\bigoplus_{m\geq 1}\Ho_{c,\GL_{\Cp}(m)}(\Rep^{\zeta_m}_m(\Sigma_g),\mathbb{Q})^{\vee}\state{(1-g)m^2}\right)
\]
at the level of $\mathbb{N}$-graded mixed Hodge structures, where $\mathbb{N}$ keeps track of the dimension of the $\mathbb{C}[\pi_1(\Sigma_g)]$-reprsentation.  By \cite[Cor.2.2.7]{HLRV13} $\Rep^{\zeta_m}_m(\Sigma_g)$ is acted on freely by $\PGL_{\Cp}(m)$, so there is an isomorphism 
\[
\Ho_{c,\GL_{\Cp}(m)}(\Rep^{\zeta_m}_m(\Sigma_g),\mathbb{Q})^{\vee}\state{(1-g)m^2}\cong\Ho_c(\Rep^{\zeta_m}_m(\Sigma_g)/\PGL_{\Cp}(m),\mathbb{Q})^{\vee}\state{(1-g)m^2}[u]
\]
where $u$ is the degree 2 generator of $\Ho_{\Cp^*}(\pt,\mathbb{Q})$.  In the language of quantum enveloping algebras, we conjecture that there are isomorphisms
\[
\mathfrak{g}_{\prim,(m,m,m,m)}\cong\Ho_c(\Rep^{\zeta_m}_m(\Sigma_g)/\PGL_{\Cp}(m),\mathbb{Q})^{\vee}\state{(1-g)m^2}
\]
between the space of primitive generators of a PBW basis for a cocommutative deformation of $\mathcal{H}_{Q_{\Sigma_g},W_{\Sigma_g}}$ and the dual compactly supported cohomology of the twisted character varieties for $\Sigma_g$, where the primitive generators are defined to be those that generate the other generators under the action of multiplication by $u$.  Since by \cite[Thm.2.2.5]{HLRV13} the twisted character varieties are smooth, we may alternatively restate this as an isomorphism between the primitive generators of $\mathcal{H}^{\Sp}_{Q_{\Sigma_g},W_{\Sigma_g}}$ and the shifted cohomology of the twisted character varieties.

\begin{example}
For ease of exposition we consider only the Riemann surface of genus 2, but everything generalises in a way that is hopefully obvious.  We break the surface $\Sigma_2$ into 4 tiles.  The front two tiles are as drawn in black in Figure \ref{g2front}, they are formed from the upper half of the figure glued to the bottom half along the dashed line. 
\begin{figure}
\caption{Tiling of $\Sigma_2$ seen from the front}
\includegraphics{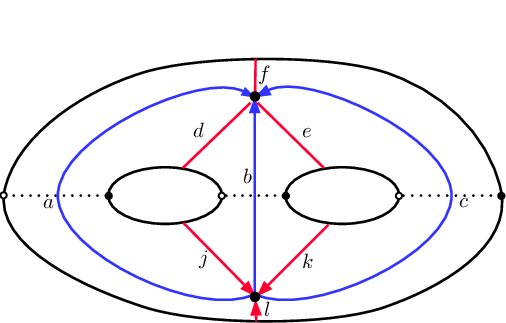}
\label{g2front}
\end{figure}

\begin{figure}
\caption{Tiling of $\Sigma_2$ seen from the back}
\includegraphics{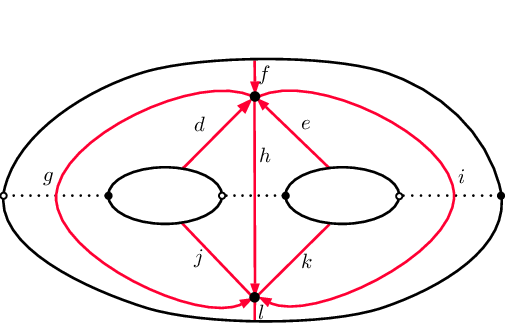}
\label{g2back}
\end{figure}
The back two tiles are as drawn in Figure \ref{g2back}, they are again formed from the top half of the figure glued to the bottom half along the dashed line.  The back two tiles are glued to the front two by identifying the solid black lines of Figure \ref{g2front} with the solid black lines of Figure \ref{g2back}.  We have drawn, in red and blue, the dual quiver to this tiling, this will be our quiver $Q_{\Sigma_2}$, which we reproduce below:
\[
\xymatrix{\\
\bullet\ar@/_1.5pc/@[blue][rrr]_a\ar@[blue][rrr]_b\ar@/^1.5pc/@[blue][rrr]_c &&&\bullet \ar@/_1.5pc/@[red][ddd]^d\ar@/^1.5pc/@[red][ddd]^f \ar@[red][ddd]^e
\\ \\
\\\bullet \ar@/_1.5pc/@[red][uuu]^l\ar@/^1.5pc/@[red][uuu]^j \ar@[red][uuu]^k&&&\bullet \ar@/_1.5pc/@[red][lll]^g\ar@[red][lll]^h\ar@/^1.5pc/@[red][lll]^i
}
\]
We consider the following element of $\mathbb{C}Q_{\Sigma_2}/[\mathbb{C}Q_{\Sigma_2},\mathbb{C}Q_{\Sigma_2}]$:
\[
W_{\Sigma_2}:=lgfa-jgda+jhdb-kheb+kiec-lifc.
\]
The recipe for this quiver with potential is as follows --- in the literature it is called the QP associated to a brane tiling of a surface, see e.g. \cite{longout} for a detailed reference, \cite{Rhombi} for the Physics background, or \cite{MR} or \cite{Dav08} for the Mathematics background.  To a tiling $\Delta$ of a Riemann surface, the 1-skeleton of which is given the structure of a bipartite graph, the associated quiver is just the dual quiver, as above, oriented so that the arrows go clockwise around the black vertices.  The potential is given by taking the alternating sum
\[
W_{\Delta}:=\sum_{v\in\Delta_0|v\text{ is white}}l_v-\sum_{v\in\Delta_0|v\text{ is black}}l_v,
\]
where $l_v$ is the shortest cycle going around the vertex $v$ in the dual quiver.

Returning to our special case, we define $\RS^{\Sp}_{Q_{\Sigma_2},\gamma}\subset \RS_{Q_{\Sigma_2},\gamma}$ by the condition that every red arrow is sent to an isomorphism, and since an upper block triangular matrix is invertible if and only if its diagonal blocks are, we deduce that these $\RS^{\Sp}_{Q_{\Sigma_2},\gamma}$ satisfy Assumption \ref{closed_under}.

The QP $(W_{\Sigma_2},W_{\Sigma_2})$ admits a cut in the sense of Section \ref{2dCOHA}, given by setting $S=\{a,b,c\}$, and the moduli spaces $\RS^{\Sp}_{Q_{\Sigma_2},\gamma}$ satisfy Assumption \ref{ass1}, so that we have an isomorphism in cohomology
\[
\Ho_{c,\Gl_{\gamma}}(\RS^{\Sp}_{Q_{\Sigma_2},\gamma},\varphi_{\tr(W_{\Sigma_2})_{\gamma}})\cong\Ho_{c,\Gl_{\gamma}}(\overline{Z}_{\gamma},\mathbb{Q})
\]
where $\overline{Z}_{\gamma}$ is the space of representations of $Q_{\Sigma_2}$ such that all red arrows are sent to isomorphisms, and the relations
\begin{align}
\label{g2rels}
\partial W_{\Sigma_2}/\partial a=lgf-jgd=0\\
\nonumber
\partial W_{\Sigma_2}/\partial b=jhd-khe=0\\
\nonumber
\partial W_{\Sigma_2}/\partial c=kie-lif=0
\end{align}
are satisfied.  Let $Z_{\gamma}$ be the space of representations of the quiver $Q'$, obtained by deleting arrows $a$, $b$ and $c$, still satisfying the relations (\ref{g2rels}).  Up to gauge transformation we may assume that $d$, $g$ and $j$ are all the identity matrix, and consider $[Z_{\gamma}/\Gl_{\gamma}]$ as the stack of representations of the 6 loop quiver algebra with loops labelled by $e$, $f$, $h$, $i$, $k$ and $l$, satisfying the relations
\begin{align*}
fl=1\\
h=khe\\
lif=kie
\end{align*}
such that all arrows are sent to invertible matrices.  Substituting $l=f^{-1}$ and $k=he^{-1}h^{-1}$, we deduce that $[Z_{\gamma}/\Gl_{\gamma}]$ is isomorphic to the stack of $m$-dimensional representations of the 4 loop quiver, with loops labelled $f,h,e,i$, satisfying the one relation
\[
he^{-1}h^{-1}ie=f^{-1}if,
\]
which becomes
\[
he^{-1}h^{-1}e=i^{-1}f^{-1}if
\]
after the substitution $h\mapsto ih$.  In other words, $[Z_{\gamma}/\Gl_{\gamma}]$ is isomorphic to the stack $[\Rep_m(\Sigma_2)/\GL_{\Cp}(m)]$ of $m$-dimensional representations of $\pi_1(\Sigma_2)$.

Set $\gamma=(m,m,m,m)$.  Via the affine fibration $\overline{Z}_{\gamma}\rightarrow Z_{\gamma}$ we obtain, by Theorem \ref{eqred}, an isomorphism of mixed Hodge structures
\[
\Ho_{c,\Gl_{\gamma}}(\RS_{Q_{\Sigma_2},W_{\Sigma_2}}^{\Sp},\varphi_{\tr(W_{\Sigma_2})_{\gamma}})\cong \Ho_{c,\GL_{\Cp}(m)}(\Rep_m(\Sigma_2),\mathbb{Q})\state{{-3m^2}}.
\]
Since $\chi((m,m,m,m),(m,m,m,m))=-8m^2$ we deduce that
\[
\mathcal{H}^{\Sp}_{Q_{\Sigma_2},W_{\Sigma_2},(m,m,m,m)}\cong \Ho_{c,\GL_{\Cp}(m)}(\Rep_m(\Sigma_2),\mathbb{Q})^{\vee}\state{{-m^2}}.
\]
\end{example}
\bigbreak
Generalising the above construction we have 
\[
\mathcal{H}^{\Sp}_{Q_{\Sigma_g},W_{\Sigma_g},(m,m,m,m)}\cong \Ho_{c,\GL_{\Cp}(m)}(\Rep_m(\Sigma_g),\mathbb{Q})^{\vee}\state{{(1-g)m^2}}.
\]
From Theorem \ref{comultalg} we deduce the following theorem.
\begin{theorem}
The graded mixed Hodge structure
\[
\bigoplus_{n\in\mathbb{N}}\Ho_{c,\GL_{\Cp}(m)}(\Rep_m(\Sigma_g),\mathbb{Q})^{\vee}\state{{(1-g)m^2}}
\]
carries the structure of a $Q$-localised bialgebra in the category of mixed Hodge structures.
\end{theorem}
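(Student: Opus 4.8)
The plan is to obtain this as a special case of Theorem~\ref{comultalg}, applied to the quiver with potential $(Q_{\Sigma_g},W_{\Sigma_g})$ and the property $\Sp$ that every red arrow is sent to an isomorphism, and then to transport the resulting structure along the dimensional reduction isomorphism computed in the example above. First I would check the standing hypotheses: as already observed, $M^{\Sp}_{\gamma}$ satisfies Assumption~\ref{closed_under} because a block upper triangular matrix is invertible exactly when its diagonal blocks are, one has $0\in M^{\Sp}_0$, and the critical locus of $\tr(W_{\Sigma_g})_{\gamma}$ meets $M^{\Sp}_{\gamma}$ inside $\tr(W_{\Sigma_g})_{\gamma}^{-1}(0)$ (the relevant Jacobi algebra representations send the loops to nilpotent-type data, cf.\ Remark~\ref{JacRem}). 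Since insisting that all red arrows be isomorphisms forces $\gamma(1)=\gamma(2)=\gamma(3)=\gamma(4)$, the space $M^{\Sp}_{\gamma}$ is empty unless $\gamma=m\cdot(1,1,1,1)$ for some $m\in\mathbb{N}$, so $\mathcal{H}^{\Sp}_{Q_{\Sigma_g},W_{\Sigma_g}}$ is graded only by the submonoid $\mathbb{N}\cdot(1,1,1,1)\cong\mathbb{N}$, which is the $\mathbb{N}$-grading appearing in the statement; moreover $\chi(m\cdot\mathbf{1},m'\cdot\mathbf{1})$ is a fixed multiple of $mm'$, hence symmetric, so $\Sp$ is degree preserving in the sense of Definition~\ref{degpres}. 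Theorem~\ref{comultalg} then produces on $\mathcal{H}^{\Sp}_{Q_{\Sigma_g},W_{\Sigma_g}}$ the structure of a $Q$-localised bialgebra in the sense of Definition~\ref{lbs}.

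The one substantive point is that Theorem~\ref{comultalg} a priori yields this structure in the category $\MMHS$, whereas we want it in $\MHS$. Here I would exploit the cut $S=\{a,b,c\}$ of $(Q_{\Sigma_g},W_{\Sigma_g})$ (in the sense of Section~\ref{2dCOHA}), for which the moduli spaces $M^{\Sp}_{Q_{\Sigma_g},\gamma}$ satisfy Assumption~\ref{ass1}: by the cohomological dimensional reduction theorem (Theorem~\ref{dim_red_prop}), each $\Ho_{c,G_{\gamma}}(M^{\Sp}_{\gamma},\phi_{\tr(W_{\Sigma_g})_{\gamma}})$ is naturally isomorphic, as a monodromic mixed Hodge structure, to $\Ho_{c,G_{\gamma}}(\overline{Z}_{\gamma},\mathbb{Q})$, which lies in the essential image of the full and faithful tensor functor $h^{*}s_{*}\colon\MHS\to\MMHS$. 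Hence $\mathcal{H}^{\Sp}_{Q_{\Sigma_g},W_{\Sigma_g}}$ and all the structure maps of Theorem~\ref{comultalg} make sense in the derived category of mixed Hodge structures $\Db{\MHS}$ (with the mild enlargement needed for the half-twist $\state{1/2}$ discussed in Section~\ref{Hdef}). I would then verify that all the structure maps respect this finer structure: the pullbacks, pushforwards, umkehr maps and $A_{\gamma}$-actions are assembled from the six operations on mixed Hodge modules and the isomorphisms of Section~\ref{sheaves_section} (Proposition~\ref{cruclift} in particular), so they descend to $\MHS$ automatically, as does the torus-equivariant reformulation $\mathcal{T}^{\Sp}_{Q,W}$ used to build $m$ and $\Delta$; the only genuinely new ingredient is the Thom--Sebastiani isomorphism, which for functions admitting a cut is compatible with mixed Hodge structures because it follows, via dimensional reduction, from the Künneth isomorphism (Proposition~\ref{TScomm}).

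Finally I would transport the bialgebra structure along the identification of graded pieces. Chaining the affine fibration $\overline{Z}_{\gamma}\to Z_{\gamma}$ with the substitutions identifying $[Z_{\gamma}/G_{\gamma}]$ with $[\Rep_m(\Sigma_g)/\Gl_{\Cp}(m)]$, applying the mixed-Hodge-theoretic dimensional reduction isomorphism (Theorem~\ref{eqred}) and the normalising twist $\state{(1-g)m^{2}}$, one recovers the isomorphism of $\mathbb{N}$-graded mixed Hodge structures
\[
\mathcal{H}^{\Sp}_{Q_{\Sigma_g},W_{\Sigma_g},(m,m,m,m)}\;\cong\;\Ho_{c,\Gl_{\Cp}(m)}(\Rep_m(\Sigma_g),\mathbb{Q})^{*}\state{(1-g)m^{2}}
\]
already recorded above. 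Since this isomorphism is natural in $\gamma$ and carries the $A_{\gamma}$-module structures and the $\boxtimes_+^{\tw}$-monoidal products over to the corresponding constant-sheaf operations, the $Q$-localised bialgebra structure of the left-hand side transports verbatim to the right-hand side, which is exactly the assertion.

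The hard part will be the bookkeeping underlying the second and third paragraphs: one must confirm that the $\MMHS$-to-$\MHS$ reduction afforded by dimensional reduction is compatible with \emph{every} morphism occurring in the constructions of $m$, $\Delta$, the localisations, and the symmetrising maps $\tilde{\sw}$ --- in particular with Thom--Sebastiani globally, not merely degreewise --- so that the transported structure is genuinely the claimed one. I expect no conceptual surprises here, only a lengthy but routine diagram chase.
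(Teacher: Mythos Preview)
Your approach is essentially the paper's own: the paper simply writes ``From Theorem~\ref{comultalg} we deduce the following theorem'' after having established the identification $\mathcal{H}^{\Sp}_{Q_{\Sigma_g},W_{\Sigma_g},(m,m,m,m)}\cong \Ho_{c,\Gl_{\Cp}(m)}(\Rep_m(\Sigma_g),\mathbb{Q})^*\state{(1-g)m^2}$, and you have correctly unpacked the details it suppresses, including the $\mathbb{N}$-grading, the degree-preserving check, and the descent from $\MMHS$ to $\MHS$ via the cut and Proposition~\ref{TScomm}.

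One small correction: your parenthetical justification for $M_{\gamma}^{\Sp}\cap \crit(\tr(W)_{\gamma})\subset \tr(W)_{\gamma}^{-1}(0)$ via ``nilpotent-type data, cf.\ Remark~\ref{JacRem}'' is imported from the wrong example --- in the character-variety setting the relevant matrices are \emph{invertible}, not nilpotent. The correct (and simpler) argument uses the cut directly: since $\tr(W_{\Sigma_g})_{\gamma}=\sum_s f_s x_s$ is linear in the cut-arrow coordinates $x_s$, at any critical point one has $\partial_{x_s}\tr(W)=f_s=0$ for all $s$, whence $\tr(W)=\sum_s f_s x_s=0$ automatically.
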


We finish by returning to the conjectural form for the generators of $\mathcal{H}^{\Sp}_{Q_{\Sigma_g},W_{\Sigma_g}}$.  
\begin{conjecture}
There is a filtration $F$ on $\mathcal{H}^{\Sp}_{Q_{\Sigma_g},W_{\Sigma_g}}$ such that $\Gr_F(\mathcal{H}^{\Sp}_{Q_{\Sigma_g},W_{\Sigma_g}})\cong \mathcal{U}(\mathfrak{g}[u])$ for $\mathfrak{g}$ a $\mathbb{Z}^{Q_{\Sigma_g,0}}$-graded Lie algebra, and there are isomorphisms in $\Db{\MHS}$
\[
\mathfrak{g}_{(m,m,m,m)}\cong\Ho_{c}(\Rep_m^{\zeta_m}(\Sigma_g)/\PGL,\mathbb{Q})^{\vee}\state{(1-g)m^2}.
\]
\end{conjecture}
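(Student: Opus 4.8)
\emph{Proof strategy.} The plan is to bootstrap from three inputs: the $Q$-localised bialgebra structure on $\mathcal{H}^{\Sp}_{Q_{\Sigma_g},W_{\Sigma_g}}$ furnished by Theorem~\ref{comultalg}, the PBW theorem for the deformed enveloping algebras built from such coproducts in the companion paper with Sven Meinhardt, and the identification of $\mathcal{H}^{\Sp}_{Q_{\Sigma_g},W_{\Sigma_g},(m,m,m,m)}$ with the dual shifted equivariant cohomology of $\Rep_m(\Sigma_g)$ obtained in Section~\ref{charSec} via dimensional reduction, combined with the geometry of twisted character varieties from \cite{HLRV13}.

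First I would restrict attention to the graded sub-bialgebra $\mathcal{H}'\subset\mathcal{H}^{\Sp}_{Q_{\Sigma_g},W_{\Sigma_g}}$ supported on dimension vectors of the form $n\cdot(1,1,1,1)$. On such vectors $\chi$ is symmetric, so $\mathcal{H}'$ is an algebra object in the symmetric monoidal category $\MMHS_{\mathbb{N}}$ and $\Delta$ restricts to it; feeding $(\mathcal{H}',m,\Delta)$ into the machinery of the companion paper yields the filtration $F$ --- the PBW filtration of the associated deformed universal enveloping algebra --- together with an isomorphism $\Gr_F(\mathcal{H}')\cong\mathcal{U}(\mathfrak{g}[u])$, where $\mathfrak{g}$ is a $\mathbb{Z}_{\geq 0}$-graded Lie algebra with finite-dimensional graded pieces and $\mathfrak{g}[u]=\mathfrak{g}\otimes\Ho_{\mathbb{C}^*}(\pt)$, exactly as in the Jordan-quiver example. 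This gives the first assertion of the conjecture.

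The second assertion identifies $\mathfrak{g}_{(m,m,m,m)}$ with $\Ho_{c}(\Rep^{\zeta_m}_m(\Sigma_g)/\PGL,\mathbb{Q})^*\state{(1-g)m^2}$; here I would reproduce the argument of Section~\ref{charSec}. Dimensional reduction already gives $\mathcal{H}^{\Sp}_{Q_{\Sigma_g},W_{\Sigma_g},(m,m,m,m)}\cong\Ho_{c,\GL_m}(\Rep_m(\Sigma_g),\mathbb{Q})^*\state{(1-g)m^2}$, and by \cite[Thm.2.2.5, Cor.2.2.7]{HLRV13} the twisted variety $\Rep^{\zeta_m}_m(\Sigma_g)$ is smooth with free $\PGL$-action, so its quotient is a smooth variety whose ordinary and intersection cohomology coincide. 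The crux is then a \textbf{strong support lemma}: that the primitive (BPS) summand $\mathfrak{g}$ of the cocommutative deformation of the CoHA is, after dimensional reduction, supported on the locus of polystable representations, which multiplication by $\zeta_m$ identifies with $\Rep^{\zeta_m}_m(\Sigma_g)/\PGL$. Together with the purity of $\Ho_{c}(\Rep^{\zeta_m}_m(\Sigma_g)/\PGL,\mathbb{Q})$ --- provable as in \cite[Cor.3.2]{DMSS12} by passing to a $\mathbb{C}^*$-equivariant relative compactification over the affine quotient --- and the freeness of the $u$-action on the primitive part, this yields the stated $\MHS$-isomorphism. A natural warm-up is the $\KK$-theoretic shadow, matching the motivic Donaldson--Thomas generating series of $(Q_{\Sigma_g},W_{\Sigma_g})$ against the explicit formulas of \cite{HLRV13}.

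The hard part will be the strong support lemma and its purity companion, i.e. proving that the BPS cohomology of the multiplicative genus-$g$ quiver is literally the cohomology of the twisted character variety, uniformly in $m$. This demands control of the singularities of $\Rep_m(\Sigma_g)$ and a stabilisation argument --- adding framings $\Fr(m,f)$ and letting $f\to\infty$, as in the Jordan-quiver case, so that $\phi$-cohomology of the stack computes the compactly supported cohomology of a stable locus carrying an almost-free group action. The representation-theoretic input of \cite{DaMe15b} provides this for quivers without potential; extending it to the multiplicative, preprojective-type relations of $\Sigma_g$ is the genuinely open ingredient, which is why the statement is recorded here as a conjecture rather than a theorem.
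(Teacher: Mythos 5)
The statement you are addressing is recorded in the paper as a conjecture, not a theorem: the paper offers no proof, only the remark that the filtration $F$ will be constructed for general triples $(Q,W,\Sp)$ in the companion paper \cite{DaMe15b}, and that the evidence is the weight-polynomial (E-polynomial) identity of \cite{HLRV13}, which shows that \emph{if} $\mathcal{H}^{\Sp}_{Q_{\Sigma_g},W_{\Sigma_g}}\cong\mathcal{U}(\mathfrak{g}[u])$ then the $(m,m,m,m)$-graded piece of $\mathfrak{g}$ has the same weight polynomial as the twisted character variety. Your overall framing — get $F$ and $\Gr_F\cong\mathcal{U}(\mathfrak{g}[u])$ from the bialgebra structure plus the companion paper, and treat the identification of $\mathfrak{g}_{(m,m,m,m)}$ as the genuinely open ingredient, with the $\KK$-theoretic/E-polynomial comparison as the sanity check — matches the paper's own stance, and you are right to flag that nothing stronger is available here.

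However, the mechanism you sketch for the second isomorphism contains two concrete errors. First, there is no identification, via ``multiplication by $\zeta_m$'', of a polystable locus inside $\Rep_m(\Sigma_g)$ with $\Rep^{\zeta_m}_m(\Sigma_g)/\PGL$: the twisted variety parametrises tuples with $\prod(A_i,B_i)=\zeta_m\id$, i.e.\ representations of a central extension of $\pi_1(\Sigma_g)$, and is not a subvariety, quotient, or translate of the untwisted representation variety; the relation between the two is precisely the content of the (still conjectural) BPS-type comparison, not a support statement inside $\Rep_m(\Sigma_g)$. Second, your purity input is false: the compactly supported cohomology of the twisted character variety carries a genuinely non-pure mixed Hodge structure — this is exactly why the mixed Hodge polynomials computed in \cite{HLRV13} are interesting — so a proper-support/compactification argument in the style of \cite[Cor.3.2]{DMSS12} (which works for the Jordan quiver because the relevant map to the affinization has proper support over a point) cannot be run here, and any strategy resting on purity of $\Ho_c(\Rep^{\zeta_m}_m(\Sigma_g)/\PGL,\mathbb{Q})$ breaks at that step. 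So while your proposal correctly reproduces the paper's evidence and honestly isolates the open ``support'' ingredient, the specific route you propose for closing it would fail, and the statement remains a conjecture.
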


A candidate filtration $F$ will be constructed for general triples $(Q,W,\Sp)$ in the paper \cite{DaMe15b}.  The evidence for this conjecture comes from taking weight polynomials.  In \cite{HLRV13} the following calculation is made:
\[
\chi_q(\mathcal{H}^{\Sp}_{Q_{\Sigma_g},W_{\Sigma_g}})=\prod(1-x_m)^{\chi_q\big(\Ho_{c,\Gl_{\Cp}(m)}(\Rep_m^{\zeta_m}(\Sigma_g),\mathbb{Q})^{\vee}\state{(1-g)m^2}\big)},
\]
which means that if we have $\mathcal{H}^{\Sp}_{Q_{\Sigma_g},W_{\Sigma_g}}\cong \mathcal{U}(\mathfrak{g}[u])$ then the $(m,m,m,m)$th graded piece of the Lie algebra $\mathfrak{g}$ has the same weight polynomial as the cohomology of the twisted character variety $\Rep^{\zeta_m}_m(\Sigma_g)$, up to the correct Tate twist.

\appendix \label{dimredap}
\section{Dimensional reduction for quivers with potential and a cut}
\subsection{Relating critical cohomology to ordinary cohomology}
Let $Y:=X\times \Aff^n$ be the total space of the trivial vector bundle, carrying the $\Cp^*$-action that acts trivially on $X$ and with weight one on $\Aff^n$ (the rescaling action).  Let $f\colon Y\rightarrow \Aff^1$ be $\Cp^*$-equivariant, where $\Cp^*$ acts with weight one on the target.  We may express $f$ as
\[
f=\sum_{s=1}^{n}f_s x_s,
\]
where $\{x_s\}_{s\in\{1,\ldots,n\}}$ is a linear coordinate system on $\Aff^n$, i.e. each $x_s$ is acted on with weight one by the $\Cp^*$-action, and the $f_s$ are functions on $X$.  We define $Z\subset X$ as the space of closed points where all of the $f_s$ vanish.  Clearly $Z$ does not depend on the linear coordinate system we pick for $\Aff^n$ --- it can be defined without reference to it, as the space of closed points $z\in X$ such that $\pi^{-1}(z)\subset f^{-1}(0)$, where $\pi\colon Y\rightarrow X$ is the projection.

\begin{theorem}
\label{dim_red_prop}
In the above situation, let $i\colon  Z\rightarrow X$ be the closed inclusion.  There is a natural isomorphism of functors $\Dbc{X}\rightarrow \Dbc{X}$
\begin{equation}
\label{dimred}
\pi_!\phi_f\pi^*[-1]\cong \pi_!\pi^*i_*i^*,
\end{equation}
so that in particular:
\begin{equation}
\label{Hdimred}
\Ho_c^*(Y,\varphi_f)\cong\Ho_c^*(Z\times \Aff^n,\mathbb{Q})\cong \Ho_c^{*-2n}(Z,\mathbb{Q}).
\end{equation}
\end{theorem}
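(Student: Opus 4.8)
The plan is to work entirely on $X$ and to analyse the complex $\pi_!\phi_f\pi^*\F$ for $\F\in\Dbc{X}$ by decomposing $X$ into the closed locus $Z$ (where all the $f_s$ vanish, equivalently where the fibre $\pi^{-1}(x)$ is contained in $f^{-1}(0)$) and its open complement $U:=X\setminus Z$. Over $Z$, the function $f$ restricted to $\pi^{-1}(Z)=Z\times\A^n$ is identically zero, so $\phi_f$ on that piece is the full nearby/vanishing cycle of the zero function, which (with our shift convention $\phi_f\mathbb{Q}[-1]=(R\Gamma_{\{\real(f)\le 0\}}\mathbb{Q})|_{f^{-1}(0)}$) is just $R\Gamma_{\{0\le 0\}}=\mathrm{id}$; hence over $Z$ the left side of (\ref{dimred}) contributes $\pi_!\pi^*i_*i^*\F$, which is the claimed answer. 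The real content is that over the open set $U$ the contribution vanishes: one must show $\pi_!\phi_f\pi^*(\F|_U)\cong 0$. First I would reduce to this vanishing statement via the triangle $i_!i^!\to\mathrm{id}\to j_*j^*\to$ (with $j:U\hookrightarrow X$), using that $\phi_f$, $\pi^*$ and $\pi_!$ all behave well with respect to open restriction (equation (\ref{commWithSmooth}) for $\pi^*$ and smooth base change / proper-ness fail-safes for the rest), together with Proposition~\ref{tensout} to move $\pi^*\F$ past $\phi_f$.

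The heart of the argument is therefore the pointwise/fibrewise claim: if $x\in U$, so that at least one $f_s(x)\neq 0$, then $R\Gamma_c(\pi^{-1}(x),\phi_f|_{\pi^{-1}(x)})=0$, and more precisely that this holds in families over $U$. Over $U$ the function $f=\sum f_s x_s$ is, fibrewise, a nonzero \emph{linear} form on $\A^n$. After an $\A^n$-fibrewise linear change of coordinates on the (Zariski-locally trivial, or at least étale-locally trivial) chart of $U$ where some $f_{s_0}$ is invertible, we may assume $f=x_1$ on that chart, i.e. $f$ is (locally on $U$) a coordinate projection $X'\times\A^n\to\A^1$ composed with a projection to one coordinate of $\A^n$. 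For such an $f$ the vanishing cycle complex $\phi_{x_1}\mathbb{Q}$ on $X'\times\A^n$ vanishes identically — $x_1$ is a submersion, it has no critical points, and $\phi$ of a smooth function with no critical locus is zero — and hence $\pi_!\phi_f\pi^*$ is zero over that chart. Gluing these local vanishings (using that the formation of $\pi_!\phi_f\pi^*$ is local on $X$, which follows from base change along open immersions) gives the vanishing over all of $U$. The cohomological consequence (\ref{Hdimred}) is then immediate: apply $R\Gamma$ of the constant map $X\to\mathrm{pt}$ to (\ref{dimred}) with $\F=\mathbb{Q}_X$, getting $\Ho_c(Y,\phi_f)\cong\Ho_c(Z\times\A^n,\mathbb{Q})$, and then strip off the $\A^n$ factor via the Künneth/Thom isomorphism $\Ho_c(Z\times\A^n,\mathbb{Q})\cong\Ho_c(Z,\mathbb{Q})[-2n]$ (up to the Tate twist, which in the Hodge-theoretic refinement is recorded by $\state{-n}$).

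The main obstacle I anticipate is not the vanishing over $U$ per se — that is a clean local computation — but rather making the reduction \emph{functorial} and compatible with all the structure the paper needs: the isomorphism must be natural in $\F$ as a transformation of functors $\Dbc{X}\to\Dbc{X}$, and (for the applications in Section~\ref{charSec} and the remark about $h^*s_*$) it must lift to mixed Hodge modules. The first issue is handled by phrasing everything through the localization triangle, checking that each edge map (adjunction units/counits, the natural transformation $\nu$ of Proposition~\ref{tensout}, the base-change isomorphisms) is natural, and that the connecting map from the $U$-part is zero because the whole $U$-part object is zero. The Hodge-module lift requires that $\phi_f$, $\pi_!$, and restriction to $Z$ all exist at the level of $\MHM$ (they do, by Saito's theory and Proposition~\ref{cruclift}) and that the vanishing $\phi_{x_1}\mathbb{Q}=0$ already holds in $\MHM(X'\times\A^1\times\A^{n-1})$, which it does since $x_1$ is smooth. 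A secondary technical point is the étale-local triviality used to straighten $f$ to a coordinate on $U$: one should either argue that the statement is étale-local on $U$ (so a Zariski cover refining to the loci $\{f_s\neq 0\}$ suffices, and on each such locus a linear coordinate change over the base does the job), or avoid coordinates entirely by observing that over $U$ the map $f:\pi^{-1}(U)\to\A^1$ is a smooth surjection with $\A^n$-affine-space fibres over $U\times\A^1$, hence has no critical points, hence $\phi_f=0$ there.
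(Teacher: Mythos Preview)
Your approach is essentially correct and takes a genuinely different---and in fact simpler---route than the paper's proof. Both arguments share the vanishing-over-$U$ step: after a fibrewise linear coordinate change on the chart $\{f_{s_0}\neq 0\}$, the function becomes a single fibre coordinate and $\phi_f\pi^*\mathcal{F}$ vanishes there. This is exactly the content of the paper's Lemma~\ref{suppLemma}. Where the arguments diverge is over $Z$. You use that $\pi^*i_*\cong\tilde{i}_*\pi_Z^*$ by smooth base change, then the closed-embedding compatibility $\phi_f\tilde{i}_*\cong\tilde{i}_*\phi_{f\tilde{i}}=\tilde{i}_*\phi_0$ from equation~(\ref{pbh}), together with $\phi_0[-1]=\id$, to conclude $\pi_!\phi_f[-1]\pi^*i_*i^*\mathcal{F}\cong\pi_!\pi^*i_*i^*\mathcal{F}$ directly. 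The paper instead restricts $\phi_f\pi^*\mathcal{F}$ to $Z\times\mathbb{A}^n$ (using Lemma~\ref{suppLemma} as a support statement), invokes the triangle $\phi_f[-1]\to(\cdot)|_{Y_0}\to\psi_f$, and then devotes Lemmas~\ref{lcco} and~\ref{mmm} plus a blow-up reduction to $n=1$ to proving that $\pi_!\tilde{i}_*\tilde{i}^*\psi_f\pi^*=0$. Your argument bypasses all of that nearby-cycle analysis. What the paper's route buys is that it identifies the isomorphism as the specific natural transformation induced by $\Gamma_{\{\real(f)\le 0\}}\to\id$, which is the form used in the corollaries; with your approach you would want to check that your map agrees with this one, though any natural isomorphism suffices for the theorem as stated and your map lifts to mixed Hodge modules for the same reasons.

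Two small corrections to your writeup. First, you wrote the localisation triangle as $i_!i^!\to\id\to j_*j^*$; the argument you actually run requires the dual triangle $j_!j^*\to\id\to i_*i^*$, so that the closed piece carries $i_*i^*$ (which you then correctly use). With your stated triangle you would need to handle $\phi_f\tilde{j}_*$ for an open embedding, which does not commute, and the $Z$-piece would involve $i^!$ rather than $i^*$. Second, Proposition~\ref{tensout} is not really doing work in your argument as written---the relevant commutation for the open piece is Corollary~\ref{ppoutside} (or equivalently~(\ref{commWithSmooth})), and for the closed piece it is~(\ref{pbh}).
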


The proof of Theorem \ref{dim_red_prop} is a little complicated --- we first prove the special case $n=1$ and then reduce the general case to this.  Below we use the notation 
\begin{align*}
Y_+:=&f^{-1}(\mathbb{R}_{>0}), \\
Y_0:=&f^{-1}(0)
\end{align*}
and the definition
\[
\psi_f:=(Y_0\rightarrow Y)_*(Y_0\rightarrow Y)^*(Y_+\rightarrow Y)_*(Y_+\rightarrow Y)^*\mathcal{F}.
\]
For $\mathcal{F}\in\Dbc{Y}$ there is a distinguished triangle 
\begin{equation}
\label{cantri}
\phi_f[-1]\mathcal{F}\rightarrow \mathcal{F}|_{Y_0} \rightarrow \psi_f\mathcal{F}.
\end{equation}
\begin{lemma}
\label{lcco}
Let $X$, $Y$ and $f$ be as above, let $\mathcal{F}\in\Dbc{X}$, and assume $n=1$.  Then $\psi_f\pi^*\mathcal{F}$ has locally constant cohomology on the fibres of the projection $X\times(\mathbb{A}\setminus \{0\})\rightarrow X$, i.e. for each point $x\in X$, if we let $p'$ be the inclusion of the punctured fibre $(\mathbb{A}^1\setminus\{0\})$ over $x$ into $X\times \mathbb{A}^1$, $p'^*\mathcal{L}$ has locally constant cohomology sheaves.
\end{lemma}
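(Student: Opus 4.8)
The plan is to first isolate the trivial locus. Since $\psi_f\pi^*\mathcal{F}$ is by construction of the form $(Y_0\hookrightarrow Y)_*(Y_0\hookrightarrow Y)^*(-)$ with $Y_0=f^{-1}(0)$, it is supported on $Y_0$; and since $n=1$ forces $f=f_1x_1$, for $x\notin Z$ we have $Y_0\cap\pi^{-1}(x)=\{(x,0)\}$, so $p'^*\psi_f\pi^*\mathcal{F}$ vanishes on the punctured fibre over such an $x$, which is trivially locally constant. Thus we may assume $x\in Z$; then $f$ vanishes identically on $\pi^{-1}(x)$ and the whole punctured fibre $C:=\{x\}\times(\mathbb{A}^1\setminus\{0\})$ lies in $Y_0$. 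Next I would pass to the open set $V:=X\times(\mathbb{A}^1\setminus\{0\})$, over which $Y_0\cap V=Z\times(\mathbb{A}^1\setminus\{0\})$ is smooth and $Y_+$ (being contained in $\{f\neq0\}\subset V$) is unchanged. By open base change the outer operation $(Y_0\hookrightarrow Y)_*(Y_0\hookrightarrow Y)^*$ in $\psi_f$ becomes $*$-pushforward along the closed embedding $Z\times(\mathbb{A}^1\setminus\{0\})\hookrightarrow V$, and since $C\subseteq Z\times(\mathbb{A}^1\setminus\{0\})$, restricting to $C$ discards it: one gets $p'^*\psi_f\pi^*\mathcal{F}\cong\mathcal{G}|_C$, where $\mathcal{G}:=(Y_+\hookrightarrow Y)_*(Y_+\hookrightarrow Y)^*\pi^*\mathcal{F}$.

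The scaling action of $\mathbb{R}_{>0}$ on $Y$ (trivial on $X$, dilating the $\mathbb{A}^1$-factor) preserves $Y_+=f^{-1}(\mathbb{R}_{>0})$ and $\pi^*\mathcal{F}$, hence $\mathcal{G}$ is $\mathbb{R}_{>0}$-equivariant; as $\mathbb{R}_{>0}$ acts freely on $C\cong\mathbb{A}^1\setminus\{0\}$ with quotient $S^1$, $\mathcal{G}|_C$ is pulled back from a constructible complex $\mathcal{K}$ on $S^1$ along the argument map, and the lemma reduces to showing $\mathcal{K}$ has locally constant cohomology, i.e. no singular point. To analyse $\mathcal{K}$ I would compute the stalks of $\mathcal{G}|_C$. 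Writing $U:=X\setminus Z$, the map $\Phi\colon U\times(\mathbb{A}^1\setminus\{0\})\to U\times(\mathbb{A}^1\setminus\{0\})$, $(z,t)\mapsto(z,f_1(z)t)$, is a biholomorphism carrying $Y_+$ onto $U\times\mathbb{R}_{>0}$ and satisfying $\Phi^*\pi^*\mathcal{F}=\pi^*\mathcal{F}$; feeding this into a K\"unneth computation identifies the stalk of $\mathcal{G}|_C$ at a point $(x,t_0)$ of $C$ with $R\Gamma\big(S_\theta(\epsilon,\delta),\mathcal{F}\big)$, where $S_\theta(\epsilon,\delta)=\{z\in B_X(x,\epsilon)\setminus Z:\arg f_1(z)\in(\theta-\delta,\theta+\delta)\}$ is an angular sector, $\theta=-\arg t_0$, and $\epsilon,\delta\to0$ in the colimit defining the stalk. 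So the whole lemma comes down to the statement that $\theta\mapsto R\Gamma(S_\theta(\epsilon,\delta),\mathcal{F})$ is locally constant in $\theta$, compatibly with shrinking $\epsilon$ and $\delta$.

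For that last point I would use the Milnor ``tube'' fibration of the holomorphic function $f_1$ at $x$. Choose a Whitney stratification of $X$ adapted to $\mathcal{F}$ and refined so that $f_1$ restricts to a stratified submersion on $X\setminus Z$ (the critical values of these restrictions can only accumulate at $0$, so over a small punctured disc there are none); Thom's first isotopy lemma, applied to the proper restriction of $f_1$ to a compact tube $\overline{B_X(x,\epsilon_0)}\cap f_1^{-1}(\overline{D(0,\eta)})$, then makes $f_1$ into a stratified locally trivial fibration over $D(0,\eta)\setminus\{0\}$ for $\eta\ll\epsilon_0$. Since $S_\theta(\epsilon,\delta)$ is, for $\epsilon$ small, $f_1^{-1}(\text{open cone at angle }\theta\text{ of width }2\delta)\cap B_X(x,\epsilon)$, and the rotation isotopy of $D(0,\eta)\setminus\{0\}$ carries these cones into one another, lifting the rotation through the stratified fibration produces an ambient isotopy of the tube which preserves the strata of $\mathcal{F}$ (and, chosen carefully, the spheres $\partial B_X(x,\epsilon)$), hence induces isomorphisms $R\Gamma(S_\theta(\epsilon,\delta),\mathcal{F})\cong R\Gamma(S_{\theta+s}(\epsilon,\delta),\mathcal{F})$ for small $s$; passing to the colimit gives local constancy of $\mathcal{K}$, and with it the lemma.

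The main obstacle is precisely this final step: constructing the stratified Milnor tube for $f_1$ compatibly with $\mathcal{F}$, arranging the rotation lift to be tangent to the relevant spheres, and matching up ``thin sector in a tiny ball'' with ``preimage of a tiny punctured disc in a fixed ball'' so that the colimit over neighbourhoods is controlled. This is standard Thom--Mather / conic-structure material, but it carries the genuine geometric content of the lemma; everything else is formal manipulation with the six functors and the $\mathbb{R}_{>0}$-equivariance.
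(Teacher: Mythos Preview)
Your argument is correct and its mathematical content coincides with the paper's, but you have taken a much longer path to the same destination. The paper's proof is two sentences: writing $f=f_1u$, one observes that on the slice $X\times\{u\}$ the complex $\psi_f\pi^*\mathcal{F}$ is precisely $\psi_{f_1}\mathcal{F}$ computed with respect to the rotated ray $u^{-1}\mathbb{R}_{>0}$ in place of $\mathbb{R}_{>0}$; local constancy in $u$ is then the standard fact that the nearby fibre of $f_1$ (with coefficients in $\mathcal{F}$) is independent of the direction of approach, the monodromy around $u=0$ being exactly the monodromy operator on $\psi_{f_1}\mathcal{F}$.

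Your route---case split on $x\in Z$, open base change to $V$, $\mathbb{R}_{>0}$-equivariance to descend to $S^1$, stalk identification with $R\Gamma(S_\theta(\epsilon,\delta),\mathcal{F})$, and finally a Thom--Mather/Milnor-tube argument---is a detailed unpacking of that same standard fact. Your angular sectors $S_\theta$ are exactly the ``Milnor-type'' neighbourhoods for $f_1$ in direction $\theta$, so your final step is reproving, at the level of stalks, what the paper cites as a basic property of nearby cycles. Two small remarks: the claim that $Z\times(\mathbb{A}^1\setminus\{0\})$ is smooth is unnecessary (and not generally true); and the ``K\"unneth'' identification of the stalk is really a contractible-fibre argument (the $t$-slice over each $z\in S_\theta$ is an interval), not a genuine product decomposition. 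Neither affects the validity of the argument.
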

\begin{proof}
We have $f=f_1u$ for $u$ a coordinate on $\mathbb{A}^1$.  The lemma, restated in terms of $f_1$ , is the statement that 
\[
\left(\big(f_1^{-1}(u^{-1}\mathbb{R}_{> 0})\rightarrow X\big)_*\big(f_1^{-1}(u^{-1}\mathbb{R}_{> 0})\rightarrow X\big)^*\mathcal{F}\right)|_{f_1^{-1}(0)}
\]
gives a family of objects in the derived category of constructible sheaves on $\mathbb{A}^1\setminus\{0\}\times X$ which is locally trivial on each fibre of the projection $(\mathbb{A}^1\setminus \{0\})\times X\rightarrow X$.  This follows from basic properties of the nearby cycles functor: the monodromy around $0\in\mathbb{A}^1$ is precisely the monodromy operator on $\psi_{f_1}\mathcal{F}$.
\end{proof}
\begin{lemma}
\label{mmm}
Let $\mathcal{L}\in\Dbc{X\times(\mathbb{A}^1\setminus \{0\})}$ have locally constant cohomology along the fibres of the projection $X\times(\mathbb{A}\setminus \{0\})\rightarrow X$.  Let $r\colon X\times(\mathbb{A}^1\setminus \{0\})\rightarrow Y=X\times\mathbb{A}^1$ be the inclusion, and let $\pi\colon Y\rightarrow X$ be the projection.  Then $\pi_!r_*\mathcal{L}=0$.
\end{lemma}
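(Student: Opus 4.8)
The plan is to compute $\pi_! r_* \mathcal{L}$ by working fibrewise over $X$ and reducing to the statement that, for a constant sheaf (complex) on $\mathbb{A}^1 \setminus \{0\}$, the derived pushforward with compact supports along $\mathbb{A}^1 \setminus \{0\} \hookrightarrow \mathbb{A}^1 \to \mathrm{pt}$ vanishes. First I would use proper base change: since $\pi$ is not proper I cannot base change $\pi_!$ naively, but $\pi_! r_*$ can be rewritten. The point is that $r$ is an open inclusion with closed complement $j : X \times \{0\} \hookrightarrow Y$, so there is a distinguished triangle $j_! j^* \to \mathrm{id} \to r_* r^*$ applied to $r_* \mathcal{L}$ (or more directly, $r_! \mathcal{L} \to r_* \mathcal{L} \to j_* (\text{something supported on } X\times\{0\})$). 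Rather than chase this, the cleaner route: for any point $x \in X$ with inclusion $\iota_x : \{x\} \times \mathbb{A}^1 \hookrightarrow Y$, base change for $\pi_!$ along $\{x\} \hookrightarrow X$ gives $(\pi_! r_* \mathcal{L})_x \cong \Ho_c^*(\mathbb{A}^1, (\iota_x^* r_* \mathcal{L}))$. Now $\iota_x^* r_* \mathcal{L}$ is the extension to $\mathbb{A}^1$ of the restriction of $\mathcal{L}$ to the punctured fibre $\{x\} \times (\mathbb{A}^1 \setminus \{0\})$, pushed forward from the open inclusion of that punctured line.

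Next I would invoke the hypothesis that $\mathcal{L}$ has locally constant cohomology along the fibres of $X \times (\mathbb{A}^1 \setminus \{0\}) \to X$: this means $\mathcal{L}|_{\{x\} \times (\mathbb{A}^1 \setminus \{0\})}$ has locally constant cohomology sheaves, hence (as $\mathbb{A}^1 \setminus \{0\} = \mathbb{C}^*$ is a $K(\mathbb{Z},1)$) is represented by a complex of local systems. So it suffices to show: if $L$ is a local system on $\mathbb{C}^*$, $k : \mathbb{C}^* \hookrightarrow \mathbb{A}^1$ the open inclusion, then $\Ho_c^*(\mathbb{A}^1, k_* L) = 0$. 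For this I would use the triangle $(\{0\}\hookrightarrow \mathbb{A}^1)_* i_0^* k_* L [-1] \to k_! L \to k_* L$, giving a long exact sequence relating $\Ho_c^*(\mathbb{A}^1, k_* L)$, $\Ho_c^*(\mathbb{C}^*, L)$, and the stalk cohomology $i_0^* k_* L$. The stalk $i_0^* k_* L$ is the cohomology of $L$ on a small punctured disc, i.e. $\Ho^*(\mathbb{C}^*_{\mathrm{small}}, L)$, which is $L^T$ in degree $0$ and $L_T$ in degree $1$ (invariants/coinvariants of the monodromy $T$). Meanwhile $\Ho_c^*(\mathbb{C}^*, L)$ is dual to $\Ho^*(\mathbb{C}^*, L) = [L^T \text{ in degree } 0, L_T \text{ in degree } 1]$, so $\Ho_c^*(\mathbb{C}^*, L) = [L^T \text{ in degree } 1, L_T \text{ in degree } 2]$. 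Comparing the long exact sequence with these identifications, the connecting maps are isomorphisms and everything cancels, so $\Ho_c^*(\mathbb{A}^1, k_* L) = 0$.

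The main obstacle I expect is bookkeeping: making sure the fibrewise argument is uniform enough that vanishing of every stalk of $\pi_! r_* \mathcal{L}$ forces $\pi_! r_* \mathcal{L} = 0$ as a complex of constructible sheaves (this is fine, since a complex with all stalks acyclic is zero in $\Dbc{X}$), and being careful that the "locally constant along the fibres" hypothesis genuinely lets me replace $\mathcal{L}$ by a complex of local systems on each fibre and then devissage to the local system case via the standard truncation/hypercohomology spectral sequence argument. A small alternative for the core vanishing: one can instead note $k_* L$ on $\mathbb{A}^1$ is, up to shift, $j_{!*}$-type and observe directly that $\mathbf{R}\Gamma_c(\mathbb{A}^1, k_* L) \cong \mathbf{R}\Gamma_c(\mathbb{A}^1, \mathbf{R} k_* L)$ computes $\mathbf{R}\Gamma(\mathbb{A}^1, \text{(Verdier dual)})^\vee$ and the Verdier dual of $k_* L$ is $k_! (L^\vee)[\dots]$ up to the relevant twist by the self-duality of $\mathbb{A}^1$; then $\mathbf{R}\Gamma(\mathbb{A}^1, k_! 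L^\vee) = \mathbf{R}\Gamma_c$ of $L^\vee$ on $\mathbb{C}^*$ shifted, which vanishes in the relevant range by the same Euler-characteristic-zero phenomenon. Either way the heart of the matter is the vanishing $\mathbf{R}\Gamma_c(\mathbb{A}^1, k_* L) = 0$ for $L$ a local system on $\mathbb{C}^*$, which is an elementary computation with the six functors on the affine line.
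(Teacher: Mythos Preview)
Your overall strategy—reduce to a single fibre and show that $\Ho_c^*(\mathbb{A}^1,k_*L)=0$ for $L$ a local system on $\mathbb{C}^*$—is sound, and your endpoint computation is correct. But there is a real gap at the step where you identify $\iota_x^* r_*\mathcal{L}$ with $r'_*(\mathcal{L}|_{\{x\}\times\mathbb{C}^*})$. This is a base change assertion for $r_*$ along the closed inclusion $\iota_x$, and neither of the standard hypotheses applies: $r$ is an open immersion (so $r_*$ is not proper pushforward), and $\iota_x$ is not smooth. The hypothesis on $\mathcal{L}$ controls its behaviour along each fibre $\{x\}\times\mathbb{C}^*$, but says nothing directly about how the sheaf varies in the $X$-direction near the closed stratum $X\times\{0\}$, which is exactly what governs the stalk $(r_*\mathcal{L})_{(x,0)}$. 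In fact your argument, once the fibrewise vanishing $\Ho_c^*(\mathbb{A}^1,k_*L)=0$ is granted, shows that the vanishing of $(\pi_!r_*\mathcal{L})_x$ is \emph{equivalent} to this base change holding at $x$: the cone of $\iota_x^*r_*\mathcal{L}\to r'_*(\iota_x'^*\mathcal{L})$ is a skyscraper at $0$, and its cohomology is precisely what survives in $\Ho_c^*$. So the step you treat as formal is the whole content.

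The paper avoids this by Verdier dualising at the outset: since the hypothesis is self-dual, it suffices to prove $\pi_*r_!\mathcal{L}=0$. Now $r_!$ commutes with arbitrary base change, so restriction to a fibre is unproblematic for that factor; the remaining issue is base change for $\pi_*$, which the paper handles by first restricting to the closed unit disk $\overline{B}(0,1)\subset\mathbb{A}^1$ (the fibrewise constancy makes this restriction harmless) so that the projection becomes proper. After that the reduction to a single fibre is legitimate, and one lands on the computation $\Ho^*(\overline{B}(0,1),\eta_!\overline{\mathcal{L}})=0$ for $\overline{\mathcal{L}}$ a local system on the punctured closed disk, which the paper dispatches by dévissage to the case $\overline{\mathcal{L}}=\eta^*f_*\mathbb{Q}$ with $f(z)=z^d$. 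Your alternative sketch at the end, passing through Verdier duality, is in fact the key manoeuvre, not an optional variant; once you dualise, the base change difficulty evaporates and the rest of your argument goes through.
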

\begin{proof}
Since the condition on $\mathcal{L}$ is stable under replacing $\mathcal{L}$ with its Verdier dual, it is enough to prove that $\pi_*r_!\mathcal{L}=0$.  Let $\overline{Y}=X\times \overline{B}(0,1)\subset Y$ be the product of $X$ with the closed unit ball.  By the condition on $\mathcal{L}$, the map 
\[
\pi_*r_!\mathcal{L}\rightarrow \pi_*(\overline{Y}\rightarrow Y)_*(\overline{Y}\rightarrow Y)^*r_!\mathcal{L}
\]
is an isomorphism, and so it suffices to show that 
\[
\overline{\pi}_*\overline{r}_!\overline{\mathcal{L}}=0
\]
where $\overline{\pi}\colon \overline{Y}\rightarrow X$ is the projection, $\overline{r}\colon X\times \overline{B}'(0,1)\rightarrow X\times\overline{B}(0,1)$ is the inclusion, and 
\[
\overline{\mathcal{L}}=\left(X\times \overline{B}'(0,1)\rightarrow X\times (\mathbb{A}^1\setminus\{0\})\right)^*\mathcal{L}.  
\]
Here $\overline{B}'(0,1)$ is the closed unit ball in $\mathbb{C}$, with zero removed.

If $p\colon x\rightarrow X$ is the inclusion of the point $x$, it is enough to prove that 
\begin{equation}
\label{discNow}
p^*\overline{\pi}_*\overline{r}_!\overline{\mathcal{L}}=0
\end{equation}
for all choices of $x$.  By base change, 
\[
p^*\overline{\pi}_*\overline{r}_!\overline{\mathcal{L}}\cong \left(x\times\overline{B}(0,1)\rightarrow x\right)_*\left(x\times\overline{B}'(0,1)\rightarrow x\times\overline{B}(0,1)\right)_!\left(x\times \overline{B}'(0,1)\rightarrow X\times\overline{B}'(0,1)\right)^*\overline{\mathcal{L}}
\]
and so we have reduced to the case in which $X$ is a point, and we will now assume that $X=x$.  By the assumption on $\mathcal{L}$, the cohomology of $\overline{\mathcal{L}}$ admits a filtration by simple local systems $\mathcal{S}$ on $\overline{B}'(0,1)$, which in turn are given by simple finite-dimensional representations of $\mathbb{Z}$.  So we may prove the statement under the assumption that $\overline{\mathcal{L}}=\eta^*f_*\mathbb{Q}_{\overline{B}(0,1)}$, where $f\colon \overline{B}(0,1)\rightarrow \overline{B}(0,1)$ is the map $x\mapsto x^d$, and $\eta\colon \overline{B}'(0,1)\rightarrow\overline{B}(0,1)$ is the inclusion, i.e. we must show that $\Ho(\overline{B}(0,1),\eta_!\eta^*f_*\mathbb{Q}_{\overline{B}(0,1)})=0$.  Let $i\colon \{0\}\rightarrow\overline{B}(0,1)$ be the inclusion.  We have the commutative diagram
\[
\xymatrix{
\Ho(\overline{B}(0,1),\eta_!\eta^*f_*\mathbb{Q}_{\overline{B}(0,1)})\ar[r] &\Ho(\overline{B}(0,1),f_*\mathbb{Q}_{\overline{B}(0,1)})\ar[r]\ar[d]^{=} &\Ho(\overline{B}(0,1),i_*i^*f_*\mathbb{Q}_{\overline{B}(0,1)})\ar[d]^{\cong}\\
&\Ho(\overline{B}(0,1),\mathbb{Q})\ar[r]^{\cong}&\Ho(\{0\},\mathbb{Q})
}
\]
in which the top row is a distinguished triangle, from which the required vanishing follows.
\end{proof}

We return briefly to the case of general $n$.
\begin{lemma}
\label{suppLemma}
Let $\mathcal{F}\in\Dbc{X}$.  Then $\supp(\phi_f\pi^*\mathcal{F})\subset Z\times\mathbb{A}^n$.
\end{lemma}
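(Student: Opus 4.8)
\textbf{Proof proposal for Lemma \ref{suppLemma}.}

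The plan is to reduce to a pointwise statement about stalks and use the basic fact that $\phi_f$ of a function is supported on (and depends only on a neighbourhood of) the critical locus, combined with the structure of $f$ as a bilinear pairing in the fibre directions. First I would observe that the support of $\phi_f\pi^*\mathcal{F}$ is contained in $f^{-1}(0)$ by definition of the vanishing cycle functor, so it suffices to show the stalk $(\phi_f\pi^*\mathcal{F})_y$ vanishes for every closed point $y=(x,v)\in Y=X\times\mathbb{A}^n$ with $x\notin Z$; note such $y$ automatically lies in $f^{-1}(0)$ only for special $v$, but we argue for all $v$. Fix such a $y$, so that not all of the $f_s(x)$ vanish; after a linear change of the coordinates $x_1,\ldots,x_n$ on $\mathbb{A}^n$ (which is harmless, as $Z$ and the statement do not depend on the coordinate system) we may assume $f_1(x)\neq 0$.

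Next I would localise analytically near $y$. Since $f_1$ is a regular function on $X$ with $f_1(x)\neq 0$, there is a Zariski (hence analytic) open neighbourhood $U\ni x$ in $X$ on which $f_1$ is invertible. On $U\times\mathbb{A}^n$ the function $f=\sum_s f_sx_s$ has no critical points: computing $\partial f/\partial x_1 = f_1$, which is nowhere zero on $U\times\mathbb{A}^n$, shows $\crit(f|_{U\times\mathbb{A}^n})=\emptyset$. The key input is that $\phi_g\mathcal{G}$ is supported on the critical locus of $g$ for any holomorphic function $g$ and any $\mathcal{G}\in\Dbc{}$ of the relevant space — this is stated in the excerpt (in the discussion preceding Proposition \ref{tensout}, and again in Remark \ref{JacRem}) and follows from the definition of $\phi_f$ via $R\Gamma_{\{\real(f)\leq 0\}}$ together with the local triviality of $f$ away from its critical points. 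Applying this with $g=f|_{U\times\mathbb{A}^n}$ and $\mathcal{G}=(\pi^*\mathcal{F})|_{U\times\mathbb{A}^n}=\pi_U^*(\mathcal{F}|_U)$, and using that vanishing cycles commute with restriction to the open set $U\times\mathbb{A}^n$ (the isomorphism \eqref{commWithSmooth} for the open embedding $U\times\mathbb{A}^n\hookrightarrow Y$), we conclude $\phi_f\pi^*\mathcal{F}$ vanishes on $U\times\mathbb{A}^n\ni y$. Hence $y\notin\supp(\phi_f\pi^*\mathcal{F})$, which is exactly the claim.

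There is no serious obstacle here; the only point requiring a little care is the legitimacy of the coordinate change — one must check that rotating the $x_s$ genuinely does not affect the statement, but this is immediate since $Z$ was already characterised coordinate-freely (as the locus where $\pi^{-1}(z)\subset f^{-1}(0)$) and $\phi_f$, being intrinsic to $f$, is unchanged. The other mild subtlety is invoking \eqref{commWithSmooth} for an open embedding to reduce the support computation to $U\times\mathbb{A}^n$; this is precisely the statement recorded after \eqref{commWithSmooth} that $j^*\phi_f\cong\phi_{f|_U}j^*$ for $j$ an open embedding. With these in hand the lemma follows formally.
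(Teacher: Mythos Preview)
Your argument contains a genuine gap at the step you label the ``key input'': the claim that $\phi_g\mathcal{G}$ is supported on $\crit(g)$ for \emph{any} $\mathcal{G}\in\Dbc{}$ is false. The two passages you cite (before Proposition~\ref{tensout} and in Remark~\ref{JacRem}) both concern $\phi_f\mathbb{Q}_Y[-1]$, the vanishing cycles of the \emph{constant} sheaf---recall the paper's standing abbreviation $\phi_f:=\phi_f\mathbb{Q}_Y[-1]$. For that sheaf the statement holds, but for general coefficients it does not: take $f=z$ on $\mathbb{C}$ and $\mathcal{G}=i_*\mathbb{Q}_{\{0\}}$; then $\crit(f)=\emptyset$ yet $\phi_f\mathcal{G}\neq 0$. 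Indeed the paper explicitly flags this pitfall in the Remark immediately following the lemma, giving the example $\supp\bigl(\phi_f\mathbb{Q}_{f^{-1}(0)}\bigr)=f^{-1}(0)\nsubseteq Z\times\mathbb{A}^n$ and stressing that the restriction to $\mathcal{G}=\pi^*\mathcal{F}$ is essential. ``Local triviality of $f$'' on its own buys you nothing unless the sheaf is also locally trivial along the $f$-direction.

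Your approach can be repaired, but it requires exactly this missing ingredient. On $U\times\mathbb{A}^n$ with $f_1$ invertible, the fibrewise linear change $(x,x_1,\ldots,x_n)\mapsto(x,\sum_sf_sx_s,x_2,\ldots,x_n)$ is an isomorphism over $U$ preserving $\pi$, and it carries $f$ to the coordinate projection $w_1$. Crucially, $\pi^*\mathcal{F}$ is still $\pi^*\mathcal{F}$ after this change, hence is pulled back along the further projection forgetting $w_1$; for such sheaves one does have $\phi_{w_1}(\pi^*\mathcal{F})=0$ (e.g.\ by Thom--Sebastiani with $\phi_{w_1}\mathbb{Q}_{\mathbb{A}^1}=0$, or directly from the definition). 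This is essentially what the paper does after its reduction to $n=1$: it identifies $(J\times\mathbb{A}^1)_0$ with $J$ and checks that the canonical map $(\pi^*\mathcal{F})|_{J}\to\psi_f\pi^*\mathcal{F}$ is an isomorphism using that $\pi^*\mathcal{F}$ is pulled back from $J$. So the fix is available, but as written your proof invokes a false general fact in place of the honest computation that uses the form $\pi^*\mathcal{F}$.
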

\begin{proof}
Let $\tilde{i}\colon Z\times\AA^n\rightarrow Y$ be the inclusion.  Let $j$ be the inclusion of the open complement to $Z\times\AA^n$.  Then we have a distinguished triangle in the derived category of constructible sheaves on $Y$
\[
j_!j^*\phi_f\pi^*\mathcal{F}\rightarrow \phi_f\pi^*\mathcal{F}\rightarrow \tilde{i}_*\tilde{i}^*\phi_f\pi^*\mathcal{F}
\]
and we need to prove that $j_!j^*\phi_f\pi^*\mathcal{F}=0$.  For this we may replace $X$ by $J$, the open complement to $Z$, and show that $\phi_{f|_{J\times\mathbb{A}^n}}\pi|_J^*\mathcal{F}_J=0$, since there is an isomorphism
\[
j^*\phi_f\pi^*\mathcal{F}\cong\phi_{f|_{J\times\mathbb{A}^n}}\pi|_{J}^*\mathcal{F}_{J},
\]
as $j$ is an open inclusion.  The open sets $J_i:=J\setminus f_i^{-1}(0)$ form an open cover of $J$, and so it is enough to prove instead that that $\phi_{f|_{J_i\times\mathbb{A}^n}}\pi|_{J_i}^*\mathcal{F}_{J_i}=0$ for each $i$.  Via the isomorphism
\begin{align*}
\left(f^{-1}(0)\cap (J_i\times\mathbb{A}^n)\right)\times\mathbb{A}^1\rightarrow &J_i\times\mathbb{A}^n\\
(z,u)\mapsto z+ue_i,
\end{align*}
where $e_i$ is the $i$th basis element of $\mathbb{A}^n$, considered as a vector space, we reduce to the situation in which $n=1$, so for the rest of the proof we make this assumption.  

Consider $J\subset J\times \mathbb{A}^1$ as a subspace via the inclusion of the zero section.  Then under our simplifying assumptions on $n$ and $f$, $J=(J\times\mathbb{A}^1)_0:=f|_{J\times\mathbb{A}^1}^{-1}(0)$.  Finally, from the distinguished triangle (\ref{cantri}) we deduce that it is enough to check that the natural map that is the horizontal map in the commuting diagram
\[
\xymatrix{
(\pi_J^*\mathcal{F}_J)_J\ar[r]\ar[dr]_{\cong}& \left((J\times \mathbb{R}_{>0}\rightarrow J\times\mathbb{A}^1)_*(J\times \mathbb{R}_{>0}\rightarrow J\times\mathbb{A}^1)^*\pi_J^*\mathcal{F}_J\right)_J\ar[d]^{\cong}\\
&\mathcal{F}_J
}
\]
is an isomorphism, which is clear.
\end{proof}
\begin{remark}
Lemma \ref{suppLemma} is trivial in the case that $\mathcal{F}=\mathbb{Q}_X$, since $\crit(f)\subset Z\times\mathbb{A}^n$, and it is a general fact that $\supp(\varphi_f)=\crit(f)$ as long as $X$ is smooth (here we use our standing abuse of notation, whereby $\varphi_f=\phi_f\mathbb{Q}_Y[-1]$).  But for general $\mathcal{G}\in\Dbc{Y}$ it is not true that $\supp(\phi_f\mathcal{G})\subset\crit(f)$.  For example $\supp\left(\phi_f\mathbb{Q}_{f^{-1}(0)}\right)=f^{-1}(0)\nsubseteq Z\times\mathbb{A}^n$.  So it is important that we restrict to $\mathcal{G}$ of the form $\pi^*\mathcal{F}$ for $\mathcal{F}\in\Dbc{X}$.
\end{remark}

\begin{lemma}
\label{n1ver}
Theorem \ref{dim_red_prop} is true in the case $n=1$.
\end{lemma}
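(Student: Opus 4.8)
The plan is to prove the $n=1$ case directly, using the distinguished triangle (\ref{cantri}) relating the vanishing cycles functor to nearby cycles, together with the two preparatory lemmas just established. So suppose $Y = X\times\mathbb{A}^1$, $f = f_1 u$ with $u$ the coordinate on $\mathbb{A}^1$, and $Z = Z(f_1)\subset X$. Applying $\pi_!$ to the triangle
\[
\phi_f\pi^*\mathcal{F}[-1]\rightarrow (\pi^*\mathcal{F})|_{Y_0}\rightarrow \psi_f\pi^*\mathcal{F}
\]
gives a distinguished triangle
\[
\pi_!\phi_f\pi^*\mathcal{F}[-1]\rightarrow \pi_!\bigl((\pi^*\mathcal{F})|_{Y_0}\bigr)\rightarrow \pi_!\psi_f\pi^*\mathcal{F}.
\]
The middle term I would analyze via the base change, decomposing $Y_0 = f^{-1}(0)$ into the piece lying over $Z$ (where the whole fibre $\{x\}\times\mathbb{A}^1$ is in $f^{-1}(0)$) and the piece lying over $X\setminus Z$ (where $f^{-1}(0)$ meets the fibre only in the zero section). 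Over $X\setminus Z$ the contribution to $\pi_!((\pi^*\mathcal{F})|_{Y_0})$ is the zero-section restriction, which is just $\mathcal{F}|_{X\setminus Z}$; over $Z$ it is $\pi_!\pi^*(i_*i^*\mathcal{F})$.

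The key input is then that the third term vanishes: $\pi_!\psi_f\pi^*\mathcal{F} = 0$. This is where Lemmas \ref{lcco} and \ref{mmm} combine. By Lemma \ref{lcco}, $\psi_f\pi^*\mathcal{F}$ has locally constant cohomology along the punctured fibres of $X\times(\mathbb{A}^1\setminus\{0\})\to X$; by construction $\psi_f$ takes values in sheaves on $Y_0 = X$, but more precisely I should phrase $\psi_f\pi^*\mathcal{F}$ as obtained by $*$-pushforward from the punctured-fibre locus, so that it is of the form $r_*\mathcal{L}$ with $\mathcal{L}$ fibrewise locally constant and $r: X\times(\mathbb{A}^1\setminus\{0\})\hookrightarrow Y$ — wait, one must be slightly careful since $\psi_f$ lands on $f^{-1}(0)$. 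The honest route is: the term $\pi_!\psi_f\pi^*\mathcal{F}$ equals $\pi_!(Y_0\hookrightarrow Y)_*(Y_0\hookrightarrow Y)^*(Y_+\hookrightarrow Y)_*(Y_+\hookrightarrow Y)^*\pi^*\mathcal{F}$; restricting the string to the locus over $X\setminus Z$ and over $Z$ separately, and identifying the relevant pushforward-restrictions with $r_*\mathcal{L}$ as in Lemma \ref{mmm}, one gets vanishing by Lemma \ref{mmm}. Combining, the triangle degenerates to an isomorphism
\[
\pi_!\phi_f\pi^*\mathcal{F}[-1]\ \cong\ \pi_!\pi^*i_*i^*\mathcal{F},
\]
which, together with Lemma \ref{suppLemma} identifying the support so that the formula is genuinely "concentrated on $Z$", is exactly (\ref{dimred}) for $n=1$. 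Taking $\mathcal{F} = \mathbb{Q}_X$ and applying global compactly supported cohomology, and using $\Ho_c^*(Z\times\mathbb{A}^1,\mathbb{Q})\cong\Ho_c^{*-2}(Z,\mathbb{Q})$, yields (\ref{Hdimred}).

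The main obstacle, I expect, is the bookkeeping in separating the $Y_0$-restriction (and the nearby-cycles string) according to whether a fibre lies entirely in $f^{-1}(0)$ or not, and verifying that the "bad" part over $X\setminus Z$ really is captured by a sheaf of the form $r_*\mathcal{L}$ to which Lemma \ref{mmm} applies — one has to match up the geometric restriction maps carefully and check that no cohomology is lost in the localization triangle $j_!j^* \to \mathrm{id}\to \tilde{i}_*\tilde{i}^*$ along $Z\times\mathbb{A}^1\hookrightarrow Y$. Once the $n=1$ case is in hand, the general case should follow by an induction on $n$: writing $\mathbb{A}^n = \mathbb{A}^{n-1}\times\mathbb{A}^1$ and $f = \bigl(\sum_{s<n} f_s x_s\bigr) + f_n x_n$, one applies the $n=1$ statement to the last coordinate over the base $X\times\mathbb{A}^{n-1}$ to reduce $f$ to the function $\sum_{s<n}f_s x_s$ on $Z(f_n)\times\mathbb{A}^{n-1}$, then invokes the inductive hypothesis; the compatibility of the two reductions is the routine part, and the $Z$ of the outer problem is visibly $\bigcap_s Z(f_s)$ regardless of the order of reduction.
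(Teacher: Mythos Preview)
There is a genuine gap: the assertion that $\pi_!\psi_f\pi^*\mathcal{F}=0$ is false in general. Over a point $x\in X\setminus Z$ (so $f_1(x)\neq 0$), the fibre of $Y_0=f^{-1}(0)$ is the single point $(x,0)$, and $f$ is smooth there; hence the stalk $(\psi_f\pi^*\mathcal{F})_{(x,0)}\cong\mathcal{F}_x$, and applying $\pi_!$ to a sheaf supported on the zero section returns that sheaf. Thus $(\pi_!\psi_f\pi^*\mathcal{F})|_{X\setminus Z}\cong\mathcal{F}|_{X\setminus Z}$, which is typically nonzero. Lemma~\ref{mmm} cannot help here: it concerns sheaves of the form $r_*\mathcal{L}$ with $\mathcal{L}$ living on $X\times(\mathbb{A}^1\setminus\{0\})$, whereas over $X\setminus Z$ the nearby cycle sits entirely on the zero section. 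Your decomposition of the middle term into a $Z$-piece and an $(X\setminus Z)$-piece is correct, but the third term decomposes the same way, and the $(X\setminus Z)$-pieces of the two must cancel rather than vanish separately.

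The paper avoids this by invoking Lemma~\ref{suppLemma} \emph{first}: since $\phi_f\pi^*\to\tilde{i}_*\tilde{i}^*\phi_f\pi^*$ is already an isomorphism (where $\tilde{i}:Z\times\mathbb{A}^1\hookrightarrow Y$), one applies the triangle (\ref{cantri}) only after this restriction. The middle term is then literally $\pi_!\tilde{i}_*\tilde{i}^*\pi^*\cong\pi_!\pi^*i_*i^*$, with no extraneous $(X\setminus Z)$-contribution to track, and one needs only $\pi_!\tilde{i}_*\tilde{i}^*\psi_f\pi^*=0$. A base change turns this into $i_*i^*\pi_!r_*(\ldots)$, and \emph{now} Lemmas~\ref{lcco} and~\ref{mmm} apply directly. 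Your route can be salvaged by separately checking that over $X\setminus Z$ the map $(\pi^*\mathcal{F})|_{Y_0}\to\psi_f\pi^*\mathcal{F}$ is an isomorphism --- but that is precisely Lemma~\ref{suppLemma}, so you end up doing the paper's restriction anyway, only later and with more bookkeeping.

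As a side remark (outside the scope of this lemma), the proposed induction on $n$ also does not go through as stated: viewing $f$ as a function on $(X\times\mathbb{A}^{n-1})\times\mathbb{A}^1$, it fails the $\mathbb{C}^*$-equivariance hypothesis for the scaling action on the last factor alone, because of the $x_n$-independent term $\sum_{s<n}f_sx_s$. The paper instead blows up $X\times\{0\}$ in $Y$; on each chart the pullback of $f$ factors as $L\cdot(\text{function on the base})$, which genuinely reduces to the $n=1$ case.
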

\begin{proof}
Let $\tilde{i}\colon Z\times\AA^1\rightarrow Y$ be the natural inclusion as in Lemma \ref{suppLemma}.  Then by Lemma \ref{suppLemma} the natural map $\phi_f\pi^*\rightarrow \tilde{i}_*\tilde{i}^*\phi_f\pi^*$, is an isomorphism.  By (\ref{cantri}) we have a distinguished triangle
\[
\pi_!\tilde{i}_*\tilde{i}^*\phi_f\pi^*[-1]\rightarrow \pi_!\tilde{i}_*\tilde{i}^*\pi^*\rightarrow \pi_!\tilde{i}_*\tilde{i}^*\psi_f\pi^*
\]
and the proposition will follow from the claim that $\pi_!\tilde{i}_*\tilde{i}^*\psi_f\pi^*=0$.  In other words we must show that
\[
\pi_!\tilde{i}_*\tilde{i}^*(Y_+\rightarrow Y)_*(Y_+\rightarrow Y)^*\pi^*=0.
\]
There is an inclusion $Y_+\subset X\times(\mathbb{A}^1\setminus \{0\})$ and so we may equivalently prove that
\[
\pi_!\tilde{i}_*\tilde{i}^*(X\times(\mathbb{A}^1\setminus \{0\})\rightarrow Y)_*(Y_+\rightarrow X\times(\mathbb{A}^1\setminus \{0\}))_*(Y_+\rightarrow Y)^*\pi^*=0,
\]
or
\[
\pi_!\tilde{i}_*\left(Z\times(\mathbb{A}^1\setminus \{0\})\rightarrow Z\times\mathbb{A}^1\right)_*\left(Z\times(\mathbb{A}^1\setminus \{0\})\rightarrow Y\right)^*(Y_+\rightarrow X\times(\mathbb{A}^1\setminus \{0\}))_*(Y_+\rightarrow Y)^*\pi^*=0,
\]
since for $\mathcal{F}$ a $\mathbb{R}_{>0}$-equivariant sheaf on $X\times(\AA^1\setminus \{0\})$, the base change map
\[
\tilde{i}^*(X\times(\mathbb{A}^1\setminus \{0\})\rightarrow Y)_*\mathcal{F}\rightarrow \left(Z\times(\mathbb{A}^1\setminus \{0\})\rightarrow Z\times\mathbb{A}^1\right)_*\left(Z\times(\mathbb{A}^1\setminus \{0\})\rightarrow Y\right)^*\mathcal{F}
\]
is an isomorphism.  By Lemma \ref{lcco} for $\mathcal{F}\in\Dbc{X}$,
\[
\left(Z\times(\mathbb{A}^1\setminus \{0\})\rightarrow Y\right)^*(Y_+\rightarrow X\times(\mathbb{A}^1\setminus \{0\}))_*(Y_+\rightarrow Y)^*\pi^*\mathcal{F}
\]
has locally constant cohomology along each restriction $x\times(\AA^1\setminus\{0\})$, and so the vanishing follows by Lemma \ref{mmm}.
\end{proof}
\begin{proof}[Proof of Theorem \ref{dim_red_prop}]
Let $g\colon \tilde{Y}\rightarrow Y$ be the blowup of $Y$ along $X\times \{0\}$, with exceptional divisor $E$, and with $\tilde{Z}:=(Z\times\mathbb{A}^n)\times_Y \tilde{Y}$.  As in the proof of Lemma \ref{n1ver}, we need to show that $\pi_!\tilde{i}_*\tilde{i}^*\psi_f\pi^*=0$.  The inclusion $Y_+\rightarrow Y$ factors through $\tilde{Y}\rightarrow Y$, and so we may write
\begin{align*}
\pi_!\tilde{i}_*\tilde{i}^*\psi_f\pi^*\cong&(Y\rightarrow X)_!(Z\times \AA^n\rightarrow Y)_*(Z\times\AA^n\rightarrow Y)^*(Y_+\rightarrow Y)_*(Y_+\rightarrow Y)^*(Y\rightarrow X)^*\\ \cong&
(Y\rightarrow X)_!(Z\times \AA^n\rightarrow Y)_*(Z\times\AA^n\rightarrow Y)^*(\tilde{Y}\rightarrow Y)_*(Y_+\rightarrow \tilde{Y})_*\\&(Y_+\rightarrow \tilde{Y})^*(\tilde{Y}\rightarrow Y)^*(Y\rightarrow X)^*\\ \cong&
(Y\rightarrow X)_!(\tilde{Y}\rightarrow Y)_!(\tilde{Z}\rightarrow \tilde{Y})_*(\tilde{Z}\rightarrow\tilde{Y})^*(Y_+\rightarrow \tilde{Y})_*(Y_+\rightarrow \tilde{Y})^*(\tilde{Y}\rightarrow E)^*(E\rightarrow X)^*\\ \cong&
(E\rightarrow X)_!(\tilde{Y}\rightarrow E)_!(\tilde{Z}\rightarrow \tilde{Y})_*(\tilde{Z}\rightarrow\tilde{Y})^*(Y_+\rightarrow \tilde{Y})_*(Y_+\rightarrow \tilde{Y})^*(\tilde{Y}\rightarrow E)^*(E\rightarrow X)^*
\end{align*}
and the result will follow from the claim that 
\begin{equation}
\label{RedVan}
(\tilde{Y}\rightarrow E)_!(\tilde{Z}\rightarrow \tilde{Y})_*(\tilde{Z}\rightarrow\tilde{Y})^*(Y_+\rightarrow \tilde{Y})_*(Y_+\rightarrow \tilde{Y})^*(\tilde{Y}\rightarrow E)^*=0.
\end{equation}
If we let $U\subset \tilde{Y}$ be defined by the condition $x_j\neq 0$ for some fixed $j\leq n$, we may write
\[
fg|_U=L\sum_{i=1}^n f_i x_i/x_j
\]
from which we deduce that we are in the situation of Lemma \ref{n1ver}, with $L$ our coordinate on $\mathbb{A}^1$ and $f'_1=\sum f_i x_i/x_j$.  The space $\tilde{Z}\cap U$ is contained in the vanishing locus of the function $f'_1$ since on an open dense subset of $\tilde{Z}\cap U$ all the $f_i$ vanish, from which we deduce (\ref{RedVan}) from Lemma \ref{n1ver}.  
\end{proof}
\subsection{The Thom--Sebastiani isomorphism and other corollaries}
Theorem \ref{dim_red_prop} can be expressed by saying that the natural transformation of derived functors of sheaves 
\[
\pi_!(Z\times\mathbb{A}^n\rightarrow Y)_*(Z\times\mathbb{A}^n\rightarrow Y)^*(\Gamma_{\{\real(f)\leq 0\}}\rightarrow\id)\pi^*
\]
is an isomorphism.  Since this lifts to a natural transformation of derived functors of mixed Hodge modules, and the forgetful functor from mixed Hodge modules to constructible sheaves is faithful, we deduce the following corollary.
\begin{corollary}
Let $X^{\Sp}\subset X$ be a subvariety of a variety $X$, and define 
\begin{align*}
Y^{\Sp}=&X^{\Sp}\times\mathbb{A}^n\\
Z^{\Sp}=&Z\cap X^{\Sp}\\
\tilde{Z}^{\Sp}=&Z^{\Sp}\times\mathbb{A}^n.
\end{align*}
Let $f\colon Y\rightarrow \Cp$ be a regular $\Cp^*$-equivariant function, where the target carries the weight one action and $Y$ carries the action given by the product of the trivial action on $X$ and the scaling action on $\mathbb{A}^n$.  There is a natural isomorphism in $\Db{\MMHS}$
\[
\Ho_c(Y^{\Sp},\varphi_f)\cong\Ho_c(\tilde{Z}^{\Sp},\QQ).
\]
In particular, the element $\Ho_c(Y^{\Sp},\varphi_f)\in\Ob(\Db{\MMHS})$ is in the essential image of the functor $h^*s_*\colon \Db{\MHS}\rightarrow\Db{\MMHS}$.\end{corollary}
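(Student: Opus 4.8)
The plan is to deduce the corollary directly from Theorem~\ref{dim_red_prop} by keeping track of the extra structures that the proof of that theorem already produces: a natural transformation of functors which is in fact a natural transformation of mixed Hodge modules, and whose target already lies in the image of $h^*s_*$. Concretely, I would first restate Theorem~\ref{dim_red_prop} in the form that the natural transformation
\[
\pi_!(Z\times\mathbb{A}^n\rightarrow Y)_*(Z\times\mathbb{A}^n\rightarrow Y)^*\bigl(\Gamma_{\{\real(f)\leq 0\}}\rightarrow\id\bigr)
\]
is an isomorphism of functors $\Dbc{X}\rightarrow\Dbc{X}$; this is just the unwinding of the isomorphism \eqref{dimred}, since $\phi_f\pi^*\mathcal{F}\cong\tilde i_*\tilde i^*\phi_f\pi^*\mathcal{F}$ by Lemma~\ref{suppLemma} (where $\tilde i:Z\times\mathbb{A}^n\rightarrow Y$ is the inclusion) and $\Gamma_{\{\real(f)\le 0\}}$ restricted to $f^{-1}(0)$ is $\phi_f[-1]$ by definition.

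Next I would observe that every functor occurring in this natural transformation --- $\pi_!$, $\tilde i_*$, $\tilde i^*$, and the morphism $\Gamma_{\{\real(f)\le0\}}\to\id$ of vanishing-cycle type --- descends to the level of mixed Hodge modules and monodromic mixed Hodge structures, exactly as recalled in Sections~\ref{sheaves_section} and \ref{mmhs_sec} (in particular Proposition~\ref{cruclift} lifts the relevant vanishing-cycle natural transformation, and $\rat$ is faithful). Since $\rat$ is faithful and exact, an isomorphism after applying $\rat$ is already an isomorphism in $\Db{\MMHS}$. Applying the whole natural transformation to $\mathbb{Q}_X$ and then to $X^{\Sp}\hookrightarrow X$ --- i.e. evaluating at $(X^{\Sp}\hookrightarrow X)_!\mathbb{Q}_{X^{\Sp}}$, or equivalently pulling the whole picture back along $X^{\Sp}\times\mathbb{A}^n\hookrightarrow Y$, which is legitimate because $\phi_f$ commutes with the restriction to the $\mathbb{C}^*$-invariant open-then-closed pieces involved --- and taking $\Ho_c$, yields
\[
\Ho_c(Y^{\Sp},\phi_f)\cong\Ho_c\bigl((Z\cap X^{\Sp})\times\mathbb{A}^n,\mathbb{Q}\bigr)
\]
in $\Db{\MMHS}$. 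Here one must be mildly careful: the statement of Theorem~\ref{dim_red_prop} is phrased for all of $X$, so to get the $\Sp$-version I would run the same argument with $X$ replaced by $X^{\Sp}$ if $X^{\Sp}$ is itself smooth, or --- in general --- note that the natural transformation above is compatible with the closed inclusion $X^{\Sp}\hookrightarrow X$ and pass to the image, using that $Z\times\mathbb{A}^n\hookrightarrow Y$ restricts to $(Z\cap X^{\Sp})\times\mathbb{A}^n\hookrightarrow Y^{\Sp}$.

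Finally, the ``in particular'' clause is immediate: the right-hand side is $\Ho_c((Z\cap X^{\Sp})\times\mathbb{A}^n,\mathbb{Q})$, which as a monodromic mixed Hodge structure carries trivial monodromy --- it is the compactly supported cohomology of an honest variety with constant coefficients, hence lies in the essential image of $h^*s_*:\MHS\rightarrow\MMHS$ (and moreover, by the Künneth/Tate-twist formula, equals $\Ho_c(Z\cap X^{\Sp},\mathbb{Q})\otimes\tate{{-n}}$ up to the usual shift conventions). I expect the only genuine obstacle to be bookkeeping: making sure that the Hodge-module lift of the vanishing-cycle natural transformation used in the sheaf-level proof (Lemmas~\ref{lcco}--\ref{n1ver} and the blow-up argument) is available at each step --- but this is exactly what Proposition~\ref{cruclift} and the faithfulness of $\rat$ are there to guarantee, so no new input beyond Theorem~\ref{dim_red_prop} itself is needed.
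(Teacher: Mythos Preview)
Your high-level strategy matches the paper's: both start from the observation (stated just before the corollary) that Theorem~\ref{dim_red_prop} amounts to the natural transformation
\[
\pi_!(Z\times\mathbb{A}^n\rightarrow Y)_*(Z\times\mathbb{A}^n\rightarrow Y)^*(\Gamma_{\{\real(f)\leq 0\}}\rightarrow\id)
\]
being an isomorphism, and both then invoke the lift to mixed Hodge modules together with faithfulness of $\rat$.

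However, there is a genuine gap in your execution concerning how one lands in $\Db{\MMHS}$ rather than merely $\Db{\MHS}$. The monodromic mixed Hodge structure on $\Ho_c(Y^{\Sp},\phi_f)$ is \emph{not} obtained by taking the pushforward to a point in $\MHM$; by the definition in Section~\ref{mmhs_sec}, it is the object of $\Db{\MHM(\mathbb{A}^1)}$ built from the family $\phi_{f/u}$ on $Y\times\mathbb{C}^*$, where $u$ is the coordinate on $\mathbb{C}^*$, followed by pushforward to $\mathbb{C}^*\hookrightarrow\mathbb{A}^1$. Your sentence ``every functor\ldots descends to the level of\ldots monodromic mixed Hodge structures'' elides this: $\MMHS$ is not a sheaf category over $Y$, so one cannot simply say the six functors on $Y$ land there. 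What the paper does is apply Theorem~\ref{dim_red_prop} \emph{relatively over $\mathbb{C}^*$}, i.e.\ with base $X\times\mathbb{C}^*$, total space $Y\times\mathbb{C}^*$, and function $f/u$ (which is still $\mathbb{C}^*$-equivariant for the scaling action on the $\mathbb{A}^n$ factor). This produces an isomorphism of mixed Hodge modules on $X\times\mathbb{C}^*$; restricting along $Y^{\Sp}\times\mathbb{C}^*\hookrightarrow Y\times\mathbb{C}^*$ and then pushing forward along the proper projection to $\mathbb{C}^*$ gives a morphism, hence an isomorphism, in $\Db{\MMHS}$ by definition.

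So the missing step is: to realise the statement in $\Db{\MMHS}$, you must run the dimensional-reduction argument in the family over $\mathbb{C}^*$ with the rescaled function $f/u$, not just once over $X$ with $f$. Once that is said, your argument for the ``in particular'' clause is fine.
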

\begin{proof}
Let $u$ be the coordinate on $\Cp^*$.  We extend the $\mathbb{C}^*$-action on $Y$ to a $\mathbb{C}^*$-action on $Y\times\mathbb{C}^*$ by letting $\mathbb{C}^*$ act trivially on itself.  Let $\pi'\colon Y\times\mathbb{C}^*\rightarrow X\times \mathbb{C}^*$ be the projection and let $i'\colon Z\times\mathbb{C}^*\rightarrow X\times \mathbb{C}^*$ be the inclusion.  By Theorem \ref{dim_red_prop} there is an isomorphism
\[
(X^{\Sp}\times \Cp^*\rightarrow X\times\Cp^*)^*(\pi'_!\phi_{f/u}\pi'^*\mathbb{Q}_{X\times\Cp^*}\rightarrow\pi'_!\pi'^*i'_*i'^*\mathbb{Q}_{X\times\Cp^*}[1])
\]
which induces the desired isomorphism after taking direct image with compact supports along the projection to $\Cp^*$.
\end{proof}

At this point it becomes quite straightforward to prove the Thom--Sebastiani isomorphism at the level of mixed Hodge structures, assuming that we have the required $\mathbb{C}^*$-equivariance for our functions.  Let $Y=Y_1\times Y_2$, and let $f=f_1\boxplus f_2$.  Assume furthermore that both $Y_1$ and $Y_2$ admit product decompositions $Y_i\cong X_i\times \mathbb{A}^{n_i}$ such that the $f_i$ are $\Cp^*$-equivariant, where $\Cp^*$ acts with weight one on $\mathbb{A}^{n_i}$, trivially on $X_i$, and with weight one on the target $\Cp$.  From the commutativity of the square of underived functors
\[
\xymatrix{
\Gamma_{\{\real(f_1)\leq 0\}}(\bullet)\boxtimes\Gamma_{\{\real(f_2)\leq 0\}}(\bullet)\ar[d]\ar[r]&\Gamma_{\{\real(f_1\boxplus f_2)\leq 0\}}(\bullet\boxtimes\bullet)\ar[d]\\\id(\bullet)\boxtimes\id(\bullet)
\ar[r]&\id(\bullet\boxplus\bullet)
}
\]
and bi-exactness of $\boxtimes$ we deduce the following proposition.
\begin{proposition}
\label{TScomm}
The following diagram of isomorphisms of mixed Hodge structures commutes
\[
\xymatrix{
\Ho_c(Y_1^{\Sp},\varphi_{f_1})\otimes \Ho_c^*(Y_2^{\Sp},\varphi_{f_2})\ar[d]_{\cong}^{\substack{\mathrm{dimensional}\\\mathrm{reduction}}}\ar[r]^-{\TS}&\Ho_c(f_1^{-1}(0)^{\Sp}\times f_2^{-1}(0)^{\Sp},\varphi_{f_1\boxplus f_2})\ar[d]_{\cong}^{\substack{\mathrm{dimensional}\\\mathrm{reduction}}}\\
\Ho_c(\tilde{Z}^{\Sp}_1,\mathbb{Q})\otimes\Ho_c^*(\tilde{Z}^{\Sp}_2,\mathbb{Q})\ar[r]^-{\Kunn}_-{\cong}&\Ho_c(\tilde{Z}^{\Sp}_1\times \tilde{Z}^{\Sp}_2,\mathbb{Q}),
}
\]
where $\TS$ is the Thom-Sebastiani map, $\Kunn$ is the Kunneth isomorphism, and the vertical isomorphisms are as in Theorem \ref{dim_red_prop}.  In particular, $\TS$ is an isomorphism.
\end{proposition}
We have again used the appendix of \cite{Br12} to identify the morphism at the level of perverse sheaves underlying the Saito Thom--Sebastiani isomorphism with Massey's Thom--Sebastiani isomorphism \cite{Ma01}.
\begin{corollary}
\label{csdr}
With $Y$ as in the statement of Theorem \ref{dim_red_prop}, assume also that $X$ is a $G$-equivariant complex algebraic variety, where $G$ is a complex algebraic group, and assume also that $Y$ is a $G$-equivariant vector bundle over $X$, and $Y^{\Sp}=X^{\Sp}\times\mathbb{A}^n$ is the total space of a sub $G$-bundle.  Then there is an isomorphism in $\D{\MHS}$
\[
\Ho_{c,G}(Y^{\Sp},\varphi_f)\rightarrow\Ho_{c,G}(\tilde{Z}^{\Sp},\mathbb{Q}).
\]
\end{corollary}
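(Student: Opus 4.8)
The plan is to deduce the $G$-equivariant statement from the non-equivariant dimensional reduction by applying it on each finite-dimensional approximation $\overline{(Y^{\Sp},G)}_N$ and passing to the colimit. The only step with real content is the first: I would promote the preceding corollary (equivalently Theorem \ref{dim_red_prop}) from the trivial bundle $X\times\mathbb{A}^n$ to an arbitrary vector bundle $\pi\colon Y\to X$ carrying a $\Cp^*$-equivariant function $f$ for the fibrewise scaling action, where now $Z\subset X$ is defined intrinsically as $\{x\in X\mid\pi^{-1}(x)\subset f^{-1}(0)\}$ and $\Cp^*$-equivariance of $f$ is equivalent to $f$ being linear on each fibre and vanishing on the zero section. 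Over an open $U\subset X$ on which the bundle is trivial, $f|_{\pi^{-1}(U)}$ has the form $\sum_s f_sx_s$ in linear fibre coordinates, so the corollary applies on $\pi^{-1}(U)$; since the natural transformation $\pi_!\phi_f\pi^*[-1]\to\pi_!\pi^*i_*i^*$ of Theorem \ref{dim_red_prop} is built entirely from globally defined adjunction and restriction maps, and a morphism of constructible complexes is an isomorphism precisely when it is so locally, the local isomorphisms glue to a global one; the same holds after the lift to mixed Hodge modules recorded just above the corollary.

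Next I would apply this generalisation to each $\overline{(Y^{\Sp},G)}_N$. Because $Y^{\Sp}=X^{\Sp}\times\mathbb{A}^n$ is a $G$-equivariant vector bundle over $X^{\Sp}$ and $G$ acts freely on the frame spaces, $\overline{(Y^{\Sp},G)}_N\to\overline{(X^{\Sp},G)}_N$ is again a vector bundle (typically nontrivial, even though $Y^{\Sp}$ itself is a trivial product). The rescaling $\Cp^*$-action on the fibres commutes with the fibrewise-linear $G$-action, hence descends, and the induced function $f_N$ is $\Cp^*$-equivariant for it. Since $Z$ is $G$-invariant, the corresponding locus on $\overline{(X^{\Sp},G)}_N$ is $\overline{(Z\cap X^{\Sp},G)}_N$, and its preimage in the bundle is $\overline{((Z\cap X^{\Sp})\times\mathbb{A}^n,G)}_N$, the restriction of the $G$-bundle $Y^{\Sp}$ to $Z\cap X^{\Sp}$ being the trivial factor $\mathbb{A}^n$. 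The generalised dimensional reduction then gives, for each $N$, a natural isomorphism in $\Db{\MMHS}$
\[
\Ho_c\big(\overline{(Y^{\Sp},G)}_N,\phi_{f_N}\big)\cong\Ho_c\big(\overline{((Z\cap X^{\Sp})\times\mathbb{A}^n,G)}_N,\mathbb{Q}\big),
\]
where on the left the preceding corollary guarantees that the vanishing-cycle cohomology is genuinely a monodromic mixed Hodge structure, in the image of $h^*s_*$, so the comparison indeed takes place in $\Db{\MMHS}$.

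Finally I would pass to $\varinjlim_N$. The transition maps $i_N$ on both sides are induced by the inclusions of frame spaces, and since every arrow in the construction of the dimensional-reduction transformation is natural in the underlying space, these isomorphisms commute with the $i_N$; moreover the normalising Tate twist in the definition of $\Ho_{c,G}$ is the same on both sides because the factor $\mathbb{A}^n$ is retained, so no shift is introduced and no matching of shifts is needed. Taking the colimit yields the asserted isomorphism $\Ho_{c,G}(Y^{\Sp},\phi_f)\to\Ho_{c,G}((Z\cap X^{\Sp})\times\mathbb{A}^n,\mathbb{Q})$ in $\Db{\MMHS}$. The main obstacle is the bundle extension in the first paragraph, i.e. checking that the dimensional reduction isomorphism of Theorem \ref{dim_red_prop} is sufficiently natural and local on the base to descend to (possibly nontrivial) vector bundles and to be compatible with the approximation system; granting that, the equivariant case is formal.
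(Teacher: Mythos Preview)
Your proposal is correct and follows essentially the same strategy as the paper: establish the dimensional-reduction isomorphism on each finite approximation $\overline{(Y^{\Sp},G)}_N$ and then pass to the colimit. The two writeups emphasise complementary halves of the argument. You spell out explicitly the point the paper leaves implicit, namely that $\overline{(Y^{\Sp},G)}_N\to\overline{(X^{\Sp},G)}_N$ is in general a \emph{nontrivial} vector bundle (because $G$ may act nontrivially on the $\mathbb{A}^n$ factor), so one must first promote Theorem~\ref{dim_red_prop} from the product $X\times\mathbb{A}^n$ to arbitrary vector bundles with $\Cp^*$-equivariant function; your local-triviality plus gluing argument for this is exactly right, since the natural transformation is built from global adjunctions and being an isomorphism is a local condition. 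Conversely, the paper's proof is more explicit where you are brief: it writes down the sheaf-level commutative square showing that the dimensional-reduction map intertwines the Gysin transition maps $h_*\mathbb{Q}_{\overline{(Y,G)}_N}\state{\dim}\to\mathbb{Q}_{\overline{(Y,G)}_{N+1}}\state{\dim}$, rather than appealing to abstract naturality. Taken together the two accounts give a complete proof; neither has a gap.
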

\begin{proof}
Let $h\colon \overline{(Y,G)}_N\rightarrow \overline{(Y,G)}_{N+1}$ be the natural inclusion and set $T=\overline{(f^{-1}(0),G)}_{N+1}$.  The diagram
\begin{equation}
\label{abovd}
\xymatrix{
\mathbb{Q}_{\overline{(Y,G)}_{N+1}}\state{{\dim\left(\overline{(Y,G)}_{N+1}\right)}}|_T&\ar[l]\phi_{f_{N+1}}\mathbb{Q}_{\overline{(Y,G)}_{N+1}}\state{{\dim\left(\overline{(Y,G)}_{N+1}\right)}}[-1]
\\
h_*\mathbb{Q}_{\overline{(Y,G)}_N}\state{{\dim\left(\overline{(Y,G)}_{N}\right)}}|_T\ar[u]&\ar[l] h_*\phi_{f_N}\mathbb{Q}_{\overline{(Y,G)}_N}\state{{\dim\left(\overline{(Y,G)}_{N}\right)}}[-1]\ar[u]
}
\end{equation}
commutes as it is obtained by applying the natural transformation $\phi_{f_{N+1}}[-1]\rightarrow \id|_T$ to the map $h_*\mathbb{Q}_{\overline{(Y,G)}_N}\state{{\dim\left(\overline{(Y,G)}_{N}\right)}}\rightarrow\mathbb{Q}_{\overline{(Y,G)}_{N+1}}\state{{\dim\left(\overline{(Y,G)}_{N+1}\right)}}$.  Applying 
\[
\left(\overline{(\tilde{Z}^{\Sp},G)}_{N+1}\rightarrow \overline{(Y,G)}_{N+1}\right)^*
\]
and taking compactly supported cohomology, we deduce that the following diagram commutes
\[
\xymatrix{
\Ho_{c}\left(\overline{(\tilde{Z}^{\Sp},G)}_{N+1},\mathbb{Q}\right)\state{t}& \ar[l]\Ho_{c}\left(\overline{(Y^{\Sp},G)}_{N+1},\varphi_{f_{N+1}}\right)\state{t}\\
\Ho_{c}\left(\overline{(\tilde{Z}^{\Sp},G)}_{N},\mathbb{Q}\right)\ar[u]&\Ho_{c}\left(\overline{(Y^{\Sp},G)}_{N},\varphi_{f_{N}}\right)\ar[l]\ar[u]
}
\]
where $t=\dim\left(\overline{(Y,G)}_{N+1}\right)-\dim\left(\overline{(Y,G)}_{N}\right)$.  Letting $N$ go to infinity, the corollary follows.
\end{proof}

\subsection{The critical cohomology of a quiver with potential and a cut}
\label{2dCOHA}
Let $Q$ be a quiver with potential $W\in \Cp Q/[\Cp Q,\Cp Q]$.  We assume that the QP $(Q,W)$ admits a \textit{cut}.  That is, there is a grading $\nu\colon Q_1\rightarrow \mathbb{N}$ of the edges of $Q$ with zeroes and ones such that $W$ is homogeneous of degree one.  Equivalently, there is a subset $S\subset Q_1$ such that every cyclic word in $W$ contains exactly one instance of exactly one arrow of $S$.  We pass from one description to the other by setting $S=\nu^{-1}(1)$.  For $\gamma\in\mathbb{N}^{Q_0}$ denote by $\RS_{\gamma}$, as before, the affine space
\[
\bigoplus_{a\in Q_1}\Hom(\Cp^{\gamma(s(a))},\Cp^{\gamma(t(a))}).
\]

The space $\RS_{\gamma}$ admits a decomposition $\RS_{\gamma}=(\RS_{\gamma})_0\oplus(\RS_{\gamma})_1$, where 
\[
(\RS_{\gamma})_i:=\bigoplus_{a\in\nu^{-1}(i)}\Hom(\Cp^{\gamma(s(a))},\Cp^{\gamma(t(a))}).
\]
This is the weight space decomposition of the natural $\Cp^*$-action on $\RS_{\gamma}$, defined via $\nu$.  

Up to cyclic permutation of words in $Q$, we may write $W=\sum_{a\in\nu^{-1}(1)}ap_a$, where $p_a$ are linear combinations of paths in $Q$ containing only arrows in $\nu^{-1}(0)$.  We define 
\[
\tilde{i}_{\gamma}\colon Z_{\gamma}\times\AA^{\sum_{a\in\nu^{-1}(1)}\gamma(s(a))\gamma(t(a))}\rightarrow \RS_{\gamma} 
\]
to be the inclusion of subscheme cut out by the matrix valued equations $\{p_a=0|a\in\nu^{-1}(1)\}$.

As in the rest of the paper we assume that we are given subspaces $\RS^{\Sp}_{\gamma}\subset \RS_{\gamma}$ satisfying Assumption \ref{closed_under}, as well as the the following additional assumption.
\begin{assumption}
\label{ass1}
The spaces $\RS^{\Sp}_{\gamma}$ are in turn given by pullbacks of subspaces $\pi_{\gamma,0}^{-1}((\RS_{\gamma})_0^{\Sp})$ for $(\RS_{\gamma})_0^{\Sp}\subset(\RS_{\gamma})_0$ and $\pi_{\gamma,0}\colon \RS_{\gamma}\rightarrow (\RS_{\gamma})_0$ the natural projection.  
\end{assumption}

In words, we assume that the subspace $\RS^{\Sp}_{\gamma}$ is the space of $Q$-representations $\rho$ satisfying some property that defines a Serre subcategory of the category of $Q$-representations, and is independent of the values of $\rho(a)$ for $a\in S$.  We define $Z_{\gamma}^{\Sp}=Z_{\gamma}\cap (M_{\gamma})_0^{\Sp}$.  The following is then a direct application of Corollary \ref{csdr}.
\begin{theorem}
\label{eqred}
Let $(Q,W)$ admit a cut, and assume that the spaces $\RS^{\Sp}_{\gamma}$ satisfy Assumptions \ref{closed_under} and \ref{ass1}.  Then there are canonical isomorphisms
\begin{align*}
\Ho_{c,\Gl_{\gamma}}\left(\RS_{\gamma}^{\Sp},\varphi_{\tr(W)_{\gamma}}\right)\cong&\Ho_{c,\Gl_{\gamma}}\left(\pi^{-1}_{\gamma,0}\left((\RS_{\gamma})_0^{\Sp}\cap Z_{\gamma}\right),\mathbb{Q}\right)\\
\cong&\Ho_{c,\Gl_{\gamma}}(Z_{\gamma}^{\Sp},\QQ)\state{{-\sum_{a\in\nu^{-1}(1)}\gamma(s(a))\gamma(t(a))}}.
\end{align*}
\end{theorem}

\bibliographystyle{amsplain}
\bibliography{SPP}
\end{document}